\newcommand{\solution}[1]{}
\newtheorem{theorem}{Theorem}[section]
\newtheorem{definition}[theorem]{Definition}
\newtheorem{proposition}[theorem]{Proposition}
\newtheorem{corollary}[theorem]{Corollary}
\newtheorem{lemma}[theorem]{Lemma}
\newtheorem{fact}[theorem]{Remark}
\newtheorem{exemplu}[theorem]{Example}
\newtheorem{exercise}{Exercise}
\newtheorem{notation}[theorem]{Notation}
\newtheorem{problem}[theorem]{Problem}
\newcommand{\bdfn}{\begin{definition}}
\newcommand{\edfn}{\end{definition}}
\newcommand{\bthm}{\begin{theorem}}
\newcommand{\ethm}{\end{theorem}}
\newcommand{\bprop}{\begin{proposition}}
\newcommand{\eprop}{\end{proposition}}
\newcommand{\bcor}{\begin{corollary}}
\newcommand{\ecor}{\end{corollary}}
\newcommand{\blem}{\begin{lemma}}
\newcommand{\elem}{\end{lemma}}
\newcommand{\bfact}{\begin{fact}}
\newcommand{\efact}{\end{fact}}
\newcommand{\bex}{\begin{exemplu}\begin{rm}}
\newcommand{\eex}{\end{rm}\end{exemplu}}
\newcommand{\bxc}{\begin{exercise}}
\newcommand{\exc}{\end{exercise}}
\newcommand{\bntn}{\begin{notation}}
\newcommand{\entn}{\end{notation}}
\newcommand{\be}{\begin{enumerate}}
\newcommand{\ee}{\end{enumerate}}
\newcommand{\bce}{\begin{center}}
\newcommand{\ece}{\end{center}}
\newcommand{\bi}{\begin{itemize}}
\newcommand{\ei}{\end{itemize}}
\newcommand{\bt}{\begin{tabular}}
\newcommand{\et}{\end{tabular}}
\newcommand{\beq}{\begin{equation}}
\newcommand{\eeq}{\end{equation}}
\newcommand{\ba}{\begin{array}} 
\newcommand{\ea}{\end{array}}
\newcommand {\bea} {\begin{eqnarray}}
\newcommand {\eea} {\end {eqnarray}}
\newcommand {\bua} {\begin{eqnarray*}}
\newcommand {\eua} {\end {eqnarray*}}
\newcommand{\ra}{\rightarrow}
\newcommand{\Ra}{\Rightarrow}
\newcommand{\se}{\subseteq}
\newcommand{\ol}{\overline}
\newcommand{\ul}{\underline}
\newcommand{\Lra}{\Leftrightarrow}
\newcommand{\uae}{The following are equivalent}
\newcommand{\ds}{\displaystyle}
\def\R{{\mathbb R}}
\def\N{{\mathbb N}}
\def\Q{{\mathbb Q}}
\def\B{{\mathbb B}}
\newcommand{\Real}{{\mathbb R}}
\newcommand{\limn}{\ds\lim_{n\to\infty}}
\newcommand{\remin}{\mathop{-\!\!\!\!\!\hspace*{1mm}\raisebox{0.5mm}{$
\cdot$}}\nolimits}
\newcommand{\eps}{\varepsilon}
\newcommand{\lambdaxy}{(1-\lambda)x\oplus\lambda y}
\newcommand{\midxy}{\frac12x\oplus\frac12y}
\newcommand{\aquant}{\forall}
\newcommand{\si}{\wedge}
\newcommand{\adw}{{\cal A}^\omega[X,d,W]_{-b}}
\newcounter{ct}
\newcommand{\aomega}{{\cal A}^\omega}
\newcommand{\aomegaxd}{\mathcal{A}^\omega[X,d]_{-b}}
\newcommand{\aomegaxdw}{\mathcal{A}^\omega[X,d,W]_{-b}}
\newenvironment{denseBibitem}[1]
{\bibitem{#1}}
\begin{document}

\title{Proof mining in metric fixed point theory and ergodic theory}
\author{ Lauren\c tiu Leu\c stean
\footnote{This survey was written during the author's stay as an Oberwolfach Leibniz Fellow at Oberwolfach Mathematics Institute whose support is gratefully acknowledged.}\\[0.2cm]
Department of Mathematics, Technische Universit\" at Darmstadt,\\
 Schlossgartenstrasse 7, 64289 Darmstadt, Germany\\[0.1cm] and\\[0.1cm]
Institute of Mathematics "Simion Stoilow'' of the 
Romanian Academy, \\
Calea Grivi\c tei 21, P.O. Box 1-462, Bucharest, Romania\\[0.1cm]
E-mail: leustean@mathematik.tu-darmstadt.de
}
\date{}

\maketitle

\begin{abstract}
\noindent In this survey we present some recent applications of proof mining to the fixed point theory of (asymptotically) nonexpansive mappings and to the metastability (in the sense of Terence Tao) of ergodic averages in uniformly convex Banach spaces.
\end{abstract}

\tableofcontents

\section{Proof Mining}

\noindent By {\em proof mining} we mean the logical analysis, using proof-theoretic tools, of mathematical proofs with the aim of extracting relevant information hidden in the proofs. 
This new information can be both of quantitative nature, such as algorithms and effective bounds, as well as of qualitative nature, such as uniformities in the bounds or weakening the premises. Thus, even if one is not particularly interested in the numerical details of the bounds themselves, in many cases such explicit bounds immediately show the independence of the quantity in question from certain input data. An up-to-date and comprehensive reference for proof mining is Kohlenbach's recent book \cite{Koh08-book}. 

The main proof-theoretic techniques in proof mining are the so-called {\em proof interpretations}. A proof interpretation $I$ maps proofs $p$ in theories $\cal T$ of theorems $A$ into new proofs $p^I$ in theories ${\cal T}^I$ of the interpretation $A^I$ of $A$. In this way, the original mathematical proof is  transformed into  a new enriched proof of a stronger result, from which the desired additional information can be read off. While the soundness of these methods rests on results in mathematical logic, the new proof can again  be written in ordinary mathematics.

This line of research has its roots in Kreisel's program on {\em unwinding of proofs}. Already in the 50's, Kreisel had asked
\bce
{\em "What more do we know if we have proved a theorem by restricted means than if we merely know that it is true?"}
\ece
\noindent Kreisel proposed to apply proof-theoretic techniques - originally developed for foundational purposes - to analyze concrete proofs in mathematics and unwind the extra information hidden in them; see for example \cite{KreMacint82,Luc96} and,  more recently, \cite{Macint05}. Unwinding of proofs has had applications in  number theory \cite{Kre82,Luc89}, algebra \cite{Del96,CosLomRoy01, Coq04,CoqLom06,Lom06} and combinatorics \cite{Bel90,Gir87,KetSol81,Wei04}.

However, the most systematic development of proof mining took place in connection with applications to 
approximation theory \cite{Koh92,Koh93,Koh93a,Oli02,KohOli03a},  metric fixed point theory \cite{Koh01,Koh01a,Koh03,KohLeu03,KohLam04,Koh05a,Ger06,Bri07,Bri07a,Bri07b,KohLeu07,Leu07,Leu07a,Bri08,KohLeu08a,Leu08,Bri08a,Bri08b}, as well as ergodic theory and topological dynamics \cite{AviGerTow08,Ger08a,KohLeu08a,Ger08b}.

Moreover, in the context of these applications, general logical metatheorems were developed by Kohlenbach \cite{Koh05,Koh06}  and Gerhardy-Kohlenbach \cite{GerKoh08}, having the following form:  If certain $\forall\,\exists$-sentences are proved in some formal systems associated to abstract structures $X$ (e.g.  metric, (uniformly convex) normed, Hilbert, $CAT(0)$ or $W$-hyperbolic spaces), then from a given proof one can extract an effective bound which holds in arbitrary such spaces and is uniform for all parameters meeting very weak local boundedness conditions. Adaptations of these metatheorems to other structures ($\R$-trees, Gromov hyperbolic spaces, uniformly convex $W$-hyperbolic spaces) are given in \cite{Leu06}. The importance of the metatheorems is that they can be used to infer new uniform existence results without having to carry out any actual proof analysis.  
The metatheorems apply to formal systems and thus to formalized proofs, they guarantee the extractability of additional information based on a transformation of these formalized proofs. In practice, it is in general not necessary to completely formalize a mathematical proof in order to analyze it. Applications of proof mining often consist of preprocessing the original mathematical proof by putting the statement and the main concepts involved into a suitable logical form and then identifying the key steps in the proof that need to be given a computational interpretation.  As a result, we get direct proofs  for the explicit quantitative versions of the original results, i.e. proofs that no longer rely on any logical tools.
  
Naturally, there are limits to what can be achieved with proof mining. Let us consider the Cauchy property of bounded monotone sequences $(a_n)$ in $\R$, which is a statement of a more complicated  $\forall\,\exists\forall$ logical form:
\beq
\forall\eps>0\exists N\in\N\forall p\in\N\big(|a_{N+p}-a_N|<\eps\big). \label{intro-Cauchy}
\eeq

\noindent By a well-known construction of Specker \cite{Spe49}, there are easily computable sequences already in $[0,1]\cap\Q$ without any computable bound on the $\exists N$, that is which have no computable Cauchy modulus. 
Nevertheless, as we shall see in Section \ref{proof-mining-fpt}, the logical metatheorems guarantee effective uniform bounds on the so-called  {\em Herbrand normal form} of the Cauchy property, that (ineffectively) is equivalent with (\ref{intro-Cauchy}).

The proofs of the logical metatheorems are based on extensions to the new theories of two proof interpretations developed by G\" odel: {\em functional} (or {\em Dialectica}) {\em interpretation} \cite{God58} and {\em double-negation} interpretation \cite{God33}. 

In applications of proof mining, Kohlenbach's {\em monotone} functional interpretation (see \cite{Koh96a} or \cite[Chapter 9]{Koh08-book} for details) is crucially used, since it systematically transforms any statement in a given proof into a new  version for which explicit bounds are provided. As it is argued in \cite{KohOli03}, monotone functional interpretation provides in many cases the {\em right} notion of {\em numerical implication} in analysis.

Recently, Terence Tao \cite{Tao07} arrived at a proposal of so-called {\em hard analysis} (as opposed to {\em soft analysis}), inspired by the finitary arguments used recently by him and Green \cite{GreTao08} in their proof that there are arithmetic progressions of arbitrary length in the prime numbers, as well as by him alone in a series of papers \cite{Tao06,Tao07a,Tao08,Tao08a,Tao08b}. In the essay \cite{Tao07}, Tao illustrates his ideas using two examples: a {\em finite convergence} principle and a {\em finitary} infinite pigeonhole principle. It turns out that both the former and a variant of the latter directly result from monotone functional interpretation \cite{Koh07,GasKoh09}. Hence, Tao's hard analysis could be roughly understood as carrying out analysis on the level of 
uniform bounds in the sense of monotone functional interpretation which in many cases allows one to {\em finitize} analytic assumptions and to arrive at qualitatively stronger results.

\section{Some topics in fixed point theory of nonexpansive mappings}\label{intro-fpt}

In the following we review some topics related with the applications we shall present in Chapter \ref{proof-mining-fpt}. We refer to  \cite[Chapter 3]{KirSim01} or to \cite{GoeKir90,GoeRei84} for a comprehensive treatment of the fixed point theory of nonexpansive mappings.

The notion of nonexpansive mapping can be introduced in the very general setting of metric spaces. Thus, if $(X,d)$ is a metric space, and $C\subseteq X$ a nonempty subset, then a mapping $T:C\to C$ is said to be {\em nonexpansive} if for all $x,y\in C$, 
\[d(Tx, Ty)\le d(x,y).\]

We shall denote with $Fix(T)$ the set of fixed points of $T$.  The subset $C$ is said to have the {\em fixed point property for nonexpansive mappings}, FPP for short, if  $Fix(T)\ne\emptyset$ for any nonexpansive mapping $T:C\to C$.

While an abstract metric space is all that is needed to define the concept of nonexpansive mapping, the most interesting results were obtained in the setting of Banach spaces.

Fixed point theory of contractions is, even from a computational point of view, essentially trivial, due to Banach's Contraction Mapping Principle. Anyway, the picture known for contractions breaks down for nonexpansive mappings, as we indicate below:
\be
\item Nonexpansive mappings need not to have fixed points: just take $T:\R\to\R, \,\,T(x)=x+1$.
\item Even when $C$ is compact (and therefore fixed points exist by the fixed point theorems of Brouwer and Schauder), they are not unique:  take $T:\R\to\R, \,\,T(x)=x$.
\item Even when the fixed point is unique, it will in general not be approximated by the Picard iteration $x_{n+1}=Tx_n$:  if we let $T:[0,1]\to[0,1], \,T(x)=1-x$ and $x_0=0$, then $T$ has a unique fixed point $\frac12$, while $x_n$ alternates between $0$ and $1$.
\ee

Fixed point theory for nonexpansive mappings has been a very active research area in nonlinear analysis beginning with the 60's, when the most widely known result in the theory, the so-called Browder-G\" ohde-Kirk Theorem, was published. 

\bthm\label{BGK-thm}
If $C$ is a bounded closed and convex subset of a uniformly convex Banach space $X$ and $T:C\to C$ is nonexpansive, then $T$ has a fixed point.
\ethm

The above theorem was proved independently by Browder \cite{Bro65} and G\" ohde \cite{Goh65} in the form stated above, and by Kirk \cite{Kir65} in a more general form. Browder and Kirk used the same line of argument, which in fact yields a more general result - while the proof of G\" ohde relies on properties essentially unique to uniformly convex Banach spaces.

\subsection{The approximate fixed point property}

Let $(X,d)$ be a metric space, $C\se X$ and $T:C\to C$. The {\em minimal displacement of } $T$ is defined as
\beq 
r_C(T):=\inf\{d(x,Tx)\mid x\in C\}. \label{def-minimal-disp}
\eeq
A sequence $(x_n)$ in $C$ is called an {\em approximate fixed point sequence} of $T$ if $\ds\limn d(x_n,Tx_n)=0$.  We say that $T$ is {\em approximately fixed} \cite{BorReiSha92}, or that $T$ {\em has approximate fixed points}, if $T$ has an approximate fixed point sequence. 

Given $\varepsilon>0$, a point $x\in C$ is said to be an {\em $\varepsilon$-fixed point} of $T$ if $d(x,Tx)<\eps$. We shall denote with $Fix_\eps(T)$ the set of $\eps$-fixed points of $T$.

It is easy to see that $T$ is approximately fixed if and only if $r_C(T)=0$ if and only if $Fix_\eps(T)\ne\emptyset$ for any $\eps>0$.

A related notion is the following. For $x\in C$ and $b,\eps>0$, let us denote
\[Fix_{\varepsilon}(T,x,b):=\{y\in C\mid d(y,x)\leq b \text{~and~} d(y,Ty)<\eps\}.\]
If $Fix_{\varepsilon}(T,x,b)\ne \emptyset$ for all $\eps>0$, we say that $T$ {\em has approximate fixed points} in a $b$-neighborhood of $x$. 

\blem\label{char-AFP-b-neighborhood}
The following are equivalent.
\be
\item $T$ has a bounded approximate fixed point sequence;
\item for all $x\in C$ there exists  $b>0$ such that $T$ has approximate fixed points in a $b$-neighborhood of $x$;
\item there exist $x\in C$ and $b>0$ such that $T$ has approximate fixed points in a $b$-neighborhood of $x$.
\ee 
\elem

A subset $C$ of  a metric space $(X,d)$ is said to have the {\em approximate fixed point property for nonexpansive mappings}, AFPP for short, if each nonexpansive mapping $T:C\to C$ is approximately fixed. It is well-known that {\em bounded} closed convex subsets of Banach spaces have the AFPP for nonexpansive mappings (see, for example, \cite[Chapter 3, Lemma 2.4]{KirSim01}. 

Goebel and Kuczumow \cite{GoeKuc78} were the first to remark that there exist {\em unbounded} sets in Hilbert spaces that have this property. Namely, they proved that any closed convex set $C$ contained in a block has the AFPP; a set $K\se \ell_2$ is a {\em block} if $K$ is of the form $K=\{x\in \ell_2\mid |<x,e_n>|\le M_n, n=1,2,\ldots, \}$, where $\{e_n\}$ is some orthogonal basis and $(M_n)$ is a sequence of positive reals. More recently, Kuczumow  gave in \cite{Kuc03} an example of an unbounded closed convex subset of $\ell_2$ that has the AFPP, but it is not contained in a block for any orthogonal basis of $\ell_2$.

Goebel and Kuczumow' result was extended by Ray \cite{Ray78} to include all linearly bounded subsets of $\ell_p$, $1<p<\infty$. A subset $C$ of a normed space $X$ is said to be {\em linearly bounded} if it has bounded intersection with all lines in $X$.
Subsequently, Ray obtained  the following characterization of the FPP in Hilbert spaces, answering an open problem of Kirk.

\bthm\cite{Ray80}
A closed convex subset of a real Hilbert space has the FPP for nonexpansive mappings if and only if it is bounded.
\ethm
In \cite{Rei83}, Reich proved the following remarkable theorem.

\bthm\cite{Rei83}\label{Reich-lin-bounded}
A closed convex subset of a reflexive Banach space has the AFPP for nonexpansive mappings if and only if it is linearly bounded.
\ethm   
If the Banach space $X$ is finite-dimensional, then any linearly bounded subset $C$ of $X$ is, in fact, bounded. Thus, in this case, either $C$ is bounded and has the FPP, or $C$ is unbounded and does not even have the AFPP for nonexpansive mappings.

As it was already noted in \cite{Rei83}, the above theorem can not be extended to all Banach spaces: just take $X=\ell_1$, $C=\{x\in \ell_1\mid |x_n|\le 1 \text{ for all } n\}$ and define $T:C\to C$ by $T(x_1,x_2,\ldots)=(1,x_2.x_3,\ldots)$. Then $C$ is linearly bounded and $T$ is an isometry, but $r_C(T)=1$, hence $T$ is not approximately fixed.

In \cite{Sha91}, Shafrir gave a more general geometric characterization of the AFPP which is true in an arbitrary Banach space or even for the more general class of complete hyperbolic  spaces in the sense of \cite{ReiSha90}. In order to do this, he introduced the concept of a {\em directionally bounded set}.

A {\em directional curve} in a metric space $(X,d)$ is a curve $\gamma:[0,\infty)\to X$ for which there exists $b>0$ such that for each $t\geq s\geq 0$,
\[t-s-b\leq d(\gamma(s),\gamma(t))\leq t-s.\]
A convex subset of a Banach space is called {\em directionally bounded} if it contains no directional curve. Since a line is a directional curve with $b=0$, directionally bounded sets are always linearly bounded. Shafrir proved two important results.

\bthm\cite{Sha91}
\be
\item A convex subset of a Banach space has the AFPP if and only if it is directionally bounded.
\item A Banach space $X$ is reflexive if and only if every closed convex linearly bounded subset of $X$ is directionally bounded.
\ee
\ethm
Therefore, the characterization for the AFPP from Reich Theorem \ref {Reich-lin-bounded} is true for a Banach space $X$ if and only if $X$ is reflexive.
 
Answering an open question of Shafrir \cite{Sha91}, in \cite{MatRei03} Matou\v skov\' a and Reich showed that any infinite-dimensional Banach space contains an unbounded convex subset which has the AFPP for nonexpansive mappings; 
Shafrir \cite{Sha91} had proved this only for infinite-dimensional Banach spaces which do not contain an isomorphic copy of $\ell_1$.

\subsection{Krasnoselski-Mann iterations}\label{intro-fpt-KM}

A fundamental theorem in the fixed point theory of nonexpansive mappings is the following result due to Krasnoselski, which shows that, under an additional compactness condition, a fixed point of $T$ can be approximated by a special iteration technique.

\bthm \label{Krasnoselski-thm}\cite{Kra55}
Let $C$ be a closed convex subset of a uniformly convex Banach space $X$, $T$ be a nonexpansive mapping, and suppose that $T(C)$ is contained into a compact subset of $C$. Then for every $x\in C$, the sequence $(x_n)$ defined by
\beq
x_0:=x, \quad x_{n+1}:=\frac12(x_n+Tx_n)\label{def-K-iteration}
\eeq
converges to a fixed point of $T$.
\ethm
\noindent  Schaefer \cite{Sch57} remarked that Krasnoselski Theorem holds for iterations of the form 
\beq
x_0:=x, \quad x_{n+1}:=(1-\lambda)x_n+\lambda Tx_n, \label{def-K-iteration-lambda}
\eeq
 where $\lambda \in (0,1)$. Moreover, Edelstein \cite{Ede70} proved that strict convexity of $X$ suffices. The iteration (\ref{def-K-iteration-lambda}) is today known as the {\em Krasnoselski iteration}. 

For any $\lambda\in(0,1)$, the {\em averaged mapping} $T_\lambda$ is defined by
\[T_\lambda:C\to C, \quad T_\lambda(x)=(1-\lambda)x + \lambda Tx.\]
It is easy to see that $T_\lambda$ is also nonexpansive and that $Fix(T)=Fix(T_\lambda)$. Moreover, the Krasnoselski iteration $(x_n)$ starting with $x\in C$ is the Picard iteration $\big(T_\lambda^n(x)\big)$ of $T_\lambda$. 

A vast extension  of Krasnoselski Theorem was obtained by Ishikawa in his seminal paper \cite{Ish76}.  He showed that Krasnoselski Theorem holds without the assumption of $X$ being uniformly convex and for much more general iterations, defined as follows:
\begin{equation}
x_0:=x, \quad x_{n+1}:=(1-\lambda_n)x_n +\lambda_n Tx_n, \label{def-KM-iteration}
\end{equation}
where $(\lambda_n)$ is a sequence in $[0,1]$ and $x\in C$ is the starting point. This iteration is a special case of the generalized iteration method introduced by Mann \cite{Man53}. Following \cite{BorReiSha92}, we call the iteration (\ref{def-KM-iteration}) the {\em Krasnoselski-Mann iteration}. We remark  that it is often said to be a {\em segmenting Mann iteration} \cite{OutGro69,Gro72,HicKub77}.

\bthm \label{Ishikawa-thm}\cite{Ish76}
Let $C$ be a closed convex subset of a Banach space $X$, $T$ be a nonexpansive mapping, and suppose that $T(C)$ is contained into a compact subset of $C$. Assume that $(\lambda_n)$ is a sequence in $[0,1]$, divergent in sum and bounded away from $1$. 

Then for every $x\in C$, the Krasnoselski-Mann iteration converges to a fixed point of $T$.
\ethm 

\noindent Independently, Edelstein and O'Brien \cite{EdeOBr78} obtained a similar result for constant $\lambda_n=\lambda\in(0,1)$.

The question whether we obtain strong convergence of the Krasnoselski-Mann iterations if the assumption that $T(C)$ is contained into a compact subset of $C$ is exchanged for nicer behavior of $X$ is very natural. The answer to this question is no, and it was given by Genel and Lindenstrauss \cite{GenLin75}. They constructed an example of a bounded closed convex subset $C$ in the Hilbert space $\ell_2$ and a nonexpansive mapping $T:C\to C$ with the property that even the original Krasnoselski iteration (\ref{def-K-iteration}) fails to converge to a fixed point of $T$ for some $x\in C$.

A classical weak convergence result is the following theorem due to Reich \cite{Rei79}. 

\bthm\label{Reich-weak-con-KM-thm}
Let $C$ be a closed  convex subset of a uniformly convex Banach space $X$ with a Fr\' echet differentiable norm and $T:C\to C$ a nonexpansive mapping  with a fixed point.  Assume that $(\lambda_n)$ is a sequence in $[0,1]$ satisfying 
the following condition
\begin{equation}
\sum_{k=0}^\infty \lambda_k(1-\lambda_k)=\infty.\label{intro-hyp-lambda-n-Groetsch}
\end{equation} 
Then for every $x\in C$, the Krasnoselski-Mann iteration converges weakly to a fixed point of $T$.
\ethm

We end this short presentation of Krasnoselski-Mann iterations by emphasizing that a wide variety of iterative procedures used in signal processing and image reconstruction and elsewhere are special cases of the Krasnoselski-Mann iterative procedure, for particular choices of the nonexpansive mapping $T$. We refer to \cite{Byr04,BauBor96} for nice surveys. 

\subsection{Asymptotic regularity}\label{intro-fpt-as-reg}

Asymptotic regularity is a fundamentally important concept in metric fixed-point theory. Asymptotic regularity was already implicit in \cite{Kra55,Sch57,Ede70}, but it was formally introduced by Browder and Petryshyn in \cite{BroPet66}. A mapping $T$ of a metric space $(X,d)$ into itself is said to be asymptotically regular if for all $x\in C$,
\[\displaystyle\lim_{n\to\infty} d(T^n(x),T^{n+1}(x))=0.\]

Let $X$ be a Banach space, $C\se X$ and $T:C\to C$. Then the asymptotic regularity of the averaged mapping  $T_\lambda:=(1-\lambda)I+\lambda T$ is equivalent with  the fact that $\displaystyle\lim_{n\to\infty} \|x_n-Tx_n\|=0$ for all $x\in C$, where $(x_n)$  is the Krasnoselski iteration (\ref{def-K-iteration-lambda}).

Following \cite{BorReiSha92}, we say that the nonexpansive mapping $T$ is {\em $\lambda_n$-asymptotically regular}  (for general $\lambda_n\in[0,1]$) if  for all $x\in C$, 
\[\lim_{n\to\infty} \|x_n-Tx_n\|=0,\]
where $(x_n)$ is the general Krasnoselski-Mann iteration (\ref{def-KM-iteration}). 

The most general assumptions on the sequence $(\lambda_n)$ for which asymptotic regularity has been proved for arbitrary normed spaces are the following, made in Ishikawa's paper \cite{Ish76}: 
\begin{eqnarray}
\sum_{n=0}^\infty \lambda_n=\infty \text{~~and~~}\limsup \lambda_n<1. \label{intro-lambda-n-div-lambda-n-sup}
\end{eqnarray}
Note that if $\lambda_n\in [a,b]$ for all $n\in\N$ and $0<a\leq b<1$, then $(\lambda_n)$ satisfies (\ref{intro-lambda-n-div-lambda-n-sup}).

Ishikawa proved the following result, which was the intermediate step in obtaining Theorem \ref{Ishikawa-thm}.

\begin{theorem}\label{asreg-Ishikawa}\cite{Ish76}
Let $C$ be a convex subset of a Banach space $X$ and $T:C\to C$ be a nonexpansive mapping. Assume that $(\lambda_n)$ satisfies (\ref{intro-lambda-n-div-lambda-n-sup}).
If $(x_n)$ is bounded for some $x\in C$, then $\limn d(x_n,Tx_n)=0$. Thus, if $C$ is bounded, $T$ is $\lambda_n$-asymptotically regular.
\end{theorem}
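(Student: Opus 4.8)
\medskip
\noindent\emph{Proof strategy.} Let $(x_n)$ be the Krasnoselski--Mann iteration (\ref{def-KM-iteration}) with starting point the given $x\in C$, and set $a_n:=d(x_n,Tx_n)=\|x_n-Tx_n\|$. The plan is to show $a_n\to 0$; the last sentence of the theorem then follows at once, since a bounded $C$ makes every such iteration bounded.

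\medskip
\noindent\emph{Step 1 (monotonicity).} From (\ref{def-KM-iteration}) one reads off $x_{n+1}-Tx_n=(1-\lambda_n)(x_n-Tx_n)$ and $x_{n+1}-x_n=\lambda_n(Tx_n-x_n)$, hence $\|x_{n+1}-Tx_n\|=(1-\lambda_n)a_n$ and $\|x_{n+1}-x_n\|=\lambda_n a_n$. Therefore, by the triangle inequality and nonexpansiveness of $T$,
\[
a_{n+1}=\|x_{n+1}-Tx_{n+1}\|\le\|x_{n+1}-Tx_n\|+\|Tx_n-Tx_{n+1}\|\le(1-\lambda_n)a_n+\|x_n-x_{n+1}\|=a_n .
\]
Thus $(a_n)$ is nonincreasing and converges to some $r\ge 0$; it remains to prove $r=0$. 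I would also note that, $T$ being nonexpansive and $(x_n)$ bounded, the sequence $(Tx_n)$ is bounded as well, so there is a $D>0$ with $\|x_i-x_n\|\le D$ and $\|x_i-Tx_n\|\le D$ for all $i,n$.

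\medskip
\noindent\emph{Step 2 (Ishikawa's key inequality).} This is the technical core and, I expect, the main obstacle. The plan is to prove, by induction on the block length $n-i$, an inequality which plays off the size of $a_n$ and the accumulated coefficient mass $\sum_{k=i}^{n-1}\lambda_k$ against the (bounded) displacement of the iteration --- schematically, a bound of the shape
\[
\Bigl(1+\sum_{k=i}^{n-1}\lambda_k\Bigr)a_n\ \le\ \|x_i-Tx_n\|+(\text{nonnegative corrections built from the differences }a_k-a_n),\qquad i\le n,
\]
whose right-hand side one then shows stays bounded as $n$ varies. The base case $i=n$ is the identity $a_n=\|x_n-Tx_n\|$. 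For the inductive step, going from the block $[i+1,n]$ to $[i,n]$, I would use the identity $x_i=\tfrac{1}{1-\lambda_i}\bigl(x_{i+1}-\lambda_i Tx_i\bigr)$, which rewrites $x_{i+1}-Tx_n$ as $(1-\lambda_i)(x_i-Tx_n)+\lambda_i(Tx_i-Tx_n)$, then bound $\|Tx_i-Tx_n\|\le\|x_i-x_n\|\le\sum_{k=i}^{n-1}\lambda_k a_k$ via nonexpansiveness and Step 1, and use $a_k\le a_i$. It is exactly the factors $\tfrac{1}{1-\lambda_i}$ produced at each such step that force the hypothesis $\limsup\lambda_n<1$: only then do these factors stay uniformly bounded and the induction close. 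No uniform --- or even strict --- convexity of the norm is available, which is precisely why the correct shape of the correction term and the verification of the induction are delicate; this is the content that separates Ishikawa's theorem from the easy uniformly convex case.

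\medskip
\noindent\emph{Step 3 (conclusion).} Fix $i$. By Step 2 the right-hand side of the key inequality remains bounded as $n\to\infty$, whereas by (\ref{intro-lambda-n-div-lambda-n-sup}) we have $\sum_{k=i}^{n-1}\lambda_k\to\infty$; since $a_n\downarrow r$, passing to the limit forces $\bigl(1+\sum_{k=i}^{n-1}\lambda_k\bigr)a_n$ to stay bounded, which is impossible unless $r=0$. Hence $\limn d(x_n,Tx_n)=0$. Finally, if $C$ is bounded then every Krasnoselski--Mann iteration of $T$ is bounded, so the conclusion holds for all starting points and $T$ is $\lambda_n$-asymptotically regular.
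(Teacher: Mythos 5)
Your overall architecture --- monotonicity of $a_n=\|x_n-Tx_n\|$, then Ishikawa's key inequality, then the divergence of $\sum\lambda_n$ --- is the right one and is essentially the classical route (the survey itself does not reprove this theorem: it cites Ishikawa, and in Section \ref{proof-mining-fpt} it derives the hyperbolic version from the Goebel--Kirk Theorem \ref{habil-GK-gen-thm}, which packages exactly this inequality). Step 1 is correct.

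There is, however, a genuine gap in Step 3. The key inequality in its actual form reads
\[
\Bigl(1+\sum_{k=i}^{n-1}\lambda_k\Bigr)a_n\ \le\ \|x_i-Tx_n\|+\prod_{k=i}^{n-1}(1-\lambda_k)^{-1}\,\bigl(a_i-a_n\bigr),
\]
so the ``nonnegative correction'' is not a sum of bounded terms but carries the factor $\prod_{k=i}^{n-1}(1-\lambda_k)^{-1}$. Each individual factor $(1-\lambda_k)^{-1}$ is bounded by $K$ under (\ref{intro-lambda-n-div-lambda-n-sup}), but their product over the block $[i,n)$ tends to infinity with $n$ --- indeed $\prod(1-\lambda_k)^{-1}\ge\exp\bigl(\sum\lambda_k\bigr)$, and $\sum\lambda_k$ diverges by hypothesis. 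Since for fixed $i$ one has $a_i-a_n\to a_i-r$, which is in general strictly positive, the right-hand side does \emph{not} remain bounded as $n\to\infty$ with $i$ fixed, and your concluding contradiction collapses. The repair is to let $i$ and $n$ vary together: given a target $M\approx D/r$, truncate each block as soon as $\sum_{k=i}^{n-1}\lambda_k$ first exceeds $M$ (so that it is also $\le M+1$), whence the product over that block is at most $\exp(C_K(M+1))=:P$; then run a pigeonhole over consecutive disjoint blocks of this kind, using that the total decrease $\sum_j(a_{i_j}-a_{i_{j+1}})\le a_0$ is finite, to find one block on which $a_i-a_n<1/P$. On that block the inequality yields $(1+M)a_n\le D+1$, contradicting $a_n\ge r$. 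This interplay is precisely what is visible in the bound of Theorem \ref{habil-quant-BRS}: the factor $\exp(K(M+1))$ is the product bound $P$, and the iterate $\widehat{\alpha}(\lceil 2b\exp(K(M+1))\rceil\remin 1,M)$ counts the disjoint blocks consumed by the pigeonhole. Your Step 2 is only a sketch of the induction, which you acknowledge; but as written it also suggests that boundedness of the individual factors $(1-\lambda_i)^{-1}$ suffices to ``close the induction,'' and it is worth being explicit that it is their accumulated product that must be controlled, and that it can only be controlled blockwise, not globally.
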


As observed in \cite{BorReiSha92}, we obtain asymptotic regularity under the weaker assumption that $C$ contains a point $x$ with the property that the Krasnoselski-Mann iteration $(x_n)$ starting with $x$ is bounded. In fact, it is easy to see that if for some $x\in C$, the Krasnoselski-Mann iteration $(x_n)$ starting with $x$ is bounded, then this is true for all $x\in C$.

\begin{theorem}\label{asreg-Ishikawa-bounded-xn}
Let $C$ be a convex subset of a Banach space $X$ and $T:C\rightarrow C$ a nonexpansive mapping. Assume that $(\lambda_n)$ satisfies (\ref{intro-lambda-n-div-lambda-n-sup}) and that $(x_n)$ is bounded for some (each) $x\in C$. 

Then $T$ is $\lambda_n$-asymptotically regular.
\end{theorem}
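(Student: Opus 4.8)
The plan is to reduce the statement to Theorem~\ref{asreg-Ishikawa}, whose first conclusion already yields $\limn d(x_n,Tx_n)=0$ whenever the Krasnoselski--Mann iteration starting at a \emph{particular} point is bounded. Since $\lambda_n$-asymptotic regularity requires $\limn\|y_n-Ty_n\|=0$ for \emph{every} starting point $y\in C$, the only additional ingredient is the observation (implicit in the ``some (each)'' phrasing of the hypothesis) that boundedness of the iteration does not depend on the starting point.

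So first I would fix $x\in C$ for which $(x_n)$ is bounded, take $y\in C$ arbitrary, and consider the two Krasnoselski--Mann iterations $(x_n)$ and $(y_n)$ driven by the \emph{same} parameter sequence $(\lambda_n)$. Expanding $x_{n+1}=(1-\lambda_n)x_n+\lambda_n Tx_n$ and $y_{n+1}=(1-\lambda_n)y_n+\lambda_n Ty_n$ (which lie in $C$ by convexity), the triangle inequality together with nonexpansiveness of $T$ gives
\[
\|x_{n+1}-y_{n+1}\|\le (1-\lambda_n)\|x_n-y_n\|+\lambda_n\|Tx_n-Ty_n\|\le \|x_n-y_n\|,
\]
so $\big(\|x_n-y_n\|\big)$ is nonincreasing and hence bounded by $\|x-y\|$. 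Consequently $\|y_n\|\le \|x_n\|+\|x-y\|$ for all $n$, and boundedness of $(x_n)$ transfers to $(y_n)$.

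With this in hand the conclusion is immediate: for arbitrary $y\in C$ the iteration $(y_n)$ is bounded, so Theorem~\ref{asreg-Ishikawa} applied with $y$ as starting point gives $\limn d(y_n,Ty_n)=0$; since $y$ was arbitrary, $T$ is $\lambda_n$-asymptotically regular. I do not expect a genuine obstacle here — all the real analytic work (the asymptotic-regularity estimate exploiting $\sum\lambda_n=\infty$ and $\limsup\lambda_n<1$) is packaged inside Theorem~\ref{asreg-Ishikawa}, and the present statement is essentially a bookkeeping remark. The only point requiring a little care is that the two iterations must share the parameter sequence $(\lambda_n)$ for the monotonicity of $\|x_n-y_n\|$ to hold.
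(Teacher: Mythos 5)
Your proposal is correct and follows exactly the route the paper intends: the observation that $\bigl(\|x_n-y_n\|\bigr)$ is nonincreasing for two Krasnoselski--Mann iterations driven by the same $(\lambda_n)$ (stated as a remark before the theorem, and used verbatim in the proof of the $W$-hyperbolic analogue, Theorem~\ref{asreg-Ishikawa-bounded-xn-W}) transfers boundedness to every starting point, after which Theorem~\ref{asreg-Ishikawa} applies pointwise. No gaps.
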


Using an embedding theorem due to Banach and Mazur \cite{Ban32},  Edelstein and O'Brien \cite{EdeOBr78} also proved the asymptotic regularity for constant $\lambda_n=\lambda\in (0,1)$, and noted that it is uniform for $x\in C$. In \cite{GoeKir83}, Goebel and Kirk  unified Ishikawa's and Edelstein/O'Brien's results, obtaining uniformity with respect to $x$ and to the family of all nonexpansive mappings $T:C\to C$. 

\bthm\label{asreg-uniform-Goebel-Kirk}\cite{GoeKir83}
Let $C$ be a bounded convex subset of a Banach space $X$ and $(\lambda_n)$ satisfying (\ref{intro-lambda-n-div-lambda-n-sup}). Then for every $\eps >0$ there exists a positive integer $N$ such that for all $x\in C$ and all $T:C\to C$ nonexpansive,
\beq
\forall n\geq N \big(\|x_n-Tx_n\| <\eps \big).
\eeq 
\ethm
\noindent We remark that the above theorem was proved in \cite{GoeKir83} for {\em spaces of hyperbolic type};  we refer to Chapter \ref{hyperbolic-spaces} for details on this very general class of spaces.

In 2000, Kirk \cite{Kir00} generalized Theorems \ref{asreg-uniform-Goebel-Kirk} and \ref{asreg-Ishikawa-bounded-xn} to directionally nonexpansive mappings, but only for constant $\lambda_n=\lambda\in(0,1)$.  A mapping $T:C\to C$ is said to be {\em directionally nonexpansive} if $\|Tx-Ty\|\le \|x-y\|$ for all $x\in C$ and all $y\in seg[x,\,Tx]$.

\bthm\label{asreg-uniform-Kirk-dir-ne}\cite{Kir00}
Let $C$ be a convex subset of a Banach space $X$, $T:C\to C$ be directionally nonexpansive and $\lambda \in (0,1)$.
\be
\item If $(x_n)$ is bounded for each $x\in C$, then the averaged mapping $T_\lambda$ is asymptotically regular.
\item If $C$ is bounded, then for every $\eps >0$ there exists $N\in\N$ such that for all $x\in C$ and all $T:C\to C$ directionally nonexpansive,
\beq
\forall n\geq N \big(\|T_\lambda^{n+1}(x)-T_\lambda^n(x)\| < \eps \big).
\eeq 
\ee
\ethm

A very important result is the following theorem due to  Borwein, Reich and Shafrir,  extending Ishikawa Theorem \ref{asreg-Ishikawa-bounded-xn} to unbounded $C$.

\bthm \label{BRS-thm-normed}\cite{BorReiSha92}
Let $C$ be a closed convex subset of a Banach space $X$ and $T:C\rightarrow C$ a nonexpansive mapping. Assume that $(\lambda_n)$ satisfies (\ref{intro-lambda-n-div-lambda-n-sup}).
Then for all $x\in C$, 
\beq
\limn \|x_n-Tx_n\|=r_C(T),
\eeq
where $r_C(T)$ is the minimal displacement of T, defined by (\ref{def-minimal-disp}).
\ethm
Thus, convergence of $(\|x_n-Tx_n\|)$ towards $r_C(T)$ is obtained for $(\lambda_n)$ divergent in sum and bounded away from $1$, while in \cite{ReiSha87, ReiSha90} $(\lambda_n)$ was required also to be bounded away from $0$. In this way, the case of Cesaro and other summability methods is covered \cite{Dot70,Gro72,Man53}. 

As an immediate consequence of  Borwein-Reich-Shafrir Theorem, we get that any approximately fixed nonexpansive mapping is $\lambda_n$-asymptotically regular for $(\lambda_n)$ satisfying (\ref{intro-lambda-n-div-lambda-n-sup}). 

A  straightforward application of Theorems \ref{asreg-Ishikawa-bounded-xn} and \ref{BRS-thm-normed} is the fact that $r_C(T)=0$  whenever $(x_n)$ is bounded for some (each) $x\in C$, in particular for bounded $C$. Let us remark that for unbounded $C$, $r_C(T)$ can be very well strict positive: for example, if $T:\R\to\R, \,Tx=x+1$, then $r_\R(T)=1$ although $T$ is nonexpansive.

In \cite{BaiBru96}, it is conjectured that Ishikawa's Theorem \ref{asreg-Ishikawa} holds true if  (\ref{intro-lambda-n-div-lambda-n-sup}) is replaced by the weaker condition (\ref{intro-hyp-lambda-n-Groetsch}), which is symmetric in $\lambda_n, 1-\lambda_n$. For the case of uniformly convex Banach spaces, this has been proved by Groetsch \cite{Gro72} (see also \cite{Rei79}).

\begin{theorem}\label{intro-Groetsch-thm}
Let $C$ be a convex subset of a uniformly convex Banach space and $T:C\to C$ be a nonexpansive mapping such that $T$ has at least one fixed point. Assume that $(\lambda_n)$ satisfies the following condition:
\begin{equation}
\sum_{k=0}^\infty \lambda_k(1-\lambda_k)=\infty.
\end{equation} 
Then $T$ is $\lambda_n$-asymptotically regular.
\end{theorem}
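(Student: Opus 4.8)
The plan is to exploit the fixed point $p\in Fix(T)$ together with uniform convexity, extracting a summable ``energy decrease'' for $\|x_n-p\|$ that is driven by $\|x_n-Tx_n\|$. Write $d:=\|x_0-p\|$. First I would record two monotonicity facts about the Krasnoselski--Mann iterate $(x_n)$. Since $Tp=p$ and $T$ is nonexpansive,
\[
\|x_{n+1}-p\|\le(1-\lambda_n)\|x_n-p\|+\lambda_n\|Tx_n-Tp\|\le\|x_n-p\|,
\]
so $(\|x_n-p\|)$ is non-increasing, say with limit $r\in[0,d]$; moreover, if $x_{n_0}=p$ for some $n_0$, or if $r=0$, then $\|x_n-Tx_n\|\le\|x_n-p\|+\|Tx_n-Tp\|\le 2\|x_n-p\|\to 0$ and we are done. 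So from now on I assume $x_n\ne p$ for all $n$ and $r>0$.

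Secondly, I would prove that $(\|x_n-Tx_n\|)$ is itself non-increasing --- a purely normed-space computation. From $x_{n+1}-Tx_{n+1}=(1-\lambda_n)(x_n-Tx_{n+1})+\lambda_n(Tx_n-Tx_{n+1})$, together with $\|Tx_n-Tx_{n+1}\|\le\|x_n-x_{n+1}\|=\lambda_n\|x_n-Tx_n\|$ and $\|x_n-Tx_{n+1}\|\le\|x_n-Tx_n\|+\|Tx_n-Tx_{n+1}\|\le(1+\lambda_n)\|x_n-Tx_n\|$, one gets
\[
\|x_{n+1}-Tx_{n+1}\|\le(1-\lambda_n)(1+\lambda_n)\|x_n-Tx_n\|+\lambda_n^2\|x_n-Tx_n\|=\|x_n-Tx_n\|.
\]
Hence $\|x_n-Tx_n\|\downarrow\ell$ for some $\ell\ge 0$, and it suffices to rule out $\ell>0$.

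The third and decisive step is the uniform convexity input, which is exactly where the hypothesis $\sum_k\lambda_k(1-\lambda_k)=\infty$ enters. Letting $\delta$ be the modulus of convexity of $X$ (taken with the $\|u-v\|\ge\varepsilon$ convention, so that $\delta$ is nondecreasing and strictly positive on $(0,2]$), I would use the standard Groetsch-type estimate: for all $u,v$ with $\|u\|,\|v\|\le R$ and all $\lambda\in[0,1]$,
\[
\|(1-\lambda)u+\lambda v\|\le\Big(1-2\lambda(1-\lambda)\,\delta\big(\|u-v\|/R\big)\Big)R .
\]
Applying this with $u=x_n-p$, $v=Tx_n-p$, $R=\|x_n-p\|$ (legitimate since $\|Tx_n-p\|\le\|x_n-p\|$ and $\|u-v\|/R\le 2$), and setting $\varepsilon_n:=\|x_n-Tx_n\|/\|x_n-p\|$, I obtain $\|x_n-p\|-\|x_{n+1}-p\|\ge 2\lambda_n(1-\lambda_n)\,\delta(\varepsilon_n)\,\|x_n-p\|$. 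Telescoping from $0$ to $N$ and bounding the left side by $\|x_0-p\|=d$ gives $\sum_n\lambda_n(1-\lambda_n)\,\delta(\varepsilon_n)\,\|x_n-p\|\le d/2<\infty$. Now if $\ell>0$, then $\varepsilon_n\ge\ell/d>0$, so $\delta(\varepsilon_n)\ge\delta(\ell/d)>0$; combined with $\|x_n-p\|\ge r>0$, every summand is at least $r\,\delta(\ell/d)\,\lambda_n(1-\lambda_n)$, whence $\sum_n\lambda_n(1-\lambda_n)<\infty$, contradicting the hypothesis. Therefore $\ell=0$, i.e.\ $\lim_n\|x_n-Tx_n\|=0$, which is the $\lambda_n$-asymptotic regularity of $T$.

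The only genuinely nontrivial ingredient is the quantitative uniform convexity inequality in the form displayed above, and the main thing to get right is that the coefficient is (a constant times) $\lambda(1-\lambda)$ --- precisely the quantity appearing in the hypothesis. I would derive it directly from the definition of $\delta$: by the symmetry $u\leftrightarrow v$ it suffices to treat $\lambda\le\tfrac12$, and then writing $(1-\lambda)u+\lambda v=(1-2\lambda)u+2\lambda\cdot\tfrac{u+v}{2}$ and using $\|\tfrac{u+v}{2}\|\le(1-\delta(\|u-v\|/R))R$ yields $\|(1-\lambda)u+\lambda v\|\le(1-2\lambda\,\delta(\|u-v\|/R))R$, which is $\le(1-2\lambda(1-\lambda)\,\delta(\|u-v\|/R))R$ since $1-\lambda\le1$. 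All remaining steps are elementary telescoping together with the two monotonicity observations.
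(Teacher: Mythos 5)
Your proof is correct and follows exactly the classical Groetsch argument that this survey cites (and whose quantitative analysis it presents later): the key inequality $\|(1-\lambda)u+\lambda v\|\le\big(1-2\lambda(1-\lambda)\delta(\|u-v\|/R)\big)R$ is precisely Lemma \ref{UCW-eta-prop-1}(\ref{UCW-Groetsch-eta}), and your telescoping of $\|x_n-p\|-\|x_{n+1}-p\|$ against $\sum\lambda_n(1-\lambda_n)$ is the same mechanism underlying the quantitative version in Theorem \ref{habil-main-Groetsch-thm}. All steps, including the monotonicity of $(\|x_n-Tx_n\|)$ and the handling of the degenerate cases $r=0$ and $x_{n_0}=p$, check out.
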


\subsection{Ishikawa iterations}

Let $C$ be a convex subset of a normed space $X$ and  $T:C\to C$ be nonexpansive.

The {\em Ishikawa iteration} \cite{Ish74} starting with $x\in C$ is defined by 
\beq
x_0:=x, \quad x_{n+1}=(1-\lambda_n)x_n+\lambda_nT\big((1-s_n)x_n+ s_nTx_n\big),
\eeq
where $(\lambda_n),(s_n)$ are sequences in $[0,1]$. By letting $s_n=0$ for all $n\in\N$, we get the Krasnoselski-Mann iteration as  a special case.

An extension of Ishikawa Theorems \ref{asreg-Ishikawa} and \ref{Ishikawa-thm} to these iterations  was proved by Deng \cite{Den96}.

\bthm\cite{Den96}
Let $C$ be a convex subset of a Banach space $X$ and $T:C\to C$ be a nonexpansive mapping. Assume that $(\lambda_n)$ satisfies (\ref{intro-lambda-n-div-lambda-n-sup}) and that $\ds\sum_{n=0}^\infty s_n$ converges.
\be
\item If $(x_n)$ is bounded for some $x\in C$, then $\limn d(x_n,Tx_n)=0$. 
\item  Assume furthermore that $C$ is closed and $T(C)$ is contained into a compact subset of $C$. Then $(x_n)$ converges to a fixed point of $T$.
\ee
\ethm

Tan and Xu \cite{TanXu93} obtained a weak convergence result for Ishikawa iterates  that generalizes Reich Theorem \ref{Reich-weak-con-KM-thm}.

\bthm\label{weak-con-Ishikawa-thm}
Let $C$ be a bounded closed  convex subset of a uniformly convex Banach space $X$ which satisfies Opial's condition or has a Fr\' echet differentiable norm and $T:C\to C$ be a nonexpansive mapping.  Assume that $(\lambda_n), (s_n)$ satisfy 
\beq \sum_{n=0}^\infty\lambda_n(1-\lambda_n) \text{ diverges},\quad \limsup_n s_n<1 \quad \text{ and } \sum_{n=0}^\infty s_n(1-\lambda_n)\text{  converges.}\label{hyp-Ishikawa-TanXu}
\eeq
\be
\item For every $x\in C$, the Ishikawa iteration $(x_n)$ converges weakly to a fixed point of $T$.
\item If, moreover, $T(C)$ is contained into a compact subset of $C$, then the convergence is strong.
\ee
\ethm

As in the case of Krasnoselski-Mann iterations, the  first step towards getting weak or strong convergence is proving asymptotic regularity (with respect to Ishikawa iterates), and this was done by Tan and Xu \cite{TanXu93} for uniformly convex Banach spaces and, recently,  by Dhompongsa and Panyanak \cite{DhoPan08} for $CAT(0)$ spaces.
 
\bthm\label{intro-Ishikawa-as-reg}
Let $X$ be a uniformly convex Banach space or a $CAT(0)$ space, $C\se X$ a  bounded closed convex subset and $T:C\to C$   nonexpansive.  Assume that $(\lambda_n), (s_n)$ satisfy  (\ref{hyp-Ishikawa-TanXu}).

Then $\ds\limn \|x_n-Tx_n\|=0$ for every $x\in C$. 
\ethm

\subsection{Halpern iterations}

Let $C$ be a convex subset of a normed space $X$ and  $T:C\to C$   nonexpansive. The {\em Halpern iteration} was introduced in  \cite{Hal67} as follows: 
\beq
x_0:=x, \quad x_{n+1}:=\lambda_{n+1}x+(1-\lambda_{n+1})Tx_n,
\label{intro-def-Halpern-iterate}
\eeq
where $(\lambda_n)_{n\ge 1}$ is a sequence in $[0,1]$ and $x\in C$ is the starting point.

\bfact \cite{Wit91,Wit92}\, If $T$ is positively homogeneous (i.e. $T(tx)=tT(x)$ for all $t\geq 0$ and all $x\in C$), then
\beq
x_n=\frac{1}{n+1}\,S_nx \label{xn-1-n+1-T-pos-homogeneous}, \quad\text{where}\quad S_0x=x, \,\,\,S_{n+1}x=x+T(S_nx).
\eeq
Furthermore, if $T$ is linear, then $\ds x_n=\frac{1}{n+1}\ds\sum_{i=0}^nT^ix$, so the Halpern iterations could be regarded as nonlinear generalizations of the usual Cesaro averages. We refer to \cite{Wit91,LinWit91} for a a systematic study of the behavior of iterations given by (\ref{xn-1-n+1-T-pos-homogeneous}).
\efact

In \cite{Rei83}, Reich formulated the following problem:

\begin{problem}\cite[Problem 6]{Rei83}\label{problem-Reich}\\
Let $X$ be a Banach space. Is there a sequence $(\lambda_n)$ such that whenever a weakly compact convex subset $C$ of $X$ possesses the FPP for nonexpansive mappings, then $(x_n)$ converges to a fixed point of $T$ for all $x\in C$ and all nonexpansive mappings $T:C\to C$ ?
\end{problem}

Let us consider the following conditions on $(\lambda_n)$.
\[\ba{lll} 
(C1) \quad  \lim \lambda_n=0, & (C2) \quad \ds\sum_{n=1}^\infty\lambda_n=\infty, & (C3)  \quad \ds\sum_{n=1}^\infty|\lambda_{n+1}-\lambda_n|=\infty,\\
(C4) \quad \ds\limn\frac{\lambda_n-\lambda_{n+1}}{\lambda_{n+1}^2}=0, & (C5) \quad  \ds\limn\frac{\lambda_n-\lambda_{n+1}}{\lambda_{n+1}}=0.
\ea
\]
In (C4) and (C5) we assume moreover that $\lambda_n>0$ for all $n\ge 1$.

The study of the convergence of the scheme (\ref{intro-def-Halpern-iterate}) in the Hilbert space setting was initiated by Halpern \cite{Hal67}, who proved  that $(x_n)$ converges to a fixed point of $T$ for $(\lambda_n)$ satisfying certain conditions, two of which are (C1) and (C2). P.-L. Lions \cite{Lio77} improved Halpern's result by showing the convergence of $(x_n)$ if $(\lambda_n)$ satisfies (C1), (C2) and (C4). However, both Halpern's and Lions' conditions exclude the natural choice $\ds\lambda_n=\frac1{n+1}$. 

This was overcome by Wittmann \cite{Wit92}, who obtained one of the most important results on the convergence of Halpern iterations in Hilbert spaces.

\begin{theorem}\label{habil-wittmann-thm}\cite{Wit92}
Let $C$  be a closed convex subset of a Hilbert space $X$ and $T:C\to C$ a nonexpansive mapping such that the set $Fix(T)$ of fixed points of $T$ is nonempty. Assume that $(\lambda_n)$ satisfies  (C1), (C2) and (C3).
Then for any $x\in C$, the Halpern iteration $(x_n)$ converges to the projection $Px$ of $x$ on $Fix(T)$.
\end{theorem}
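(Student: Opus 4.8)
The plan is to follow the now-standard route for strong convergence of Halpern iterations in Hilbert space: first establish boundedness and asymptotic regularity of $(x_n)$, then identify the candidate limit via a weak-limit argument, and finally close the loop with a recursive inequality in the squared norm. Fix a point $p\in Fix(T)$; from $\|x_{n+1}-p\|\le\lambda_{n+1}\|x-p\|+(1-\lambda_{n+1})\|x_n-p\|$ (using nonexpansivity) one gets by induction $\|x_n-p\|\le\|x-p\|$, so $(x_n)$, $(Tx_n)$ and the displacements $\|x-Tx_n\|$ are all bounded, say by $M$. Next, the telescoping identity $x_{n+1}-x_n=(\lambda_{n+1}-\lambda_n)(x-Tx_{n-1})+(1-\lambda_{n+1})(Tx_n-Tx_{n-1})$ and nonexpansivity give $\|x_{n+1}-x_n\|\le(1-\lambda_{n+1})\|x_n-x_{n-1}\|+M|\lambda_{n+1}-\lambda_n|$; since $\sum\lambda_{n+1}=\infty$ by (C2) and $\sum|\lambda_{n+1}-\lambda_n|<\infty$ by (C3), a standard lemma on recursive real sequences (if $a_{n+1}\le(1-\alpha_n)a_n+\gamma_n$ with $\alpha_n\in[0,1]$, $\sum\alpha_n=\infty$, $\sum\gamma_n<\infty$, then $a_n\to0$) forces $\|x_{n+1}-x_n\|\to0$. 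Combined with $\|x_{n+1}-Tx_n\|=\lambda_{n+1}\|x-Tx_n\|\le\lambda_{n+1}M\to0$ coming from (C1), this yields asymptotic regularity $\|x_n-Tx_n\|\to0$.

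For the second stage, let $P$ be the metric projection of $X$ onto the nonempty closed convex set $Fix(T)$ and set $u:=Px$, so $\langle x-u,\,y-u\rangle\le0$ for all $y\in Fix(T)$. I would prove that $\limsup_n\langle x-u,\,x_n-u\rangle\le0$. Pick a subsequence along which this $\limsup$ is attained, and then by boundedness a further subsequence $x_{n_k}\rightharpoonup z$ converging weakly; the asymptotic regularity $\|x_{n_k}-Tx_{n_k}\|\to0$ together with Browder's demiclosedness principle (available here since $X$ is a Hilbert space, hence has the Opial property) gives $z\in Fix(T)$, whence $\langle x-u,\,x_{n_k}-u\rangle\to\langle x-u,\,z-u\rangle\le0$, which settles the claim.

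Finally, write $x_{n+1}-u=(1-\lambda_{n+1})(Tx_n-u)+\lambda_{n+1}(x-u)$ and use the elementary Hilbert-space estimate $\|a+b\|^2\le\|a\|^2+2\langle b,\,a+b\rangle$ with $a=(1-\lambda_{n+1})(Tx_n-u)$ and $b=\lambda_{n+1}(x-u)$, together with $\|Tx_n-u\|\le\|x_n-u\|$ and $(1-\lambda_{n+1})^2\le1-\lambda_{n+1}$, to obtain $\|x_{n+1}-u\|^2\le(1-\lambda_{n+1})\|x_n-u\|^2+\lambda_{n+1}\beta_n$, where $\beta_n:=2\langle x-u,\,x_{n+1}-u\rangle$ satisfies $\limsup_n\beta_n\le0$ by the previous step. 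Applying the recursive lemma once more in the form "$s_{n+1}\le(1-\alpha_n)s_n+\alpha_n\beta_n$ with $\alpha_n\in[0,1]$, $\sum\alpha_n=\infty$, $\limsup\beta_n\le0$ implies $s_n\to0$" — with $\alpha_n=\lambda_{n+1}$ and using (C2) — gives $\|x_n-u\|^2\to0$, i.e. $x_n\to Px$.

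The only genuinely non-elementary ingredient is the demiclosedness principle invoked in the middle stage to force weak subsequential limits of $(x_n)$ into $Fix(T)$; I expect this to be the conceptual heart of the argument, while the rest reduces to nonexpansivity estimates and the two applications of the real-analysis lemma on recursively defined sequences, which is exactly where (C2) and (C3) are consumed. The most tedious, though not deep, point will be the careful bookkeeping in the first stage — producing the telescoping identity for $x_{n+1}-x_n$ and tracking the uniform bound $M$ cleanly so that the summable perturbation $M|\lambda_{n+1}-\lambda_n|$ appears in the required form.
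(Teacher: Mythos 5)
Your proof is correct. Be aware, though, that the survey does not prove this theorem at all: it is stated as a quoted result of Wittmann \cite{Wit92}, so there is no internal proof to compare against. What the paper \emph{does} develop is precisely your first stage: the observation that boundedness and asymptotic regularity of the Halpern iteration need neither the Hilbert structure nor $Fix(T)\ne\emptyset$ is isolated as Theorem \ref{intro-Halpern-ass-reg}, and the two inequalities you derive — $\|x_{n+1}-x_n\|\le(1-\lambda_{n+1})\|x_n-x_{n-1}\|+M|\lambda_{n+1}-\lambda_n|$ and $\|x_{n+1}-Tx_n\|\le\lambda_{n+1}M$ — are exactly the estimates (\ref{H-ineq-xn+1-xn}) and (\ref{H-ineq-Txn-xn}) of Lemma \ref{lemma-Halpern-hyp}, whose quantitative analysis is the content of Subsection \ref{habil-app-fpt-Halpern}. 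Your remaining two stages (the $\limsup\langle x-Px,\,x_n-Px\rangle\le0$ step via weak sequential compactness plus demiclosedness of $I-T$, and the closing recursion $s_{n+1}\le(1-\alpha_n)s_n+\alpha_n\beta_n$) constitute the now-standard streamlined route to Wittmann's theorem, essentially the organization found in Xu \cite{Xu02}; it is sound, and conditions (C1)--(C3) are consumed exactly where you say they are. Two cosmetic points only: the weak subsequential limit $z$ lies in $C$ because $C$ is closed and convex, hence weakly closed (needed before applying $T$ to $z$), and the demiclosedness principle is the genuinely ineffective ingredient — which is consistent with the paper's remark that only the asymptotic-regularity half admits the uniform quantitative treatment given in Section \ref{habil-app-fpt-Halpern}.
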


Thus, all the above partial answers to Reich's problem require that the sequence $(\lambda_n)$ satisfies (C1) and (C2).  Halpern \cite{Hal67} showed in fact that  conditions (C1) and (C2) are necessary in the sense that if, for every closed convex subset $C$ of a Hilbert space $X$ and every nonexpansive mappings $T:C\to C$ such that $Fix(T)\ne\emptyset$, the Halpern iteration $(x_n)$ converges to a fixed point of $T$, then $(\lambda_n)$ must satisfy (C1) and (C2). It however remains an open question whether (C1) and (C2) are sufficient to guarantee the convergence of $(x_n)$. Recently, Chidume and Chidume \cite{ChiChi06} and Suzuki \cite{Suz07} proved that if  the nonexpansive mapping $T$ in (\ref{intro-def-Halpern-iterate}) is averaged, then (C1) and (C2) suffice for obtaining the convergence of $(x_n)$.

Halpern derived  his result as a consequence of a limit theorem for the resolvent, first shown by Browder \cite{Bro67}. This approach has the advantage that this result can be immediately generalized, once the limit theorem for the resolvent has been generalized. This was done by Reich \cite{Rei80}.

\bthm \cite{Rei80} \label{Reich-theorem-Halpern}
Let $C$ be a closed convex subset of a uniformly smooth Banach space $X$, and let $T:C\to C$ be nonexpansive such that $Fix(T)\ne\emptyset$. For each $y\in C$ and $t\in (0,1)$, let $y_t$ denote the unique fixed point of the contraction mapping 
$$T_t(\cdot)=(1-t)y+tT(\cdot).$$
Then $\ds\lim_{t\to 1^-}y_t$ exists and is a fixed point of $T$.
\ethm
A similar result was obtained recently by Kirk \cite{Kir03} for $CAT(0)$ spaces. 
As a consequence of Theorem \ref{Reich-theorem-Halpern}, a partial positive answer to Problem \ref{problem-Reich} was obtained \cite{Rei80} for uniformly smooth Banach spaces and $\ds\lambda_n=\frac1{(n+1)^\alpha}$ with $0<\alpha < 1$. Furthermore, Reich \cite{Rei94} proved the strong convergence of $(x_n)$ in the setting of uniformly smooth Banach spaces that have a weakly sequentially continuous duality mapping for general $(\lambda_n)$ satisfying (C1), (C2) and being decreasing (and hence (C4) holds). Another partial answer in the case of uniformly smooth Banach spaces was obtained by Xu \cite{Xu02,Xu02a} for $(\lambda_n)$ satisfying (C1), (C2) and (C5) (which is weaker than Lions' (C4)). In \cite{ShiTak97}, Shioji and Takahashi extended Wittmann's result  to Banach spaces with uniformly G\^ ateaux differentiable norm and  with the property that $\ds\lim_{t\to 1^-}y_t$ exists and is a fixed point of $T$.

We end this section with the following remark. By inspecting the proof of Theorem \ref{habil-wittmann-thm}, it is easy to see that the first step is to obtain asymptotic regularity (i.e. $\ds \limn \|x_n-Tx_n\|=0$) and this can be done in  a much more general setting.  Thus, the following theorem is essentially contained in \cite{Wit92,Xu02,Xu04} and will be the point of departure for our application in Section \ref{habil-app-fpt-Halpern}.

\begin{theorem}\label{intro-Halpern-ass-reg}
Let $C$ be a convex subset of a normed space $X$ and  $T:C\to C$   nonexpansive. Assume that $(\lambda_n)_{n\geq 1}$ is a sequence in $[0,1]$  satisfying (C1), (C2) and (C3).

Then $\ds\limn \|x_n-Tx_n\|=0$ for every $x\in C$  with the property that $(x_n)$ is bounded.
\end{theorem}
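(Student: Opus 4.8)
\emph{Sketch of the intended argument.} The plan is to follow the classical route for Halpern-type schemes: first show that the iteration is asymptotically ``slow'', i.e.\ $\|x_{n+1}-x_n\|\to 0$, and then deduce asymptotic regularity by a one-line triangle inequality using (C1). Note first that the iteration stays in $C$ precisely because $C$ is convex ($x_{n+1}$ is a convex combination of $x$ and $Tx_n$), and that no completeness or closedness is needed since we are not claiming convergence of $(x_n)$. By hypothesis $(x_n)$ is bounded; since $T$ is nonexpansive, $\|Tx_n-Tx\|\le\|x_n-x\|$, so $(Tx_n)$ is bounded too, and $M:=\sup_n\|x-Tx_n\|<\infty$.

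The main computation is the estimate for $a_n:=\|x_{n+1}-x_n\|$. Substituting the defining recursion and regrouping (using $(1-\lambda_{n+1})-(1-\lambda_n)=\lambda_n-\lambda_{n+1}$) gives
\[
x_{n+1}-x_n=(\lambda_{n+1}-\lambda_n)(x-Tx_{n-1})+(1-\lambda_{n+1})(Tx_n-Tx_{n-1}),
\]
hence, by nonexpansivity of $T$,
\[
a_n\le(1-\lambda_{n+1})\,a_{n-1}+M\,|\lambda_{n+1}-\lambda_n|\qquad(n\ge1).
\]
Now I would invoke the standard lemma on nonnegative sequences: if $a_n\le(1-\gamma_n)a_{n-1}+\sigma_n$ with $\gamma_n\in[0,1]$, $\sum_n\gamma_n=\infty$ and $\sum_n\sigma_n<\infty$, then $a_n\to0$. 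Here $\gamma_n=\lambda_{n+1}$, so $\sum_n\gamma_n=\infty$ by (C2), while $\sigma_n=M|\lambda_{n+1}-\lambda_n|$ is summable by (C3); therefore $\|x_{n+1}-x_n\|\to0$.

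Finally, from $x_{n+1}=\lambda_{n+1}x+(1-\lambda_{n+1})Tx_n$ one gets $x_{n+1}-Tx_n=\lambda_{n+1}(x-Tx_n)$, so $\|x_{n+1}-Tx_n\|\le\lambda_{n+1}M\to0$ by (C1), and then
\[
\|x_n-Tx_n\|\le\|x_n-x_{n+1}\|+\|x_{n+1}-Tx_n\|\longrightarrow0,
\]
which is the claim. The only genuinely non-routine step is the passage from the recursive inequality to $a_n\to0$; this is where all three conditions on $(\lambda_n)$ enter (divergence of $\sum\lambda_n$ to ``use up'' the contraction factors, summability of the increments to absorb the error term, and $\lambda_n\to0$ for the last display), and it is exactly the Halpern-scheme analogue of the ``$\|x_{n+1}-x_n\|\to0$'' step underlying Wittmann's and Xu's convergence theorems.
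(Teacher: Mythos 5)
Your proof is correct and follows essentially the same route as the paper: your recursion $a_n\le(1-\lambda_{n+1})a_{n-1}+M|\lambda_{n+1}-\lambda_n|$ is exactly inequality (\ref{H-ineq-xn+1-xn}) of Lemma \ref{lemma-Halpern-hyp}, and the final triangle-inequality step via $x_{n+1}-Tx_n=\lambda_{n+1}(x-Tx_n)$ is inequality (\ref{H-ineq-Txn-xn}). The intermediate lemma on sequences satisfying $a_n\le(1-\gamma_n)a_{n-1}+\sigma_n$ is the standard (Xu-type) lemma whose quantitative form underlies the paper's explicit bound, so nothing is missing.
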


\subsection{Asymptotically nonexpansive mappings}\label{intro-as-ne}

Asymptotically nonexpansive mappings were introduced  by Goebel and Kirk \cite{GoeKir72} as a generalization of the nonexpansive ones. A mapping $T:C\to C$ is said to be {\em asymptotically nonexpansive with sequence}  $(k_n)$ in $[0,\infty)$ if 
$\lim\limits_{n\to\infty} k_n =0$ and 
\[  d(T^n x,T^ny) \leq (1+k_n)d(x,y), \ \hfill \text{forall }n\in\N \text{ and all }x,y\in C.
 \] 
It is obvious that an asymptotically nonexpansive mapping with sequence $(k_n)$ is $(1+k_1)$-Lipschitz. Examples  showing that the class of asymptotically nonexpansive mappings is wider than the class of nonexpansive mappings are given in  \cite{GoeKir72,KirMarShi98}. 

Goebel and Kirk \cite{GoeKir72} extended the Browder-G\" ohde-Kirk Theorem to this class of mappings. 

\bthm\cite{GoeKir72} \label{habil-as-ne-FPP-ucBanach}
Bounded closed convex subsets of uniformly convex Banach spaces have the FPP for asymptotically nonexpansive mappings.
\ethm

Recently \cite{Kir04}, Kirk proved the same result for $CAT(0)$ spaces.

\bthm\cite{Kir04} \label{habil-as-ne-FPP-CAT0}
Bounded closed convex subsets of complete $CAT(0)$ spaces have the FPP for asymptotically nonexpansive mappings.
\ethm

\noindent Kirk proved Theorem \ref{habil-as-ne-FPP-CAT0} using nonstandard methods, inspired by Khamsi's proof that bounded hyperconvex metric spaces have the AFPP for asymptotically nonexpansive mappings \cite{Kha03}.

For asymptotically nonexpansive mappings, the {\em Krasnoselski-Mann iteration} starting from $x\in C$ is defined by 
\begin{equation}
x_0:=x, \quad x_{n+1}:=(1-\lambda_n)x_n +\lambda_n T^nx_n, \label{habil-as-ne-KM-lambda-n-def-hyp}\end{equation}
where $(\lambda_n)$ is a sequence in $[0,1]$. The above iteration was introduced by Schu \cite{Sch91}; it is called {\em modified Mann iteration} in \cite{TanXu94}. 

Asymptotically nonexpansive mappings have been studied mostly in the context of uniformly convex Banach spaces. In fact, for general Banach spaces it is not known whether bounded closed convex subsets have the AFPP (see \cite{KirMarShi98} for a discussion).

In the setting of uniformly convex Banach spaces, the following weak convergence result was proved by Schu \cite{Sch91a}  with the assumption that  Opial's condition is satisfied and by Tan and Xu \cite{TanXu94} in the hypothesis that the space has a Fr\' echet differentiable norm.

\bthm\cite{Sch91a,TanXu94}\label{intro-as-ne-weak}
Let $X$ be a uniformly convex Banach space which  satisfies Opial's condition or has a Fr\' echet differentiable norm,  $C$ be a bounded closed convex subset of $X$ and $T:C\to C$   an asymptotically nonexpansive mapping with sequence $(k_n)$ satisfying $\ds\sum\limits^{\infty}_{i=0} k_i<\infty$. 
Assume  that $(\lambda_n)$ is bounded away from $0$ and $1$. 

Then for all $x\in C$,  the Krasnoselski-Mann iteration $(x_n)$ starting with $x$ converges weakly to a fixed point of $T$.
\ethm

As in the case of nonexpansive mappings, if  $\ds\limn d(x_n,Tx_n)=0$ for all $x\in C$, $T$ is said to be {\em $\lambda_n$-asymptotically regular}. The following asymptotic regularity result is  essentially contained in \cite{Sch91,Sch91a}.

\begin{theorem}\label{intro-as-ne-as-reg}
Let $C$ be a convex subset of a uniformly convex Banach space $X$   and $T:C\to C$ an asymptotically nonexpansive mapping with sequence $(k_n)$ in $[0,\infty )$ satisfying 
$\ds \sum_{i=0}^{\infty} k_i <\infty$.  Let $(\lambda_n)$ be a sequence in $[a,b]$ for $0<a<b<1$.

If $T$ has a fixed point, then $T$ is $\lambda_n$-asymptotically regular.
\end{theorem}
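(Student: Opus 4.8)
The plan is to run the classical Schu-type asymptotic regularity argument around a fixed point. Fix $x\in C$, let $(x_n)$ be the iteration $x_{n+1}=(1-\lambda_n)x_n+\lambda_nT^nx_n$, and pick $p\in Fix(T)$.

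\emph{Step 1: $\limn\|x_n-p\|$ exists.} Since $\|T^nx_n-p\|=\|T^nx_n-T^np\|\le(1+k_n)\|x_n-p\|$, convexity of the norm gives $\|x_{n+1}-p\|\le(1-\lambda_n)\|x_n-p\|+\lambda_n(1+k_n)\|x_n-p\|=(1+\lambda_nk_n)\|x_n-p\|$. As $\sum_n\lambda_nk_n\le\sum_nk_n<\infty$, the standard lemma on nonnegative sequences satisfying $a_{n+1}\le(1+b_n)a_n$ with $\sum_nb_n<\infty$ shows $(\|x_n-p\|)$ is bounded and $c:=\limn\|x_n-p\|$ exists. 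Fix $R>0$ with $(1+k_n)\|x_n-p\|\le R$ for all $n$, so in particular $\|x_n-p\|\le R$ and $\|T^nx_n-p\|\le R$.

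\emph{Step 2: $\|x_n-T^nx_n\|\to0$.} If $c=0$ this is immediate from $\|x_n-T^nx_n\|\le(2+k_n)\|x_n-p\|\to0$. Assume $c>0$ and, for contradiction, that $\|x_{n_j}-T^{n_j}x_{n_j}\|\ge\varepsilon>0$ along some subsequence $(n_j)$. The tool is the quantitative form of uniform convexity: for $u,v$ with $\|u\|,\|v\|\le r$ and $t\in[0,1]$, $\|(1-t)u+tv\|\le r\bigl(1-2t(1-t)\delta_X(\|u-v\|/r)\bigr)$, where $\delta_X$ is the (nondecreasing) modulus of convexity of $X$, which is $>0$ on $(0,2]$ by uniform convexity. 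Apply this with $u=x_n-p$, $v=T^nx_n-p$, $r=R$, $t=\lambda_n$, and use $\lambda_n(1-\lambda_n)\ge a(1-b)>0$: for $n=n_j$ one obtains $\|x_{n_j+1}-p\|\le(1+k_{n_j})(1-q)\|x_{n_j}-p\|$ with $q:=2a(1-b)\delta_X(\varepsilon/R)\in(0,1)$, while $\|x_{n+1}-p\|\le(1+k_n)\|x_n-p\|$ for every $n$. Multiplying these estimates for $n=0,\dots,N$ yields $\|x_{N+1}-p\|\le\bigl(\prod_{n=0}^N(1+k_n)\bigr)(1-q)^{\,\#\{j:\,n_j\le N\}}\|x_0-p\|$, and since $\prod_n(1+k_n)<\infty$ while $\#\{j:\,n_j\le N\}\to\infty$, the right side tends to $0$, so $c=0$, a contradiction.

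\emph{Step 3: $\|x_n-T^nx_n\|\to0$ implies $\|x_n-Tx_n\|\to0$.} From the iteration, $\|x_{n+1}-x_n\|=\lambda_n\|x_n-T^nx_n\|\le\|x_n-T^nx_n\|\to0$. Hence $\|T^nx_{n+1}-x_{n+1}\|\le(1+k_n)\|x_{n+1}-x_n\|+\|T^nx_n-x_n\|+\|x_n-x_{n+1}\|\to0$. Since $T=T^1$ is $(1+k_1)$-Lipschitz, $\|T^{n+1}x_{n+1}-Tx_{n+1}\|=\|T(T^nx_{n+1})-T(x_{n+1})\|\le(1+k_1)\|T^nx_{n+1}-x_{n+1}\|\to0$, so $\|x_{n+1}-Tx_{n+1}\|\le\|x_{n+1}-T^{n+1}x_{n+1}\|+\|T^{n+1}x_{n+1}-Tx_{n+1}\|\to0$, which is the claim. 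The substantive step is Step 2, where the honest contraction produced by uniform convexity --- present only while $\|x_n-T^nx_n\|$ stays away from $0$ --- must be weighed against the cumulative expansion $\prod(1+k_n)$; this is precisely where uniform convexity of $X$ and the summability $\sum_ik_i<\infty$ are both essential, and where one must keep track of the constants $a(1-b)$, $R$ and $\delta_X(\varepsilon/R)$. Steps 1 and 3 are routine triangle-inequality and Lipschitz bookkeeping. For the intended proof-mining application, one would make each step effective --- replacing ``$\to0$'' by explicit rates in terms of a modulus of uniform convexity of $X$, a rate of convergence (or bound) for $\sum_ik_i$, and the constants $a,b$ --- to obtain a quantitative rate of asymptotic regularity uniform in the data.
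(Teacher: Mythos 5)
Your proof is correct and follows essentially the classical Schu/Groetsch scheme that the paper itself does not reproduce (it only cites Schu's papers) but which underlies its quantitative Theorem \ref{habil-as-ne-Herbrand}: convergence of $\|x_n-p\|$ from $\sum_n k_n<\infty$, a uniform-convexity argument forcing $\|x_n-T^nx_n\|\to 0$, and Lipschitz bookkeeping to pass to $\|x_n-Tx_n\|\to 0$. The one detail to tidy in Step 2 is that Groetsch's inequality should be applied with radius $r=(1+k_{n_j})\|x_{n_j}-p\|$ rather than $R$ in order to produce the multiplicative bound $(1+k_{n_j})(1-q)\|x_{n_j}-p\|$; since this radius is at most $R$ and $\delta_X$ is nondecreasing, $\delta_X(\varepsilon/R)$ remains a valid lower bound for the modulus and the argument goes through unchanged.
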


\section{$W$-hyperbolic spaces}\label{hyperbolic-spaces}

We work in the setting of hyperbolic spaces as introduced by Kohlenbach \cite{Koh05}. In order to distinguish them from Gromov hyperbolic spaces \cite{BriHae99} or from other notions of hyperbolic space  that can be found in the literature (see for example \cite{Kir82,GoeKir83,ReiSha90}), we shall call them W-hyperbolic spaces. 

A {\em  $W$-hyperbolic space}  $(X,d,W)$ is a metric space $(X,d)$ together with a {\em convexity mapping} $W:X\times X\times [0,1]\to X$ satisfying 
\begin{eqnarray*}
(W1) & d(z,W(x,y,\lambda))\le (1-\lambda)d(z,x)+\lambda d(z,y),\\
(W2) & d(W(x,y,\lambda),W(x,y,\tilde{\lambda}))=|\lambda-\tilde{\lambda}|\cdot 
d(x,y),\\
(W3) & W(x,y,\lambda)=W(y,x,1-\lambda),\\
(W4) & \,\,\,d(W(x,z,\lambda),W(y,w,\lambda)) \le (1-\lambda)d(x,y)+\lambda
d(z,w).
\end {eqnarray*}

The convexity mapping $W$ was first considered by Takahashi in \cite{Tak70}, where a triple $(X,d,W)$ satisfying $(W1)$ is called a {\em convex metric space}. If $(X,d,W)$ satisfies $(W1)-(W3)$, then we get the notion of {\em space of hyperbolic type} in the sense of Goebel and Kirk \cite{GoeKir83}. $(W4)$ was already considered by Itoh \cite{Ito79} under the name "condition III" and it is used by Reich and Shafrir \cite{ReiSha90} and Kirk \cite{Kir82} to define their notions of hyperbolic space. We refer to \cite[p.384-387]{Koh08-book} for a detailed discussion.

Obviously, any normed space is a $W$-hyperbolic space: just define $W(x,y,\lambda)=(1-\lambda)x+\lambda y$. Furthermore, any convex subset of a normed space is a $W$-hyperbolic space. We shall see in Subsection \ref{W-related-classes} other examples of $W$-hyperbolic spaces. 

Let $(X,d,W)$ be a  $W$-hyperbolic space. If $x,y\in X$ and $\lambda\in[0,1]$, then we use the notation $(1-\lambda)x\oplus \lambda y$ for $W(x,y,\lambda)$. The following holds even for the more general setting of convex metric spaces \cite{Tak70}: for all $x,y\in X$ and  $\lambda\in[0,1]$,
\beq
d(x,\lambdaxy)=\lambda d(x,y)\quad \text{~and~}\quad  d(y,\lambdaxy)=(1-\lambda)d(x,y). \label{habil-prop-xylambda}
\eeq
As an immediate consequence, $1x\oplus 0y=x,\,0x\oplus 1y=y$ and $(1-\lambda)x\oplus \lambda x=\lambda x\oplus (1-\lambda)x=x$. 

Following \cite{Tak70}, we call a $W$-hyperbolic space {\em strictly convex} if for any $x\ne y\in X$ and any $\lambda\in(0,1)$ there exists a unique element $z\in X$ (namely $z=\lambdaxy$) such that 
\[d(x,z)=\lambda d(x,y)\quad \text{~and~}\quad  d(y,z)=(1-\lambda)d(x,y).\]

The following definitions can be given in an arbitrary metric space $(X,d)$. A {\em geodesic path}, {\em geodesic} for short, in $X$ is a map $\gamma:[a,b]\to X$ which is distance-preserving, that is 
\beq 
d(\gamma(s),\gamma(t))=|s-t| \text{~~for all~~} s,t\in [a,b].
\eeq
A {\em geodesic ray} in $X$ is a distance-preserving map $\gamma:[0,\infty)\to X$ and a {\em geodesic line} in $X$ is a distance-preserving map $\gamma:\R\to X$. A {\em geodesic segment} in $X$ is the image of a geodesic in $X$, while a {\em straight line} in $X$ is the image of a geodesic line in $X$. If $\gamma:[a,b]\to\R$ is a geodesic in $X$, $\gamma(a)=x$ and $\gamma(b)=y$, we say that the geodesic  $\gamma$ {\em joins x and y} or that the geodesic  segment $\gamma([a,b])$ {\em joins x and y}; $x$ and $y$ are also called the {\em endpoints} of $\gamma$. A metric space $(X,d)$ is said to be a {\em (uniquely) geodesic space} if every two points are joined by a (unique) geodesic segment. 

In the following, $(X,d,W)$ is a $W$-hyperbolic space. For all $x,y\in X$, let us denote b
\[[x,y]:=\{(1-\lambda)x\oplus \lambda y\mid \lambda\in[0,1]\}.\]
 Thus, $[x,x]=\{x\}$ and for $x\ne y$, the map
\beq
\gamma_{xy}:[0,d(x,y)]\to\R, \quad \gamma(\alpha)=\left(1-\frac{\alpha}{d(x,y)}\right)x\oplus \frac{\alpha}{d(x,y)}y
\eeq\label{W-def-geodesic}
is a geodesic satisfying $\gamma_{xy}\big([0,d(x,y)]\big)=[x,y]$, so $[x,y]$ is a geodesic segment that joins $x$ and $y$. Hence, any $W$-hyperbolic space is a geodesic space.

A nonempty subset $C\subseteq X$ is {\em convex} if $[x,y]\se C$ for all $x,y\in C$. A nice feature of our setting is that any convex subset is itself a $W$-hyperbolic space with the restriction of $d$ and $W$ to $C$. It is easy to see that open and closed balls are convex and that the intersection of any family of convex sets is again convex. Moreover, using (W4), we get that the closure of a convex subset of a $W$-hyperbolic space is again convex.

If $C$ is a convex subset of $X$, then a function $f:C\to\R$ is said to be {\em convex} if 
\beq 
f\left(\lambdaxy\right)\le (1-\lambda)f(x)+\lambda f(y) \label{habil-convex-f}
\eeq
for all $x,y\in C,\lambda\in[0,1]$. $f$ is said to be {\em strictly convex} if strict inequality holds in (\ref{habil-convex-f}) for $x\ne y$ and $\lambda\in(0,1)$.

\subsection{$UCW$-hyperbolic spaces}\label{habil-UCW}

One of the most important classes of Banach spaces are the uniformly convex ones, introduced by Clarkson in the 30's \cite{Cla36}. Following Goebel and Reich \cite[p. 105]{GoeRei84}, we can define uniform convexity for $W$-hyperbolic spaces too.

A $W$-hyperbolic space $(X,d,W)$ is  {\em uniformly convex} \cite{Leu07} if for any $r>0$ and any $\varepsilon\in(0,2]$ there exists $\delta\in(0,1]$ such that 
for all $a,x,y\in X$,
\begin{eqnarray}
\left.\begin{array}{l}
d(x,a)\le r\\
d(y,a)\le r\\
d(x,y)\ge\varepsilon r
\end{array}
\right\}
& \quad \Rightarrow & \quad d\left(\frac12x\oplus\frac12y,a\right)\le (1-\delta)r. \label{habil-uc-def}
\end{eqnarray}
A mapping $\eta:(0,\infty)\times(0,2]\rightarrow (0,1]$ providing such a
$\delta:=\eta(r,\varepsilon)$ for given $r>0$ and $\varepsilon\in(0,2]$ is called a {\em modulus of uniform convexity}. We call $\eta$ {\em monotone} if it decreases with $r$ (for a fixed $\eps$).

\bprop\cite{Leu07}\\
Any uniformly convex $W$-hyperbolic space is strictly convex.
\eprop

\blem\label{UCW-eta-prop-1}\cite{Leu07,KohLeu08a}\\
Let $(X,d,W)$ be a uniformly convex $W$-hyperbolic space and $\eta$ be a modulus of uniform convexity. Assume that $r>0,\varepsilon\in(0,2], a,x,y\in X$ are such that 
\[d(x,a)\le r,\,\,d(y,a)\le r \text{~and~} d(x,y)\ge\eps r.\]
 Then for any $\lambda\in[0,1]$,
\be
\item\label{UCW-Groetsch-eta} $\ds d(\lambdaxy,a)\le  \big(1-2\lambda(1-\lambda)\eta(r,\varepsilon)\big)r$; 
\item\label{UCW-eta-monotone-eps} for any $\psi\in (0,2]$ such that $\psi\le\eps$, 
\[\ds d(\lambdaxy,a)\le  \big(1-2\lambda(1-\lambda)\eta(r,\psi)\big)r\,;\]
\item  \label{UCW-eta-s-geq-r} for any $s\geq r$, 
\[d(\lambdaxy,a) \le \left(1-2\lambda(1-\lambda)\eta\left(s,\frac{\eps r}{s}\right)\right)s\,;\]
\item\label{UCW-eta-monotone-s-geq-r}if $\eta$ is monotone, then for any $s\geq r$, 
\[d(\lambdaxy,a) \le \left(1-2\lambda(1-\lambda)\eta\left(s,\eps\right)\right)r\,.\]
\ee
\elem

We shall refer to  uniformly convex $W$-hyperbolic spaces with a monotone modulus of uniform convexity as {\em $UCW$-hyperbolic spaces}. We shall see in Subsubsection \ref{habil-CAT0} that $CAT(0)$ spaces  are  $UCW$-hyperbolic spaces with modulus of uniform convexity $\ds\eta(r,\varepsilon)=\varepsilon^2/8$ quadratic in $\eps$. Thus, $UCW$-hyperbolic spaces are a natural generalization of both uniformly convex normed spaces and $CAT(0)$ spaces.

Moreover, as we shall see in the sequel, complete $UCW$-hyperbolic spaces have very nice properties. For the rest of this section, $(X,d,W)$ is a complete $UCW$-hyperbolic space.

\bprop\label{UCW-CIP}\cite{KohLeu08a}\\
The intersection of any decreasing sequence of nonempty bounded closed convex subsets of $X$ is nonempty.
\eprop

The next result is  inspired by \cite[Proposition 2.2]{GoeRei84}.

\bprop\label{UCW-basic-prop-as-center}\cite{Leu08}\\
Let $C$ be a closed convex subset of $X$, $f:C\to[0,\infty)$ be convex and lower semicontinuous. Assume moreover that for all sequences $(x_n)$ in $C$,
\[\limn d(x_n,a)=\infty \text{~for some~} a\in X \text{~implies~} \limn f(x_n)=\infty.\]
Then $f$ attains its minimum on $C$.  If, in addition, for all $x\neq y$,
\[f\left(\midxy\right) < \max\{f(x),f(y)\}\]
then $f$ attains its minimum at exactly one point.
\eprop

Let us recall that a subset $C$ of a metric space $(X,d)$ is called a {\em Chebyshev set} if to each point $x\in X$ there corresponds a unique point $z\in C$ such that $d(x,z)= d(x,C)(=\inf\{d(x,y)\mid y\in C\})$. If $C$ is a Chebyshev set, {\em nearest point projection} $P:X\to C$ can be defined by assigning $z$ to $x$.

\bprop\label{UCW-closed-convex-Chebyshev}\cite{Leu08}\\
Every  closed convex subset  $C$ of $X$ is a Chebyshev set.
\eprop

\subsection{Some related structures}\label{W-related-classes}

\subsubsection{Spaces of hyperbolic type}

Spaces of hyperbolic type were introduced by Goebel and Kirk \cite{GoeKir83} (see also \cite{Kir82}). Let $(X,d)$ be a metric space and $S$ be a family of geodesic segments in $X$. We say that the structure $(X,d,S)$ is a {\em space of hyperbolic type} if the following conditions are satisfied:
\be
\item for each two points $x,y\in X$ there exists a unique geodesic segment from $S$ that joins them, denoted $[x,y]$;
\item if $p,x,y\in M$ and if $m\in[x,y]$ satisfies $d(x,m)=\lambda d(x,y)$ for some $\lambda\in[0,1]$, then $$d(p,m)\le (1-\lambda)d(p,x)+\lambda d(p,y).$$
\ee

The following result shows that spaces of hyperbolic type are exactly the metric spaces with a convexity mapping $W$ satisfying $(W1),(W2),(W3)$.
 
\bprop
Let $(X,d)$ be a metric space. The following are equivalent.
\be
\item There exists a family $S$ of metric segments such that $(X,d,S)$ is a space of hyperbolic type.
\item There exists a a convexity mapping $W$ such that $(X,d,W)$ satisfies $(W1),(W2),(W3)$.
\ee
\eprop
\begin{proof}
$(i)\Ra(ii)$ It is easy to see that for all $x,y\in X$ and any $\lambda\in[0,1]$ there exists a unique $m\in [x,y]$ satisfying $d(x,m)=\lambda d(x,y)$ and $d(y,m)=(1-\lambda)d(x,y)$. 
Define $W:X\times X\times[0,1]\to X$ by $W(x,y,\lambda)=$ this unique $m$. Then $(X,d,W)$ satisfies $(W1),(W2),(W3)$. \\
$(ii)\Ra(i)$ For all $x,y\in X, x\ne y$, consider the geodesic $\gamma_{xy}$ joining $x$ and $y$, defined  by (\ref{W-def-geodesic}). For $x=y$, let $\gamma_{xx}:\{0\}\to X, \gamma(0)=x$. Taking $S:=\{\gamma_{xy}\mid x,y\in X\}$, we obtain that $(X,d,S)$ is a space of hyperbolic type. 
\end{proof}
As a consequence, any $W$-hyperbolic space is a space of hyperbolic type. In fact, $W$-hyperbolic spaces are exactly the spaces of hyperbolic type satisfying $(W4)$.

\subsubsection{Hyperbolic spaces in the sense of Reich and Shafrir}

The class of hyperbolic spaces presented in this section was defined by Reich and Shafrir \cite{ReiSha90} as an appropriate context for the study of operator theory in general, and of iterative processes for 
nonexpansive mappings in particular.

Let $(X,d)$ be a metric space and $M$ be a nonempty family of straight lines in $X$ with the following property: for each two distinct points $x,y\in X$ there is a unique straight line from $M$ which passes through $x,y$. 

We shall denote by $(X,d,M)$ a metric space $(X,d)$ together with a family $M$ as above. Since $M\ne\emptyset$, there is  at least one geodesic line $\gamma:\R\ra X$ with $\gamma(\R)\in M$, so $card(X)\geq card(\R)=\aleph_1$, as $\gamma$ is injective. Furthermore, the metric space $(X,d)$ must be unbounded.

The following lemma collects some useful properties. We refer to \cite{KohLeu03} for the proofs.

\blem$\,$
\be
\item For any $x\in X$ there is at least one  straight line from $M$ that
passes through $x$.
\item For any distinct points $x$ and $y$ in $X$, the unique straight line that passes through $x$ and $y$ determines in a unique way a geodesic segment joining $x$ and $y$, denoted by  $[x,y]$.
\item For all $x,y\in X$ and all $\lambda\in [0,1]$ there is a unique point
$z\in[x,y]$ satisfying
\beq
d(x,z)=\lambda d(x,y)\quad \text{~and~}\quad  d(y,z)=(1-\lambda)d(x,y).\label{habil-hypRS-oplus}
\eeq
\ee
The unique point $z$ satisfying (\ref{habil-hypRS-oplus}) will be denoted by $\lambdaxy$.
\elem

We say that the structure $(X,d,M)$ is a {\em hyperbolic space} if the following inequality is satisfied
\beq
d\left(\frac{1}{2}x\oplus\frac{1}{2}y,\frac{1}{2}x\oplus\frac{1}{2}z\right)
\leq \frac{1}{2}d(y,z).
\eeq

\begin{proposition}\cite{KohLeu03}\\
Let $(X,d,M)$ be a hyperbolic space. Then
\beq
d((1-\lambda)x\oplus \lambda z,(1-\lambda)y\oplus \lambda w)\leq (1-\lambda)d(x,y)+\lambda d(z,w)\label{habil-hypRS-W4}
\eeq
for all $x,y,z,w\in X$ and any $\lambda\in[0,1]$.
\end{proposition}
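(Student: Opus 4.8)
The plan is to deduce the convexity inequality~(\ref{habil-hypRS-W4}) --- which is precisely axiom $(W4)$ --- from the single defining inequality $d\bigl(\tfrac12 x\oplus\tfrac12 y,\tfrac12 x\oplus\tfrac12 z\bigr)\le\tfrac12 d(y,z)$, proceeding in three stages: the case $\lambda=\tfrac12$, then all dyadic rationals $\lambda=k/2^n$ by induction, then all $\lambda\in[0,1]$ by continuity. I use the lemma preceding the statement throughout, in particular that $\lambdaxy$ is the \emph{unique} point of the geodesic segment $[x,y]$ at distance $\lambda d(x,y)$ from $x$ and $(1-\lambda)d(x,y)$ from $y$; this uniqueness gives at once the symmetry $\tfrac12 a\oplus\tfrac12 b=\tfrac12 b\oplus\tfrac12 a$ and the trivial endpoint cases $1\cdot a\oplus 0\cdot b=a$, $0\cdot a\oplus 1\cdot b=b$. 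For $\lambda=\tfrac12$, insert the auxiliary point $\tfrac12 x\oplus\tfrac12 w$ and use the triangle inequality,
\[
d\bigl(\tfrac12 x\oplus\tfrac12 z,\,\tfrac12 y\oplus\tfrac12 w\bigr)\le d\bigl(\tfrac12 x\oplus\tfrac12 z,\,\tfrac12 x\oplus\tfrac12 w\bigr)+d\bigl(\tfrac12 x\oplus\tfrac12 w,\,\tfrac12 y\oplus\tfrac12 w\bigr).
\]
The first term is $\le\tfrac12 d(z,w)$ by the defining inequality (fixed first entry $x$); the second equals $d\bigl(\tfrac12 w\oplus\tfrac12 x,\,\tfrac12 w\oplus\tfrac12 y\bigr)$ by symmetry, hence is $\le\tfrac12 d(x,y)$ by the defining inequality (fixed first entry $w$). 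Adding gives~(\ref{habil-hypRS-W4}) at $\lambda=\tfrac12$.

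For the dyadic stage the key tool is a bisection identity: if $\mu,\nu\in[0,1]$ and $\lambda=\tfrac{\mu+\nu}{2}$, then, writing $p=(1-\mu)a\oplus\mu b$ and $q=(1-\nu)a\oplus\nu b$,
\[
(1-\lambda)a\oplus\lambda b=\tfrac12 p\oplus\tfrac12 q.
\]
I would prove this from the uniqueness characterisation alone: $p$ and $q$ lie on $[a,b]$, so the unique straight line through $p,q$ is the one through $a,b$ and $[p,q]\se[a,b]$; identifying $[a,b]$ isometrically with $[0,d(a,b)]$ sends $p,q$ to $\mu\,d(a,b),\nu\,d(a,b)$ and hence sends $\tfrac12 p\oplus\tfrac12 q$ to their midpoint $\tfrac{\mu+\nu}{2}d(a,b)=\lambda\,d(a,b)$, which pins it down as $(1-\lambda)a\oplus\lambda b$. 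Then I induct on $n$: assume~(\ref{habil-hypRS-W4}) holds for every $\lambda=j/2^n$ with $0\le j\le 2^n$ (base $n=0$: trivial endpoint cases). For $\lambda=k/2^{n+1}$ with $k$ even the claim reduces to level $n$; for $k$ odd, put $\mu=(k-1)/2^{n+1}$ and $\nu=(k+1)/2^{n+1}$, both of level $n$, apply the bisection identity to the pairs $(x,z)$ and $(y,w)$, and chain the $\lambda=\tfrac12$ case with the inductive hypothesis at $\mu$ and $\nu$:
\[
d\bigl((1-\lambda)x\oplus\lambda z,\,(1-\lambda)y\oplus\lambda w\bigr)\le\tfrac12\,d(p,p')+\tfrac12\,d(q,q')\le(1-\lambda)d(x,y)+\lambda\,d(z,w),
\]
where $p=(1-\mu)x\oplus\mu z$, $q=(1-\nu)x\oplus\nu z$, $p'=(1-\mu)y\oplus\mu w$, $q'=(1-\nu)y\oplus\nu w$.

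Finally, for fixed $x,z$ the points $(1-\lambda)x\oplus\lambda z$ and $(1-\tilde\lambda)x\oplus\tilde\lambda z$ both lie on $[x,z]\cong[0,d(x,z)]$, so their distance is $|\lambda-\tilde\lambda|\,d(x,z)$; hence $\lambda\mapsto d\bigl((1-\lambda)x\oplus\lambda z,(1-\lambda)y\oplus\lambda w\bigr)$ is continuous, as is the affine function on the right of~(\ref{habil-hypRS-W4}), and since the inequality holds on the dense set of dyadic rationals in $[0,1]$ it holds for all $\lambda$.

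The triangle-inequality bookkeeping of the first stage and the continuity estimate are routine; the step demanding real care is the bisection identity, and within it the inclusion $[p,q]\se[a,b]$ --- this is exactly where the Reich--Shafrir hypothesis (geodesic segments arising from a family $M$ of straight lines with a uniqueness property), rather than a mere geodesic-space assumption, is genuinely used.
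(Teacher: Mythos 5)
Your proof is correct and follows essentially the same route as the argument in the cited source \cite{KohLeu03}: establish the case $\lambda=\frac12$ by inserting $\frac12 x\oplus\frac12 w$ and using the defining inequality twice (after symmetrizing the midpoint), extend to dyadic $\lambda$ by induction via the bisection identity, and pass to all $\lambda\in[0,1]$ by the Lipschitz continuity of $\lambda\mapsto(1-\lambda)x\oplus\lambda z$. You have also correctly isolated the one genuinely delicate point, namely that the uniqueness of the straight line through two distinct points is what forces the line through $p$ and $q$ to coincide with the line through $a$ and $b$ in the bisection identity.
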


If we define 
$$W:X\times X\times [0,1]\to X, \quad W(x,y,\lambda)=\lambdaxy,$$
it is easy to see that $(X,d,W)$ is a $W$-hyperbolic space. Thus, any hyperbolic space in the sense of Reich and Shafrir is a $W$-hyperbolic space. 

\subsubsection{$CAT(0)$ spaces}\label{habil-CAT0}

In this section we give a very brief exposition of $CAT(0)$ spaces. We refer to the monograph by Bridson and  Haefliger \cite{BriHae99} for an extensive study of this important class of spaces.

Let $(X,d)$ be a geodesic space. A {\em geodesic triangle} in $X$ consists of three points $p,q,r\in X$, its {\em vertices}, and  a choice of three geodesic segments $[p,q], [q,r], [r,s]$ joining them, its {\em sides}. Such a geodesic triangle will be denoted $\Delta([p,q],[q,r], [r,s])$. If a point lies in the union of $[p,q], [q,r], [r,s]$, then we write $x\in\Delta$.

A triangle $\ol{\Delta}=\Delta(\ol{p},\ol{q},\ol{r})$ in $\R^2$ is called a  {\em comparison triangle} for the geodesic triangle $\Delta([p,q],[q,r], [r,s])$ if $d_{\R^2}(\ol{p},\ol{q})=d(p,q), d_{\R^2}(\ol{q},\ol{r})=d(q,r)$ and $d_{\R^2}(\ol{r},\ol{p})=d(p,r)$. Such a triangle $\ol{\Delta}$ always exists and it is unique up to isometry \cite[Lemma I.2.14]{BriHae99}.  We write $\ol{\Delta}=\ol{\Delta}(p,q,r)$ or $\Delta(\ol{p},\ol{q},\ol{r})$ according to whether a specific choice of $\ol{p},\ol{q},\ol{r}$ is required. A point $\ol{x}\in[\ol{p},\ol{q}]$ is called a {\em comparison point} for $x\in[p,q]$ if $d(p,x)=d_{\R^2}(\ol{p},\ol{x})$. Comparison points on $[\ol{q},\ol{r}]$ and $[\ol{r},\ol{p}]$ are defined similarly.

Let $\Delta$ be a geodesic triangle in $X$ and $\ol{\Delta}$ be a comparison triangle for $\Delta$ in $\R^2$. Then $\Delta$ is said to satisfy the $CAT(0)$ {\em inequality} if
for all $x,y\in \Delta$ and for all comparison points $\ol{x},\ol{y}\in\ol{\Delta}$, 
\beq
d(x,y)\leq d_{\R^2}(\ol{x}, \ol{y}).\label{CAT(0)-ineq}
\eeq

A geodesic space $X$ is said to be a {\em $CAT(0)$ space} if all geodesic triangles satisfy the $CAT(0)$ {\em inequality}. Complete $CAT(0)$ spaces are often called {\em Hadamard spaces}. It can be shown that $CAT(0)$ spaces are uniquely geodesic and that a normed space is a $CAT(0)$-space if and only if it is a pre-Hilbert space.

In the sequel, we give an equivalent characterization of $CAT(0)$ spaces, using the so-called: {\em CN inequality of Bruhat-Tits} \cite{BruTit72}: for all $x,y,z\in X$  and all $m\in X$ with $\ds d(x,m)=d(y,m)=\frac12 d(x,y)$,
\beq
d(z,m)^2\leq \frac12d(z,x)^2+\frac12d(z,y)^2-\frac14d(x,y)^2. \label{habil-CN-ineq}
\eeq

In the setting of $W$-hyperbolic spaces, we consider the following reformulation of the $CN$ inequality, which is nicer from the point of view of the logical metatheorems to be presented in Section \ref{logical-meta}: for all $x,y,z\in X$,
\bea
CN^-: \quad\quad d\left(z,\frac12 x\oplus \frac12 y\right)^2\leq \frac12d(z,x)^2+\frac12d(z,y)^2-\frac14d(x,y)^2. \label{CN-}
\eea

We refer to \cite[p. 163]{BriHae99} and to \cite[p. 386-388]{Koh08-book} for the proof of the following result.

\bprop\label{habil-char-CAT0}
Let $(X,d)$ be a metric space. \uae.
\be
\item \label{eq-CAT} $X$ is a CAT(0)-space.
\item \label{eq-geodesic+CN}   $X$ is a geodesic space that satisfies the $CN$ inequality  (\ref{habil-CN-ineq});
\item \label{eq-W+CN} There exists a a convexity mapping $W$ such that $(X,d,W)$ is a $W$-hyperbolic space satisfying the $CN$ inequality (\ref{habil-CN-ineq}).
\item\label{eq-W+CN-} There exists a a convexity mapping $W$ such that $(X,d,W)$ is a $W$-hyperbolic space satisfying the $CN^-$ inequality (\ref{CN-}).
\ee
\eprop
Thus, $CAT(0)$ spaces are exactly the $W$-hyperbolic spaces satisfying the $CN$ inequality. Furthermore

\bprop\cite{Leu07}\\
$CAT(0)$ spaces are $UCW$-hyperbolic spaces with a monotone modulus of uniform convexity 
$$\eta(\eps,r)=\frac{\eps^2}8,$$
that does not depend on $r$.
\eprop

\subsubsection{The Hilbert ball}

Let $H$ be a complex Hilbert space, and let $\B$ be the open unit ball in $H$. We consider the Poincar\' e metric on $\B$, defined by 
\beq
\rho(x,y):= \text{argtanh}(1-\sigma(x,y))^{1/2}, \quad \text{where~~}\sigma(x,y)=\frac{(1-\|x\|^2)(1-\|y\|^2)}{
|1-\langle x,y\rangle |^2}. \label{def-hyperbolic-Hilbert-ball}
\eeq
The metric space $(\B,\rho)$ is called the {\em Hilbert ball}. 

The Hilbert ball is a uniquely geodesic space (see \cite[Theorem 4.1]{KucReiSho01} or \cite{GoeSekSta80}). Moreover, by the inequality (4.2) in \cite{ReiSha90}, the CN inequality is satisfied. Applying Proposition \ref{habil-char-CAT0}.(\ref{eq-geodesic+CN}), it follows that the Hilbert ball is a $CAT(0)$ space.

We refer to Goebel and Reich's book \cite{GoeRei84} for an extensive study of the Hilbert ball.  

\subsubsection{Gromov hyperbolic spaces}\label{habil-Gromov-hyp}

Gromov's theory of hyperbolic spaces is set out in \cite{Gro87}.  The study of Gromov hyperbolic spaces has been largely motivated and dominated by questions about (Gromov) hyperbolic groups, one of the main object of study in geometric group theory. In the sequel, we review some definitions and elementary facts concerning Gromov hyperbolic spaces. For a more detailed account of this material, the reader is referred to \cite{Gro87,GhyHar90,BriHae99}. 

Let $(X,d)$ be a metric space. Given three points $x,y,w$, the {\em Gromov product} of $x$ and $y$  with respect to the {\em base point} $w$ is defined to be:
\beq
(x\cdot y)_w=\frac 12(d(x,w)+d(y,w)-d(x,y)).
\eeq
It measures the failure of the triangle inequality to be an equality and it is always nonnegative. 

\begin{definition}\label{habil-delta-hyp}
Let $\delta\geq 0$. $X$ is called $\delta-hyperbolic$ if for all $x,y,z,w\in X$,
\begin{equation}
(x\cdot y)_w\geq \min\{(x\cdot z)_w, (y\cdot z)_w\}-\delta.\label{habil-delta-hyp-ineq}
\end{equation}
We say that $X$ is {\em hyperbolic} if  it is $(\delta)$-hyperbolic
for some $\delta\geq 0$.
\end{definition}
It turns out that the definition is independent of the choice of the base point $w$ in the sense that if there exists {\em some} $w\in X$ such that the above inequality holds for all $x,y,z\in X$, then $X$ is $2\delta$-hyperbolic.

By unraveling the definition of Gromov product, (\ref{habil-delta-hyp-ineq}) can be rewritten as a 4-point condition: for all $x,y,z,w\in X$,
\begin{equation}
d(x,y)+d(z,w)\leq \max\{d(x,z)+d(y,w), d(x,w)+d(y,z)\}+2\delta. \label{habil-Q(delta)-ineq}
\end{equation}

\subsubsection{$\R$-trees}

The notion of $\Real$-tree was introduced by Tits \cite{Tit77}, as a generalization of the notion of local Bruhat-Tits building for rank-one groups, which itself generalizes the notion of simplicial tree. A more general concept, that of a $\Lambda$-tree, where $\Lambda$ is a totally ordered abelian group,  made its appearance as an essential tool in the study of groups acting on hyperbolic manifolds in the work of Morgan and Shalen \cite{MorSha84}. For detailed informations about $\Real(\Lambda)$-trees, we refer to \cite{Bes02,Chi01}.

\begin{definition}\cite{Tit77}
An {\em $\Real$-tree}  is a  geodesic space containing no homeomorphic image of a circle.
\end{definition}

We remark that in the initial definition, Tits only considered $\Real$-trees that are complete as metric spaces, but the assumption of completeness is usually irrelevant. The following proposition gives some equivalent characterizations of $\Real$-trees, which can be found in the literature.

\begin{proposition}\label{habil-R-trees-equiv-char}(see, for example, \cite{AlpBas87,Bes02,GhyHar90})\\
Let $(X,d)$ be a metric space. The following are equivalent:
\begin{enumerate}
\item $X$ is an $\Real$-tree,
\item $X$ is uniquely geodesic and for all $x,y,z\in X$, 
\bua
[y,x]\cap[x,z]=\{x\}\Rightarrow [y,x]\cup[x,z]=[y,z].
\eua
(i.e., if two geodesic segments intersect in a single point, then their union is a geodesic segment.)
\item $X$ is a geodesic space that is  (Gromov) $0$-hyperbolic, i.e. satisfies the inequality (\ref{habil-Q(delta)-ineq}) with $\delta=0$.
\end{enumerate}
\end{proposition}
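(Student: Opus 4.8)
The plan is to establish the two equivalences $(i)\Leftrightarrow(ii)$ and $(ii)\Leftrightarrow(iii)$. The main tool for the second is a \emph{tripod lemma}: if $X$ is uniquely geodesic and satisfies the segment condition in $(ii)$, then for all $x,y,z\in X$ there is a point $c$ lying on each of $[x,y]$, $[y,z]$, $[x,z]$, with $[x,y]=[x,c]\cup[c,y]$, $[x,z]=[x,c]\cup[c,z]$, $[y,z]=[y,c]\cup[c,z]$, and $d(x,c)=(y\cdot z)_x$. To prove it, one first notes that $[x,y]\cap[x,z]$ is an initial subsegment $[x,c]$ of both: writing the geodesic from $x$ to $y$ as $\gamma:[0,d(x,y)]\to X$ with $\gamma(0)=x$, uniqueness of geodesics makes $\{t:\gamma(t)\in[x,z]\}$ downward closed, and it is closed since $[x,z]$ is compact, so it is an interval $[0,s_0]$ with $c:=\gamma(s_0)$. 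By maximality of $c$ one has $[c,y]\cap[c,z]=\{c\}$, so the condition in $(ii)$ applied \emph{at the point $c$} (with outer points $y$ and $z$) gives $[y,z]=[c,y]\cup[c,z]$; comparing the lengths occurring in the three decompositions yields $d(x,c)=(y\cdot z)_x$.

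\emph{$(ii)\Rightarrow(iii)$.} Let $x,y,z,w\in X$. Apply the tripod lemma to $(x,y,z)$ and to $(x,y,w)$, getting centres $m$ and $m'$, both on the segment $[x,y]$. The $\delta=0$ instance of (\ref{habil-Q(delta)-ineq}) is unchanged when $z$ and $w$ are interchanged, so we may assume $d(x,m)\le d(x,m')$; then on $[x,y]$ the points occur in the order $x,m,m',y$. Put $\alpha=d(x,m)$, $\gamma=d(m,m')$, $\beta=d(m',y)$, $p=d(m,z)$, $q=d(m',w)$. Since $m\in[x,z]\cap[y,z]$ and $m'\in[x,w]\cap[y,w]$, the segment decompositions give $d(x,y)=\alpha+\gamma+\beta$, $d(x,z)=\alpha+p$, $d(y,z)=\gamma+\beta+p$, $d(x,w)=\alpha+\gamma+q$, $d(y,w)=\beta+q$. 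Hence the right-hand side of (\ref{habil-Q(delta)-ineq}) equals $\max\{\alpha+\beta+p+q,\ \alpha+\beta+p+q+2\gamma\}=\alpha+\beta+p+q+2\gamma$, and the inequality to prove, $d(x,y)+d(z,w)\le\alpha+\beta+p+q+2\gamma$, reduces to $d(z,w)\le p+\gamma+q$ --- the triangle inequality applied to the concatenation of $[z,m]$, $[m,m']$ and $[m',w]$.

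\emph{$(iii)\Rightarrow(ii)$.} Write the $4$-point condition as $d(a,b)+d(c,d)\le\max\{d(a,c)+d(b,d),\,d(a,d)+d(b,c)\}$. Unique geodesy is immediate: if $\sigma,\tau$ are geodesics from $p$ to $q$ of common length $L$, then for $0\le t\le L$ the choice $(a,b,c,d)=(p,\sigma(t),q,\tau(t))$ gives $d(p,q)+d(\sigma(t),\tau(t))\le d(p,q)$, so $\sigma(t)=\tau(t)$. For the segment condition, suppose $[y,x]\cap[x,z]=\{x\}$ while $\varepsilon:=(y\cdot z)_x>0$, and let $p_0\in[x,y]$, $q_0\in[x,z]$ be the points at distance $\varepsilon$ from $x$ (they exist since $\varepsilon\le\min\{d(x,y),d(x,z)\}$, and $d(y,z)=d(x,y)+d(x,z)-2\varepsilon$). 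Two applications of the $4$-point condition --- with $(a,b,c,d)$ equal to $(p_0,z,x,y)$ and to $(q_0,y,x,z)$ --- together with the triangle inequality force $d(p_0,z)=d(x,z)-\varepsilon$ and $d(q_0,y)=d(x,y)-\varepsilon$; a third application, with $(a,b,c,d)=(p_0,q_0,y,z)$, then has both arguments of the maximum equal to $d(y,z)$, whence $d(p_0,q_0)\le 0$. Thus $p_0=q_0\ne x$, contradicting $[y,x]\cap[x,z]=\{x\}$. Therefore $(y\cdot z)_x=0$, so $x\in[y,z]$, and unique geodesy gives $[y,z]=[y,x]\cup[x,z]$.

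\emph{$(i)\Leftrightarrow(ii)$.} If $X$ has no embedded circle and two distinct geodesics joined some pair of points, then their restrictions to the parameter interval running from the last instant they coincide to the first later instant they meet again are two arcs with the same endpoints meeting only there; their union is homeomorphic to $S^1$ and embedded in $X$, a contradiction. The same bigon extraction applied to the concatenated arc $[y,x]\cup[x,z]$ (when $[y,x]\cap[x,z]=\{x\}$) and to $[y,z]$ forces these to coincide, so $(i)\Rightarrow(ii)$. Conversely, under $(ii)$ one shows (a standard fact, e.g.\ via the $1$-Lipschitz nearest-point projection onto a geodesic segment supplied by the tripod lemma) that a geodesic segment is the \emph{unique} arc joining its endpoints; since an embedded circle would provide two distinct arcs with common endpoints, $X$ has no embedded circle. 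I expect the main obstacle to be the choice of quadruples in $(iii)\Rightarrow(ii)$, where one must arrange that \emph{both} arguments of the relevant maxima collapse to the right value; the topological steps in $(i)\Leftrightarrow(ii)$ are standard and only sketched here, with full treatments in the references cited with the statement.
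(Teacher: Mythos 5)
The paper does not actually prove this proposition: it is stated with a pointer to the literature (\cite{AlpBas87,Bes02,GhyHar90}, in particular Alperin--Bass for the equivalence of (i) and (iii)), so your proposal supplies an argument where the survey supplies only a citation. Your route is the standard one and the metric part of it is correct and essentially complete. The tripod lemma is proved correctly (downward-closedness of $\{t:\gamma(t)\in[x,z]\}$ via unique geodesy, maximality giving $[c,y]\cap[c,z]=\{c\}$, then the hypothesis of (ii) applied at $c$), the bookkeeping in $(ii)\Rightarrow(iii)$ checks out (both arguments of the max reduce to $\alpha+\beta+p+q$ up to the $2\gamma$ term, and the claim becomes the triangle inequality along $[z,m]\cup[m,m']\cup[m',w]$), and the three applications of the four-point condition in $(iii)\Rightarrow(ii)$ do exactly what you say: each relevant max has both arguments equal, forcing $d(p_0,z)=d(x,z)-\varepsilon$, $d(q_0,y)=d(x,y)-\varepsilon$ and finally $d(p_0,q_0)=0$. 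One cosmetic slip: for unique geodesy the quadruple must be $(a,b,c,d)=(p,q,\sigma(t),\tau(t))$, not $(p,\sigma(t),q,\tau(t))$, to make the left-hand side $d(p,q)+d(\sigma(t),\tau(t))$ under the convention $d(a,b)+d(c,d)\le\max\{d(a,c)+d(b,d),d(a,d)+d(b,c)\}$; as written the inequality is vacuous.

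The one substantive thin spot is $(ii)\Rightarrow(i)$, i.e.\ ruling out embedded circles. A circle in $X$ is a purely topological object, so you must show that \emph{every arc} from $p$ to $q$ has image $[p,q]$, and the 1-Lipschitz property of the nearest-point projection $\pi$ onto $[p,q]$ is not by itself enough: continuity of $x\mapsto\pi(x)$ only gives that the projection of an arc covers $[p,q]$, not that the arc itself does, and the naive disconnection argument fails because $\pi^{-1}(m)$ contains points other than $m$. What one actually needs is the stronger standard fact that every geodesic from $x$ to a point of $[p,q]$ passes through $\pi(x)$ (equivalently, $d(x,y)=d(x,\pi(x))+d(\pi(x),\pi(y))+d(\pi(y),y)$ whenever $\pi(x)\ne\pi(y)$); with this, an arc avoiding an interior point $m$ of $[p,q]$ yields a contradiction by continuity at the first parameter where the projection hits $m$. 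This is indeed in the references (e.g.\ \cite{Chi01}), and you flag it as deferred, so I would not call it a gap so much as the one step whose ``standard'' justification needs the right lemma rather than the one you named.
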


The fact that $\Real$-trees are exactly the geodesic $0$-hyperbolic spaces follows from a very important result of Alperin and Bass \cite[Theorem 3.17]{AlpBas87} (see also \cite[Chapter 2, Exercise 8]{GhyHar90} and is the basic ingredient for proving the following characterization of $\Real$-trees using our notion of $W$-hyperbolic space.
 
\begin{proposition}\label{habil-char-R-trees}
Let $(X,d)$ be a metric space. The following are equivalent:
\begin{enumerate}
\item $X$ is an $\Real$-tree;
\item  there exists a convexity mapping $W$ such that $(X,d,W)$ is a $W$-hyperbolic space satisfying for all $x,y,z,w\in X$,
\begin{eqnarray*}
d(x,y)+d(z,w)\leq \max\{d(x,z)+d(y,w),d(x,w)+d(y,z)\}.
\end {eqnarray*}
\end{enumerate}
\end{proposition}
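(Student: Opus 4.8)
The plan is to deduce both implications from Proposition \ref{habil-R-trees-equiv-char}, whose item (iii) characterizes $\Real$-trees as exactly the geodesic spaces satisfying the Gromov $0$-hyperbolicity condition; note that the $4$-point inequality displayed in item (ii) of the present proposition is precisely (\ref{habil-Q(delta)-ineq}) with $\delta=0$.

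The direction $(ii)\Rightarrow(i)$ is then immediate: if $W$ is a convexity mapping making $(X,d,W)$ a $W$-hyperbolic space, then, as recorded in Section \ref{hyperbolic-spaces}, each map $\gamma_{xy}$ of (\ref{W-def-geodesic}) is a geodesic joining $x$ and $y$, so $X$ is a geodesic space; since $X$ also satisfies (\ref{habil-Q(delta)-ineq}) with $\delta=0$ by hypothesis, part (iii) of Proposition \ref{habil-R-trees-equiv-char} gives that $X$ is an $\Real$-tree.

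The work lies in $(i)\Rightarrow(ii)$. Assuming $X$ is an $\Real$-tree, part (ii) of Proposition \ref{habil-R-trees-equiv-char} tells us that $X$ is uniquely geodesic and that the union of two geodesic segments meeting in a single point is again a geodesic segment; in particular, for $x,y\in X$ and $\lambda\in[0,1]$ there is a unique point of $[x,y]$ at distance $\lambda d(x,y)$ from $x$, which we declare to be $W(x,y,\lambda)$. It then remains to verify $(W1)$--$(W4)$ for this $W$, after which the $4$-point inequality of item (ii) is nothing but the $0$-hyperbolicity of the $\Real$-tree $X$ (part (iii) of Proposition \ref{habil-R-trees-equiv-char}), a statement about $d$ alone, so the proof concludes. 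Properties $(W2)$ and $(W3)$ fall out of the definition of $W$ via the distance-preserving parametrization of the unique segment $[x,y]$. For $(W1)$ and $(W4)$ I would use the tripod structure of geodesic triangles: given three points $x,y,z$, the union-of-segments property forces a branch point $c$ with $[x,y]=[x,c]\cup[c,y]$, $[y,z]=[y,c]\cup[c,z]$, $[z,x]=[z,c]\cup[c,x]$, so that $d(p,q)=d(p,c)+d(c,q)$ whenever $p$ and $q$ lie on two different legs; substituting these additive identities into $(W1)$ and $(W4)$ reduces them to elementary inequalities among collinear points of $[x,y]$, with $(W4)$ requiring a short case distinction according to the relative positions on $[x,y]$ of the two branch points attached to the two endpoint pairs.

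An alternative, and perhaps cleaner, treatment of $(i)\Rightarrow(ii)$ is to note that an $\Real$-tree is a $CAT(0)$ space — its geodesic triangles are tripods and hence "thinner" than any Euclidean comparison triangle — and then to apply Proposition \ref{habil-char-CAT0}; the convexity mapping it supplies must agree with the one defined above, since $CAT(0)$ spaces are uniquely geodesic and $(W1)$--$(W3)$ already force $W(x,y,\lambda)$ to be the parameter-$\lambda$ point of $[x,y]$. Either way, the only real obstacle is checking that the canonical geodesic-interpolation map of an $\Real$-tree satisfies $(W1)$--$(W4)$, i.e. that $\Real$-trees are $W$-hyperbolic spaces; the tripod description of triangles makes every axiom a one-dimensional computation, the case analysis for $(W4)$ being the only mildly delicate point.
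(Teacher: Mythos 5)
Your proposal is correct and follows exactly the route the paper indicates: both implications are reduced to the Alperin--Bass characterization of $\Real$-trees as geodesic $0$-hyperbolic spaces (Proposition \ref{habil-R-trees-equiv-char}(iii)), with the only substantive work being the verification that the canonical geodesic-interpolation map of an $\Real$-tree satisfies $(W1)$--$(W4)$, which your tripod argument (or the detour through $CAT(0)$ and Proposition \ref{habil-char-CAT0}) handles correctly.
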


\subsection{Asymptotic centers and fixed point theory of nonexpansive mappings}\label{UCW-as-cen-fpp} 

The asymptotic center technique, introduced by Edelstein \cite{Ede72,Ede74}, is one of the most useful tools in metric fixed point theory of nonexpansive mappings in uniformly convex Banach spaces, due to the fact that bounded sequences have unique asymptotic centers with respect to closed convex subsets.

Let us recall basic facts about asymptotic centers. We refer to \cite{Ede72,Ede74,GoeRei84,GoeKir90} for details.

Let $(X,d)$ be a metric space, $(x_n)$ be a bounded sequence in $X$ and $C\se X$ be a nonempty subset of $X$. We define the following functionals:
\bua
r_m(\cdot, (x_n)):X\to [0,\infty), \quad r_m(y,(x_n))&=&\sup\{d(y,x_n)\mid n\geq m\}\\
&& \text{for~}m\in\N,\\
r(\cdot, (x_n)):X\to[0,\infty), \quad r(y,(x_n)) &=&\limsup_{n}d(y,x_n)=\inf_{m}r_m(y,(x_n)) \label{def-r-xn}\\
&=&\lim_{m\to\infty}r_m(y,(x_n)).
\eua
The following lemma collects some basic properties of the above functionals. 

\blem\label{UCW-prop-rm-r}
Let $y\in X$.
\be
\item $r_m(\cdot, (x_n))$ is nonexpansive for all $m\in \N$;
\item $r(\cdot, (x_n))$ is continuous and $r(y,(x_n))\to\infty$ whenever $d(y,a)\to\infty$ for some $a\in X$;
\item $r(y, (x_n))=0$ if and only if $\limn x_n=y$;
\item if $(X,d,W)$ is a convex metric space and $C$ is convex, then $r(\cdot, (x_n))$ is a convex function.
\ee
\elem

The {\em asymptotic radius of } $(x_n)$ {\em with respect to} $C$ is defined by
\[r(C,(x_n))  = \inf\{r(y,(x_n))\mid y\in C\}.\]
The  {\em asymptotic radius} of $(x_n)$, denoted by $r((x_n))$, is the asymptotic radius of $(x_n)$ with respect to $X$, that is $r((x_n))=r(X,(x_n))$.

A point $c\in C$ is said to be an {\em asymptotic center} of $(x_n)$ {\em with respect to } $C$ if
\[r(c,(x_n))= r(C,(x_n))=\min\{r(y,(x_n))\mid y\in C\}.\]
We denote with $A(C,(x_n))$ the set of asymptotic centers of $(x_n)$ with respect to $C$. When $C=X$, we call $c$ an {\em asymptotic center} of $(x_n)$ and we use the notation $A((x_n))$ for $A(X,(x_n))$. 

The following lemma, inspired by \cite[Theorem 1]{Ede74}, turns out to be very useful in the following. 
 
\blem\label{UCW-useful-unique-as-center} \cite{Leu08}\\
Let $(x_n)$ be a bounded sequence in $X$ with $A(C,(x_n))=\{c\}$ and  $(\alpha_n),(\beta_n)$ be real sequences such that $\alpha_n\geq 0$ for all $n\in\N$, $\limsup_n \alpha_n\leq 1$ and $\limsup_n \beta_n\leq 0$.\\
 Assume that $y\in C$  is such that there exist $p,N\in\N$ satisfying 
\[\forall n\ge N\bigg(d(y,x_{n+p})\leq \alpha_nd(c,x_n)+\beta_n\bigg).\] 
Then $y=c$.
\elem

In general, the set $A(C,(x_n))$ of asymptotic centers of a bounded sequence $(x_n)$ with respect to $C\se X$ may be empty or, on the contrary, contain infinitely many points. 

The following result shows that in the case of complete $UCW$-hyperbolic spaces, the situation is as nice as for uniformly convex Banach spaces (see, for example, \cite[Theorem 4.1]{GoeRei84}).

\bprop\label{UCW-unique-ac}\cite{Leu08}\\
Let $(X,d,W)$ be a complete $UCW$-hyperbolic space. Every bounded sequence $(x_n)$ in $X$ has a unique asymptotic center with respect to any closed convex subset $C$ of $X$.
\eprop

As an application of Proposition \ref{UCW-unique-ac} and Lemma \ref{UCW-useful-unique-as-center}, we can prove the following characterization of the fact that a nonexpansive mapping $T:C\to C$ has fixed points. 

\bthm\cite{Leu08}\label{UCW-equiv-T-fpp}\\
Let $C$  be a convex closed subset of a complete $UCW$-hyperbolic space  $(X,d,W)$ and $T:C\to C$ be nonexpansive.
The following are equivalent.
\be
\item\label{UCW-T-f}  $T$ has fixed points;
\item\label{UCW-un-bounded-Tun-un} $T$ has a bounded approximate fixed point sequence;
\item for all $x\in C$ there exists $b>0$ such that $T$ has approximate fixed points in a $b$-neighborhood of $x$;
\item there exist $x\in C$ and $b>0$ such that $T$ has approximate fixed points in a $b$-neighborhood of $x$;
\item\label{UCW-P-bounded-exists-x} the sequence $(T^nx)$ of Picard iterates is bounded for some $x\in C$;
\item\label{UCW-P-bounded-forall-x} the sequence $(T^nx)$ of Picard iterates is bounded for all $x\in C$.
\ee
\ethm

As an immediate consequence we obtain the generalization to complete $UCW$-hyperbolic spaces of the  Browder-G\" ohde-Kirk Theorem.

\bcor\label{UCW-BGK}
Let $C$  be a bounded convex closed subset of a complete $UCW$-hyperbolic space  $(X,d,W)$ and $T:C\to C$ be nonexpansive. Then $T$ has fixed points.
\ecor

\section{Logical metatheorems}\label{logical-meta}

In this section we give an informal presentation of the general logical metatheorems proved by Kohlenbach \cite{Koh05}  and Gerhardy-Kohlenbach \cite{GerKoh08}. We refer to Kohlenbach's book \cite{Koh08-book} for a comprehensive treatment.

The system ${\cal A}^{\omega}$  of so-called {\em weakly extensional} classical analysis goes back to Spector 
\cite{Spe62}. It is formulated in the language of functionals of finite types and consists of a finite type extension $\mathbf{PA}^\omega$ of first order Peano arithmetic $\mathbf{PA}$ and the axiom schema of dependent choice in all types, which implies countable choice and hence comprehension over natural numbers. Full second order arithmetic in the sense of reverse mathematics \cite{Sim89} is contained in ${\cal A}^{\omega}$ if we identify subsets of $\N$ with their characteristic functions. 

Let us recall the so-called {\em Axiom of Countable Choice}: \, For each set $B$ and each binary relation $P\se \N\times B$ between natural numbers and members of $B$,
\bua
\forall n\in \N\,\exists y\in B \, P(n,y) \quad \Ra \quad \exists f:\N\to B\, \forall n\in \N \, P(n,f(n)).
\eua
In contrast to the full Axiom of Choice which demands the existence of choice functions $f:A\to B$ for arbitrary sets $A,B$, the Axiom of Countable Choice justifies only a sequence of independent choices from an arbitrary set $B$ which successively satisfy the conditions
\[P(0,f(0)),\,\, P(1,f(1)),\,\, P(2,f(2)),\ldots\]

A stronger axiom is the {\em Axiom of Dependent Choice (DC)}:\, 
For each set $A$ and each relation  $P\se A\times A$,
\[
a\in A \text{ and } \forall x\in A\,\exists y\in A \, P(x,y) \quad \Ra \quad \exists f:\N\to A\big[f(0)=a\text{ and }   \forall n\in \N \, P(f(n),f(n+1))\big].
\]
The Axiom of Dependent Choice also justifies only a sequence of choices, where, however, each of them may depend on the previous one, since they must now satisfy the conditions
\[P(f(0),f(1)),\,\, P(f(1),f(2)),\,\, P(f(2),f(3)), \ldots\]
It is easy to see that the Axiom of Choice implies the Axiom of Dependent Choice, which implies further the Axiom of Countable Choice. 

The axiom scheme of {\em Comprehension over natural numbers} says that
\[
\exists f:\N\to\N \, \forall n\in\N \big(f(n)=0 \Lra A(n)\big),
\]
where $A(n)$ is an arbitrary formula in our language, not containing $f$ free but otherwise with arbitrary parameters. We refer to the very nice monograph \cite{Mos06} for details on set theory. 

The set ${\bf T}$ of all {\em finite types} is defined inductively by the clauses:
\be
\item $0,\in {\bf T}$;
\item if $\rho,\tau\in {\bf T}$ then $(\rho\ra\tau)\in {\bf T}$.
\ee
We usually omit the outermost parentheses for types. The intended interpretation of the base type $0$ is the set of natural numbers $\N=\{0,1,2,\ldots\}$. Objects of type $\rho\ra\tau$ are functions which map objects of type $\rho$ to objects of type $\tau$.  For example, $0\to 0$ is the type of functions $f:\N\to \N$ and $(0\to 0)\to 0$ is the type of operations $F$ mapping such functions $f$ to natural numbers. 

Any type  $\rho\ne 0$ can be uniquely written in the normal form  $\rho=\rho_1\ra(\rho_2\ra\ldots\ra(\rho_n\ra 0)\ldots )$ (for suitable $n\ge 1$ and types $\rho_1,\ldots, \rho_n$), which is usually abbreviated by $\rho=\rho_1\ra\rho_2\ra\ldots\ra\rho_n\ra 0$ if it is clear to which types $\rho_1,\ldots,\rho_n$ we refer and there is no danger of confusion.

We use the notation $\ul{x}$ for tuples of variables $\ul{x}=x_1,\ldots, x_n$ and $\ul{\rho}$ for tuples of types $\ul{\rho}=\rho_1,\ldots,\rho_n$. When we write $\ul{x}^{\ul{\rho}}$ we mean that each $x_i$ has type $\rho_i$. The notations $\ul{x}^{\rho}$ or $\ul{x}\in\rho$ mean that each $x_i$ is of type $\rho$.

The set {\bf P} $\subset {\bf T}$ of {\em pure types} is defined inductively by: (i) $0\in {\bf P}$  and (ii) if $\rho\in{\bf P}$, then $\rho\to 0\in {\bf P}$. Pure types are often denoted by natural numbers: $0\ra 0=1, \,\, (0\to 0)\to 0=2$, in general $n\to 0=n+1$.

The {\em degree} (or {\em type level}) $deg(\rho)$ of a type $\rho$ is defined as
\[ deg(0):=0, \quad deg(\rho\to\tau ):=\max(deg(\tau),deg(\rho)+1). \]
Note that for pure types $\rho$, $deg(\rho)$ is just the number which denotes $\rho$. Objects of type $\rho$ with $deg(\rho)>1$ are usually called {\em functionals}.

We shall denote formulas with $A,B,C,\ldots$ and quantifier-free formulas with $A_0,B_0,C_0,\ldots$. A formula $A$ is said to be {\em universal} if it has the form $A\equiv \forall\ul{x}\,A_0(\ul{x},\ul{a})$, where $\ul{x},\ul{a}$ are tuples of variables. Similarly, $A$ is an {\em existential } formula if $A\equiv \exists\, \ul{x}\,A_0(\ul{x},\ul{a})$.

Furthermore, $A$ is called a $\ds\Pi_n^0$-formula if it has $n$-alternating blocks of equal quantifiers starting with a block of universal quantifiers, that is
\[\forall \ul{x_1}\,\exists \ul{x_2}\ldots \forall/\exists \ul{x_n} A_0(\ul{x_1},\ldots,\ul{x_n},\ul{a}).\]
 If the formula starts with a block of existential quantifiers, that is
\[\exists \ul{x_1}\,\forall \ul{x_2}\ldots \forall/\exists \ul{x_n} A_0(\ul{x_1},\ldots,\ul{x_n}\ul{a}),\]
it is called a $\ds\Sigma_n^0$-formula.

We only include equality $=_0$ between objects of type $0$ as a primitive predicate. Equality between objects of higher types is defined {\em extensionally}: \, if $\rho=\rho_1\ra\ldots\ra\rho_n\ra 0$ and $s,t$ are terms of type $\rho$, then
\bua
s=_\rho t \,:= \, \forall y_1^{\rho_1},\ldots, y_n^{\rho_n} \big(sy_1\ldots y_n=_0 ty_1\ldots  y_n\big),\label{def-equality-rho}
\eua
where $y_1,\ldots, y_n$ are variables not occurring in $s,t$.

Instead of the full axiom of extensionality in all types, the system ${\cal A}^{\omega}$ only has a {\em quantifier-free rule of extensionality}:
\bua
\displaystyle\frac{A_0 \rightarrow s=_{\rho} t}{A_0
\rightarrow r[s/x]=_{\tau} r[t/x]}\,,\\
\eua
where $A_0$ is a quantifier-free formula, $s,t$ are terms of type $\rho$, $r$ is a term of type $\tau$, and $r[s/x]$ (resp. $r[t/x]$) is the result of replacing every occurrence of $x$ in $r$ by $s$ (resp. $t$). 
We refer to \cite{Koh05} for an extensive discussion of extensionality issues. 

In the sequel, we briefly recall the representation of real numbers in  $\aomega$. We refer to \cite[Chapter 4]{Koh08-book} for details. 

We will most times use $\N$ instead of $0$ and $\N^\N$ instead of $1$, say "natural numbers" instead of "objects of type 0", and write $n\in\N$ instead of $n^0$, respectively $f:\N\to\N$ instead of $f^1$. 

Rational numbers are represented as codes $j(n,m)$ of pairs of natural numbers: $j(n,m)$ represents the rational number $\ds\frac{\frac{n}{2}}{m+1}$ if $n$ is even, and the negative rational number $\ds -\frac{\frac{n+1}{2}}{m+1}$ otherwise. Here we use the surjective Cantor pairing $j$, defined by 
$\ds j(n,m)=\frac12(n+m)(n+m+1)+m.$

As a consequence, each natural number codes a uniquely determined rational number. An equality $=_\Q$ on the representatives of the rational numbers (i.e. on $\N$) together with operations $+_\Q,-_\Q,\cdot_\Q$  and predicates $<_\Q,\le_Q$ are defined primitive recursively in a natural way.  

In order to express the statement that $n$ represents the rational $r$, we write $n=_\Q \langle r\rangle$ or simply $n=\langle r\rangle$. Since a rational number $r$ possesses infinitely many representatives, $\langle \cdot \rangle$ is not a function. In fact, rational numbers are equivalence classes on $\N$ with respect to $=_\Q$, but one can avoid formally introducing the set $\Q$ of all these equivalence classes. An alternative is to select a {\em canonical representative} by defining
\beq
c:\N\to\N, \quad c(n):=_0 \min\, m\le_0 n [n=_\Q m]. \label{rep-rational-canonical}
\eeq
Then $c(n)$ is the code of the irreducible fraction representing the rational number encoded by $n$. It is clear that $c(n)=_\Q n$ and $n=_\Q m\to c(n)=_\Q c(m)$. 

$\N$ can be naturally embedded into our representation of $\Q$ via $n\mapsto \langle n\rangle :=j(2n,0), \,\,0_{\Q} :=\langle 0\rangle, \,\, 1_{\Q} :=\langle 1\rangle$. 
Then $(\N,+_{\Q},\cdot_{\Q},0_{\Q},1_{\Q},<_{\Q})$ is an ordered field, which represents $(\Q,+,\cdot,0,1,<)$ in  $\aomega$.

Each function $f:\N\rightarrow\N$ can be conceived of as an infinite sequence of codes of rationals and therefore as a representative of a sequence of rationals. Real numbers are represented by functions $f:\N\rightarrow\N$ such that 
\beq
\ \aquant n\in\N\big( |f(n+1)-_{\Q} f(n)|_{\Q} <_{\Q} 2^{-n}\big)\label{Aomega-rep-reals}
\eeq
For better readability, we usually write $2^{-n}$  instead of its (canonical) code $\langle 
2^{-n} \rangle:=j\left(2,2^n-1\right)$.

(\ref{Aomega-rep-reals}) implies that for all $m,n,p\in\N$ with $m\ge n$,
\[ |f(m+p)-_{\Q} f(m)|_{\Q} \le_{\Q} 
\sum^{m+p-1}_{i=m} |f(i+1)-_{\Q} f(i)|_{\Q} \le_{\Q} \sum ^{\infty}_{i=n} |f(i+1)-_{\Q} f(i)|_{\Q} < 2^{-n},\]
hence each $f$ satisfying (\ref{Aomega-rep-reals}) in fact represents a Cauchy sequence of rationals with Cauchy modulus $2^{-n}$. In order to guarantee that each function $f:\N\to\N$ codes a real number, we use the following construction:
\beq
\widehat{f}(n):= \left\{ \ba{l} f(n) \ \mbox{if} \ \aquant k<n 
\big( |f(k+1)-_{\Q} f(k)|_{\Q} <_{\Q} 2^{-k-1} \big),
\\[0.2cm]
f(k) \ \mbox{for the least} \ k<n \ \mbox{with} \ |f(k+1)-_{\Q} 
f(k)|_{\Q} \ge_{\Q}  2^{-k-1} \quad\mbox{otherwise.} \ea 
\right. 
\eeq
Then $\widehat{f}$ always satisfies $(\ref{Aomega-rep-reals})$ and, moreover, if $(\ref{Aomega-rep-reals})$ is already valid for $f$, then $\aquant n(fn=_0 \widehat{f} n)$. Thus each function $f:\N\to\N$ codes a 
uniquely determined real number, namely the real number which is given 
by the Cauchy sequence coded by $\widehat{f}$. The construction $f\mapsto\widehat{f}$ allows us to reduce 
quantification over $\R$ to $\forall f:\N\to\N$ resp. $\exists f:\N\to\N$  without adding further quantifiers. 
This also holds for the operations on $\R$ defined below. 

On the representatives of real numbers, i.e. on the functions $f_1,f_2:\N\to\N$, one defines the relations $=_{\R}, <_{\R}$ and $\le_\R$:
\bua
f_1 =_{\R} f_2 &:\equiv & \aquant n\big( |\widehat{f}_1 (n+1)-_{\Q} \widehat{f}_2 (n+1)|_{\Q} <_{\Q} 2^{-n}\big), \\
f_1 <_{\R} f_2 &:\equiv & \exists n\big( \widehat{f}_2 (n+1)-_{\Q} \widehat{f}_1 (n+1) \ge_{\Q} 2^{-n}\big), \\
f_1 \le_{\R} f_2 &:\equiv& \neg (f_2 <_{\R} f_1).
\eua
Hence, the relations $=_\R, \le_\R$ are given by $\Pi_1^0$ predicates, while $<_\R$ is given by a $\Sigma_1^0$ predicate. 

The operations $+_\R,-_\R,\cdot_\R$, etc. on representatives of real numbers can be defined by primitive recursive functionals. If $n=\langle r\rangle$  codes the rational number $r$, then $\lambda k.n$ represents $r$ as a real number. Thus, $0_{\R} :=\lambda k.0_{\Q},\,\,1_{\R} :=\lambda k.1_{\Q}$ and $\left(2^{-n}\right)_\R:=\lambda k.j(2.2^n-1)$; we shall write simply $2^{-n}$ for $\left(2^{-n}\right)_\R$. $\R$ denotes the set of all equivalence classes on  $\N^\N$ with respect to $=_{\R}$. As in the case of $\Q$, we use $\R$ 
only informally and deal exclusively with the representatives and the operations defined on them. One  can verify that $\left(\N^\N,+_\R,\cdot_\R,0_\R,1_\R,<_\R\right)$ is an Archimedean ordered field which represents $(\R,+,\cdot,0,1,<)$ in $\aomega$.

In the sequel, we need a semantic operator  which for any real number $x\in[0,\infty)$ selects out of all the representatives $f:\N\to\N$ of $x$  a unique representative $(x)_\circ$  satisfying some "nice" properties. For  any $x\in[0,\infty)$, $(x)_\circ:\N\to\N$  is defined by
\beq
(x)_0(n):=j(2k_0,2^{n+1}-1), \quad \text{where~~} k_0:=\max \,k\left[\frac k{2^{n+1}}\leq x\right]. \label{def-circ-rep}
\eeq

\blem \cite[Lemma 17.8]{Koh08-book}
Let $x\in[0,\infty)$. Then
\be
\item $(x)_\circ$ is a representative of $x$, so $\widehat{(x)_\circ}=_{\N\to\N}(x)_\circ$;
\item if  $x,y\in[0,\infty)$ and $x\le y$, then  $(x)_\circ\le_\R (y)_\circ$ 
and $(x)_\circ\le_{\N\to\N} (y)_\circ$, i.e. $\forall n\in N\big((x)_\circ(n)\le_\N (y)_\circ(n)\big)$;
\item $(x)_\circ$ is monotone, that is $\forall n\in N\big((x)_\circ(n)\le_\N (x)_\circ(n+1)\big)$.
\ee
\elem

Since the interval $[0,1]$ will play a very important role in the theory of $W$-hyperbolic spaces, we use for it a special representation by number theoretic functions $\N\to\N$. For every $\lambda:\N\to\N$, let us define
\beq
\tilde{\lambda}:=\lambda n.j(2k_0, 2^{n+2}-1), \quad \text{where~~} k_0=\max k\le 2^{n+2}\left[\frac{k} {2^{n+2}}\le_\Q \widehat{\lambda}(n+2)\right] \label{def-tilde-rep}
\eeq
($k_0:=0$ if no such $k$ exists; recall that $j(2k_0, 2^{n+2}-1)$ encodes the rational number $k_0/2^{n+2}$).

It is easy to verify the following properties.

\blem\cite[Lemma 4.25]{Koh08-book}
Provably in ${\cal A}^\omega$, for all $\lambda,\theta:\N\to\N$:
\be
\item $0_\R\leq_\R \lambda\leq_\R 1_\R \ra \tilde{\lambda}=_\R \lambda$,  $\,\,\lambda>_\R 1_\R \ra  \tilde{\lambda}=_\R 1_\R$  and $\lambda<_\R 0_\R \ra  \tilde{\lambda}=_\R 0_\R$,
\item $0_\R\leq_\R \tilde{\lambda}\leq_\R 1_\R$,
\item $\lambda =_\R\theta\ra \widehat{\lambda}=_\R \widehat{\theta}$,
\item $\tilde{\lambda}\leq_1 M:=\lambda n.j(2^{n+3}, 2^{n+2}-1)$.
\ee
\elem

\subsection{Logical metatheorems for metric and $W$-hyperbolic spaces}

In order to be able to talk about arbitrary metric spaces, we axiomatically add general metric spaces $(X,d)$ to our system $\aomega$, resulting in a theory ${\cal A}^\omega[X,d]_{-b}$ which is based on two ground types $\N,X$ rather  than only $\N$. Hence, the theory ${\cal A}^\omega[X,d]_{-b}$ for abstract metric spaces is an extension of ${\cal A}^\omega$ defined as follows:
\be
\item extend ${\bf T}$ to the set ${\bf T}^X$ of all finite types over the ground types $\N$ and $X$, that is:
\[\N,X\in {\bf T}^X \quad \text{and}\quad  \rho,\tau\in {\bf T}^X\,\, \Ra \,\, \rho\ra\tau\in {\bf T}^X;\] 
\item extend all the axioms and rules of $\aomega$ to the new set of types ${\bf T}^X$;
\item add a constant $0_X$ of type $X$;
\item add a new constant $d_X$ of type $X\to X\to \N^\N$ together with the axioms
\be
\item[(M1)] $\forall x^X \, \big(d_X(x,x) =_\R 0_\R\big)$,
\item[(M2)] $\forall x^X, y^X \, \big(d_X(x,y)=_\R d_X(y,x)\big)$, 
\item[(M3)] $\forall x^X, y^X, z^X \, \big(d_X(x,z)\leq_\R d_X(x,y)+_\R d_X(y,z)\big).$
\ee
\ee

We use the subscript $_{-b}$ here and for the theories defined in the sequel in order to be consistent with the notations from \cite{Koh08-book}.

Equality $=_X$ between objects of type $X$ is defined by $x=_X y:\equiv d_X(x,y)=_\R 0_\R$ and equality for complex types is defined as before as extensional equality using $=_\N$ and $=_X$ for the base cases. The new axioms  (M1)-(M3) of $\aomegaxd$ express that $d_X$ represents  a pseudo-metric $d$ on the domain the variables of type $X$ are ranging over. Thus, $d_X$ represents a metric on the set of equivalence classes generated by $=_X$. We do not form these classes explicitly, but talk instead about representatives $x^X,y^X$. As a consequence, we have to keep in mind that a functional $f^{X\to X}$ represents a function $X\to X$ only if it respects this equivalence relation, i.e. $\forall x^X,y^X\big(x=_X y\ra f(x)=_X f(y)\big)$. However, the mathematical properties of the functions considered in applications of proof mining usually imply their full extensionality.

The theory ${\cal A}^\omega[X,d,W]_{-b}$ for $W$-hyperbolic spaces results from ${\cal A}^\omega[X,d]_{-b}$ by adding a new constant $W_X$ of type $X\to X\to \N^\N\to X$ together with the axioms:
\be
\item[] \hspace*{-0.5cm}$\forall x^X, y^X, z^X\, \forall \lambda:\N\to\N\bigg(d_X(z,W_X(x,y,\lambda)) \le_\R  (1_\R-_\R\tilde{\lambda})\cdot_\R d_X(z,x)+_\R\tilde{\lambda}\cdot_\R d_X(z,y)\bigg)$,
\item[] \hspace*{-0.5cm}$\forall x^X, y^X\,\forall \lambda_1:\N\to\N,\lambda_2:\N\to\N\bigg( d_X(W_X(x,y,\lambda),W_X(x,y,\tilde{\lambda}))  =_\R|\tilde{\lambda_1}-_\R\tilde{\lambda_2}|_\R \cdot_\R 
d_X(x,y)\bigg)$,
\item[] \hspace*{-0.5cm}$\forall x^X, y^X\,\forall  \lambda:\N\to\N\bigg(W_X(x,y,\lambda) =_X W_X(y,x,1_\R-_\R\lambda)\bigg)$,
\item[] \hspace*{-0.5cm}\mbox{$\forall x^X\!, \!y^X\!, \!z^X,\!w^W\forall \lambda
\,\bigg(d_X(W_X(x,z,\lambda),W_X(y,w,\lambda))\le_\R (1_\R-_\R\tilde{\lambda})\cdot_\R d_X(x,y)+_\R\tilde{\lambda}\cdot_\R d_X(z,w)\bigg).$}
\ee
In the above axioms, $\tilde{\lambda}$ is defined by (\ref{def-tilde-rep}). 

\bdfn
Let $X$ be a nonempty set. The {\em full-theoretic type structure} $S^{\omega,X}:=\left<S_\rho\right>_{\rho\in {\bf T}^X}$ over $\N$ and $X$ is defined as follows:
$$S_\N:=\N, \quad S_X:=X \quad \text{and  ~~} S_{\rho\rightarrow \tau}:=S_\tau^{S_\rho},$$
where $S_\tau^{S_\rho}$ is the set of all set-theoretic functions $S_\rho\rightarrow S_\tau$.
\edfn 

Let $(X,d)$ be a metric space. $S^{\omega,X}$ becomes a model of $\aomegaxd$ by letting the variables of type $\rho$ range over $S_\rho$, giving the natural interpretations to the constants of $\aomega$, interpreting  $0_X$ by an arbitrary element in $X$ and $d_X(x,y)$ (for $x,y\in X$) by $(d(x,y))_\circ$, where $(\cdot)_\circ$ refers to (\ref{def-circ-rep}).

If, moreover, $(X,d,W)$ is a $W$-hyperbolic space, then $S^{\omega,X}$ becomes a model of $\aomegaxdw$ if we interpret $W_X(x,y,\lambda)$  (for $x,y\in X,\lambda:\N\to\N$) as  $W(x,y,r_{\tilde{\lambda}})$ where $r_{\tilde{\lambda}}$ is the uniquely determined real number in $[0,1]$ which is represented by $\tilde{\lambda}$.

\begin{definition}\label{habil-model-metric+W-hyp}
We say that a sentence in the language ${\cal L}(\mathcal{A}^\omega[X,d]_{-b})$ of $\aomegaxd$ holds in a nonempty metric space $(X,d)$ if it holds in the models of $\mathcal{A}^\omega[X,d]_{-b}$ obtained from $S^{\omega,X}$ as specified above. 

The notion that a sentence of ${\cal L}(\mathcal{A}^\omega[X,d,W]_{-b})$ holds in a nonempty $W$-hyperbolic space is defined similarly.
\end{definition}

From now on, in order to improve readability, we shall usually omit the subscripts $_\N, _\R,_\Q,_X$ excepting the cases where such an omission could create confusions. We shall write, for example, $x\in X, T:X\to X$ instead of $x^X, T^{X\to X}$ and sometimes $x\in \rho$ instead of $x^\rho$. 

The notion of {\em majorizability} was originally introduced by Howard \cite{How73}, and subsequently modified by Bezem \cite{Bez85}. For any type $\rho\in {\bf T}^X$, we define the type $\widehat{\rho}\in {\bf T}$, which is the result of replacing all occurrences of the type $X$ in $\rho$ by $\N$. Based on Bezem's notion of {\em strong majorizability} {\em s-maj} \cite{Bez85}, Gerhardy and Kohlenbach \cite{GerKoh08} defined a parametrized {\em $a$-majorization} relation $\gtrsim^a_\rho$ between objects of type $\rho\in {\bf T}^X$ and their majorants of type $\widehat{\rho}\in {\bf T}$, where the parameter $a$ of type $X$ serves as a reference point for comparing and majorizing elements of $X$:
\be
\item ${x^*}\gtrsim^a_\N  x:\equiv  x^* \ge x$ for $x,x^*\in\N$
\item ${x^*}\gtrsim^a_X x  :\equiv (x^*)_\R\ge_\R d(x,a)$ for $x^*\in\N, x\in X$,
\item $x^* \gtrsim^a_{\rho\rightarrow\tau} x :\equiv  \forall y^*,y(y^*\gtrsim^a_{\rho} y \rightarrow x^* y^*  \gtrsim^a_{\tau} xy)\wedge \forall z^*,z(z^*\gtrsim^a_{\hat{\rho}} z \rightarrow x^* z^* \gtrsim^a_{\hat{\tau}} x^*z)$.
\ee
Restricted to the types $\bf T$ the relation $\gtrsim^a$ is identical with  Bezems's strong majorizability {\em s-maj} and, hence, for $\rho\in \bf T$ we write {\em s-maj}$_\rho$ instead of $\gtrsim^a_\rho$, since in this case the parameter $a$ is irrelevant. 

If $t^*\gtrsim^a t$ for terms $t^*,t$, we say that $t^*$ {\em $a$-majorizes} $t$ or that $t^*$ is an {\em $a$-majorant} of $t$.  A term $t$ is said to be {\em $a$-majorizable} if it has an $a$-majorant and $t$ is said to be {\em majorizable} if it is $a$-majorizable for some $a\in X$. Since it can be shown that if a term $t$ is {\em $a$-majorizable} for some $a\in X$, then this is true for all $a\in X$ \cite[Lemma 17.78]{Koh08-book}, it follows that $t$ is majorizable if and only if it is  $a$-majorizable for each $a\in X$. Although the question whether or not a certain term is $a$-majorizable is independent from the particular choice of $a\in X$, the complexity and possible uniformities of the majorants may depend crucially on that choice.
If $t^*$ $a$-majorizes $t$ and does not depend on $a$, then we say that $t^*$ {\em uniformly} $a$-majorizes $t$. We will in general look for uniform majorants so as to produce uniform bounds.

\blem\label{lemma-T-majorizable}
Let $T:X\to X$. The following are equivalent.
\be
\item $T$ is majorizable;
\item for all $x\in\N$ there exists $\,\Omega:\N\to\N$ such that 
\beq
\forall n\in\N, y\in X\bigg(d(x,y) < n \ra d(x,Ty)\leq \Omega(n)\bigg) \label{modulus-maj-x}
\eeq
\item for all $x\in\N$ there exists $\,\Omega:\N\to\N$ such that 
\beq
\forall n\in\N, y\in X\bigg(d(x,y)\le  n \ra d(x,Ty)\leq \Omega(n)\bigg) \label{modulus-maj-x-leq}
\eeq
\ee
\elem
\begin{proof}
$T$ is majorizable if and only if $T$ is $x$-majorizable for each $x\in X$ if and only if for each $x\in X$ there exists a function $T^*:\N\to\N$ such that $T^*$ is monotone and satisfies
$$\forall n\in\N\, \forall y\in X\bigg(d(x,y)\le n \ra d(x,Ty)\le T^*n\bigg).$$
$(i)\Ra(iii)$ is obvious: take $\Omega:=T^*$. For the implication $(iii)\Ra(i)$, given, for $x\in X$, $\Omega$ satisfying (\ref{modulus-maj-x-leq}), define $\ds T^*n:=\max_{k\le n}\Omega(k)$. \\
$(iii)\Ra(ii)$ is again obvious. For the converse implication, given $\Omega$ satisfying  (\ref{modulus-maj-x}) define $\tilde{\Omega}(n):=\Omega(n+1)$. Then  $\tilde{\Omega}$ satisfies (\ref{modulus-maj-x-leq})
\end{proof}

In the sequel, given a majorizable function $T:X\to X$, an $\Omega$ satisfying (\ref{modulus-maj-x-leq}) will be called  a {\em modulus of majorizability at $x$} of $T$; we say also that $T$ is {\em x-majorizable with modulus} $\Omega$. We gave in the lemma above the equivalent condition (\ref{modulus-maj-x}) for logical reasons: since $<_\R$ is a $\Sigma_1^0$ predicate and $\le_\R$ is a $\Pi_1^0$ predicate, the formula in (\ref{modulus-maj-x}) is (equivalent to) a universal sentence. 

The following lemma shows that natural classes of mappings in metric or $W$-hyperbolic spaces are majorizable with a very "nice" modulus; its proof is implicit in the proof of \cite[Corollary 17.55]{Koh08-book}.

\blem\label{habil-classes-majorizable}
Let $(X,d)$ be a metric space. 
\be
\item If $(X,d)$ is bounded with diameter $d_X$, then any function $T:\N\to\N$ is majorizable with modulus of majorizabiliy  $\Omega(n):=\lceil d_X\rceil$ for each $x\in X$.
\item If $T:X\to X$ is $L$-Lipschitz, then $T$ is majorizable with modulus at $x$ given by $\Omega(n):=n+ L^*b$, where $b,L^*\in\N$  are such that $d(x,Tx)\leq b$ and $L\le L^*$. In particular, any nonexpansive mapping is majorizable with modulus $\Omega(n):=n+b$.
\item If $(X,d,W)$ is a $W$-hyperbolic space, then and uniformly continuous mapping $T:X\to X$ is majorizable with modulus  $\Omega(n):=n\cdot 2^{\alpha_T(0)}+1+b$ at $x$,  where $d(x,Tx)\leq b\in\N$ and $\alpha_T$ is a modulus of uniform continuity of $T$, i.e. $\alpha_T:\N\to\N$ satisfies
\[\forall x,y\in X\,\forall k\in\N\left(d(x,y)\leq 2^{-\alpha_T(k)}\ra d(Tx,Ty)\leq 2^{-k}\right).\]
\ee
\elem

Before stating  the main logical metatheorem, let us give a couple of definitions. Let $\rho\in{\bf T}^X$ be a type. We say that
\be
\item $\rho$ has degree $(0,X)$ if $\rho=X$ or $\rho=\N\to\ldots \ra\N\to X$;
\item $\rho$ is of degree $(1,X)$  if $\rho=X$ or has the form $\rho=\rho_1\ra\ldots \ra\rho_n\ra X$, where $n\ge 1$ and each $\rho_i$ has degree $\le 1$ or $(0,X)$.
\item $\rho$ has degree $1^*$ if $deg(\widehat{\rho})\le 1$.
\ee
A formula $A$ is called a {\em $\forall$-formula} (resp. {\em $\exists$-formula}) if it has the form $$A\equiv\forall\underline{x}^{\underline{\sigma}}A_0(\underline{x},\underline{a}) \quad\text{(resp. } A\equiv \exists \underline{x}^{\underline{\sigma}}A_0(\underline{x},\underline{a})),$$
where $A_0$  is a quantifier free formula and the types in $\underline{\sigma}$ are of degree $1^*$ or $(1,X)$.

We assume in the following that the constant $0_X$ does not occur in the formulas we consider. This is no restriction, since $0_X$ is just an arbitrary constant which could have been replaced by any new variable of type $X$ that, by taking universal closure, would just add another input that had to be $a$-majorized.  Whenever we write $A(\ul{x})$, we mean that $A$ is a formula in our language which has only the variables $\ul{x}$ free.

Very general metatheorems were proved first by Kohlenbach \cite{Koh05} for bounded metric ($W$-hyperbolic) spaces, and then generalized to the unbounded case  by Gerhardy and Kohlenbach \cite{GerKoh08}. In the following we give a simplified version of these metatheorems, specially designed for concrete applications in mathematics.

\begin{theorem}\label{habil-logic-main-thm}(see \cite[Corollary 17.54]{Koh08-book})\\
Let $P$ be $\N$ or $\N^\N$, $K$ be an $\mathcal{A}^\omega$-definable compact metric space, $\rho$ be of degree $1^*$,  $B_\forall(u,y,z,n)$ be a $\forall$-formula and $C_\exists(u,y,z,N)$ be a $\exists$-formula. 

Assume that $\mathcal{A}^\omega[X,d]_{-b}$ proves that
\begin{equation}
\forall u\in P\forall y\in K \forall z^\rho\bigg(\forall n\in\N\,  B_\forall\rightarrow \exists N\in\N \,C_\exists\bigg).
\end{equation}
Then one can extract a computable functional $\Phi:P\times \N^{(\N\times\ldots \times\N)}\to\N$ such that the following holds in all nonempty metric spaces $(X,d)$: \\

for all $z\in S_\rho, z^*\in \N^{(\N\times\ldots \times\N)}$, if there exists $a\in X$ such that $ z^*\gtrsim^a_\rho  z$, then 
$$\forall u\in P\forall y\in K \bigg(\forall n\le \Phi(u,z^*)\, B_\forall\ra \exists N\le \Phi(u,z^*)  \,C_\exists\bigg).$$
\end{theorem}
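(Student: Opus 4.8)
The plan is to follow the standard proof-mining route: apply the \emph{negative translation} to the assumed proof, then the \emph{monotone functional interpretation}, and finally read off $\Phi$ from the extracted majorizing terms. First I would invoke the soundness theorem for the (monotone) functional interpretation of $\aomegaxd$, which is an extension of Gödel's Dialectica interpretation to finite types over the two ground types $\N$ and $X$. The point that makes this routine to set up is that the new axioms (M1)--(M3) are purely universal ($\forall$-)formulas, so they are interpreted by themselves and introduce no new functionals; the dependent-choice schema inherited from $\aomega$ is interpreted, as in Spector's treatment, by Gödel primitive recursion together with bar recursion. The outcome of this step is a tuple of \emph{closed} terms $t$ of $\aomegaxd$ extended by bar recursors, so that the verifying system proves the monotone functional interpretation of the negative translation of the assumed sentence, the components of $t$ playing the role of majorants of the realizing functionals.

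Next I would unwind the logical form. Writing $B_\forall(u,y,z,n)\equiv\forall m\,B_0$ and $C_\exists(u,y,z,N)\equiv\exists k\,C_0$ with $B_0,C_0$ quantifier-free, the functional interpretation of
\[\forall u,y,z\,\big(\forall n\,\forall m\,B_0\to\exists N\,\exists k\,C_0\big)\]
asks for functionals $\mathcal N,\mathcal K$ producing the witnesses $N,k$, together with ``challenge'' functionals $\hat{\mathcal N},\hat{\mathcal M}$ producing the instances $n,m$ of the antecedent that have to be true, all taking $u,y,z$ as arguments, so that
\[B_0\big(u,y,z,\hat{\mathcal N}(u,y,z),\hat{\mathcal M}(u,y,z)\big)\to C_0\big(u,y,z,\mathcal N(u,y,z),\mathcal K(u,y,z)\big).\]
The monotone interpretation supplies, instead of $\mathcal N,\hat{\mathcal N}$ themselves, majorants $\mathcal N^{*},\hat{\mathcal N}^{*}$ of these (only their outputs are of type $\N$, and these are the only components needing a numerical bound; $k$ is unbounded in the conclusion and $\hat{\mathcal M}$ may be of degree $(1,X)$, but instantiation of $\forall m$ handles it). Setting $\Phi(u,z^{*})$ to be the maximum of the relevant majorants, one argues: if $\forall n\le\Phi\,B_\forall$ holds, then for $n=\hat{\mathcal N}(u,y,z)\le\Phi$ one has $\forall m\,B_0$, hence $B_0(u,y,z,\hat{\mathcal N},\hat{\mathcal M})$, hence by the verified implication $C_0(u,y,z,\mathcal N,\mathcal K)$, hence $\exists k\,C_0(u,y,z,\mathcal N,k)$ with $\mathcal N\le\Phi$, i.e. $\exists N\le\Phi\,C_\exists(u,y,z,N)$.

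The new ingredient is that this extraction goes through \emph{uniformly in the abstract data} even though $X$ may be unbounded, which is exactly what the parametrized $a$-majorizability is for. Here I would use that every constant of the extended language has a uniform $a$-majorant: by the triangle inequality $d_X(x,y)\le d(x,a)+d(y,a)$, so $d_X$ is $a$-majorized by $\lambda x^{*},y^{*}.\,x^{*}+y^{*}$ independently of $a$; in the $W$-hyperbolic version (W1) gives $d(W_X(x,y,\lambda),a)\le\max\{d(x,a),d(y,a)\}$, so $W_X$ is $a$-majorized by $\lambda x^{*},y^{*},\lambda^{*}.\max\{x^{*},y^{*}\}$; and $0_X$, having been eliminated from the formulas, survives only as an extra universally quantified $X$-variable that must be supplied with an $a$-majorant. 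Since Gödel's combinators and the bar recursors are themselves majorizable (Howard, Bezem), majorization lifts through every closed term, so the components of $t$ all have computable uniform $a$-majorants (cf. Lemma~\ref{habil-classes-majorizable} for the analogous statement about mappings). For the inputs: $z$ is $a$-majorized by the given $z^{*}$; if $P=\N$ then $u$ majorizes itself, and if $P=\N^\N$ one uses $\lambda n.\max_{k\le n}u(k)$, so $\Phi$ may depend on $u$; and because $K$ is an $\aomega$-definable compact metric space, the codes of all its elements are dominated by a single fixed function, so applying the majorant of $t$ to that function bounds $t$ on every $y\in K$ --- which is why $\Phi$ need not take $y$ as an argument. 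Feeding these majorants into the components of $t$ and taking maxima defines the computable $\Phi:P\times\N^{(\N\times\ldots\times\N)}\to\N$.

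Finally I would verify correctness: the verifying system proves the bounded implication ``$\forall n\le\Phi\,B_\forall\to\exists N\le\Phi\,C_\exists$'' for all $u\in P$, $y\in K$, and all $z$ admitting an $a$-majorant $z^{*}$; since the full set-theoretic type structure $S^{\omega,X}$ over any nonempty metric space $(X,d)$ is simultaneously a model of $\aomegaxd$ and of its extension by the bar recursors, this bounded statement holds in every nonempty metric space in the sense of Definition~\ref{habil-model-metric+W-hyp}. The \textbf{main obstacle} is not the bookkeeping above but the underlying soundness result: one must re-prove, for the type structure enriched by the ground type $X$ and the constants $d_X$ (and $W_X$), that negative translation composed with the functional interpretation remains sound over a system that still contains dependent choice --- i.e. that the bar recursors stay definable and majorizable in the enriched setting and that the quantifier-free rule of extensionality causes no trouble. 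This is precisely the content of Kohlenbach's and Gerhardy--Kohlenbach's metatheorems, of which the statement above is a user-friendly specialization.
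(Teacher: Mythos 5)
Your proposal follows exactly the route the paper itself indicates (and defers to \cite[Corollary 17.54]{Koh08-book} for): negative translation composed with the (monotone) functional interpretation extended to the two-sorted types ${\bf T}^X$, soundness via Spector's bar recursion, and interpretation in the extension of Bezem's majorizable functionals built on the parametrized relation $\gtrsim^a$, with the $\forall$-axioms (M1)--(M3) self-interpreting and the constants $d_X$, $W_X$ uniformly $a$-majorizable. Your bookkeeping of which quantifiers need bounds (only $n$ and $N$, since $B_\forall$ contributes only majorized challenges and the inner existentials of $C_\exists$ stay unbounded) and of why $y\in K$ drops out of $\Phi$ matches the intended argument, so this is essentially the paper's proof, correctly reconstructed.
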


\bfact
\be
\item The above theorem holds for $\mathcal{A}^\omega[X,d,W]_{-b}$ and nonempty $W$-hyperbolic spa-ces $(X,d,W)$ too.
\item Instead of single variables $u,y,n$ and single premises $\forall n B_\forall(u,y,z,n)$ we may have tuples $\ul{u}\in P, \ul{y}\in K, \ul{n}\in \N$ of variables and finite conjunctions of premises. Moreover, we can have also $\underline{z}^{\ul{\rho}}=z_1^{\rho_1}, \ldots z_k^{\rho_k}$ as long as all the types $\rho_1,\ldots, \rho_k$ are of degree $1^*$ and in the conclusion is assumed that $z_i^*\gtrsim^a_{\rho_i}z_i$ for a {\em common} $a\in X$ for all $i=1,\ldots, k$. Furthermore, the bound $\Phi$ depends now on all the $a$-majorants $z_1^*,\ldots, z_k^*$.
\ee
\efact

\bfact
The theory ${\cal A}^{\omega}[X,\|\cdot \|]$ of normed spaces and ${\cal A}^{\omega}[X,\|\cdot \|,\eta]$ corresponding to uniformly convex normed spaces were defined by Kohlenbach \cite{Koh05} and similar logical metatheorems were obtained for these theories too. We refer to \cite{Koh05} or to \cite[Section 17.3]{Koh08-book} for details.
\efact

The proof of the above logical metatheorem is based on an extension to $\mathcal{A}^\omega[X,d]_{-b}$, resp. $\mathcal{A}^\omega[X,d,W]_{-b}$, of Spector's \cite{Spe62} interpretation of classical analysis $\aomega$ by {\em bar-recursive} functionals followed by an interpretation of these functionals in an extension of Bezem's \cite{Bez85} type structure of hereditarily strongly majorizable functionals to all types ${\bf T}^X$, based on the $a$-majorization relation $\gtrsim^a$, parametrized by $a\in X$. Spector's work generalizes  G\" odel's well-known {\em functional interpretation}  \cite{God58} for intuitionistic and - via G\" odel's {\em double-negation} interpretation \cite{God33} as intermediate step - classical arithmetic to classical analysis. We refer to \cite{Koh08a} for a recent survey on applied aspects of functional interpretation  and to \cite{Luc73} for a book treatment of Spector's bar recursion.

Moreover, the proof of the metatheorem actually provides an extraction algorithm for the functional $\Phi$, which  can always be defined in the calculus of bar-recursive functionals. However, as we shall see in Section \ref{proof-mining-fpt},  for concrete applications usually small fragments of $\mathcal{A}^\omega[X,d,W]_{-b}$ or $\mathcal{A}^\omega[X,d]_{-b}$ (corresponding to fragments of $\aomega$) are needed to formalize the proof. 
In particular, it follows from results of  Kohlenbach \cite{Koh96,Koh96a} that a single use of sequential compactness (over a sufficiently weak base system) only gives rise to at most primitive recursive complexity in the sense of Kleene, often only simple exponential complexity. This corresponds to the complexity of the bounds  obtained in our applications from Section \ref{proof-mining-fpt}. 

In these applications, one actually is interested in the extraction of bounds which, in order to be useful, should be {\em uniform}, i.e. independent from  various parameters. This can be achieved by using Kohlenbach's {\em monotone} functional interpretation, introduced in \cite{Koh96a}  (see \cite[Chapter 9]{Koh08-book} for details), that systematically transforms any statement in a given proof into a new version for which explicit bounds are provided. In recent years, other "bounds-oriented" variants of functional interpretation were defined, as {\em bounded} functional interpretation introduced by  Ferreira and Oliva \cite{FerOli05,FerOli07} or the very recent  Shoenfield-like bounded functional interpretation of Ferreira \cite{Fer08}, that gives a direct interpretation of 
classical theories and so could be suitable for proof mining.\\

We give now a very useful corollary of Theorem \ref{habil-logic-main-thm}.

\bcor\label{meta-BRS}(see \cite[Corollary 17.54]{Koh08-book})\\
Let $P$ be $\N$ or $\N^\N$, $K$ be a $\aomega$-definable compact metric space, $B_\forall(\ul{u},\ul{y},x,x^*,T,n)$ be a $\forall$-formula and $C_\exists(\ul{u},\ul{y},x,x^*,T,N)$ a  $\exists$-formula. Assume that $\mathcal{A}^\omega[X,d,W]_{-b}$ proves that
\[\ba{l}
\forall\,  \ul{u}\in P\,\forall\,\Omega:\N\to\N\, \forall\, \ul{y}\in K \, \forall \, x,x^*\in X\,\forall\, T:X\to X \\
\quad\quad\quad\quad
\bigg( T \text{ is } x\text{-majorizable with modulus } \Omega\, \si\,   \forall n\in\N \, B_\forall\,\ra \,\exists N\in\N \, C_\exists\bigg).
\ea\]
Then one can extract a computable functional $\Phi$ such that  for all $b\in\N$, 
\[\ba{l}
\forall\,  \ul{u}\in P\,\forall\,\Omega:\N\to\N\,\forall\, \ul{y}\in K \, \forall \, x,x^*\in X\,\forall\, T:X\to X \\
\quad\quad\bigg( T \text{ is }  x\text{-majorizable with modulus } \Omega \, \si \, d(x,x^*)\leq b\, \si\,   \forall n\le \Phi(\ul{u},b,\Omega) \, B_\forall \\
\qquad\qquad\qquad\qquad\ra \,\exists N\le \Phi(\ul{u},b,\Omega) \, C_\exists\bigg).
\ea\]
holds in all nonempty $W$-hyperbolic spaces $(X,d,W)$.
\ecor
\begin{proof}
The premise "$T$ $x$-majorizable with modulus $\Omega$"  is a $\forall$-formula, by (\ref{modulus-maj-x}). Furthermore, $0$ $x$-majorizes $x$, $b$ is a $x$-majorant for $x^*$, since $d(x,x^*)\le b$, and $\ds T^*:=\lambda n. \max_{k\le n}\Omega(k)$ $x$-majorizes $T$, by the proof of Lemma \ref{lemma-T-majorizable}. Apply now Theorem \ref{habil-logic-main-thm}
\end{proof}

\bfact
As in the case of Theorem \ref{habil-logic-main-thm}, instead of single $n\in \N$ and a single premise $\forall n B_\forall$ we could have tuples $\ul{n}=n_1,\ldots, n_k$ and a conjunction of premises $\forall n_1 B_\forall^1\si\ldots \si \forall n_k B_\forall^k$. In this case, in the conclusion we shall have in the premise 
$\forall n_1\le \Phi\,B_\forall^1\si\ldots \si \forall n_k \le \Phi\, B_\forall^k$.
\efact

Corollary \ref{meta-BRS}  will be used for our first application in metric fixed point theory, a quantitative version of Borwein-Reich-Shafrir Theorem (see Subsection \ref{section-app-fpt-brs}). In fact, a simplified version of it suffices for this application, namely for $T$ nonexpansive. In this case, as we have seen in Lemma \ref{habil-classes-majorizable}, a modulus of majorizability at $x$ is given by $\Omega(n)=n+b$, where $b\ge d(x,Tx)$, so the bound $\Phi$ will depend only on the parameters $\ul{u}\in P$ and $b\in\N$ such that $d(x,Tx),d(x,x^*)\le b$.\\

A remarkable feature of the (proof of the) above logical metatheorem is the fact the same results hold true for extensions of the theories $\aomegaxd, \aomegaxdw$ obtained as follows: 
\begin{enumerate}
\item the theory may be extended by new axioms that have the form of $\forall$-sentences;
\item the language may be extended by new majorizable constants, in particular constants of type $\N$ or $\N^\N$ which are uniformly majorizable. In this case, the extracted bounds then additionally depend on $a$-majorants for the new constants.
\end{enumerate} 
Then the conclusion holds in all metric spaces $(X,d)$, resp. $W$-hyperbolic spaces $(X,d,W)$, satisfying these axioms (under a suitable interpretation of the new constants if any).

We shall exemplify this with three classes of spaces discussed in Subsection \ref{W-related-classes}: Gromov hyperbolic spaces, $CAT(0)$ spaces and $\R$-trees.

The theory of $\delta$-hyperbolic spaces, $\mathcal{A}^\omega[X,d,\delta\text{-hyperbolic}]_{-b}$ is an extension of $\mathcal{A}^\omega[X,d]_{-b}$ defined as follows: 
\begin{enumerate}
\item add a constant $\delta_\R$ of type $\N\to\N$ (representing the nonnegative real $\delta$);
\item add the axioms:
$$\begin{array}{l}
\hspace*{-0.5cm}\delta_\R \ge_\Real 0_\Real,\\[0.2cm]
\hspace*{-0.5cm}\forall x,y,z,w\in X\,\bigg(d_X(x,y)+_\Real d_X(z,w)\leq_{\Real} \max_\Real\{d_X(x,z)+_\Real d_X(y,w),d_X(x,w)+_\Real d_X(y,z)\}\\
\hfill+_\Real 2\cdot_\Real\delta_R\bigg). 
\end{array}$$
\end{enumerate}

The notion that a sentence of ${\cal L}(\mathcal{A}^\omega[X,d,\delta\text{-hyperbolic}]_{-b})$ holds in a nonempty $\delta$-hyperbolic space $(X,d)$ is defined as in Definition \ref{habil-model-metric+W-hyp}, by interpreting the new constant $\delta_\R$ as $(\delta)_0$.

Since $\leq_\Real$ is $\Pi_1^0$, the two axioms are $\forall$-sentences. Thus, in order to adapt Theorem \ref{habil-logic-main-thm}  to the theory of Gromov $\delta$-hyperbolic spaces, we need to show that the new constant $\delta_\R$ is strongly majorizable. It is easy to see that if $(X,d)$ is a $\delta$-hyperbolic space, and $k\in\N$ is such that $k\ge \delta$, then 
\[\delta_\R^*:=\lambda n.j(k\cdot 2^{n+2}, 2^{n+1}-1) \,\, \text{s-maj}_1 (\delta)_\circ.\] 

\begin{theorem}\label{habil-main-thm-delta-hyp}
Theorem \ref{habil-logic-main-thm} holds also for $\mathcal{A}^\omega[X,d, \delta\text{-hyperbolic}]_{-b}$ and nonempty  Gromov $\delta$-hyperbolic spaces $(X,d)$, with the bound $\Phi$ depending additionally on  $k\in\N$ such that $k\geq\delta$.
\end{theorem}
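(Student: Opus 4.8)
The plan is not to redo the proof of Theorem~\ref{habil-logic-main-thm}, but to recognise the passage from $\aomegaxd$ to $\mathcal{A}^\omega[X,d,\delta\text{-hyperbolic}]_{-b}$ as an instance of the two extensions under which — as recalled just before the theorem — the metatheorem and its proof (via Spector's bar recursion and the $\gtrsim^a$-majorization model) are stable: extending the theory by $\forall$-sentences, and extending the language by a majorizable constant, in which case the extracted bound picks up an additional dependence on a majorant for the new constant and the conclusion holds in all metric spaces validating the new axioms. Thus the work reduces to checking two points about the constant $\delta_\R$ and the two axioms governing it.

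First, that both new axioms are $\forall$-sentences in the required technical sense. The nonnegativity axiom is (equivalent to) $0_\R \le_\R \delta_\R$, whose matrix is $\Pi^0_1$ since $\le_\R$ is $\Pi^0_1$; prenexing the hidden universal quantifier gives a $\forall$-sentence. For the four-point inequality, the real operations $+_\R,\cdot_\R,\max_\R$ are represented by primitive recursive functionals and $\le_\R$ is $\Pi^0_1$, so the body is a $\Pi^0_1$ formula; the outer quantifiers range over type $X$ (of degree $(1,X)$) and, after prenexing, over a variable of type $\N$ (of degree $1^*$), so the whole axiom is a $\forall$-sentence.

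Second, that $\delta_\R$ is strongly majorizable, with a majorant depending on $\delta$ only through a natural number $k \ge \delta$. Since $\delta_\R$ has the pure type $\N\to\N$, the relation $\gtrsim^a$ collapses to Bezem's $\smaj_1$ independently of $a$, and in the intended model $\delta_\R$ is interpreted by $(\delta)_\circ$. One checks — this is the only computational point — that for every $k \ge \delta$,
\[
\delta_\R^* := \lambda n.\, j\bigl(k\cdot 2^{n+2},\, 2^{n+1}-1\bigr)\ \smaj_1\ (\delta)_\circ ,
\]
i.e.\ that $\delta_\R^*$ is nondecreasing and $\delta_\R^*(n) \ge_0 (\delta)_\circ(m)$ for all $m\le n$. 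Unwinding the codes, $(\delta)_\circ(m) = j\bigl(2\lfloor 2^{m+1}\delta\rfloor,\, 2^{m+1}-1\bigr)$, the Cantor pairing $j$ is strictly increasing in each argument separately, $2\lfloor 2^{m+1}\delta\rfloor \le 2^{m+2}\delta \le 2^{m+2}k \le 2^{n+2}k$ whenever $m\le n$, and $2^{m+1}-1 \le 2^{n+1}-1$; monotonicity of $\delta_\R^*$ in $n$ follows likewise from monotonicity of $j$ in each slot.

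Having checked these two points, Theorem~\ref{habil-logic-main-thm} applies to $\mathcal{A}^\omega[X,d,\delta\text{-hyperbolic}]_{-b}$ exactly in the style of the proof of Corollary~\ref{meta-BRS}: one obtains a computable $\Phi$ whose arguments now include an $\smaj_1$-majorant for $\delta_\R$, and substituting the concrete $\delta_\R^*$ above — which depends only on $k$ — makes $\Phi$ depend on $\delta$ solely via $k$, with the conclusion holding in every Gromov $\delta$-hyperbolic space under the interpretation $\delta_\R \mapsto (\delta)_\circ$. The only genuine obstacle is the code-level bookkeeping in the displayed majorization; everything else is a verbatim appeal to the already established metatheorem and its robustness properties.
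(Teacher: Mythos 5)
Your proposal is correct and matches the paper's own (very brief) argument exactly: the paper likewise observes that the two new axioms are $\forall$-sentences because $\le_\R$ is $\Pi^0_1$, exhibits the same majorant $\delta_\R^*=\lambda n.j(k\cdot 2^{n+2},2^{n+1}-1)\ \text{s-maj}_1\ (\delta)_\circ$, and then invokes the stated robustness of Theorem~\ref{habil-logic-main-thm} under $\forall$-axioms and majorizable constants. The only difference is that you actually carry out the code-level verification of the majorization, which the paper dismisses as ``easy to see''.
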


Let us consider the case of $CAT(0)$ spaces. As we have seen in Subsubsection \ref{habil-CAT0}, we can define the theory ${\cal A}^\omega[X,d,W,CAT(0)]_{-b}$  for $CAT(0)$ spaces  by adding to $\aomegaxdw$ the formalized form of the $CN^-$ inequality, which is a $\forall$-sentence.
\bua
\forall x,y,z\in X\left(
d_X\left(z,W_X\left(x,y,\frac 12\right)\right)^2\leq_\R\frac12 d_X(z,x)^2+_\R\frac12 d_X(z,y)^2-_\R \frac14 d_X(x,y)^2\right).
\eua

\begin{theorem}\label{habil-main-thm-CAT0}
Theorem \ref{habil-logic-main-thm} holds for ${\cal A}^\omega[X,d,W,CAT(0)]_{-b}$ and nonempty $CAT(0)$ spaces. 
\end{theorem}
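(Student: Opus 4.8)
The plan is to obtain Theorem~\ref{habil-main-thm-CAT0} as an instance of the general extension principle for the logical metatheorems recalled just before Theorem~\ref{habil-main-thm-delta-hyp}: the conclusion of Theorem~\ref{habil-logic-main-thm} persists when $\aomegaxdw$ is enlarged by finitely many new axioms of $\forall$-form (and, if one wishes, by new uniformly majorizable constants), the conclusion then being asserted for all $W$-hyperbolic spaces $(X,d,W)$ that validate the new axioms. Granting this principle, only three routine verifications remain.

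First, I would check that the displayed formalization of the $CN^-$ inequality is indeed a $\forall$-sentence in the required sense. Its matrix is an inequality $\le_\R$ between two terms built from $d_X$, $W_X$, the rational constants $\frac12,\frac14$ and real squaring by the primitive recursive operations on representatives of reals; since $\le_\R$ is a $\Pi_1^0$-predicate, prenexing its hidden universal number quantifier turns the matrix into $\forall n\in\N\,(\text{quantifier-free})$, so that the whole sentence has the form $\forall x,y,z\in X\,\forall n\in\N\,(\ldots)$ with all bound variables of degree $(1,X)$ or $1^*$, as demanded in Theorem~\ref{habil-logic-main-thm}. Note also that the axiom invokes $W_X$ only at the fixed argument $\frac12$, so no complication arises from the $\tilde\lambda$-normalization (\ref{def-tilde-rep}). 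Secondly, I would observe that --- in contrast to the passage to Gromov $\delta$-hyperbolic spaces in Theorem~\ref{habil-main-thm-delta-hyp} --- moving from $\aomegaxdw$ to ${\cal A}^\omega[X,d,W,CAT(0)]_{-b}$ adds no new constant; hence the extracted bound $\Phi$ acquires no extra argument beyond those already present in Theorem~\ref{habil-logic-main-thm}.

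Thirdly, I would identify the class of admissible models. By the extension principle the metatheorem holds for every $W$-hyperbolic space whose model $S^{\omega,X}$, with $W_X$ interpreted through geodesics as in the paragraph preceding Definition~\ref{habil-model-metric+W-hyp}, satisfies the new axiom; and satisfaction of that axiom in $S^{\omega,X}$ is exactly the geometric $CN^-$ property of $(X,d,W)$. By the characterization in Proposition~\ref{habil-char-CAT0} (a $W$-hyperbolic space satisfies $CN^-$ if and only if it is a $CAT(0)$ space, and every $CAT(0)$ space carries such a $W$), this class is precisely the class of $CAT(0)$ spaces, which is what the statement asserts.

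The genuine obstacle is the justification of the extension principle itself; this, however, is internal to the proof of Theorem~\ref{habil-logic-main-thm} and may be taken as given here. On the soundness side one checks that the (monotone) functional interpretation of a $\forall$-sentence is essentially itself --- there is nothing to witness --- so the $CN^-$ axiom can be added to the bar-recursive verifying system over the types ${\bf T}^X$ (Spector's bar recursion together with the Bezem-style majorizable functionals) at no cost and without affecting either the extractability or the complexity of $\Phi$; on the model-theoretic side one checks, as used above, that truth of the new $\forall$-axiom in $S^{\omega,X}$ coincides with the $CN^-$ inequality. With these in hand, the proof of Theorem~\ref{habil-main-thm-CAT0} is simply the assembly of the three checks. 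One may add, as in the general discussion, that since ${\cal A}^\omega[X,d,W,CAT(0)]_{-b}$ proves $CN^-$, a single use of sequential compactness over a sufficiently weak fragment still yields bounds of at most primitive recursive --- often simple exponential --- complexity.
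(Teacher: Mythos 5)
Your proposal is correct and follows exactly the route the paper takes: the $CN^-$ axiom is a $\forall$-sentence (since $\le_\R$ is $\Pi_1^0$), no new constants are added, so the general extension principle for the metatheorem applies, and Proposition \ref{habil-char-CAT0} identifies the admissible models as precisely the $CAT(0)$ spaces. The additional checks you record (the fixed argument $\frac12$ in $W_X$, the triviality of the functional interpretation of a $\forall$-sentence) are sound elaborations of what the paper leaves implicit.
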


Following Proposition \ref{habil-char-R-trees}, the theory $\mathcal{A}^\omega[X,d,W,\R\text{-tree}]_{-b}$ of $\R$-trees results from the theory $\mathcal{A}^\omega[X,d,W]_{-b}$ by adding a $\forall$-axiom:
\[
\forall x,y,z,w\in X\bigg(d_X(x,y)+_\Real d_X(z,w)\leq_\Real \max_\Real\{d_X(x,z)+_\Real d_X(y,w),d_X(x,w)+_\Real d_X(y,z)\}\bigg).
\]

As a consequence

\begin{theorem}\label{habil-main-thm-R-tree}
Theorem \ref{habil-logic-main-thm} holds also for $\mathcal{A}^\omega[X,d,W,R\text{-tree}]_{-b}$ and nonempty $\Real$-trees. 
\end{theorem}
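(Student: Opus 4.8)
The plan is to treat Theorem~\ref{habil-main-thm-R-tree} as an immediate instance of the general robustness of Theorem~\ref{habil-logic-main-thm} under extension by $\forall$-sentences, exactly as was done for the Gromov $\delta$-hyperbolic case (Theorem~\ref{habil-main-thm-delta-hyp}) and the $CAT(0)$ case (Theorem~\ref{habil-main-thm-CAT0}), so that essentially nothing new has to be proved.

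First I would recall that $\mathcal{A}^\omega[X,d,W,R\text{-tree}]_{-b}$ is obtained from $\mathcal{A}^\omega[X,d,W]_{-b}$ by adjoining the single four-point axiom
\[
\forall x,y,z,w\in X\bigg(d_X(x,y)+_\Real d_X(z,w)\leq_\Real \max_\Real\{d_X(x,z)+_\Real d_X(y,w),d_X(x,w)+_\Real d_X(y,z)\}\bigg),
\]
and crucially \emph{no} new constants. The one point needing a (routine) check is that this axiom lies in the $\forall$-fragment admitted by Theorem~\ref{habil-logic-main-thm}: the terms $d_X(\cdot,\cdot)$, $+_\Real$, $\max_\Real$ are built from closed (primitive recursive) functionals and hence contribute no quantifiers, while $\leq_\Real$ is a $\Pi^0_1$-predicate; expanding it, the body becomes $\forall k\in\N\,A_0$ with $A_0$ quantifier-free, and prefixing the universal type-$X$ quantifiers yields a sentence of the form $\forall\underline{x}^{\underline{\sigma}}A_0(\underline{x})$ with the types in $\underline{\sigma}$ being $X$ (degree $(1,X)$) and $\N$ (degree $1^*$). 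Thus it is an admissible $\forall$-axiom.

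Next I would invoke the stability of the proof of Theorem~\ref{habil-logic-main-thm}: since one only adds $\forall$-sentences and no majorizable constants, Spector's bar-recursive interpretation and the subsequent interpretation in the $\gtrsim^a$-majorizable functionals carry over unchanged, the new axiom being automatically validated in the model of majorants precisely because it is universal; and the extracted functional $\Phi$ acquires no new arguments (in contrast with Theorem~\ref{habil-main-thm-delta-hyp}, where a majorant $k\geq\delta$ for the extra constant $\delta_\R$ has to be supplied). The conclusion one obtains is that the bound $\Phi$ of Theorem~\ref{habil-logic-main-thm} is correct in every $W$-hyperbolic space that models the added axiom.

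Finally I would identify those models: by Proposition~\ref{habil-char-R-trees} a metric space $(X,d)$ admits a convexity mapping $W$ making $(X,d,W)$ a $W$-hyperbolic space satisfying the four-point inequality if and only if $(X,d)$ is an $\R$-tree, and, under the interpretation fixed in Definition~\ref{habil-model-metric+W-hyp}, the models of $\mathcal{A}^\omega[X,d,W,R\text{-tree}]_{-b}$ obtained from the full set-theoretic structure $S^{\omega,X}$ are exactly those coming from $\R$-trees. Hence $\Phi$ works in all nonempty $\R$-trees, which is the claim. I do not expect a genuine obstacle here; the only thing demanding care is the verification that the adjoined inequality really avoids any hidden existential quantifier arising from the representation of real numbers — which is, in fact, why the $\leq_\Real$-formulation (and, analogously, the $CN^-$ rather than the $CN$ inequality in the $CAT(0)$ case) was chosen in the first place — together with the trivial observation that $\R$-trees are nonempty metric spaces, so the relevant models exist.
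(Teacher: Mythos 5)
Your proposal is correct and follows exactly the route the paper takes: the theory $\mathcal{A}^\omega[X,d,W,R\text{-tree}]_{-b}$ adds only the four-point inequality, which (since $\leq_\Real$ is $\Pi^0_1$) is an admissible $\forall$-axiom with no new constants, so the metatheorem's extension principle applies directly, and Proposition~\ref{habil-char-R-trees} identifies the resulting models with the nonempty $\Real$-trees. The paper treats this as an immediate consequence in just this way, so there is nothing to add.
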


\subsection{Logical metatheorems for $UCW$-hyperbolic spaces}

In the sequel, we shall see that the logical metatheorem from the previous subsection can be easily adapted to $UCW$-hyperbolic spaces (see \cite{Leu06}).

The theory ${\cal A}^{\omega}[X,d,UCW,\eta]_{-b}$, corresponding to the class of  $UCW$-hyperbolic spaces is obtained from ${\cal A}^{\omega}[X,d,W]_{-b}$ by adding a new constant $\eta_X:\N\to\N
\to\N$ together with axioms
\[\ba{l}
\forall r,k\in\N\forall x,y,a\in X \biggl(d_X(x,a) <_\R r \,\wedge\, d_X(y,a)<_\R r\,\\[0.1cm]
\quad \quad \quad\quad \quad \quad\quad \quad \quad \wedge\, \displaystyle d_X(W_X(x,y,1/2),a)>_\R \displaystyle \left(1-_\R 2^{-\eta_X(r,k)}\right)\cdot_\R r
\rightarrow d_X(x,y)\leq_\R 2^{-k}\cdot_\R r\biggr), \\[0.1cm]
\forall r_1,r_2,k\in\N\big(r_1\leq_\Q r_2\rightarrow \eta(r_1,k)\geq_0 \eta(r_2,k)\big),\\[0.2cm]
\forall r,k\in\N\big(\eta_X(r,k)=_0\eta_X(c(r),k)\big). 
\ea
\]
The first two  axioms express the fact that $\eta_X:\N\to\N\to\N$ represents a monotone modulus of uniform continuity. The meaning of the third axiom is that $\eta_X$ is a function having the first argument a rational number on the level of codes; $c$ is the canonical representation for rational numbers defined by (\ref{rep-rational-canonical}).
It is easy to see, using the representation of real numbers in $\aomega$, that all the three axioms are universal. Moreover, the constant $\eta_X$ of degree $1$ is majorizable. 

The notion that a sentence of ${\cal L}(\mathcal{A}^\omega[X,d,UCW,\eta]_{-b})$ holds in a nonempty $UCW$-hyperbolic space $(X,d,W)$ with monotone modulus of uniform convexity $\eta$ is defined as above, by interpreting the new constant $\eta_X$ as $\eta_X(r,k):=\eta(c(r),k)$. 

Since $\mathcal{A}^\omega[X,d,UCW,\eta]_{-b}$ results from $\aomegaxdw$ by adding a majorizable constant and three $\forall$-axioms, we get that the logical metatheorem and its corollaries hold in this setting too.

\begin{theorem}\label{intro-main-thm-uc-hyp}
Theorem \ref{habil-logic-main-thm} holds for $\mathcal{A}^\omega[X,d,UCW,\eta]_{-b}$ and nonempty $UCW$-hyperbolic spa-ces $(X,d,W)$ with monotone modulus of uniform convexity $\eta$, with the bound $\Phi$ depending additionally on $\eta$.
\end{theorem}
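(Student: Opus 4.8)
The plan is not to redo the proof of Theorem~\ref{habil-logic-main-thm}, but to invoke the robustness of the metatheorem recorded in the paragraph preceding Theorem~\ref{habil-main-thm-delta-hyp}: if $\aomegaxdw$ is extended by finitely many new axioms of $\forall$-form and by new majorizable constants, then the conclusion of Theorem~\ref{habil-logic-main-thm} still holds, the extracted bound $\Phi$ merely gaining an additional dependence on ($a$-)majorants of the new constants, and the conclusion being valid in all $W$-hyperbolic spaces that interpret the constants so as to satisfy the new axioms. This is exactly the mechanism already used to obtain Theorems~\ref{habil-main-thm-delta-hyp}, \ref{habil-main-thm-CAT0} and \ref{habil-main-thm-R-tree} for $\mathcal{A}^\omega[X,d,\delta\text{-hyperbolic}]_{-b}$, $\mathcal{A}^\omega[X,d,W,CAT(0)]_{-b}$ and $\mathcal{A}^\omega[X,d,W,\R\text{-tree}]_{-b}$. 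Since $\mathcal{A}^\omega[X,d,UCW,\eta]_{-b}$ arises from $\aomegaxdw$ by adjoining the single new constant $\eta_X$ together with its three axioms, it suffices to check that (i) all three axioms are $\forall$-formulas and (ii) $\eta_X$ is majorizable, with a majorant computable from $\eta$.

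For (i) I would unfold the $\aomega$-representations of reals and rationals. Recall that $=_\R$ and $\le_\R$ are $\Pi^0_1$ while $<_\R$ and $>_\R$ are $\Sigma^0_1$, whereas $\le_\Q$, $\ge_0$ and $=_0$ are primitive recursive, hence quantifier-free. In the first axiom the three conditions $d_X(x,a)<_\R r$, $d_X(y,a)<_\R r$ and $d_X(W_X(x,y,1/2),a)>_\R(1-2^{-\eta_X(r,k)})\cdot r$ all occur negatively, i.e. as premises, so the leading existential quantifiers of their $\Sigma^0_1$-matrices may be pulled to the front as universal quantifiers, while the consequent $d_X(x,y)\le_\R 2^{-k}\cdot r$ is already $\Pi^0_1$; the whole implication thus prenexes to a formula $\forall(\ldots)$ with quantifier-free matrix, over variables of type $\N$ and $X$ and hence of admissible degree ($1^*$, resp. $(1,X)$). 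The monotonicity axiom and the canonical-code axiom involve only the quantifier-free predicates $\le_\Q$, $\ge_0$, $=_0$, so they are manifestly universal. This is the content of the remark that ``all the three axioms are universal''.

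For (ii), $\eta_X$ has type $\N\to\N\to\N$, hence degree $1$, and every functional of degree $\le 1$ over $\N$ is strongly majorizable; an explicit monotone majorant is given by the running-maximum (monotone hull) $\eta_X^{*}(m):=\lambda n.\max\{\eta_X(i)(j)\mid i\le m,\ j\le n\}$, which indeed s-majorizes $\eta_X$. Under the intended interpretation $\eta_X(r,k)=\eta(c(r),k)$ with $\eta$ monotone, this majorant is obtained uniformly (primitive-recursively) from a code of $\eta$, so the extra parameter on which $\Phi$ depends may be taken to be $\eta$ itself. Applying the robustness property of Theorem~\ref{habil-logic-main-thm} now gives the statement. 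It remains to note, for the soundness side, that for a $UCW$-hyperbolic space with monotone modulus $\eta$ the model $S^{\omega,X}$ with $\eta_X$ interpreted as above does satisfy the three axioms: the second and third follow at once from monotonicity of $\eta$ in its first argument and from $c(r)=_\Q r$, and the first is the routine $2^{-k}$-coded reformulation of the defining property~(\ref{habil-uc-def}), for which one may use Lemma~\ref{UCW-eta-prop-1} to pass between the $(\eps,\delta,r)$-form and the coded form. I expect the only point needing care — rather than a genuine obstacle — to be the bookkeeping in (i): tracking the polarity of each occurrence of $<_\R$ versus $\le_\R$ when certifying that the first axiom is literally a $\forall$-formula, and exhibiting the majorant of $\eta_X$ concretely enough that the dependence of $\Phi$ is truly on $\eta$ alone.
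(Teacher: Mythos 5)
Your proposal is correct and follows essentially the same route as the paper: the paper likewise observes that $\mathcal{A}^\omega[X,d,UCW,\eta]_{-b}$ arises from $\aomegaxdw$ by adding the degree-$1$ (hence majorizable) constant $\eta_X$ together with three universal axioms, and then invokes the robustness of Theorem~\ref{habil-logic-main-thm} under such extensions. Your additional bookkeeping on the polarity of the $<_\R$ occurrences and the explicit monotone majorant for $\eta_X$ only makes explicit what the paper leaves as ``easy to see''.
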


\bcor\label{meta-BRS-UCW}
Corollary \ref{meta-BRS} holds also for for $\mathcal{A}^\omega[X,d,UCW,\eta]_{-b}$ and nonempty $UCW$-hyperbolic spaces $(X,d,W)$ with monotone modulus of uniform convexity $\eta$, with the bound $\Phi$ depending additionally on $\eta$.
\ecor

\begin{corollary}\label{meta-Groetsch}
Let $P$ be $\N$ or $\N^\N$, $K$ be a $\aomega$-definable compact metric space, $B_\forall(\ul{u},\ul{y},x,T,n)$ be a $\forall$-formula and $C_\exists(\ul{u},\ul{y},x,T,N)$ a  $\exists$-formula. Assume that $\mathcal{A}^\omega[X,d,UCW,\eta]_{-b}$ proves that
\[\ba{l}
\hspace*{-0.5cm}\forall\, \ul{u}\in P\,\forall \,\Omega:\N\to\N\,\forall\, \ul{y}\in K \, \forall \, x\in X\,\forall\, T:X\to X\\
\quad\quad \bigg( T  \text{ is }   x\text{-majorizable with modulus } \Omega\, \si\, Fix(T)\ne\emptyset \si  \forall n\in\N \, B_\forall\,\ra \,\exists N\in\N \, C_\exists\bigg).
\ea\]
Then one can extract a computable functional $\Phi$ such that for all $b\in\N$, 
\[\ba{l}
\forall\,  \ul{u}\in P\,\forall \,\Omega:\N\to\N\,\forall\, \ul{y}\in K \, \forall \, x\in X\,\forall\, T:X\to X \\
\qquad\bigg( T  \text{ is }   x\text{-majorizable with modulus } \Omega\,\si\, \forall \delta>0\big(Fix_\delta(T,x,b)\ne\emptyset\big) \si  \forall n\le \Phi(\ul{u},b,\eta,\Omega) B_\forall\,\\
\qquad\qquad\qquad\qquad\qquad\qquad\ra \,\exists N\le \Phi(\ul{u},b,\eta,\Omega) \, C_\exists\bigg).
\ea\]
holds in any nonempty  $UCW$-hyperbolic space $(X,d,W)$ with monotone modulus of uniform convexity $\eta$. We recall that 
\[Fix_{\delta}(T,x,b):=\{y\in X\mid d(y,x)\leq b \text{~and~} d(y,Ty)<\delta\}.\]
\end{corollary}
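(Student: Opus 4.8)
The plan is to rewrite the hypothesis so that Corollary~\ref{meta-BRS-UCW} applies directly, and then to exploit the fact that, in the bound produced by monotone functional interpretation, the premise $Fix(T)\neq\emptyset$ is only used up to finite precision, so that a sufficiently good \emph{approximate} fixed point already does the job. Observe that $Fix(T)\neq\emptyset$ is the formula $\exists p\,(Tp=_X p)$, and $Tp=_X p$ unfolds to $d_X(Tp,p)=_\R 0_\R$; since $=_\R$ (equivalently $\le_\R$) is a $\Pi^0_1$-predicate, this is (equivalent to) a $\forall$-formula. Pulling $\exists p$ out of the premise of the implication, the assumption of the corollary becomes: $\mathcal{A}^\omega[X,d,UCW,\eta]_{-b}$ proves
\[
\forall\ul{u}\forall\Omega\forall\ul{y}\,\forall x,p\,\forall T\,\big(\,T\text{ is }x\text{-maj.\ with modulus }\Omega\ \wedge\ \forall n\,B_\forall\ \wedge\ d_X(Tp,p)\le_\R 0_\R\ \longrightarrow\ \exists N\,C_\exists\,\big),
\]
a conjunction of $\forall$-premises together with the extra parameter $p$ of type $X$, on which $B_\forall$ and $C_\exists$ do not depend, so that $p$ may take the role of ``$x^*$'' in Corollary~\ref{meta-BRS-UCW}. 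Applying that corollary (in its conjunction-of-premises form) yields a computable $\Phi$ such that, for every $b\in\N$ and in every $UCW$-hyperbolic space $(X,d,W)$ with monotone modulus $\eta$: whenever $T$ is $x$-majorizable with modulus $\Omega$, $d(x,p)\le b$, $\forall n\le\Phi(\ul{u},b,\eta,\Omega)\,B_\forall$, and the quantifier-free matrix of $d_X(Tp,p)\le_\R 0_\R$ holds at all $n\le\Phi(\ul{u},b,\eta,\Omega)$, then $\exists N\le\Phi(\ul{u},b,\eta,\Omega)\,C_\exists$.

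Next I would unwind that last bounded premise. By the $\aomega$-coding of $\le_\R$, the matrix of $d_X(Tp,p)\le_\R 0_\R$ at $n$ is the decidable condition $(d(p,Tp))_\circ(n+1)<_\Q 2^{-n}$, and since $(d(p,Tp))_\circ(n+1)$ is a rational underestimate of $d(p,Tp)$, the conjunction of these conditions over all $n\le\Phi(\ul{u},b,\eta,\Omega)$ is implied by the single inequality $d(p,Tp)<2^{-\Phi(\ul{u},b,\eta,\Omega)}$. Thus the conclusion of the first paragraph does not in fact require $p$ to be an exact fixed point: it is enough that $p$ be an approximate fixed point with $d(p,Tp)<2^{-\Phi(\ul{u},b,\eta,\Omega)}$ lying in the closed $b$-ball around $x$, i.e.\ that $p\in Fix_{2^{-\Phi(\ul{u},b,\eta,\Omega)}}(T,x,b)$.

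Finally I would assemble the statement. Fix $b\in\N$, a $UCW$-hyperbolic space $(X,d,W)$ with monotone modulus $\eta$, and data $\ul{u},\Omega,\ul{y},x,T$ such that $T$ is $x$-majorizable with modulus $\Omega$, $\forall\delta>0\,(Fix_\delta(T,x,b)\neq\emptyset)$, and $\forall n\le\Phi(\ul{u},b,\eta,\Omega)\,B_\forall$. Instantiating the premise $\forall\delta>0\,(Fix_\delta(T,x,b)\neq\emptyset)$ at $\delta:=2^{-\Phi(\ul{u},b,\eta,\Omega)}$ produces $p$ with $d(x,p)\le b$ and $d(p,Tp)<2^{-\Phi(\ul{u},b,\eta,\Omega)}$; by the second paragraph all hypotheses of the implication from the end of the first paragraph are met for this $p$, so $\exists N\le\Phi(\ul{u},b,\eta,\Omega)\,C_\exists$, which is the required conclusion, with the same $\Phi$. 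The essential point, and the only place where care is needed, is conceptual rather than computational: the prenexing in the first paragraph is legitimate exactly because $Tp=_X p$ has $\forall$-form (this is why it matters that $=_\R,\le_\R$ are $\Pi^0_1$), and then monotone functional interpretation --- which underlies Corollary~\ref{meta-BRS-UCW} --- automatically converts the use of the fixed-point hypothesis into the use of one explicitly bounded approximate fixed point; one just has to read off the correct threshold $2^{-\Phi}$ from the $\aomega$-representation of the reals, and check that, as always, $\Phi$ inherits its stated dependence on $\ul{u},b,\eta,\Omega$ from the uniformity built into the metatheorem.
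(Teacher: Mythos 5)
Your proposal is correct and follows essentially the same route as the paper's own proof: rewrite $Fix(T)\ne\emptyset$ as the existence of a $p$ satisfying the $\forall$-formula $\forall k\,(d(p,Tp)\le 2^{-k})$, let $p$ play the role of $x^*$ in Corollary~\ref{meta-BRS-UCW}, and observe that the resulting bounded premise $\forall k\le\Phi\,(d(p,Tp)\le 2^{-k})$ is supplied by instantiating $\forall\delta>0\,(Fix_\delta(T,x,b)\ne\emptyset)$ at $\delta:=2^{-\Phi}$. Your extra unwinding of the $\Pi^0_1$-matrix of $\le_\R$ via the $(\cdot)_\circ$-representation is a harmless refinement of the same step.
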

\begin{proof}
The statement proved in $\mathcal{A}^\omega[X,d,UCW,\eta]_{-b}$ can be written as
\[\ba{l}
\forall\,  \ul{u}\in P\,\forall \,\Omega:\N\to\N\,\forall\, \ul{y}\in K \, \forall \, x,p\in X\,\forall\, T:X\to X \\
\bigg( T \text{ is }  x\text{-majorizable with modulus } \Omega  \, \si\, \forall k\in\N\left(d(p,Tp)\le 2^{-k}\right) \si  \forall n\in\N \, B_\forall\,\ra \,\exists N\in\N \, C_\exists\bigg).
\ea\]
We have used the fact that  $Fix(T)\ne\emptyset$ is equivalent with $\exists p\in X(Tp=_Xp)$ that is further equivalent with $\exists p\in X\,\forall \,k\in\N\big(d(p,Tp)\le 2^{-k}\big)$, by using the definition of $=_X$ and $=_\R$ in our system. 
As all the premises are $\forall$-formulas, we can apply Corollary \ref{meta-BRS-UCW} to extract a functional $\Phi$ such that for all $b\in\N$,
\[\ba{l}
\hspace*{-0.5cm}\forall\,  \ul{u}\in P\,\forall \,\Omega:\N\to\N\,\forall\, \ul{y}\in K \, \forall \, x,p\in X\,\forall\, T:X\to X \\
\quad \bigg(T \, x\text{-maj. w. modulus } \Omega \,\si\, d(x,p)\le b \, \si\,\forall k\le \Phi(\ul{u},b,\eta,\Omega)\big(d(p,Tp)\le 2^{-k}\big)\\ 
\quad\quad \quad\quad \si \, \forall n\le \Phi(\ul{u},b,\eta,\Omega) \, B_\forall\,\ra \,\exists N\le \Phi(\ul{u},b,\eta,\Omega)\, C_\exists\bigg),
\ea\]
that is
\[\ba{l}
\hspace*{-0.5cm}\forall\,  \ul{u}\in P\,\forall \,\Omega:\N\to\N\,\forall\, \ul{y}\in K \, \forall \, x\in X\,\forall\, T:X\to X \\
\quad \bigg(T \, x\text{-maj. w. modulus } \Omega \,\si\, \exists p\in X\bigg(d(x,p)\le b \, \si\,\forall k\le \Phi(\ul{u},b,\eta,\Omega)\big(d(p,Tp)\le 2^{-k}\big)\bigg)\\ 
\quad\quad \quad\quad \si \, \forall n\le \Phi(\ul{u},b,\eta,\Omega) \, B_\forall\,\ra \,\exists N\le \Phi(\ul{u},b,\eta,\Omega)\, C_\exists\bigg),
\ea\]
Use the fact that  the existence of $p\in X$ such that $d(x,p)\le b$ and $\forall k\le \Phi\big(d(p,Tp)\le 2^{-k}\big)$ is equivalent with the existence of $p\in X$ such that $d(x,p)\le b$ and $d(p,Tp)\le 2^{-\Phi}$ which is obviously implied by $\forall \delta>0\left(Fix_\delta(T,x,b)\ne\emptyset\right)$.
\end{proof}

We shall apply the above corollary twice. The first application will be in Subsection \ref{habil-app-fpt-CAT0} for nonexpansive mappings $T$. As we have already discussed, if $T$ is nonexpansive, then its modulus of majorizability at $x$ is simply $\Omega(n)=n+\tilde{b}$ with $\tilde{b}\ge d(x,Tx)$. 

For all $\delta>0$ there exists $y\in X$ such that $Fix_\delta(T,x,b)\ne\emptyset$, hence
$$d(x,Tx)\le d(x,y)+d(y,Ty)+d(Ty,Tx)\le 2d(x,y)+d(y,Ty)\le 2b+\delta \quad \text{for all } \delta >0.$$
It follows that $d(x,Tx)\le 2b$, so we can take $\tilde{b}:=2b$. As a consequence, the bound $\Phi$ will depend only on $\ul{u},b$ and $\eta$. 

The second application will be in Subsection \ref{habil-app-fpt-as-ne}, this time for asymptotically nonexpansive mappings. As we have discussed in Subsection \ref{intro-as-ne}, an asymptotically nonexpansive mapping $T:X\to X$ with sequence $(k_n)$ is a $(1+K)$-Lipschitz mapping, where $K\in \N$ is such that $k_1\le K$. By Lemma \ref{habil-classes-majorizable}, we get that $T$ is majorizable with modulus at $x$ given by $\Omega(n):=n+(1+K)\tilde{b}$, where again $\tilde{b}\ge d(x,Tx)$.  Reasoning as above, it is easy to see that if $b$ is such that $Fix_\delta(T,x,b)\ne\emptyset$ for all $\delta>0$ , then we can take $\tilde{b}:=(2+K)b$. Thus, the bound $\Phi$ depends on $\ul{u}, b,\eta$ and on $K\in\N$ with $K\ge k_1$.

\section{Proof mining in metric fixed point theory}\label{proof-mining-fpt}

In the sequel, $(X,d,W)$ is a $W$-hyperbolic space, $C\subseteq X$ a convex subset of $X$, and $T:C\to C$ a nonexpansive mapping. 

As in the case of normed spaces,  we can define the {\em Krasnoselski-Mann iteration} starting from $x\in C$ by
\begin{equation}
x_0:=x, \quad x_{n+1}:=(1-\lambda_n)x_n \oplus\lambda_n Tx_n, \label{app-KM-lambda-n-def-hyp}
\end{equation}
where $(\lambda_n)$ is a sequence in $[0,1]$. For constant $\lambda_n=\lambda \in(0,1)$, we get the {\em Krasnoselski iteration}, which can be also  defined as the Picard iteration $\big(T_\lambda^n(x)\big)$ of
$$T_\lambda:C\to C, \quad T_\lambda(x)=(1-\lambda)x\oplus\lambda Tx.$$ 
The {\em averaged mapping} $T_\lambda$ is also nonexpansive and  $Fix(T)=Fix(T_\lambda)$.

The following proposition collects some useful properties of Krasnoselski-Mann iterates in $W$-hyperbolic spaces. We refer to \cite{KohLeu03,Leu08} for the proofs.

\bprop\label{W-useful-KM-x-y}
Let $(x_n),(x^*_n)$ be the Krasnoselski-Mann iterations starting with $x,x^*\in C$. Then 
\be
\item\label{W-KM-xn-yn-nonincreasing} $(d(x_n,x^*_n))$ is nonincreasing;
\item\label{W-KM-xn-Txn-nonincreasing} $(d(x_n,Tx_n))$ is nonincreasing;
\item\label{W-KM-xn-fp-nonincreasing} $(d(x_n,p))$ is nonincreasing for any fixed point $p$ of $T$.
\ee
\eprop

The following very useful result was proved by Goebel and Kirk \cite{GoeKir83} for spaces of hyperbolic type, thus holds for $W$-hyperbolic spaces too.

\bthm\label{habil-GK-gen-thm}
Let $(X,d, W)$ be a $W$-hyperbolic space and $(\lambda_n)$ be a sequence in $[0,1]$ which is divergent in sum and bounded away from $1$. Assume that $(u_n),(v_n)$ are sequences in $X$ satisfying  for all $n\in\N$,
\beq
u_{n+1}=(1-\lambda_n)u_n\oplus\lambda_nv_n \quad \text{and} \quad d(v_n, v_{n+1})\leq d(u_n, u_{n+1}).
\eeq
Then $(d(u_n,v_n))$ is nonincreasing and $\ds \limn d(u_n,v_n)=0$ whenever $(u_n)$ is bounded.
\ethm

As an immediate consequence of the above theorem, we get the generalization of Theorem \ref{asreg-Ishikawa-bounded-xn} to $W$-hyperbolic spaces.

\begin{theorem}\label{asreg-Ishikawa-bounded-xn-W}
Let $C$ be a convex subset of a $W$-hyperbolic space $(X,d,W)$ and $T:C\rightarrow C$ a nonexpansive mapping. Assume that $(\lambda_n)$ is divergent in sum and bounded away from $1$. 

If there exists $x^*\in C$ such that $(x^*_n)$ is bounded, then $T$ is $\lambda_n$-asymptotically regular, that is $\limn d(x_n,Tx_n)=0$ for all $x\in C$. 
\end{theorem}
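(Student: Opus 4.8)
The plan is to reduce the statement to the Goebel--Kirk general Theorem \ref{habil-GK-gen-thm}, applied with $u_n := x_n$ and $v_n := Tx_n$. First I would dispose of the apparent asymmetry between ``for some $x^*$'' in the hypothesis and ``for all $x$'' in the conclusion. Fix an arbitrary $x \in C$ and let $(x_n)$ be its Krasnoselski--Mann iteration. By Proposition \ref{W-useful-KM-x-y}(\ref{W-KM-xn-yn-nonincreasing}), the sequence $\big(d(x_n,x^*_n)\big)$ is nonincreasing, so $d(x_n,x^*_n) \le d(x_0,x^*_0) = d(x,x^*)$ for all $n$. Hence, for any fixed $a \in X$, $d(x_n,a) \le d(x,x^*) + d(x^*_n,a)$, and since $(x^*_n)$ is bounded by assumption, $(x_n)$ is bounded as well. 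Thus it suffices to show $\limn d(x_n,Tx_n)=0$ for a fixed $x\in C$ whose iteration $(x_n)$ is known to be bounded.

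Next I would check that the hypotheses of Theorem \ref{habil-GK-gen-thm} are satisfied by $u_n := x_n$ and $v_n := Tx_n$. The relation $u_{n+1}=(1-\lambda_n)u_n\oplus\lambda_n v_n$ is precisely the defining recursion (\ref{app-KM-lambda-n-def-hyp}) of the Krasnoselski--Mann iteration. The required comparison $d(v_n,v_{n+1}) \le d(u_n,u_{n+1})$ reads $d(Tx_n,Tx_{n+1}) \le d(x_n,x_{n+1})$, which is immediate from the nonexpansiveness of $T$. Since $(\lambda_n)$ is divergent in sum and bounded away from $1$, and $(u_n)=(x_n)$ is bounded by the previous paragraph, Theorem \ref{habil-GK-gen-thm} gives $\limn d(x_n,Tx_n) = \limn d(u_n,v_n) = 0$, which is exactly the asserted $\lambda_n$-asymptotic regularity.

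I do not expect a substantial obstacle here: essentially all the analytic content is already packaged into Theorem \ref{habil-GK-gen-thm} and Proposition \ref{W-useful-KM-x-y}, and the only genuinely new point is the boundedness-transfer observation in the first paragraph, which is routine once Proposition \ref{W-useful-KM-x-y}(\ref{W-KM-xn-yn-nonincreasing}) is available. The one place deserving a moment of care is to confirm that the hypothesis needed is merely ``$(\lambda_n)$ bounded away from $1$'' (together with divergence in sum), not the two-sided condition required for strong convergence of the iterates themselves as in Theorem \ref{Ishikawa-thm} --- and indeed that weaker condition is exactly what Theorem \ref{habil-GK-gen-thm} demands.
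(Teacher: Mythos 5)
Your proposal is correct and coincides with the paper's own proof: the paper likewise transfers boundedness from $(x^*_n)$ to $(x_n)$ via the monotonicity of $\big(d(x_n,x^*_n)\big)$ from Proposition \ref{W-useful-KM-x-y} and then applies Theorem \ref{habil-GK-gen-thm} with $u_n:=x_n$, $v_n:=Tx_n$. Your write-up merely spells out the verification of the hypotheses in more detail.
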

\begin{proof}
Since $(d(x_n,x^*_n))$ is nonincreasing, we get that $(x_n)$  is bounded for all $x\in C$. Apply now Theorem \ref{habil-GK-gen-thm} with $u_n:=x_n$ and $v_n:=Tx_n$. 
\end{proof}

As a consequence, we get that for bounded convex $C$, any nonexpansive self-mapping of $C$ is approximately fixed.
  
\bcor\label{habil-bounded-hyperbolic-afpp-ne}
Bounded convex subsets of $W$-hyperbolic spaces have the AFPP for nonexpansive mappings.
\ecor
 
As we have already remarked in Section \ref{intro-fpt-as-reg}, Theorem \ref{asreg-uniform-Goebel-Kirk}, unifying Ishikawa's and Edelstein/O'Brien's results is valid in spaces of hyperbolic type, hence in $W$-hyperbolic spaces too.

\bthm\label{asreg-uniform-Goebel-Kirk-W}\cite{GoeKir83}
Let $C$ be a bounded convex subset of a  $W$-hyperbolic space $(X,d,W)$ and $\lambda_n$ divergent in sum and bounded away from $1$. Then for every $\eps >0$ there exists a positive integer $N$ such that for all $x\in C$ and all $T:C\to C$ nonexpansive,
\beq
\forall n\geq N \big(d(x_n,Tx_n) <\eps \big).
\eeq 
\ethm

\subsection{A quantitative version of Borwein-Reich-Shafrir Theorem}\label{section-app-fpt-brs}

Our first application of proof mining is an effective quantitative version of the following theorem due to Borwein, Reich and Shafrir \cite{BorReiSha92}.
 
\bthm \label{BRS-thm-hyperbolic}\cite{BorReiSha92}
Let $C$ be a convex subset of a $W$-hyperbolic space $(X,d,W)$  and $T:C\rightarrow C$ a nonexpansive mapping. Assume that $(\lambda_n)$ is divergent in sum and bounded away from $1$.

Then for all $x\in C$, 
\beq
\lim d(x_n,Tx_n)=r_C(T).
\eeq
\ethm
We recall that $r_C(T)=\inf\{d(x,Tx)\mid x\in C\}$ is the minimal displacement of $T$. As we have already remarked in Section \ref{intro-fpt-as-reg}, the above theorem was initially proved for normed spaces. Anyway, it is easy to see that its proof holds also in the more general context of $W$-hyperbolic spaces.

In the following, we give an explicit quantitative version of the above theorem, generalizing to $W$-hyperbolic spaces and directionally nonexpansive mappings the logical analysis made by Kohlenbach \cite{Koh01,Koh03} for normed spaces and nonexpansive functions. Our Theorem \ref{habil-quant-BRS-bounded} extends Kohlenbach's results (even with the same numerical bounds)  to  $W$-hyperbolic spaces and directionally nonexpansive mappings and contains all previously known results of this kind as special cases. In this way, we obtain significantly stronger and much more general forms of Kirk's  Theorem \ref{asreg-uniform-Kirk-dir-ne} with  explicit bounds. As a special feature of our approach, which is based on logical analysis instead of functional analysis, no functional analytic embeddings are needed to obtain our uniformity results. 

The main application of the quantitative version of the Borwein-Reich-Shafrir Theorem is a  {\em uniform} effective rate of $\lambda_n$-asymptotic regularity  in the case of bounded $C$ for  general $(\lambda_n)$  divergent in sum and bounded away from $1$ (see Theorem \ref{habil-quant-BRS-bounded}). Thus, the rate of asymptotic regularity is uniform in the nonexpansive mapping $T:C\to C$ and in the starting point $x\in C$ of the Krasnoselski-Mann iteration $(x_n)$ and  in the bounded convex subset $C$ (by this we mean that it depends on $C$ only via its diameter $d_C$). 

As we have already discussed in Section \ref{intro-fpt-as-reg}, uniformity in $x\in C$ for Banach spaces and constant $\lambda_n=\lambda$ was first established by Edelstein and O'Brien in \cite{EdeOBr78}. Subsequently, in \cite{GoeKir83}, Goebel and Kirk  obtained uniformity in $x$ and $T$ for general $(\lambda_n)$, but no uniformity in $C$;  their result holds even for spaces of hyperbolic type. In 2000 \cite{Kir00}, Kirk  established uniformity in $x,T$ for Banach  spaces and directionally nonexpansive mappings only in the case of constant $\lambda_n=\lambda$. In 2001 \cite{Koh01}, by using methods of proof mining,  Kohlenbach  obtained for the first time uniformity in $x,T,C$ for nonexpansive mappings and general $(\lambda_n)$ in the case of Banach spaces with explicit rates of asymptotic regularity. 

None of the papers \cite{Ish76,EdeOBr78,GoeKir83,BorReiSha92,Kir00} contain any bounds and in fact \cite{EdeOBr78,GoeKir83,Kir00} use non-trivial functional theoretic embeddings to get the uniformities. Kirk and Martinez-Yanez \cite[p.191]{KirMar90} explicitly mention the non-effectivity of all these results and state that {\em "it seems unlikely that such estimates would be easy to obtain in a general setting''} and, therefore, only study the {\em tractable} case of uniformly convex Banach spaces. 

Not even the ineffective existence of bounds uniform in $C$ was known for general $(\lambda_n)$ and still in 1990, Goebel and Kirk conjecture \cite[p. 101]{GoeKir90} as {\em ``unlikely'' to be true}. Only for Banach spaces and constant $\lambda_n=\lambda$, uniformity with respect to $C$ has been established by Baillon and Bruck in \cite{BaiBru96}, where for this special case an optimal quadratic bound was obtained.

\subsubsection{Logical discussion}\label{BRS-logical}

The proof of Theorem \ref{BRS-thm-hyperbolic} is prima facie ineffective and does not provide any rate of convergence of $(d(x_n,Tx_n))$. Moreover, its statement does not have the required logical form for the logical metatheorems from Section \ref{logical-meta} to apply, due to the two implicative assumptions on $(\lambda_n)$ and, more seriously, to the existence of $r_C(T)$, which can not be formed in the theory $\mathcal{A}^\omega[X,d,W]_{-b}$ of $W$-hyperbolic spaces. 

However, we show in the sequel that it can be reformulated in such  a way that the logical metatheorems apply (more precisely Corollary \ref{meta-BRS}). Firstly, let us remark that any convex subset $C$ of a $W$-hyperbolic space is also a $W$-hyperbolic space, so it suffices to consider only the case $C=X$, and hence only nonexpansive functions $T:X\to X$. For simplicity, we shall denote $r_X(T)$ with $r(T)$.
 
Let us consider the conclusion of Theorem \ref{BRS-thm-hyperbolic}. 

\bprop
The following are equivalent for all $x\in X$.
\be
\item $\limn d(x_n,Tx_n)=r(T)$;
\item $\forall\,\varepsilon >0\,\exists N\in\N\,\forall\, m\ge N\big( d(x_m,Tx_m) < r(T)+\varepsilon\big)$;
\item $\forall\,\varepsilon >0\,\exists N\in\N\,\forall\, m\ge N\,\forall\, x^*\in X\big( d(x_m,Tx_m) <  d(x^*,Tx^*)+\varepsilon\big)$;
\item $\forall\,\varepsilon >0\,\exists N\in\N\,\forall\, x^*\in X\big( d(x_N,Tx_N) <  d(x^*,Tx^*)+\varepsilon\big)$;
\item $\forall\,\varepsilon >0\,\forall\, x^*\in X\,\exists N\in\N\big( d(x_N,Tx_N) <  d(x^*,Tx^*)+\varepsilon\big)$.
\ee
\eprop
\begin{proof}$\,$
$(i)\Lra(ii)\Lra(iii)$ are obvious, by the definition of $r(T)$. \\
$(iii)\Lra(iv)$ follows immediately from the fact that $( d(x_n,Tx_n))$ is nonincreasing, hence the quantifier $\forall\,  m\ge N$ in (iii) is superfluous.\\
$(iv)\Ra(v)$ is obvious, so it remains to prove $(v)\Ra(iv)$ Since $r(T)=\inf\{ d(x^*,Tx^*):x^*\in X\}$, there exists $y^*\in X$ such that $ d(y^*, Ty^*)< r(T)+\eps/2$. Applying (v) with $\eps/2$ and $y^*$, we get $N\in\N$ such that $ d(x_N,Tx_N) < d(y^*,Ty^*)+\varepsilon/2<r(T)+\eps\leq  d(x^*,Tx^*)+\eps$ for all $x^*\in X$. Thus, (iv) is satisfied with this $N$.
\end{proof}
Thus, the conclusion $\forall\, x\in X\left(\limn d(x_n,Tx_n)=r(T)\right)$ of Borwein-Reich-Shafrir Theorem can be reformulated as 
$$\forall\, x\in X\,\forall\,\varepsilon >0\,\forall\, x^*\in X\,\exists N\in\N\bigg( d(x_N,Tx_N) <  d(x^*,Tx^*)+\varepsilon\bigg),$$ that has the $\forall\,\exists$-form required by the logical metatheorems.

Let us now examine the hypotheses on $(\lambda_n)$: $\ds \limsup \lambda_n<1$ and  $\ds\sum_{n=0}^\infty \lambda_n=\infty$. 

The first one, $\limsup \lambda_n<1$, states the existence of a $K\in\N^*$ such that $\ds\lambda_n\leq 1-\frac1K$ for all $n$ from some index $N_0$ on. Since $N_0$ only contributes an additive constant to our bound, we may assume for simplicity that $N_0=0$, which is anyway the case if $(\lambda_n)$ is  a sequence in $[0,1)$. Hence, we may replace the hypothesis $\limsup \lambda_n<1$ with 
\beq
\exists K\in\N\,\forall\, n\in\N\left(\lambda_n\leq 1-\frac1K\right).\label{ref-intro-lambda-n-limsup}
\eeq

The second one, $\ds\sum_{n=0}^\infty \lambda_n=\infty$, is (ineffectively, using countable axiom of choice) equivalent with 
\beq
\exists \,\theta:\N\to\N\,\forall\, n\in\N\left(\ds\sum_{s=0}^{\theta(n)}\lambda_s\geq n\right),\label{ref-intro-lambda-n-div}
\eeq
that is with the existence of a rate of divergence $\theta:\N\to\N$.

It is easy to see that $\adw$ proves the following formalized version of Theorem \ref{BRS-thm-hyperbolic}:
\[\ba{l}
\forall\, (\lambda_n)\in[0,1]^\N\,\forall\, x\in X\, \forall\,\, T:X\to X\\
\biggl(T \text{~nonexpansive~} \si \,\exists K\in\N\,\forall\, n\in\N\ds\left(\lambda_n\leq 1- \frac1K\right) \, \si\,\exists \,\theta:\N\to\N\,\forall\, n\in\N\left(\ds\sum_{s=0}^{\theta(n)}\lambda_s\geq n\right)\\
\hfill\ra \,\forall\, \eps>0\,\forall\, x^*\in X\,\exists N\in \N\bigg(d(x_N,Tx_N) < d(x^*,Tx^*)+ \eps\bigg)\biggr),
\ea\]
hence,
\[\ba{l}
\!\!\forall\, K\in\N\, \forall\, \eps>0\, \forall\, \theta:\N\to\N \,\forall\, (\lambda_n)\in[0,1]^\N\,\forall\, x,x^*\in X\, \forall\, T:X\to X\quad\quad\\
\quad \quad\quad \quad\quad \quad \hfill\biggl(T \text{~nonexpansive~} \si \,\forall\, n\in\N\ds \left(\lambda_n\leq 1- \frac1K\right)\, \si \,\forall\, n\in\N\left(\ds\sum_{s=0}^{\theta(n)}\lambda_s\geq n\right)\\
\hfill\ra \, \exists N\in \N\bigg(d(x_N,Tx_N) < d(x^*,Tx^*)+\eps\bigg)\biggr),
\ea\]
The Hilbert cube $[0,1]^\N$ is a compact metric space which is ${\cal A}^\omega$-definable and we can let $\eps=2^{-p}$ with $p\in\N$, hence the above formalization of the statement of Borwein-Reich-Shafrir Theorem has the required logical form. 

Corollary \ref{meta-BRS} yields the existence of a computable functional $\Phi$ such that for all $b\in\N$, 
\[\ba{l}
\!\!\forall\,K\in\N\, \forall\, \eps>0\, \forall\, \theta:\N\to\N\, \forall (\lambda_n)\in[0,1]^\N\,\forall\, x,x^*\in X\, \forall\,T:X\to X\\
\biggl(T \text{ n.e. } \, \si \, d(x,Tx)\leq b \, \si \, d(x,x^*)\leq b \, \si \,\forall\, n\in\N\ds\left(\lambda_n\leq 1- \frac1K\right)\, \si \,\forall\, n\in\N\left(\ds\sum_{s=0}^{\theta(n)}\lambda_s\geq n\right) \\
\hfill\ra \,\exists N\leq \Phi(\eps,b, K,\theta)\bigg(d(x_N,Tx_N) < d(x^*,Tx^*)+ \eps\bigg)\biggr).
\ea\]
holds in any $W$-hyperbolic space $(X,d,W)$; "n.e." abbreviates "nonexpansive". Using again the fact that $(d(x_n,Tx_n))$ is nonincreasing, we get in fact that 
\[\forall n\geq \Phi(\eps,b, K,\theta)\bigg(d(x_n,Tx_n) < d(x^*,Tx^*)+ \eps\bigg).\]

In fact, a slight reformulation of  (\ref{ref-intro-lambda-n-div}) is better suited for the proof of our theorem.

\blem\label{BRS-lambda-rate-divergence}
The following are equivalent:
\be
\item  there exists $\theta:\N\to\N$ such that $\ds\sum_{s=0}^{\theta(n)}\lambda_s\geq n$ for all $n\in\N$;
\item there exists  $\gamma:\N\times \N\to\N$ such that $\ds\sum_{s=i}^{i+\gamma(i,n)-1}\lambda_s \geq n$ for all $n,i\in\N$;
\item there exists $\alpha:\N\times\N\to\N$ such that for all $n,i\in\N$,
\beq
\alpha(i,n)\le\alpha(i+1,n) \text{~~and~~} \ds\sum_{s=i}^{i+\alpha(i,n)-1}\lambda_s \geq n. \label{habil-BRS-alpha}
\eeq
\ee
\elem
\begin{proof} 
$(i)\Ra(ii)$ Define $\gamma(i,n)=\theta(n+i)-i+1\geq 0$, since $n+i\leq  \ds\sum_{s=0}^{\theta(n+i)}\lambda_s \le \theta(n+i)+1$. Furthermore,
\bua
\ds\sum_{s=i}^{i+\gamma(i,n)-1}\lambda_s=\ds\sum_{s=i}^{\theta(n+i)}\lambda_s=\ds\sum_{s=0}^{\theta(n+i)}\lambda_s-\ds\sum_{s=0}^{i-1}\lambda_s\geq n+i-i=n, \quad\text{as~} \ds\sum_{s=0}^{i-1}\lambda_s\leq i.
\eua
$(ii)\Ra(iii)$ Define $\ds \alpha(i,n)=\max_{j\leq i}\{\gamma(j,n)\}.$
 Then $\alpha$ is increasing in $i$, $\alpha(i,n)\ge \gamma(i,n)$, so 
$\ds\sum_{s=i}^{i+\alpha(i,n)-1}\lambda_s\geq  \ds\sum_{s=i}^{i+\gamma(i,n)-1}\lambda_s\geq n$.\\
$(iii)\Ra(i)$ Applying (iii) with $i=0$, we get that $n\leq \ds\sum_{s=0}^{\alpha(0,n)-1}\lambda_s\le \alpha(0,n)$, so $\alpha(0,n)-1\geq  n-1\ge 0$ for all $n\geq 1$. We can define then $\theta(n)=\alpha(0,n)-1$ for $n\geq 1$ and $\theta(0)$ arbitrary. 
\end{proof}

Hence, Corollary \ref{meta-BRS} guarantees the extractability of a computable functional $\Phi$ such that
for all $b\in\N$, 
\[\ba{l}
\!\!\forall\, K\in\N\, \forall\, \eps>0\, \forall\, \alpha:\N\times\N\to\N\, \forall (\lambda_n)\in[0,1]^\N\,\forall\, x,x^*\in X\, \forall\,T:X\to X\\
\biggl(T \text{ n.e.} \, \si \, d(x,Tx)\leq b \, \si \, d(x,x^*)\leq b \, \si \,\forall\, n\in\N\ds\left(\lambda_n\leq 1- \frac1K\right)\, \si\, \alpha \text{ satisfies (\ref{habil-BRS-alpha})}\\
\hfill \ra \,\forall n\geq \Phi(\eps,b, K,\alpha)\bigg(d(x_n,Tx_n) < d(x^*,Tx^*)+ \eps\bigg)\biggr).
\ea\]

An explicit such bound $\Phi$ has been extracted by Kohlenbach and the author in \cite{KohLeu03} and will be given in the following. \\

\subsubsection{Main results}

We present now the quantitative version of the Borwein-Reich-Shafrir Theorem. 

\bthm\label{habil-quant-BRS} Let $K\in \N, K\geq 1$, $\alpha:\N\times\N\to N$ and $b>0$.\\
 Then for all $W$-hyperbolic spaces $(X,d,W)$, for all convex subsets $C\se X$, \\
 for all sequences $(\lambda_n)$ in $\left[0,1-1/K\right]$ satisfying
\beq
\forall\, i,n\in\N \left((\alpha(i,n)\le\alpha(i+1,n))\quad\text{~and~}\quad 
n\le\sum\limits^{i+\alpha(i,n)-1}_{s=i}\lambda_s \right),\label{habil-quant-BRS-hyp-alpha}
\eeq
for all $x,x^*\in C$ and for all nonexpansive mappings $T:C\to C$ such that
\beq
d(x,Tx)\leq b \quad\text{~and~}\quad d(x,x^*)\leq b,\label{habil-quant-BRS-hyp-x-b} 
\eeq
the following holds
\beq
\aquant \varepsilon >0 \aquant n\ge
\Phi(\varepsilon,b,K,\alpha)\bigg(d(x_n,Tx_n)< d(x^*,Tx^*)
+\varepsilon\bigg),
\eeq
where $\Phi(\varepsilon,b,K,\alpha)=\widehat{\alpha}(\lceil
2b\cdot \exp(K(M+1)) \rceil\remin\, 1,M),$
with 
\[ \ba{l} n\, \remin 1=\max\{0,n-1\}, \quad \ds M=\left\lceil \frac{1+2b}{\varepsilon}\right\rceil, \quad \widehat{\alpha},\tilde{\alpha}:\N\times\N\to \N\\
\widehat{\alpha}(0,n)=\tilde{\alpha}(0,n), \quad 
\widehat{\alpha}(i+1,n)=\tilde{\alpha}(\widehat{\alpha}(i,n),n), \quad 
\tilde{\alpha}(i,n)=i+\alpha(i,n). \ea\]
\ethm

\bfact
As we have seen in Lemma \ref{BRS-lambda-rate-divergence}, we could have started with a rate of divergence $\theta:\N\to\N$ for $\ds\sum_{n=0}^\infty \lambda_n$  and then define $\ds \alpha(i,n)=\max_{j\le i}\big\{\theta(n+j)-j+1\big\}$. Starting with $\theta$ would in general give less good bounds than when working with $\alpha$ directly, as it can be seen  in \cite[Remark 3.19]{KohLeu03}.
\efact

The above theorem was proved for normed spaces and nonexpansive mappings by Kohlenbach \cite{Koh01}. For $W$-hyperbolic spaces,  it was obtained by Kohlenbach and the author in \cite{KohLeu03} as a consequence of an extension to the more general class of directionally nonexpansive mappings 

As we have seen in Section \ref{intro-fpt-as-reg}, the notion of directionally nonexpansive mapping was introduced by Kirk \cite{Kir00} in the context of normed spaces, but $W$-hyperbolic spaces in our sense suffice: 
\bce
 $T:C\to C$ is {\em directionally nonexpansive} if $d(Tx,Ty)\leq d(x,y)$ for all $x\in C$ and  all $y\in[x,Tx]$.
 \ece
\noindent Obviously, any nonexpansive mapping is directionally nonexpansive, but the converse fails as directionally nonexpansive  mappings not even need to be continuous on the whole space, as it can be seen from the following example.

\bex (simplified by Paulo Oliva): Consider the normed space $(\R^2,\|\cdot\|_{\max})$ and the mapping 
\[ T:[0,1]^2\rightarrow [0,1]^2, \ T(x,y)=\left\{\ba{l} 
(1,y), \ \mbox{if $y>0$} \\ (0,y), \ \mbox{if $y=0$}. \ea \right. \] 
Clearly, $T$ is directionally nonexpansive, but discontinuous at $(0,0)$, hence $T$ is not nonexpansive.
\eex

Since $x_{n+1}\in[x_n, Tx_n]$, we have  that $d(Tx_n,Tx_{n+1})\leq d(x_n,x_{n+1})$ for directionally nonexpansive mappings too, so we can apply Goebel-Kirk Theorem \ref{habil-GK-gen-thm} to get that $(d(x_n,Tx_n))$ is nonincreasing  and to obtain the following generalization of Ishikawa Theorem \ref{asreg-Ishikawa}.

\begin{theorem}\label{asreg-Ishikawa-bounded-xn-W-dir-ne}
Let $C$ be a convex subset of a $W$-hyperbolic space $(X,d,W)$ and $T:C\rightarrow C$ a directionally nonexpansive mapping. Assume that $(\lambda_n)$ is divergent in sum and bounded away from $1$. 

If there exists $x\in C$ such that $(x_n)$ is bounded, then $\limn d(x_n,Tx_n)=0$ for all $x\in C$.
\end{theorem}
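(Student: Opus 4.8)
The plan is to obtain this as a direct consequence of the Goebel--Kirk Theorem~\ref{habil-GK-gen-thm}, exactly as in the nonexpansive case (Theorem~\ref{asreg-Ishikawa-bounded-xn-W}), the only new point being the weaker hypothesis on $T$. First I would fix $x\in C$ whose Krasnoselski--Mann iteration $(x_n)$ is bounded and set $u_n:=x_n$, $v_n:=Tx_n$. By the definition~\eqref{app-KM-lambda-n-def-hyp} of the iteration, $u_{n+1}=(1-\lambda_n)u_n\oplus\lambda_n v_n$ for every $n$, so the first requirement of Theorem~\ref{habil-GK-gen-thm} is met. For the second, the key observation is that $x_{n+1}=(1-\lambda_n)x_n\oplus\lambda_n Tx_n$ lies on the segment $[x_n,Tx_n]$; applying the defining inequality of directional nonexpansiveness to the pair $x_n$ and $x_{n+1}\in[x_n,Tx_n]$ gives $d(Tx_n,Tx_{n+1})\le d(x_n,x_{n+1})$, i.e. $d(v_n,v_{n+1})\le d(u_n,u_{n+1})$. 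Since $(\lambda_n)$ is divergent in sum and bounded away from $1$ and $(u_n)$ is bounded, Theorem~\ref{habil-GK-gen-thm} yields that $(d(x_n,Tx_n))$ is nonincreasing and $\limn d(x_n,Tx_n)=\limn d(u_n,v_n)=0$.

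This already proves the conclusion for the given $x$ and, verbatim, for every $x\in C$ whose iteration is bounded. To read the statement with the full quantifier ``for all $x\in C$'' I would argue as follows: the existence of one bounded iteration together with the previous paragraph gives $r_C(T)\le\limn d(x_n,Tx_n)=0$, hence $r_C(T)=0$, and then the Borwein--Reich--Shafrir theorem for directionally nonexpansive mappings on $W$-hyperbolic spaces (the result of \cite{KohLeu03} underlying Theorem~\ref{habil-quant-BRS}) gives $\limn d(x_n,Tx_n)=r_C(T)=0$ for \emph{every} $x\in C$. It is worth stressing that, unlike in Theorem~\ref{asreg-Ishikawa-bounded-xn-W}, one cannot upgrade ``some bounded iteration'' to ``all iterations bounded'': the proof of Proposition~\ref{W-useful-KM-x-y}(\ref{W-KM-xn-yn-nonincreasing}), which makes $(d(x_n,x^*_n))$ nonincreasing, uses $d(Tx_n,Tx^*_n)\le d(x_n,x^*_n)$, and directional nonexpansiveness does not supply this when $x^*_n\notin[x_n,Tx_n]$; only the vanishing of $d(x_n,Tx_n)$ transfers, not boundedness.

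I expect the only genuinely non-formal ingredient to be the step from ``a bounded iteration'' to ``$d(x_n,Tx_n)\to 0$'', which is precisely Theorem~\ref{habil-GK-gen-thm}, together with the elementary facts $x_{n+1}\in[x_n,Tx_n]$ and $d(Tx_n,Tx_{n+1})\le d(x_n,x_{n+1})$; the passage to all $x\in C$ is then just an appeal to Borwein--Reich--Shafrir. No quantitative or proof-theoretic input is needed here, since the statement is purely qualitative --- a mild extension of Ishikawa's Theorem~\ref{asreg-Ishikawa} to $W$-hyperbolic spaces and directionally nonexpansive maps.
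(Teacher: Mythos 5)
Your first paragraph is precisely the paper's argument: the only thing written in the text before this theorem is the observation that $x_{n+1}=(1-\lambda_n)x_n\oplus\lambda_n Tx_n\in[x_n,Tx_n]$, so directional nonexpansiveness applied to the pair $(x_n,x_{n+1})$ gives $d(Tx_n,Tx_{n+1})\le d(x_n,x_{n+1})$, and Theorem~\ref{habil-GK-gen-thm} with $u_n:=x_n$, $v_n:=Tx_n$ then yields that $(d(x_n,Tx_n))$ is nonincreasing and tends to $0$ whenever $(x_n)$ is bounded. That part is correct and is all the paper does.

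Your second paragraph, however, contains a genuine gap. The Borwein--Reich--Shafrir theorem you invoke for directionally nonexpansive mappings is not Theorem~\ref{BRS-thm-hyperbolic} (which is stated only for nonexpansive $T$) but Theorem~\ref{habil-quant-BRS-dir-ne} / Corollary~\ref{habil-BRS-dir-ne-non-quant}, and these carry the additional hypothesis that $d(x_n,x^*_n)\le b$ for \emph{all} $n$, where $x^*$ ranges over the approximate minimizers of $y\mapsto d(y,Ty)$. For an arbitrary starting point $x\in C$ you have no such $x^*$: knowing $r_C(T)=0$ gives you $\varepsilon$-fixed points, but nothing controls the distance between the two iteration sequences, and — as you yourself note in the same paragraph — $(d(x_n,x^*_n))$ need not be nonincreasing for directionally nonexpansive maps. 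So the vanishing of $d(x_n,Tx_n)$ does \emph{not} transfer to arbitrary $x$ by this route; the very hypothesis that would make the transfer work is the one that fails. The paper does not attempt this upgrade either: the remark immediately following the theorem states explicitly that there is no analogue of Theorem~\ref{asreg-Ishikawa-bounded-xn-W} in the directionally nonexpansive case, so the quantifier ``for all $x\in C$'' in the statement should be read as applying to those $x$ whose iteration is bounded (the literal wording is a slip inherited from the nonexpansive version; compare Kirk's Theorem~\ref{asreg-uniform-Kirk-dir-ne}(i), which assumes boundedness for \emph{each} $x$). In short: keep your first paragraph, delete the appeal to Borwein--Reich--Shafrir, and restrict the conclusion to starting points with bounded iteration.
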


As a consequence

\bcor\label{habil-bounded-hyperbolic-afpp-dir-ne}
Bounded convex subsets of $W$-hyperbolic spaces have the AFPP for directionally nonexpansive mappings.
\ecor

We remark that in the case of directionally nonexpansive mappings, the sequence $(d(x_n,x^*_n))$ is not necessarily nonincreasing, so we do not have an analogue of Theorem \ref{asreg-Ishikawa-bounded-xn-W}.

The following is the main result of \cite{KohLeu03}.

\bthm\label{habil-quant-BRS-dir-ne}
Theorem \ref{habil-quant-BRS} holds for directionally nonexpansive mappings too, if the hypothesis $\,d(x,x^*)\leq b$ is strengthened to $d(x_n,x_n^*)\leq b$ for all $n\in\N$.
\ethm
\noindent As we have already remarked,  $(d(x_n,x^*_n))$ is not necessarily nonincreasing for directionally nonexpansive mappings and that's why we need the stronger assumption that $d(x_n,x_n^*)\leq b$ for all $n\in\N$, which is equivalent to $d(x,x^*)\leq b$ in the nonexpansive case, since $(d(x_n,x^*_n))$ is nonincreasing. Thus, Theorem \ref{habil-quant-BRS} is an immediate consequence of Theorem \ref{habil-quant-BRS-dir-ne}. 

Let us note also that as a corollary to Theorem \ref{habil-quant-BRS-dir-ne} we get the following (non-quantitative) generalization of Borwein-Reich-Shafrir Theorem to directionally nonexpansive mappings.

\begin{corollary}\label{habil-BRS-dir-ne-non-quant}
Let $C$  be a convex subset of a $W$-hyperbolic space $(X,d, W)$, $T:C\rightarrow C$ be a directionally nonexpansive mapping, and $(\lambda_n)$ be divergent in sum and bounded away from $1$. \\
Assume $x\in C$ is such that for all $\eps>0$ there exists $x^*\in C$ satisfying 
\beq
d(x_n,x_n^*)\text{ is bounded } \quad\text{ and } \quad d( x^*,Tx^*) \le r_C(T)+
\varepsilon.
\eeq
Then $\ds \limn d(x_n,Tx_n)=r_C(T)$. 
\end{corollary}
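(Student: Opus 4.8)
The plan is to deduce this qualitative convergence statement directly from the quantitative Theorem \ref{habil-quant-BRS-dir-ne} by a routine passage to the limit. First I would record the two facts that reduce the problem to producing, for each $\eps>0$, a single good index: since $x_{n+1}\in[x_n,Tx_n]$ and $T$ is directionally nonexpansive we have $d(Tx_n,Tx_{n+1})\le d(x_n,x_{n+1})$, so Goebel--Kirk Theorem \ref{habil-GK-gen-thm} (applied with $u_n:=x_n$, $v_n:=Tx_n$) yields that $(d(x_n,Tx_n))$ is nonincreasing; and by the very definition of $r_C(T)$ we have $d(x_n,Tx_n)\ge r_C(T)$ for all $n$. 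Hence it suffices to show that for every $\eps>0$ there is $N\in\N$ with $d(x_N,Tx_N)<r_C(T)+\eps$, because then $r_C(T)\le d(x_n,Tx_n)<r_C(T)+\eps$ for all $n\ge N$, and $\eps$ being arbitrary gives $\ds\limn d(x_n,Tx_n)=r_C(T)$.

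Next I would fix $\eps>0$ and instantiate the hypothesis of the corollary at $\eps/2$, obtaining $x^*\in C$ with $b_0:=\sup_{n\in\N}d(x_n,x_n^*)<\infty$ and $d(x^*,Tx^*)\le r_C(T)+\eps/2$. Set $b:=\max\{d(x,Tx),\,b_0\}$, which is finite, so that simultaneously $d(x,Tx)\le b$ and $d(x_n,x_n^*)\le b$ for all $n\in\N$ --- this is precisely the strengthened boundedness assumption that Theorem \ref{habil-quant-BRS-dir-ne} requires for directionally nonexpansive maps. Since $(\lambda_n)$ is bounded away from $1$, pick $K\in\N$, $K\ge1$, with $\lambda_n\le 1-1/K$ for all $n$ (the finitely many exceptional initial indices only contribute an additive constant, as noted in the discussion around (\ref{ref-intro-lambda-n-limsup})); and since $\ds\sum_{n=0}^\infty\lambda_n=\infty$, Lemma \ref{BRS-lambda-rate-divergence} furnishes $\alpha:\N\times\N\to\N$ satisfying (\ref{habil-quant-BRS-hyp-alpha}).

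Then I would apply Theorem \ref{habil-quant-BRS-dir-ne} with these data $\eps/2,b,K,\alpha$, the mapping $T$ and the points $x,x^*$: all hypotheses hold, so for every $n\ge\Phi(\eps/2,b,K,\alpha)$ one gets
\[
d(x_n,Tx_n)<d(x^*,Tx^*)+\frac{\eps}{2}\le r_C(T)+\frac{\eps}{2}+\frac{\eps}{2}=r_C(T)+\eps .
\]
Taking $N:=\Phi(\eps/2,b,K,\alpha)$ completes the reduction from the first paragraph, and hence $\ds\limn d(x_n,Tx_n)=r_C(T)$.

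I do not expect a genuine obstacle: all the analytic work is already packaged into Theorem \ref{habil-quant-BRS-dir-ne}. The only point that needs a little care is that for directionally nonexpansive $T$ the sequence $(d(x_n,x_n^*))$ need not be monotone, so one cannot deduce a uniform bound on it from $d(x,x^*)$ alone; this is exactly why the corollary's hypothesis is phrased as "$d(x_n,x_n^*)$ is bounded'', which is precisely what lets one choose the single constant $b$ above.
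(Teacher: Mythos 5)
Your proposal is correct and is exactly the argument the paper intends: the corollary is obtained by instantiating the quantitative Theorem \ref{habil-quant-BRS-dir-ne} at $\eps/2$ with $b:=\max\{d(x,Tx),\sup_n d(x_n,x_n^*)\}$, combining the resulting bound $d(x_n,Tx_n)<d(x^*,Tx^*)+\eps/2\le r_C(T)+\eps$ with the trivial lower bound $d(x_n,Tx_n)\ge r_C(T)$. Your side remarks (monotonicity of $(d(x_n,Tx_n))$ via Theorem \ref{habil-GK-gen-thm}, and why the hypothesis must bound $d(x_n,x_n^*)$ rather than just $d(x,x^*)$) also match the paper's discussion.
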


Combining Corollaries \ref{habil-bounded-hyperbolic-afpp-dir-ne} and \ref{habil-BRS-dir-ne-non-quant} we get asymptotic regularity for bounded $C$.

\begin{theorem}\label{asreg-KM-bounded-C-dir-ne}
Let $C$ be a bounded convex subset of a $W$-hyperbolic space $(X,d,W)$ and $T:C\rightarrow C$ a directionally nonexpansive mapping. Assume that $(\lambda_n)$ is divergent in sum and bounded away from $1$. 

Then $T$ is $\lambda_n$-asymptotically regular.
\end{theorem}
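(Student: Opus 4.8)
The plan is to combine the two corollaries immediately preceding the statement, exactly as the surrounding text indicates. First I would invoke Corollary \ref{habil-bounded-hyperbolic-afpp-dir-ne}: since $C$ is a bounded convex subset of the $W$-hyperbolic space $(X,d,W)$ and $T:C\to C$ is directionally nonexpansive, $C$ has the AFPP for directionally nonexpansive mappings, so $T$ is approximately fixed. Recalling that $T$ is approximately fixed iff $r_C(T)=0$ iff $Fix_\eps(T)\ne\emptyset$ for every $\eps>0$, where $r_C(T)=\inf\{d(y,Ty)\mid y\in C\}$, this means: for every $\eps>0$ there exists $x^*\in C$ with $d(x^*,Tx^*)<\eps$.

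Next, fix an arbitrary starting point $x\in C$ and consider the Krasnoselski-Mann iteration $(x_n)$ defined by (\ref{app-KM-lambda-n-def-hyp}). I want to apply Corollary \ref{habil-BRS-dir-ne-non-quant} to this $x$. Its hypothesis requires that for every $\eps>0$ there be a point $x^*\in C$ for which $(d(x_n,x_n^*))$ is bounded and $d(x^*,Tx^*)\le r_C(T)+\eps$. The boundedness of $(d(x_n,x_n^*))$ is automatic: both iterations $(x_n)$ and $(x_n^*)$ lie in the bounded set $C$, so $d(x_n,x_n^*)\le d_C$ for all $n$, where $d_C$ is the diameter of $C$. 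And $d(x^*,Tx^*)<\eps=r_C(T)+\eps$ holds for the $x^*$ produced in the previous paragraph, since $r_C(T)=0$. Hence the hypothesis of Corollary \ref{habil-BRS-dir-ne-non-quant} is satisfied, and we conclude $\limn d(x_n,Tx_n)=r_C(T)=0$.

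Since $x\in C$ was arbitrary, $\limn d(x_n,Tx_n)=0$ for all $x\in C$, which is precisely the assertion that $T$ is $\lambda_n$-asymptotically regular. I do not anticipate any real obstacle: the two nontrivial ingredients (the AFPP for directionally nonexpansive self-maps of bounded convex subsets, and the directionally nonexpansive version of the Borwein-Reich-Shafrir convergence statement) have already been established in the excerpt, and the only thing that needs checking by hand is the trivial boundedness of $(d(x_n,x_n^*))$, which is immediate from the boundedness of $C$.
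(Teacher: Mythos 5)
Your proposal is correct and follows exactly the route the paper intends: the paper introduces the theorem with the phrase ``Combining Corollaries \ref{habil-bounded-hyperbolic-afpp-dir-ne} and \ref{habil-BRS-dir-ne-non-quant} we get asymptotic regularity for bounded $C$,'' which is precisely your argument. Your explicit verification that $r_C(T)=0$ supplies the approximate fixed points and that boundedness of $C$ makes $(d(x_n,x_n^*))$ trivially bounded fills in the details the paper leaves implicit.
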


From Theorem \ref{habil-quant-BRS-dir-ne}, various strong effective uniformity results for the case of bounded $C$ can be derived, as well as for the more general case of bounded $(x_n)$ for some $x\in C$. 

In the case of bounded $C$ with finite diameter $d_C$, the assumptions $d(x,Tx)\leq d_C$ and $d(x_n,x^*_n)\leq d_C$ hold trivially for all $x,x^*\in C$ and all $n\in\N$. The following result is a consequence of Theorema \ref{habil-quant-BRS-dir-ne} and \ref{asreg-KM-bounded-C-dir-ne}.  

\bthm\label{habil-quant-BRS-bounded}$\,$
Let $(X,d, W)$ be a $W$-hyperbolic space, $C\se X$ be a {\em bounded} convex subset with diameter $d_C$, and $T:C\to C$ be directionally nonexpansive.
Assume that $K,\alpha,(\lambda_n)$ are as in the hypothesis of Theorem \ref{habil-quant-BRS}. 

Then $\limn d(x_n,Tx_n)=0$ for all $x\in C$ and, moreover, 
\beq
\aquant \varepsilon>0 \aquant n\ge \Phi(\varepsilon,d_C,K,\alpha)\bigg(d(x_n,Tx_n)< \varepsilon\bigg),
\eeq
where $\Phi(\eps,d_C,K,\alpha)$ is defined as in Theorem \ref{habil-quant-BRS} by replacing $b$ with $d_C$.
\ethm
For bounded $C$, we derive an explicit rate of asymptotic regularity $\Phi(\varepsilon,d_C,K,\alpha)$  depending only on the error $\eps$, on the diameter $d_C$ of $C$, and on $(\lambda_n)$  via $K$ and $\alpha$, but which does not depend on the nonexpansive mapping $T$, the starting point $x\in C$ of the Krasnoselski-Mann iteration or other data related with $C$ and $X$. 

We can simplify the rate of asymptotic regularity further, if we assume that $(\lambda_n)$ is a sequence in $\ds \left[1/K,1-1/K\right]$. In this case, it is easy to see that
\[\alpha:\N\times\N\to \N, \quad \alpha(i,n)=Kn\]
satisfies (\ref{habil-quant-BRS-hyp-alpha}).

\begin{corollary} \label{habil-quant-BRS-bounded-exp}
Let $(X,d, W)$ be a $W$-hyperbolic space, $C\se X$ be a {\em bounded} convex subset with diameter $d_C$, and $T:C\to C$ be directionally nonexpansive.
Let $K\in \N, K\geq 2$ and assume that $\ds \lambda_n\in \left[1/K,1-1/K\right]$ for all $n\in\N$.
Then 
\beq
\aquant \varepsilon >0 \aquant n\ge
\Phi(\varepsilon,d_C,K)\bigg(d(x_n,Tx_n)< \varepsilon\bigg),
\eeq
where $\Phi(\varepsilon,d_C,K)=K\cdot M\cdot\lceil 2d_C\cdot\exp(K(M+1))\rceil$, with $\ds M=\left\lceil\frac{1+2d_C}{\varepsilon}\right\rceil.$
\end{corollary}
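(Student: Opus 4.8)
The plan is to obtain this corollary as a direct specialization of Theorem \ref{habil-quant-BRS-bounded}: I would exhibit a concrete function $\alpha:\N\times\N\to\N$ fitting the hypothesis (\ref{habil-quant-BRS-hyp-alpha}) for sequences $(\lambda_n)$ in $\left[1/K,1-1/K\right]$, namely $\alpha(i,n):=Kn$, and then simplify the bound $\Phi(\varepsilon,d_C,K,\alpha)$ of Theorem \ref{habil-quant-BRS} to the closed form stated here. We may assume $d_C>0$, since if $d_C=0$ then $C$ is a single point, necessarily fixed by $T$, and the conclusion holds trivially (note $\Phi(\varepsilon,0,K)=0$ in that case).

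First I would check that $\alpha(i,n)=Kn$ is admissible. Since it does not depend on $i$, the monotonicity clause $\alpha(i,n)\le\alpha(i+1,n)$ holds trivially. For the divergence clause, using $\lambda_s\ge 1/K$ for all $s$,
\[
\sum_{s=i}^{i+\alpha(i,n)-1}\lambda_s=\sum_{s=i}^{i+Kn-1}\lambda_s\ge Kn\cdot\frac1K=n,
\]
so (\ref{habil-quant-BRS-hyp-alpha}) is satisfied. Moreover $\left[1/K,1-1/K\right]\subseteq\left[0,1-1/K\right]$ (here $K\ge 2$ is used only so that this interval is nonempty), hence $(\lambda_n)$ and $\alpha$ meet the hypotheses on $K,\alpha,(\lambda_n)$ required in Theorem \ref{habil-quant-BRS}, and Theorem \ref{habil-quant-BRS-bounded} applies, with the bound obtained from that of Theorem \ref{habil-quant-BRS} upon setting $b=d_C$.

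Next I would evaluate the nested functional $\widehat\alpha$ for this $\alpha$. Here $\tilde\alpha(i,n)=i+\alpha(i,n)=i+Kn$, so $\widehat\alpha(0,n)=Kn$ and $\widehat\alpha(i+1,n)=\tilde\alpha(\widehat\alpha(i,n),n)=\widehat\alpha(i,n)+Kn$; an immediate induction on $i$ gives $\widehat\alpha(i,n)=(i+1)Kn$. Writing $L:=\lceil 2d_C\cdot\exp(K(M+1))\rceil$ with $M=\lceil(1+2d_C)/\varepsilon\rceil$, we have $L\ge 1$ since $d_C>0$ and $\exp(K(M+1))\ge 1$, hence $L\remin 1=L-1$ and
\[
\Phi(\varepsilon,d_C,K,\alpha)=\widehat\alpha(L-1,M)=L\cdot K\cdot M=K\cdot M\cdot\lceil 2d_C\cdot\exp(K(M+1))\rceil=\Phi(\varepsilon,d_C,K).
\]
Feeding these two observations into Theorem \ref{habil-quant-BRS-bounded} yields $\limn d(x_n,Tx_n)=0$ for every $x\in C$ together with the explicit estimate $\forall\varepsilon>0\,\forall n\ge\Phi(\varepsilon,d_C,K)\,\big(d(x_n,Tx_n)<\varepsilon\big)$, which is precisely the assertion.

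I do not expect a genuine obstacle: all the analytic content has already been carried out in Theorem \ref{habil-quant-BRS-bounded}. The only step demanding any care is the closed-form evaluation of the iterated recursion defining $\widehat\alpha$, and, as noted, this collapses to a one-line induction once one observes that $\alpha$ is constant in its first argument; the verification of (\ref{habil-quant-BRS-hyp-alpha}) is the trivial telescoping bound $\sum_{s=i}^{i+Kn-1}\lambda_s\ge n$.
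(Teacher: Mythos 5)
Your proposal is correct and follows exactly the route the paper takes: the text preceding the corollary observes that $\alpha(i,n)=Kn$ satisfies (\ref{habil-quant-BRS-hyp-alpha}) for $(\lambda_n)$ in $[1/K,1-1/K]$, and the corollary is then the specialization of Theorem \ref{habil-quant-BRS-bounded} to this $\alpha$. Your closed-form evaluation $\widehat\alpha(i,n)=(i+1)Kn$ (which the paper leaves implicit) and your handling of the degenerate case $d_C=0$ are both accurate.
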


Thus, we obtain an exponential (in $1/\eps$) rate of asymptotic regularity. The above corollary is significantly stronger and more general than Kirk  Theorem \ref{asreg-uniform-Kirk-dir-ne}.(ii).

As another consequence of our quantitative version of Borwein-Reich-Shafrir Theorem, we extend, for the case of nonexpansive mappings, Theorem \ref{habil-quant-BRS-bounded} to the situation where $C$ no longer is required to be bounded but only the existence of a point $x^*\in C$ whose iteration sequence $(x^*_n)$ is bounded. In this way, we obtain a quantitative version of Theorem \ref{asreg-Ishikawa-bounded-xn-W}. This is of interest, since the functional analytic embedding techniques from \cite{GoeKir83,Kir00} seem to require that $C$ is bounded, while our proof is a straightforward generalization of Kohlenbach's proof of the corresponding result for normed spaces \cite{Koh03}. 

\bthm \label{habil-quant-BRS-bounded-xn}
Assume that $(X,d, W), C, (\lambda_n),K,\alpha$ are as in the hypothesis of Theorem \ref{habil-quant-BRS} and let $T:C\to C$ be nonexpansive.
Suppose $x,x^*\in C$  and $b>0$ satisfy
\beq
d(x,x^*)\le b\quad \text{and}\quad\aquant n,m\in\N (d(x^*_n,x^*_m)\le b).
\eeq 
Then the following holds
\beq
\aquant \varepsilon >0 \aquant n\ge \Phi(\varepsilon,b,K,\alpha)\bigg(d(x_n,Tx_n)< \varepsilon\bigg),
\eeq
where $\Phi(\varepsilon,b,K,\alpha)=\widehat{\alpha}\left(\lceil 12b\cdot \exp(K(M+1)) \rceil\remin\, 1,M\right),
$
with $\ds M=\left\lceil\frac{1+6b}{\varepsilon}\right\rceil$ and $\widehat{\alpha}$ as in Theorem \ref{habil-quant-BRS}.
\ethm

For the case of directionally nonexpansive mappings, however, the additional assumption in  Theorem \ref{habil-quant-BRS-dir-ne} causes various problems and significant changes in the proofs. In the following, we will only consider the case where $(x_n)$ itself is bounded (i.e. $x=x^*$). 

For any $k\in\N$, we define the sequence $((x_k)_m)_{m\in\N}$ by:
\[(x_k)_0:=x_k, \qquad (x_k)_{m+1}:=(1-\lambda_m)(x_k)_m\oplus
\lambda_m T((x_k)_m).\]
Hence, for any $k\in\N$, $((x_k)_m)_{m\in\N}$ is the Krasnoselski-Mann
iteration starting with $x_k$. Let us remark that $((x_k)_m)_{m\in\N}$ is not in general a subsequence of $(x_n)$. 

The following result is the quantitative version of Theorem \ref{asreg-Ishikawa-bounded-xn-W-dir-ne}.

\begin{theorem}\label{quant-BRS-bounded-xn-dir-ne}
Let $(X,d, W), C, (\lambda_n),K,\alpha$ be  as in the hypothesis of Theorem \ref{habil-quant-BRS} and $T:C\to C$ be directionally nonexpansive.\\
Assume that $x\in C, b>0$ are such that  
\beq
\forall\, n,k,m\in\N \left(d(x_n,(x_k)_m\right)\le b\big). \label{quant-BRS-bounded-xn-dir-ne-hyp-xn}
\eeq
Then 
\beq
\aquant \varepsilon >0\aquant n\ge \Phi(\varepsilon,b,K,\alpha)
\bigg(d(x_n,Tx_n)<\varepsilon\bigg), 
\eeq
where $\Phi(\varepsilon,b,K,\alpha)=\alpha(0,1)+\widehat{\beta}(\lceil 
2b\cdot\alpha(0,1)\cdot\exp(K(M+1)) \rceil -1,M)$, with 
\[ \ba{l}
\ds M=\left\lceil\frac{1+2b}{\varepsilon}\right\rceil,\quad \beta, \widehat{\beta},\tilde{\beta}:\N\times\N\to \N,  \quad \beta(i,n)=\alpha(i+\alpha(0,1),n)\\
\tilde{\beta}(i,n)=i+\beta(i,n), \quad \widehat{\beta}(0,n)=\tilde{\beta}(0,n), \quad \widehat{\beta}(i+1,n)=
\tilde{\beta}(\widehat{\beta}(i,n),n).
\ea\]
\end{theorem}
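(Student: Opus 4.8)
The plan is to extract an explicit rate by adapting the logical analysis of the Borwein--Reich--Shafrir theorem already carried out for nonexpansive maps in bounded sets and, in the directionally nonexpansive case, for bounded $C$ (Theorems~\ref{habil-quant-BRS} and~\ref{habil-quant-BRS-dir-ne}). Note first that, although ``$(x_n)$ is bounded'' is an $\exists\forall$-statement and hence not admissible, the hypothesis~(\ref{quant-BRS-bounded-xn-dir-ne-hyp-xn}), $\forall n,k,m\in\N\,(d(x_n,(x_k)_m)\le b)$, is a genuine $\forall$-formula; together with directional nonexpansiveness and the conditions~(\ref{habil-quant-BRS-hyp-alpha}) on $(\lambda_n)$, all premises are universal, so Corollary~\ref{meta-BRS} already guarantees the existence of a uniform bound of the stated shape depending only on $\varepsilon,b,K,\alpha$. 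The work is to compute it.

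As a first ingredient I would observe that the auxiliary iterations serve as admissible comparison sequences for Theorem~\ref{habil-quant-BRS-dir-ne}: for a fixed $k$, $(x_k)_m$ is the Krasnoselski--Mann iteration of $T$ started at $x^*:=x_k$ with the \emph{same} family $(\lambda_n)$, so specializing~(\ref{quant-BRS-bounded-xn-dir-ne-hyp-xn}) at $m=n$ gives $d(x_n,(x_k)_n)\le b$ for all $n$ --- exactly the strengthened boundedness assumption that theorem requires. Applying it with comparison point $x_k$ yields, for every $k$,
$$d(x_n,Tx_n)<d(x_k,Tx_k)+\varepsilon\qquad\text{for all }n\ge\Phi_0(\varepsilon,b,K,\alpha),$$
where $\Phi_0$ is the bound of Theorem~\ref{habil-quant-BRS}; since $(d(x_n,Tx_n))_n$ is nonincreasing (because $x_{n+1}\in[x_n,Tx_n]$ forces $d(Tx_n,Tx_{n+1})\le d(x_n,x_{n+1})$, so Theorem~\ref{habil-GK-gen-thm} applies with $u_n=x_n$, $v_n=Tx_n$), it remains only to locate an index $k$, controlled by the data, at which $d(x_k,Tx_k)$ is already below $\varepsilon$.

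For that I would re-run the Borwein--Reich--Shafrir estimate on the \emph{tail}. After $\alpha(0,1)$ steps one has $\sum_{s=0}^{\alpha(0,1)-1}\lambda_s\ge 1$ --- a full ``unit of progress'' --- and the genuine tail $m\mapsto x_{\alpha(0,1)+m}$ is the Krasnoselski--Mann iteration, started at $x_{\alpha(0,1)}$, of the shifted family $(\lambda_{n+\alpha(0,1)})_{n}$, for which $\beta(i,n)=\alpha(i+\alpha(0,1),n)$ is a rate of divergence satisfying~(\ref{habil-quant-BRS-hyp-alpha}) and along which the displacements remain bounded. The comparison sequences supplied by~(\ref{quant-BRS-bounded-xn-dir-ne-hyp-xn}) are, however, the \emph{unshifted} $((x_k)_m)_m$, so one must track the discrepancy $d\big((x_k)_m,x_{k+m}\big)$ between the two families of iterates; this is built up one step at a time through the convexity axioms $(W2)$ and $(W4)$ and is what forces the distance parameter in the BRS bound to be inflated to $b\cdot\alpha(0,1)$. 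Carrying out the same iterated-$\beta$ argument as in Theorem~\ref{habil-quant-BRS} --- which pushes the displacement below $\varepsilon$ after $\widehat\beta(\lceil 2b\,\alpha(0,1)\exp(K(M+1))\rceil\remin 1,M)$ steps of the tail, with $M=\lceil(1+2b)/\varepsilon\rceil$ --- and prepending the initial $\alpha(0,1)$ steps gives exactly the announced $\Phi(\varepsilon,b,K,\alpha)$.

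The main obstacle is precisely the reason the boundedness hypothesis is phrased uniformly over all $n,k,m$: for a merely directionally nonexpansive $T$, $(d(x_n,x^*_n))$ need \emph{not} be nonincreasing, so boundedness of one trajectory does not transfer to the restarted or comparison trajectories, and one cannot simply invoke Theorem~\ref{habil-quant-BRS-dir-ne} with $x^*=x$. The delicate part of the calculation is the uniform estimate of the index mismatch between $(x_k)_m$ and $x_{k+m}$ together with the propagation of all error terms through the axioms $(W1)$--$(W4)$, ensuring that the final rate depends on nothing beyond $\varepsilon,b,K$ and $\alpha$.
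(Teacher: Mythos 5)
You have the right skeleton --- the displacement $(d(x_n,Tx_n))$ is nonincreasing, the bound splits as ``$\alpha(0,1)$ warm-up steps plus a BRS run on the tail with the shifted rate $\beta(i,n)=\alpha(i+\alpha(0,1),n)$'' --- but the two load-bearing steps are missing or misidentified. First, hypothesis (\ref{quant-BRS-bounded-xn-dir-ne-hyp-xn}) bounds only distances \emph{between iterates}; it does not bound the displacement $d(x,Tx)$ (e.g.\ $d(x,x_1)=\lambda_0\,d(x,Tx)\le b$ tells you nothing if $\lambda_0$ is small). So your step of ``applying Theorem \ref{habil-quant-BRS-dir-ne} with comparison point $x_k$'' is not available: that theorem needs $d(x,Tx)\le b$. (It is also redundant: once you have some $k\le\Phi$ with $d(x_k,Tx_k)<\eps$, monotonicity of the displacement finishes the proof outright.) The actual purpose of the first $\alpha(0,1)$ steps is precisely to manufacture a displacement bound: writing $\lambda_s d(x_s,Tx_s)=d(x_s,x_{s+1})\le 2b$ (hypothesis with $k=0$) and using $\sum_{s<\alpha(0,1)}\lambda_s\ge 1$ together with the monotonicity of $(d(x_s,Tx_s))$ gives $d(x_{\alpha(0,1)},Tx_{\alpha(0,1)})\le 2b\,\alpha(0,1)$. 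That is where the factor $2b\,\alpha(0,1)$ in the window count comes from --- it replaces the \emph{displacement} parameter of Theorem \ref{habil-quant-BRS}, not the distance parameter (note $M=\lceil(1+2b)/\varepsilon\rceil$ is unchanged). Your attribution of this factor to an accumulated discrepancy $d((x_k)_m,x_{k+m})$ is therefore off target.

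Moreover, the proposed ``discrepancy tracking'' mechanism cannot work as described: propagating $d((x_k)_{m+1},x_{k+m+1})$ from $d((x_k)_m,x_{k+m})$ via $(W2)$, $(W4)$ requires $d(T(x_k)_m,Tx_{k+m})\le d((x_k)_m,x_{k+m})$, which is exactly what directional nonexpansiveness does not give --- the very obstruction you correctly flag in your last paragraph. No tracking is needed anyway, since the hypothesis hands you $d(x_n,(x_k)_m)\le b$ for all indices. The real reason the hypothesis quantifies over all $k,m$ is different: for directionally nonexpansive $T$ the key Goebel--Kirk/BRS inequality over a window starting at $x_i$ cannot compare $Tx_i$ with $Tx_n$ directly, and is instead proved against the \emph{restarted} iteration $((x_i)_m)_m$; the bound $d(x_n,(x_k)_m)\le b$ is what keeps the term playing the role of $d(x_i,Tx_n)$ under control there. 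Finally, you never say what makes the tail argument yield the absolute conclusion $d(x_n,Tx_n)<\varepsilon$ rather than a bound relative to $d(x^*,Tx^*)$; this is the window-counting contradiction argument (each window of $\beta$-length either witnesses $d<\varepsilon$ or forces the displacement, starting from $\le 2b\,\alpha(0,1)$, to drop by $\exp(-K(M+1))$), and it needs to be run explicitly on the tail with the restarted sequences as comparison data.
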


\noindent Thus, in the case of directionally nonexpansive mappings, we need the stronger requirement (\ref{quant-BRS-bounded-xn-dir-ne-hyp-xn}). Note that for constant $\lambda_n=\lambda$,  $(x_k)_m=x_{k+m}$ for all $m,k\in\N$,  so $((x_k)_m)_{m\in\N}$ is a subsequence of $(x_n)$. In this case, the assumption $d(x_n,x_m)\leq b$ for all $m,n\in\N$ suffices.

\begin{corollary}\label{habil-BRS-bounded-xn-constant-lambda-dir-ne} 
Let $(X,d, W), C, T, K$ be  as before. Assume that $\lambda_n=\lambda$ for all $n\in\N$, where  $\lambda\in[1/K,1-1/K]$.
Let $x\in C, b>0$ be such that  $d(x_n,x_m)\le b$ for all $m,n\in\N$. 

Then the following holds
\beq
\aquant \varepsilon >0\aquant n\ge \Phi(\varepsilon,b,K)
\bigg(d(x_n,Tx_n)<\varepsilon\bigg), 
\eeq
where $\Phi(\varepsilon,b,K)=K+K\cdot M\cdot \lceil 2b\cdot K
\cdot\exp(K(M+1))\rceil$, with $\ds M=\left\lceil\frac{1+2b}{\varepsilon}\right\rceil.$
\end{corollary}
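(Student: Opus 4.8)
The plan is to obtain the corollary by specializing Theorem~\ref{quant-BRS-bounded-xn-dir-ne} to constant step-sizes $\lambda_n=\lambda$ and to the explicit rate $\alpha(i,n):=Kn$, and then simplifying the resulting bound.

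First I would record the elementary fact that, when $\lambda_n=\lambda$ is constant, $(x_k)_m=x_{k+m}$ for all $k,m\in\N$. This follows by induction on $m$: $(x_k)_0=x_k$, and if $(x_k)_m=x_{k+m}$ then, since $\lambda_m=\lambda=\lambda_{k+m}$, the defining recurrence gives $(x_k)_{m+1}=(1-\lambda)x_{k+m}\oplus\lambda\,Tx_{k+m}=x_{k+m+1}$. Consequently the hypothesis (\ref{quant-BRS-bounded-xn-dir-ne-hyp-xn}) of Theorem~\ref{quant-BRS-bounded-xn-dir-ne}, namely $\forall n,k,m\,(d(x_n,(x_k)_m)\le b)$, collapses to $\forall n,m\,(d(x_n,x_m)\le b)$, which is exactly our assumption. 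This is also precisely why the weaker boundedness hypothesis on the iterates suffices only in the constant case.

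Next I would verify that $\alpha(i,n):=Kn$ meets the requirement (\ref{habil-quant-BRS-hyp-alpha}) inherited from Theorem~\ref{habil-quant-BRS}: monotonicity in $i$ is trivial since $\alpha$ does not depend on $i$, and $\sum_{s=i}^{i+Kn-1}\lambda_s=Kn\cdot\lambda\ge Kn\cdot\frac1K=n$ because $\lambda\ge 1/K$; moreover $\lambda\le 1-1/K$ places $(\lambda_n)$ in $[0,1-1/K]$. Thus all hypotheses of Theorem~\ref{quant-BRS-bounded-xn-dir-ne} hold with this choice of $\alpha$, and it only remains to compute the bound.

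Finally I would perform the arithmetic simplification. With $\alpha(i,n)=Kn$ one has $\alpha(0,1)=K$, hence $\beta(i,n)=\alpha(i+K,n)=Kn$, $\tilde\beta(i,n)=i+Kn$, and an easy induction on $i$ yields $\widehat\beta(i,n)=(i+1)Kn$. Substituting into $\Phi(\varepsilon,b,K,\alpha)=\alpha(0,1)+\widehat\beta(\lceil 2b\cdot\alpha(0,1)\cdot\exp(K(M+1))\rceil-1,M)$ gives $K+K\cdot M\cdot\lceil 2bK\exp(K(M+1))\rceil$ with $M=\lceil(1+2b)/\varepsilon\rceil$, which is precisely $\Phi(\varepsilon,b,K)$ as stated. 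The only step that requires any real care is the reduction of hypothesis (\ref{quant-BRS-bounded-xn-dir-ne-hyp-xn}) to the stated one via the subsequence identity $(x_k)_m=x_{k+m}$; everything else is a direct computation.
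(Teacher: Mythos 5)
Your proposal is correct and follows exactly the route the paper intends: the paper itself only remarks that for constant $\lambda_n=\lambda$ one has $(x_k)_m=x_{k+m}$, so that hypothesis (\ref{quant-BRS-bounded-xn-dir-ne-hyp-xn}) reduces to $d(x_n,x_m)\le b$, and then the bound is obtained by taking $\alpha(i,n)=Kn$ in Theorem \ref{quant-BRS-bounded-xn-dir-ne}. Your verification that this $\alpha$ satisfies (\ref{habil-quant-BRS-hyp-alpha}) and your computation $\widehat\beta(i,n)=(i+1)Kn$ leading to $\Phi(\varepsilon,b,K)=K+K\cdot M\cdot\lceil 2bK\exp(K(M+1))\rceil$ are both accurate.
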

Hence, we obtain a  strong uniform version of Kirk Theorem \ref{asreg-uniform-Kirk-dir-ne}.(i), which does not   state any uniformity of the convergence at all.

\subsection{A quadratic rate of asymptotic regularity for $CAT(0)$ spaces}\label{habil-app-fpt-CAT0}

If $T:C\to C$ is a nonexpansive self-mapping of a bounded convex subset $C$ of a $W$-hyperbolic space and $(\lambda_n)$ is a sequence in  $\left[1/K,1-1/K\right]$ for some $K\in\N,K\ge 2$ (in particular, $\lambda_n=\lambda\in (0,1)$), then, as we have seen in the previous subsection, Corollary \ref{habil-quant-BRS-bounded-exp} gives an exponential (in $1/\eps$) rate of asymptotic regularity for the Krasnoselski-Mann iteration.

For normed spaces and the special case of constant $\lambda_n=\lambda\in (0,1)$, this exponential bound  is not optimal. In this case, a uniform and optimal quadratic bound was obtained by Baillon and Bruck \cite{BaiBru96} using an extremely complicated computer aided proof, and only for $\lambda_n=1/2$ a classical proof of a  result of this type was given \cite{Bru96}. However, the questions whether the methods of proof used by them hold for  non-constant sequences $(\lambda_n)$ or for $W$-hyperbolic spaces are left as open problems in \cite{BaiBru96}, and as far as we know they received no positive answer until now. Hence, the bound from Corollary \ref{habil-quant-BRS-bounded-exp} is the only effective bound known at all for non-constant sequences $(\lambda_n)$ (even for normed spaces).

Our result guarantees only an exponential rate of  asymptotic regularity in the case of $CAT(0)$ spaces, and as we have already remarked, it seems that Baillon and Bruck's approach does not extend to this more general setting.

In this subsection we show that we can still get a quadratic rate of asymptotic regularity for $CAT(0)$ spaces, but following a completely different approach, inspired by the results on asymptotic regularity obtained before Ishikawa and Edelstein-O'Brien theorems, in the setting of uniformly convex Banach spaces.  The method we use is to find explicit uniform bounds on  the rate of asymptotic regularity in the general setting of $UCW$-hyperbolic spaces and then to specialize them to $CAT(0)$ spaces. As we have seen in Section \ref{hyperbolic-spaces}, $CAT(0)$ spaces are $UCW$-hyperbolic spaces with a nice modulus of uniform convexity.

More specifically, our point of departure is the following theorem due to Groetsch.

\bthm\cite{Gro72}\label{habil-Groetsch-UCW}
Let $C$ be a  convex subset of a $UCW$-hyperbolic space and $T:C\to C$ be a nonexpansive mapping  such that $T$ has at least one fixed point. \\
Assume that $(\lambda_n)$ is a sequence in $[0,1]$ satisfying
\beq
\sum_{n=0}^\infty\lambda_n(1-\lambda_n)=\infty.\label{habil-hyp-lambda-n-Groetsch}
\eeq
Then $\displaystyle\lim_{n\to\infty} d(x_n,Tx_n)=0$ for all $x\in C$.
\ethm

\noindent The above theorem was proved by Groetsch for uniformly convex Banach spaces (see Theorem \ref{intro-Groetsch-thm}), but it is easy to see that its proof extends to $UCW$-hyperbolic spaces. By proof mining, Kohlenbach \cite{Koh03} obtained a quantitative version of Groetsch Theorem \ref{intro-Groetsch-thm} for uniformly convex Banach spaces, generalizing previous results obtained by Kirk and Martinez-Yanez \cite{KirMar90} for constant $\lambda_n=\lambda\in(0,1)$. 

In \cite{Leu07} we  extended Kohlenbach's results to the more general setting of $UCW$-hyperbolic spaces. The most important consequence of our results is that for $CAT(0)$ spaces we obtain a quadratic rate of asymptotic regularity (see Corollary \ref{habil-CAT0-constant-lambda}).

The following table presents a general picture of the cases where effective bounds for asymptotic regularity were obtained.\\[0.4cm]
\begin{tabular}{|c|c|c|}
\hline 
&&\\
& $\lambda_n=\lambda$ & non-constant $\lambda_n$ \\[0.2cm]
\hline
&&\\
Hilbert spaces & {\em quadratic} in $1/\eps$: Browder and Petryshyn \cite{BroPet67} & $\theta\left(1/\varepsilon^2\right)$: Kohlenbach \cite{Koh03}\\[0.2cm]
\hline
&&\\
$\ell_p$, $2\leq p<\infty$ &  {\em quadratic} in $1/\eps$:  Kirk and Martinez-Yanez\cite{KirMar90}, & $\theta\left(1/\varepsilon^p\right)$: Kohlenbach \cite{Koh03}\\[0.1cm]
&  \quad  \quad \quad Kohlenbach \cite{Koh03} & \\[0.2cm]
\hline
&&\\
 uniformly convex &Kirk and Martinez-Yanez\cite{KirMar90}, Kohlenbach \cite{Koh03}  & Kohlenbach \cite{Koh03}\\[0.1cm]
 Banach  spaces & & \\[0.2cm]
\hline
&&\\
Banach   & {\em quadratic} in $1/\eps$: Baillon and Bruck \cite{BaiBru96} & Kohlenbach \cite{Koh01}\\[0.2cm]
\hline
&&\\
$CAT(0)$\, spaces & {\em quadratic} in $1/\eps$:  Corollary \ref{habil-CAT0-constant-lambda} & $\theta\left(1/\varepsilon^2\right)$: Corollary \ref{habil-CAT0-general-lambda} \\[0.2cm]
\hline
&&\\
UCW-hyperbolic   & Corollary \ref{habil-Groetsch-bounded-C-constant-lambda} & Corollary  \ref{habil-bounded-C-general-lambda} \\[0.1cm]
spaces &&\\[0.2cm]
\hline
&&\\
W-hyperbolic  & {\em exponential} in $1/\eps$: Corollary \ref{habil-quant-BRS-bounded-exp} & Theorem \ref{habil-quant-BRS-bounded} \\[0.1cm]
spaces &&\\[0.2cm]
\hline
\end{tabular}

\subsubsection{Logical discussion}

As in the case of the logical analysis of Borwein-Reich-Shafrir Theorem, it suffices to consider nonexpansive mappings $T:X\to X$. Moreover, it is easy to see that the proof of Groetsch Theorem can be formalized in the theory $\mathcal{A}^\omega[X,d,UCW,\eta]_{-b}$ of $UCW$-hyperbolic spaces with a monotone modulus of uniform convexity $\eta$.

The assumption  on $(\lambda_n)$ in Theorem \ref{habil-Groetsch-UCW} is equivalent with the existence of a rate of divergence  $\theta:\N\to\N$ such that for all $n\in \N$,
$$\displaystyle\sum_{i=0}^{\theta(n)} \lambda_i(1-\lambda_i)\geq n.$$

Using the fact that $(d(x_n,Tx_n)$ is nonincreasing, it  follows that $\mathcal{A}^\omega[X,d,UCW,\eta]_{-b}$ proves the following formalized version of Theorem \ref{habil-Groetsch-UCW}:
\[\ba{l}
\forall\, \eps>0\, \forall\, \theta:\N\to\N \,\forall\, (\lambda_n)\in[0,1]^\N\,\forall\, x\in X\, \forall\,\, T:X\to X \\
\quad\quad \quad\quad\biggl(T \text{~nonexpansive~} \si \, Fix(T)\ne\emptyset\, \si \, \ds \forall n\in\N\left( \sum_{i=0}^{\theta(n)}\lambda_i(1-\lambda_i)\ge n \right)\\
\hfill \,\ra\, \exists N\in\N \bigg(d(x_N, Tx_N)<\eps\bigg)\biggr)
\ea\]
Since we can let $\eps=2^{-p}$ with $p\in\N$, the above formalization of the statement of Theorem \ref{habil-Groetsch-UCW} has the required logical form for applying Corollary \ref{meta-Groetsch}.
It follows that we can extract a computable functional $\Phi$ such that for all $\eps>0, b\in\N, \theta:\N\to\N$,

\[\ba{l}
\forall\, (\lambda_n)\in[0,1]^\N\,\forall\, x\in X\, \forall\,\, T:X\to X \\
\quad\quad \quad\quad\biggl(T \text{~nonexpansive~} \si \, \forall \delta>0(Fix_\delta(T,x,b)\ne\emptyset)\, \si \, \ds \forall n\in\N\left( \sum_{i=0}^{\theta(n)}\lambda_i(1-\lambda_i)\ge n \right)\\
\hfill \,\ra\, \exists N\le \Phi(\eps,\eta,b,\theta)\bigg(d(x_N, Tx_N)<\eps\bigg)\biggr)
\ea\]
holds  in any  $UCW$-hyperbolic space with monotone modulus $\eta$.  We recall that 
\[Fix_\delta(T,x,b)= \{y\in X\mid d(x,y)\le b \,\si \,d(y,Ty)<\delta\}.\]
Using again that $(d(x_n,Tx_n))$ is nonincreasing, it follows that $\Phi(\eps,\eta, b, \theta)$ is in fact a rate convergence of $(d(x_n,Tx_n))$ towards $0$.

\subsubsection{Main results}

The following quantitative version of Groetsch Theorem is the main result of \cite{Leu07}.

\begin{theorem}  \label{habil-main-Groetsch-thm} 
Let $C$ be a  convex subset of a $UCW$-hyperbolic space $(X,d,W)$ and $T:C\rightarrow C$ be a nonexpansive mapping. 

Assume that $(\lambda_n)$ is a sequence in $[0,1]$ and $\theta :\N\to\N$ satisfies for all $n\in\N$, 
\begin{equation}
\sum\limits_{k=0}^{\theta(n)} \lambda_k(1-\lambda_k) \ge n. \label{habil-main-hyp-lambda-theta}
\end{equation} 
Let $x\in C,b>0$ be such that $T$ has approximate fixed points in a $b$-neighborhood of $x$. 

Then  $\displaystyle\limn  d(x_n,Tx_n)=0$ and, moreover,
\begin{equation} 
\forall \varepsilon >0\, \forall n\ge \Phi(\varepsilon,\eta, b,\theta)\, \bigg(d(x_n,Tx_n) < \varepsilon\bigg), \label{habil-main-thm-conclusion}
\end{equation}
where  $\eta$ is a monotone  modulus of uniform convexity and
\[\Phi(\eps,\eta,b,\theta)=\left\{ \begin{array}{ll}
        \displaystyle \theta\left(\left\lceil\frac{b+1}{\varepsilon\cdot\eta\left(b+1,\displaystyle\frac{\varepsilon}{b+1}\right)}
\right\rceil\right) & \text{for~ } \varepsilon <2b\\
        0 & \text{otherwise. }
        \end{array}\right.
\]
If we assume moreover that $\eta$ can be written as $\eta(r,\varepsilon)=\varepsilon\cdot\tilde{\eta}(r,\varepsilon)$ such that $\tilde{\eta}$ increases with $\varepsilon$ (for a fixed $r$), then the bound $\Phi(\varepsilon,\eta,b,\theta)$ can be replaced for $\varepsilon <2b$ by
\[ \tilde{\Phi}(\varepsilon,\eta,b,\theta)= \theta\left(\left\lceil
\frac{b+1}{2\varepsilon\cdot \tilde{\eta}\left(b+1,\displaystyle\frac{\varepsilon}{b+1}\right)}\right\rceil\right). \] 
\end{theorem}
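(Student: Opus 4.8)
\noindent The plan is to reduce at once to the case $C=X$ (a convex subset of a $UCW$-hyperbolic space is again one) and to use that $(d(x_n,Tx_n))_n$ is nonincreasing by Proposition \ref{W-useful-KM-x-y}.(\ref{W-KM-xn-Txn-nonincreasing}). Hence it suffices to prove $d(x_{\Phi},Tx_{\Phi})<\varepsilon$ where $\Phi=\Phi(\varepsilon,\eta,b,\theta)$; the case $\varepsilon\ge 2b$ is handled at once since, for any approximate fixed point $p$ with $d(x,p)\le b$ and $d(p,Tp)<\delta$, one has $d(x,Tx)\le d(x,p)+d(p,Tp)+d(Tp,Tx)\le 2b+\delta$. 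So fix $\varepsilon<2b$, put $M:=\lceil\frac{b+1}{\varepsilon\,\eta(b+1,\varepsilon/(b+1))}\rceil$ so that $\Phi=\theta(M)$, and assume for contradiction that $d(x_{\Phi},Tx_{\Phi})\ge\varepsilon$; by monotonicity $d(x_n,Tx_n)\ge\varepsilon$ for all $n\le\Phi$.

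The engine is to track $a_n:=d(x_n,p)$ for a suitably chosen approximate fixed point $p$ and to show it drops by a definite amount at each step. From axiom (W1), nonexpansiveness of $T$ and $d(Tx_n,p)\le d(x_n,p)+d(p,Tp)$ one gets the crude estimate $a_{n+1}\le a_n+\lambda_n\delta$. When moreover $d(x_n,Tx_n)\ge\varepsilon$, I would apply Lemma \ref{UCW-eta-prop-1}.(\ref{UCW-Groetsch-eta}) with reference point $p$, radius $r:=a_n+\delta$ (so $d(x_n,p),d(Tx_n,p)\le r$) and separation ratio $\varepsilon/r\in(0,2]$ — admissible because $d(x_n,Tx_n)\le 2a_n+\delta$ forces $a_n\ge(\varepsilon-\delta)/2$, hence $r\ge\varepsilon/2$ — to obtain
$$a_{n+1}\le\bigl(1-2\lambda_n(1-\lambda_n)\,\eta(a_n+\delta,\tfrac{\varepsilon}{a_n+\delta})\bigr)(a_n+\delta).$$
Choosing $\delta$ so small (it may depend on the finite number $\Phi$, e.g. $\delta<\tfrac1{\Phi+1}$) that $a_n+\delta\le b+1$ for all $n\le\Phi$, I would then use Lemma \ref{UCW-eta-prop-1}.(\ref{UCW-eta-monotone-eps}) to lower the ratio to $\varepsilon/(b+1)$, monotonicity of $\eta$ in its first argument, and once more $a_n+\delta\ge\varepsilon/2$, to get $a_{n+1}\le a_n+\delta-\lambda_n(1-\lambda_n)\,\varepsilon\,\eta(b+1,\tfrac{\varepsilon}{b+1})$. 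Telescoping over $n=0,\dots,\Phi$ with $a_0=d(x,p)\le b$, $(\Phi+1)\delta<1$ and the divergence hypothesis (\ref{habil-main-hyp-lambda-theta}) (which gives $\sum_{n=0}^{\theta(M)}\lambda_n(1-\lambda_n)\ge M$) yields $0\le a_{\Phi+1}<(b+1)-\varepsilon\,\eta(b+1,\tfrac{\varepsilon}{b+1})\,M\le 0$, a contradiction. This proves $d(x_{\Phi},Tx_{\Phi})<\varepsilon$, and by monotonicity the conclusion (\ref{habil-main-thm-conclusion}).

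For the refined bound $\tilde\Phi$ under $\eta(r,\varepsilon)=\varepsilon\,\tilde\eta(r,\varepsilon)$ with $\tilde\eta$ nondecreasing in $\varepsilon$, the only change is in that key step: rather than crudely bounding the leftover radius $r=a_n+\delta$ below by $\varepsilon/2$, I would keep the identity $\eta(r,\tfrac{\varepsilon}{r})\cdot r=\varepsilon\,\tilde\eta(r,\tfrac{\varepsilon}{r})$, so the stray $r$ cancels, and then lower-bound $\tilde\eta(a_n+\delta,\tfrac{\varepsilon}{a_n+\delta})\ge\tilde\eta(b+1,\tfrac{\varepsilon}{b+1})$ using monotonicity of $\tilde\eta$ in both arguments. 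This gives the sharper per-step decrease $2\lambda_n(1-\lambda_n)\,\varepsilon\,\tilde\eta(b+1,\tfrac{\varepsilon}{b+1})$, and telescoping as above with $\tilde M:=\lceil\frac{b+1}{2\varepsilon\,\tilde\eta(b+1,\varepsilon/(b+1))}\rceil$ produces $\tilde\Phi=\theta(\tilde M)$. The main obstacle is exactly the bookkeeping forced by $p$ being only an approximate fixed point: one must keep the accumulated error $\sum_n\lambda_n\delta$ negligible and the iterates inside a ball of radius $b+1$ about $p$ (by fixing $\delta$ only after $\Phi$), and one must feed $\eta$ arguments within its admissible range — which is precisely what the monotonicity clauses of Lemma \ref{UCW-eta-prop-1} are for. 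Doing the latter correctly while preserving the multiplicative $\tilde\eta$-structure is what keeps the refined bound from losing an extra factor of order $(b+1)/\varepsilon$.
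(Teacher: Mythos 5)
Your proof is correct and follows essentially the same route as the paper's (the analysis of \cite{Leu07}, generalizing Kohlenbach's quantitative treatment of Groetsch's theorem): the uniform-convexity decrease of Lemma \ref{UCW-eta-prop-1} applied with an approximate fixed point $p$ as reference, the almost-monotonicity $d(x_{n+1},p)\le d(x_n,p)+\lambda_n\delta$, the choice of $\delta$ only after the bound $\Phi$ is fixed, and telescoping against the rate of divergence $\theta$. The only cosmetic remark is that your combination of parts (\ref{UCW-Groetsch-eta}) and (\ref{UCW-eta-monotone-eps}) with the monotonicity of $\eta$ in $r$ is exactly what Lemma \ref{UCW-eta-prop-1}.(\ref{UCW-eta-monotone-s-geq-r}) packages, so it could be cited directly.
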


\noindent As an immediate consequence of our main theorem, we obtain a slight strengthening of Groetsch Theorem.

\bcor
Let $C$ be a  convex subset of a $UCW$-hyperbolic space $(X,d,W)$ and $T:C\rightarrow C$ be nonexpansive. Assume that $(\lambda_n)$ is a sequence in $[0,1]$ satisfying $\ds\sum_{n=0}^\infty\lambda_n(1-\lambda_n)=\infty$.\\
Let $x\in C,b>0$ be such that $T$ has approximate fixed points in a $b$-neighborhood of $x$. 

Then $\limn d(x_n,Tx_n)=0$.
\ecor
Thus, we assume that  $T$ has approximate fixed points in a $b$-neighborhood of some $x\in C$ instead of having fixed points. However, by Proposition \ref{UCW-equiv-T-fpp}, for {\em closed} convex  subsets $C$ of {\em complete} 
$UCW$-hyperbolic spaces, $T$ has fixed points is equivalent with $T$ having approximate fixed points in a $b$-neighborhood of $x$.

If $C$ is bounded with diameter $d_C$, then $C$ has the AFPP for nonexpansive mappings by Proposition \ref{habil-bounded-hyperbolic-afpp-ne}, so we can apply Theorem \ref{habil-main-Groetsch-thm} for all $x\in C$ with $d_C$ instead of $b$.

\begin{corollary}\label{habil-bounded-C-general-lambda}
 Let $(X,d,W),\eta,C,T,(\lambda_n),\theta$ be as in the hypothesis of Theorem \ref{habil-main-Groetsch-thm}. Assume moreover that $C$ is bounded with diameter $d_C$.
 
Then $T$ is $\lambda_n$-asymptotically regular and  the following holds for all $x\in C$:
\[\forall \varepsilon >0\,\forall n\ge \Phi(\varepsilon,\eta,d_C,\theta)\,
\bigg(d(x_n,Tx_n) <\varepsilon\bigg), \]
where $\Phi(\varepsilon,\eta,d_C,\theta)$ is defined as in Theorem \ref{habil-main-Groetsch-thm} by replacing $b$ with $d_C$.
\end{corollary}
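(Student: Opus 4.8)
The plan is to obtain this as an immediate consequence of Theorem~\ref{habil-main-Groetsch-thm}: I only need to check that the boundedness of $C$ automatically supplies the one remaining hypothesis of that theorem, with the choice $b:=d_C$.

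First I would fix an arbitrary $x\in C$. Since $C$ is a bounded convex subset of the $W$-hyperbolic space $(X,d,W)$, Corollary~\ref{habil-bounded-hyperbolic-afpp-ne} guarantees that $C$ has the AFPP for nonexpansive mappings; in particular the nonexpansive map $T:C\to C$ is approximately fixed, so for every $\varepsilon>0$ there is $y\in C$ with $d(y,Ty)<\varepsilon$. Because $d_C$ is the diameter of $C$, any such $y$ satisfies $d(x,y)\le d_C$, hence $y\in Fix_{\varepsilon}(T,x,d_C)$. Thus $Fix_{\varepsilon}(T,x,d_C)\neq\emptyset$ for every $\varepsilon>0$, i.e.\ $T$ has approximate fixed points in a $d_C$-neighborhood of $x$.

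Next I would simply invoke Theorem~\ref{habil-main-Groetsch-thm} with $b:=d_C$: its hypotheses are now all met, since $(X,d,W)$ is a $UCW$-hyperbolic space with monotone modulus $\eta$, the pair $(\lambda_n),\theta$ satisfies (\ref{habil-main-hyp-lambda-theta}), and $T$ has approximate fixed points in a $d_C$-neighborhood of $x$. The theorem yields $\limn d(x_n,Tx_n)=0$ together with
\[
\forall\varepsilon>0\;\forall n\ge\Phi(\varepsilon,\eta,d_C,\theta)\;\bigl(d(x_n,Tx_n)<\varepsilon\bigr),
\]
where $\Phi(\varepsilon,\eta,d_C,\theta)$ is read off from the formula in Theorem~\ref{habil-main-Groetsch-thm} by substituting $d_C$ for $b$. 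Finally, since $x\in C$ was arbitrary and the bound $\Phi(\varepsilon,\eta,d_C,\theta)$ depends on $C$ only through $d_C$ (not on $x$, nor otherwise on $T$), the conclusion holds for every $x\in C$, so $T$ is $\lambda_n$-asymptotically regular. There is no real obstacle here; the only thing to verify is that boundedness of $C$ forces the $b$-neighborhood hypothesis of Theorem~\ref{habil-main-Groetsch-thm} with $b=d_C$, which is immediate from the AFPP of bounded convex subsets together with $\operatorname{diam}(C)=d_C$.
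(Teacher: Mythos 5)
Your proof is correct and follows exactly the paper's own route: the paper likewise deduces the corollary by noting that boundedness of $C$ gives the AFPP (Corollary \ref{habil-bounded-hyperbolic-afpp-ne}), hence approximate fixed points within a $d_C$-neighborhood of any $x\in C$, and then applies Theorem \ref{habil-main-Groetsch-thm} with $b:=d_C$. Your explicit check that $Fix_{\varepsilon}(T,x,d_C)\neq\emptyset$ just spells out the step the paper leaves implicit.
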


For bounded $C$, we get $\lambda_n$-asymptotic regularity for general $(\lambda_n)$ satisfying (\ref{habil-hyp-lambda-n-Groetsch}) and we also obtain an effective rate $\Phi(\varepsilon,\eta,d_C,\theta)$ of asymptotic regularity that depends only on the error $\varepsilon$, on the modulus of uniform convexity $\eta$,  on the diameter $d_C$ of $C$, and on $(\lambda_n)$ via $\theta$, but not on the nonexpansive mapping $T$, the starting point $x\in C$ of the iteration or other data related with $C$ and $X$.

The rate of asymptotic regularity can be further simplified for constant $\lambda_n=\lambda\in(0,1)$. In this case, it is easy to see that $\ds \theta:\N\to\N  \quad\theta(n)=n\cdot\left\lceil\frac{1}{\lambda(1-\lambda)}\right\rceil$ satisfies (\ref{habil-main-hyp-lambda-theta}).

\begin{corollary}\label{habil-Groetsch-bounded-C-constant-lambda}
Let $(X,d,W),\eta, C, d_C,T$ be as in the hypothesis of Corollary \ref{habil-bounded-C-general-lambda}.
Assume moreover that $\lambda_n=\lambda\in(0,1)$ for all $n\in\N$.

Then $T$ is $\lambda$-asymptotically regular and  for all $x\in C$,
\begin{equation} 
\forall \varepsilon >0\,\forall n\ge \Phi(\varepsilon,\eta,d_C,\lambda)\,\bigg(d(x_n,Tx_n) <\varepsilon\bigg),
\end{equation}
where
\[\Phi(\varepsilon,\eta,d_C,\lambda)=\left\{ \begin{array}{ll}\displaystyle\left\lceil\frac{1}{\lambda(1-\lambda)}\right\rceil\cdot\left\lceil\frac{d_C+1}{\varepsilon\cdot\eta\left(d_C+1,\displaystyle\frac{\varepsilon}{d_C+1}\right)}\right\rceil & \text{for~ } \varepsilon <2d_C\\
        0 & \text{otherwise. }
        \end{array}\right.
\]
Moreover, if $\eta(r,\varepsilon)$ can be written as $\eta(r,\varepsilon)=\varepsilon\cdot\tilde{\eta}(r,\varepsilon)$ such that $\tilde{\eta}$ increases with $\varepsilon$ (for fixed $r$), then the bound $\Phi(\varepsilon,\eta,d_C,\lambda)$ can be replaced for $\varepsilon<2d_C$ with
\[\tilde{\Phi}(\varepsilon,\eta,d_C,\lambda)=\displaystyle\left\lceil\frac{1}{\lambda(1-\lambda)}\right\rceil\cdot\left\lceil\frac{d_C+1}{2\varepsilon\cdot\tilde{\eta}\left(d_C+1,\displaystyle\frac{\varepsilon}{d_C+1}\right)}\right\rceil.\]
\end{corollary}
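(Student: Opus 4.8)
The plan is to derive this corollary as a direct specialization of Corollary \ref{habil-bounded-C-general-lambda} (equivalently, of Theorem \ref{habil-main-Groetsch-thm}) to the constant case $\lambda_n=\lambda\in(0,1)$. The only genuine work is to exhibit a suitable rate of divergence $\theta$ for the series $\sum_n\lambda_n(1-\lambda_n)$ and then to substitute it into the general bound; everything else is inherited.

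First I would note that the hypotheses of Theorem \ref{habil-main-Groetsch-thm} are met: since $C$ is a bounded convex subset of a $UCW$-hyperbolic space, it has the AFPP for nonexpansive mappings by Corollary \ref{habil-bounded-hyperbolic-afpp-ne}, so for every $x\in C$ the mapping $T$ has approximate fixed points in a $d_C$-neighborhood of $x$; hence we may apply Theorem \ref{habil-main-Groetsch-thm} with $b:=d_C$, uniformly in $x\in C$, exactly as in Corollary \ref{habil-bounded-C-general-lambda}. Next I would check that the choice
\[\theta:\N\to\N,\qquad \theta(n):=n\cdot\left\lceil\frac{1}{\lambda(1-\lambda)}\right\rceil\]
satisfies (\ref{habil-main-hyp-lambda-theta}). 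Because $\lambda\in(0,1)$ we have $\lambda(1-\lambda)>0$, so $\theta$ is well defined and nondecreasing, and for each $n\in\N$
\[\sum_{k=0}^{\theta(n)}\lambda_k(1-\lambda_k)=(\theta(n)+1)\lambda(1-\lambda)\ge \theta(n)\lambda(1-\lambda)\ge n,\]
since $\theta(n)\ge n/(\lambda(1-\lambda))$. Thus $\theta$ is an admissible rate of divergence.

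Finally I would substitute this $\theta$ into the bound $\Phi(\varepsilon,\eta,d_C,\theta)$ provided by Theorem \ref{habil-main-Groetsch-thm}: for $\varepsilon<2d_C$,
\[\Phi(\varepsilon,\eta,d_C,\theta)=\theta\left(\left\lceil\frac{d_C+1}{\varepsilon\cdot\eta\left(d_C+1,\frac{\varepsilon}{d_C+1}\right)}\right\rceil\right)=\left\lceil\frac{1}{\lambda(1-\lambda)}\right\rceil\cdot\left\lceil\frac{d_C+1}{\varepsilon\cdot\eta\left(d_C+1,\frac{\varepsilon}{d_C+1}\right)}\right\rceil,\]
which is precisely the claimed $\Phi(\varepsilon,\eta,d_C,\lambda)$, and the case $\varepsilon\ge 2d_C$ giving the bound $0$ is carried over verbatim. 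For the refined estimate, when $\eta(r,\varepsilon)=\varepsilon\cdot\tilde\eta(r,\varepsilon)$ with $\tilde\eta$ increasing in $\varepsilon$, the same substitution into $\tilde\Phi$ of Theorem \ref{habil-main-Groetsch-thm} yields $\tilde\Phi(\varepsilon,\eta,d_C,\lambda)$. The assertion that $T$ is $\lambda$-asymptotically regular is then immediate from the conclusion $\limn d(x_n,Tx_n)=0$ of Theorem \ref{habil-main-Groetsch-thm}, which holds uniformly for all $x\in C$. The main ``obstacle'' here is essentially nonexistent: the content is the bookkeeping with the ceiling functions, and the only point worth double-checking is that the harmless ``$+1$'' in $(\theta(n)+1)\lambda(1-\lambda)$ does not affect the verification of (\ref{habil-main-hyp-lambda-theta}) and that $\theta$ is nondecreasing (it is, being linear with positive slope).
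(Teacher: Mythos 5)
Your proposal is correct and is exactly the paper's argument: the paper simply observes (in the sentence preceding the corollary) that $\theta(n)=n\cdot\left\lceil\frac{1}{\lambda(1-\lambda)}\right\rceil$ is an admissible rate of divergence for $\sum_n\lambda_n(1-\lambda_n)$ and then substitutes it into Corollary \ref{habil-bounded-C-general-lambda}. Your verification of (\ref{habil-main-hyp-lambda-theta}) and the substitution into both $\Phi$ and $\tilde\Phi$ are exactly what is needed.
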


As we have seen in Subsubsection \ref{habil-CAT0}, $CAT(0)$ spaces are $UCW$-hyperbolic spaces with a modulus of uniform convexity $\displaystyle \eta(r,\varepsilon)=\frac{\varepsilon^2}{8}=\eps\cdot\tilde{\eta}(r,\varepsilon)$, where $\ds\tilde{\eta}(r,\eps)=\frac{\eps}8$ increases with $\eps$.  It follows that the above results can be applied to $CAT(0)$ spaces. 

\begin{corollary}\label{habil-CAT0-general-lambda}
Let $X$ be a $CAT(0)$ space,  and $C,d_C,T,(\lambda_n), \theta$ be as in the hypothesis of Corollary \ref{habil-bounded-C-general-lambda}. 

Then $T$ is $\lambda_n$-asymptotically regular and for all $x\in C$,
\begin{equation} 
\forall \varepsilon >0\, \forall n\ge \Psi(\varepsilon,d_C,\theta)\, \bigg(d(x_n,Tx_n)< \varepsilon\bigg),
\end{equation}
where
\[\Psi(\varepsilon,d_C,\theta)=\left\{ \begin{array}{ll}
        \displaystyle \theta\left(\left\lceil\frac{4(d_C+1)^2}{\varepsilon^2}\right\rceil\right) & \text{for~ } \varepsilon <2d_C\\
        0 & \text{otherwise. }
        \end{array}\right.
\]
\end{corollary}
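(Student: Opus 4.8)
The plan is to derive this corollary as a direct specialization of the quantitative Groetsch theorem (Theorem \ref{habil-main-Groetsch-thm}), more precisely of its bounded-domain version Corollary \ref{habil-bounded-C-general-lambda}, to the class of $CAT(0)$ spaces. The first step is to recall from Subsubsection \ref{habil-CAT0} that every $CAT(0)$ space is a $UCW$-hyperbolic space carrying the modulus of uniform convexity $\eta(r,\varepsilon)=\varepsilon^2/8$. Since this $\eta$ does not depend on $r$ at all, it is trivially monotone in the sense required, so the hypotheses of Corollary \ref{habil-bounded-C-general-lambda} are met with $C$, $d_C$, $T$, $(\lambda_n)$, $\theta$ as given and this particular $\eta$; in particular $T$ is $\lambda_n$-asymptotically regular and the rate $\Phi(\varepsilon,\eta,d_C,\theta)$ from Theorem \ref{habil-main-Groetsch-thm} (with $b$ replaced by $d_C$) is valid for all $x\in C$.

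Next I would invoke the \emph{improved} form of the bound available in Theorem \ref{habil-main-Groetsch-thm}. Writing $\eta(r,\varepsilon)=\varepsilon\cdot\tilde{\eta}(r,\varepsilon)$ with $\tilde{\eta}(r,\varepsilon)=\varepsilon/8$, one checks at once that $\tilde{\eta}$ increases with $\varepsilon$ for a fixed $r$ (it is linear in $\varepsilon$), which is exactly the extra assumption needed to legitimize the use of $\tilde{\Phi}$ in place of the cruder $\Phi$. Hence, for $\varepsilon<2d_C$, the rate $\tilde{\Phi}(\varepsilon,\eta,d_C,\theta)$ applies; for $\varepsilon\ge 2d_C$ the bound $0$ already works, because $(d(x_n,Tx_n))$ is nonincreasing with $d(x_n,Tx_n)\le d(x,Tx)\le d_C<\varepsilon$ (the case $d_C=0$ being trivial), and this is precisely the "otherwise" clause inherited from Theorem \ref{habil-main-Groetsch-thm}.

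It then remains only to evaluate the explicit formula. Substituting $b=d_C$ gives $\tilde{\eta}\!\left(d_C+1,\frac{\varepsilon}{d_C+1}\right)=\frac{1}{8}\cdot\frac{\varepsilon}{d_C+1}$, whence
\[
\tilde{\Phi}(\varepsilon,\eta,d_C,\theta)
 = \theta\!\left(\left\lceil \frac{d_C+1}{2\varepsilon\cdot\frac{\varepsilon}{8(d_C+1)}}\right\rceil\right)
 = \theta\!\left(\left\lceil \frac{4(d_C+1)^2}{\varepsilon^2}\right\rceil\right)
 = \Psi(\varepsilon,d_C,\theta),
\]
which is exactly the claimed bound. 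I do not expect any genuine obstacle here: the entire analytic content is already packaged in Theorem \ref{habil-main-Groetsch-thm}, so the only work is the bookkeeping of inserting the $CAT(0)$ modulus and simplifying the ceiling expression, the single point deserving a line of justification being the (immediate) monotonicity of $\tilde{\eta}$ in $\varepsilon$.
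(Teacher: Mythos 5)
Your proposal is correct and follows exactly the route the paper takes: it observes that $CAT(0)$ spaces are $UCW$-hyperbolic with modulus $\eta(r,\varepsilon)=\varepsilon^2/8=\varepsilon\cdot\tilde{\eta}(r,\varepsilon)$, $\tilde{\eta}(r,\varepsilon)=\varepsilon/8$ increasing in $\varepsilon$, and then specializes the improved bound $\tilde{\Phi}$ of Theorem \ref{habil-main-Groetsch-thm} via Corollary \ref{habil-bounded-C-general-lambda} with $b=d_C$. The arithmetic simplification to $\theta\bigl(\lceil 4(d_C+1)^2/\varepsilon^2\rceil\bigr)$ is verified correctly, so there is nothing to add.
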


For general $(\lambda_n)$, the rate of asymptotic regularity is of order $\ds\theta\left(\frac1{\eps^2}\right)$, where $\theta$ is a rate of divergence for $\ds\sum_{n=1}^\infty \lambda_n(1-\lambda_n)$.

\begin{corollary}\label{habil-CAT0-constant-lambda}
Let $X$ be a $CAT(0)$ space,  $C\se X$ be a bounded convex subset with diameter $d_C$, and  $T:C\to C$ be nonexpansive. Assume that $\lambda_n=\lambda\in(0,1)$.

Then $T$ is $\lambda$-asymptotically regular, and for all $x\in C$,
\begin{equation} 
\forall \varepsilon >0\, \forall n\ge \Psi(\varepsilon,d_C,\lambda)\, \bigg( d(x_n,Tx_n)< \varepsilon\bigg),
\end{equation}
where
\[\Psi(\varepsilon,d_C,\lambda)=\left\{ \begin{array}{ll}
        \displaystyle\left\lceil\frac{1}{\lambda(1-\lambda)}\right\rceil\cdot\left\lceil\frac{4(d_C+1)^2}{\varepsilon^2}\right\rceil & \text{for~ } \varepsilon <2d_C\\
        0 & \text{otherwise. }
        \end{array}\right.
\]
\end{corollary}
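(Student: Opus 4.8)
The final statement is a direct specialization of Corollary \ref{habil-Groetsch-bounded-C-constant-lambda} to the class of $CAT(0)$ spaces, so the plan is to check the hypotheses of that corollary and then to simplify the bound it provides. First I would recall (see Subsubsection \ref{habil-CAT0}) that every $CAT(0)$ space is a $UCW$-hyperbolic space with a monotone modulus of uniform convexity $\eta(r,\varepsilon)=\varepsilon^2/8$ that does not depend on $r$. Since $C$ is a bounded convex subset of such a space, all the hypotheses of Corollary \ref{habil-Groetsch-bounded-C-constant-lambda} (namely those of Corollary \ref{habil-bounded-C-general-lambda}) are immediately satisfied, with $d_C$ the diameter of $C$; that corollary then already yields that $T$ is $\lambda$-asymptotically regular, together with the quantitative rate given by the bound $\Phi(\varepsilon,\eta,d_C,\lambda)$ stated there.

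It then remains only to evaluate this bound explicitly for the modulus $\eta(r,\varepsilon)=\varepsilon^2/8$. Writing $\eta(r,\varepsilon)=\varepsilon\cdot\tilde{\eta}(r,\varepsilon)$ with $\tilde{\eta}(r,\varepsilon)=\varepsilon/8$, and noting that $\tilde{\eta}$ is (trivially) increasing in $\varepsilon$ for fixed $r$, Corollary \ref{habil-Groetsch-bounded-C-constant-lambda} permits replacing $\Phi$ for $\varepsilon<2d_C$ by the sharper
\[
\tilde{\Phi}(\varepsilon,\eta,d_C,\lambda)=\left\lceil\frac{1}{\lambda(1-\lambda)}\right\rceil\cdot\left\lceil\frac{d_C+1}{2\varepsilon\cdot\tilde{\eta}\!\left(d_C+1,\dfrac{\varepsilon}{d_C+1}\right)}\right\rceil .
\]
Substituting $\tilde{\eta}\!\left(d_C+1,\frac{\varepsilon}{d_C+1}\right)=\frac18\cdot\frac{\varepsilon}{d_C+1}$ gives $2\varepsilon\cdot\tilde{\eta}(\cdots)=\frac{\varepsilon^2}{4(d_C+1)}$, so the second ceiling becomes $\left\lceil\frac{4(d_C+1)^2}{\varepsilon^2}\right\rceil$ and hence $\tilde{\Phi}=\Psi(\varepsilon,d_C,\lambda)$; for $\varepsilon\ge 2d_C$ both bounds are $0$, again matching $\Psi$.

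I expect no real obstacle here beyond careful bookkeeping: the only points needing (trivial) attention are the verification that $\tilde{\eta}(r,\varepsilon)=\varepsilon/8$ meets the monotonicity condition allowing use of the improved bound $\tilde{\Phi}$ rather than $\Phi$, and carrying the ceiling-function arithmetic through correctly. It is worth emphasizing that the quadratic-in-$1/\varepsilon$ character of $\Psi$ --- the whole point of this corollary, in contrast with the merely exponential bound of Corollary \ref{habil-quant-BRS-bounded-exp} available in general $W$-hyperbolic spaces --- is precisely inherited from the fact that the $CAT(0)$ modulus of uniform convexity is quadratic in $\varepsilon$.
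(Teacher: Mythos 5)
Your proposal is correct and follows exactly the route the paper takes: specializing Corollary \ref{habil-Groetsch-bounded-C-constant-lambda} to the $CAT(0)$ modulus $\eta(r,\varepsilon)=\varepsilon^2/8=\varepsilon\cdot\tilde{\eta}(r,\varepsilon)$ with $\tilde{\eta}(r,\varepsilon)=\varepsilon/8$ increasing in $\varepsilon$, so that the improved bound $\tilde{\Phi}$ applies. The arithmetic $2\varepsilon\cdot\tilde{\eta}\bigl(d_C+1,\tfrac{\varepsilon}{d_C+1}\bigr)=\tfrac{\varepsilon^2}{4(d_C+1)}$ is right and yields $\Psi(\varepsilon,d_C,\lambda)$ exactly.
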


Hence, for bounded convex subsets of $CAT(0)$ spaces and  constant $\lambda_n=\lambda$, we get a quadratic (in $1/\varepsilon$) rate of asymptotic regularity.

\subsection{Uniform approximate fixed point property}\label{habil-app-fpt-uafpp}

Inspired by Theorem \ref{habil-quant-BRS}, our quantitative version of Borwein-Reich-Shafrir Theorem, we introduced in \cite{KohLeu07} the notions of uniform approximate fixed point property and uniform asymptotic regularity property. The idea is to forget about the quantitative features of  Theorem \ref{habil-quant-BRS} and to look only at the uniformities.

Let $(X,d)$ be a metric space, $C\subseteq X$  and $\mathcal{F}$ be a class of mappings $T:C\to C$. We say that $C$ has the {\em uniform approximate fixed point property (UAFPP)} for $\mathcal{F}$ if for all $\varepsilon>0$ and $b>0$ there exists  $D>0$ such that for 
each point $x\in C$ and for each mapping $T\in{\mathcal F}$,
\begin{equation}
d(x,Tx)\le b \text{ implies } T \text{ has } \eps\text{-fixed points in a } D\text{-neighborhood of }x. 
\label{habil-uafpp-def}
\end{equation}
Formally, $d(x,Tx)\le b\,\, \Rightarrow \,\, \exists x^*\in C\big(d(x,x^*)\le D\,\wedge \,
d(x^*, Tx^*)< \varepsilon\big)$.

Using the same ideas,  we can define the notion of $C$ having the uniform fixed point property. Thus, $C$ has the {\em uniform fixed point property (UFPP)} for $\mathcal{F}$ if for all $b>0$ there exists $D>0$ such that for each point $x\in C$ and 
for each mapping $T\in{\mathcal F}$,
\begin{equation}
d(x,Tx)\le b \text{ implies } T \text{ has fixed points in a } D\text{-neighborhood of }x. 
\label{habil-ufpp-def}
\end{equation}
That is, $d(x,Tx)\le b\,\, \Rightarrow \,\, \exists x^*\in C\big(d(x,x^*)\le D\,\wedge\, 
Tx^*=x^*$\big). As an immediate application of Banach's Contraction Mapping Principle, we get the following.

\bprop
Assume that $(X,d)$ is a complete metric space and let $\mathcal{F}$ be the class of contractions with a common contraction constant $k\in (0,1)$. Then each closed subset $C$ of $X$ has the UFPP for $\mathcal{F}$.
\eprop
\begin{proof}
By Banach's Contraction Mapping Principle we know that each mapping $T\in\mathcal{F}$ has a unique fixed point $x_0$ and, moreover, for each $x\in C$, 
\begin{equation}
d(T^nx, x_0)\le \frac{k^n}{1-k} d(x,Tx) \quad \text{ for all } n\in\N.\label{contraction-Banach-estimation}
\end{equation}
For $n=0$, this yields $\displaystyle d(x,x_0)\le \frac{d(x,Tx)}{1-k} $, so $\displaystyle  d(x,Tx)\le b$ implies $\displaystyle d(x,x_0)\le \frac{b}{1-k}$. Hence, (\ref{habil-ufpp-def}) holds with $\displaystyle D=\frac{b}{1-k}b$.
\end{proof}

Let $(X,d,W)$ be a $W$-hyperbolic space, and $C\subseteq X$ be a convex subset and assume that $(\lambda_n)$ is a sequence in $[0,1]$. We say that $C$ has the {\em $\lambda_n$-uniform asymptotic regularity property } for $\mathcal{F}$ if for all $\varepsilon>0$ 
and $b>0$ there exists  $N\in\N$ such that for each point $x\in C$ and for 
each mapping $T\in{\mathcal F}$, 
\begin{equation}
d(x,Tx)\le b\quad \Rightarrow \quad\forall n\ge N\big(d(x_n,Tx_n)<\varepsilon\big), \label{habil-lambda-uar-def}
\end{equation}
where $(x_n)$ is the Krasnoselski-Mann iteration.

As an immediate consequence of Theorem \ref{habil-quant-BRS-bounded}, bounded convex subsets of $W$-hyperbolic spaces have the $\lambda_n$-uniform asymptotic regularity property for directionally nonexpansive mappings for all 
$(\lambda_n)$ divergent in sum and bounded away from 1.

Theorem \ref{habil-quant-BRS} is used to prove the following equivalent characterizations. 

\begin{proposition}\cite{KohLeu07}
Let $C$ be a convex subset of a $W$-hyperbolic space $(X,d,W)$. The following are equivalent.
\begin{enumerate}
\item[(i)] $C$ has the UAFPP for nonexpansive mappings;
\item[(ii)] there exists  $(\lambda_n)$  in $[0,1]$ such that $C$ has the $\lambda_n$-uniform asymptotic regularity property for nonexpansive mappings;

\item[(iii)] for all $(\lambda_n)$  in $[0,1]$ which are divergent in sum and bounded away from 1, $C$ has the $\lambda_n$-uniform asymptotic regularity property for nonexpansive mappings.
\end{enumerate}
\end{proposition}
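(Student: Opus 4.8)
The plan is to establish the cycle of implications $(iii)\Rightarrow(ii)\Rightarrow(i)\Rightarrow(iii)$. The first of these is immediate: the constant sequence $\lambda_n=1/2$ is divergent in sum and bounded away from $1$, so if $C$ has the $\lambda_n$-uniform asymptotic regularity property for all such sequences, it has it for at least one, which is exactly (ii).

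For $(ii)\Rightarrow(i)$ I would argue directly, without appealing to the quantitative Borwein--Reich--Shafrir theorem (which is not available here, since the witnessing $(\lambda_n)$ in (ii) need not satisfy any growth or boundedness hypothesis). Fix $\varepsilon,b>0$. Applying the $\lambda_n$-uniform asymptotic regularity property to these $\varepsilon$ and $b$ yields $N\in\N$ such that for every $x\in C$ and every nonexpansive $T:C\to C$ with $d(x,Tx)\le b$ one has $d(x_n,Tx_n)<\varepsilon$ for all $n\ge N$; in particular $d(x_N,Tx_N)<\varepsilon$, so $x_N$ is an $\varepsilon$-fixed point of $T$. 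It then only remains to bound $d(x,x_N)$: by (\ref{habil-prop-xylambda}) we have $d(x_n,x_{n+1})=\lambda_n\,d(x_n,Tx_n)\le d(x_n,Tx_n)$, and by Proposition \ref{W-useful-KM-x-y} the sequence $(d(x_n,Tx_n))$ is nonincreasing, so $d(x_n,x_{n+1})\le d(x,Tx)\le b$ for every $n$. Hence $d(x,x_N)\le\sum_{k=0}^{N-1}d(x_k,x_{k+1})\le Nb$. Since $N$ depends only on $\varepsilon$ and $b$, taking $D:=Nb$ verifies (\ref{habil-uafpp-def}), i.e. $C$ has the UAFPP for nonexpansive mappings.

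The substantial implication is $(i)\Rightarrow(iii)$, and here Theorem \ref{habil-quant-BRS} is the crucial tool. Let $(\lambda_n)$ be divergent in sum and bounded away from $1$; fix $K\ge 1$ with $\lambda_n\le 1-1/K$ for all $n$ (the general case $\limsup\lambda_n<1$ only shifts the bound by an additive constant), and fix $\alpha:\N\times\N\to\N$ satisfying (\ref{habil-quant-BRS-hyp-alpha}), which exists by Lemma \ref{BRS-lambda-rate-divergence} from any rate of divergence of $\sum\lambda_n$. Given $\varepsilon,b>0$, apply the UAFPP hypothesis (i) to $\varepsilon/2$ and $b$ to obtain $D>0$ such that every nonexpansive $T:C\to C$ and every $x\in C$ with $d(x,Tx)\le b$ admit $x^*\in C$ with $d(x,x^*)\le D$ and $d(x^*,Tx^*)<\varepsilon/2$. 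Set $\tilde b:=\max\{b,D\}$, so that $d(x,Tx)\le\tilde b$ and $d(x,x^*)\le\tilde b$. Theorem \ref{habil-quant-BRS} then gives, for all $n\ge\Phi(\varepsilon/2,\tilde b,K,\alpha)$,
\[ d(x_n,Tx_n)<d(x^*,Tx^*)+\varepsilon/2<\varepsilon . \]
As $\Phi(\varepsilon/2,\tilde b,K,\alpha)$ depends only on $\varepsilon,b,K,\alpha$ (through $\tilde b$, and $D$ depends only on $\varepsilon,b$) and not on the particular $x$ or $T$, setting $N:=\Phi(\varepsilon/2,\tilde b,K,\alpha)$ establishes (\ref{habil-lambda-uar-def}).

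The only genuinely nontrivial point, and the expected main obstacle, is this last step: one must upgrade the purely existential information provided by the UAFPP --- the mere existence of an $\varepsilon/2$-fixed point within distance $D$ of $x$ --- to a \emph{rate} $N$ of asymptotic regularity that is uniform in $x$ and in $T$. This is exactly what the explicit bound $\Phi$ of Theorem \ref{habil-quant-BRS} delivers, the essential feature being that $\Phi$ depends on the auxiliary point $x^*$ only through a common bound on $d(x,x^*)$ and $d(x,Tx)$, so that a single $N$ works for all admissible $x$, $T$, and $x^*$ simultaneously.
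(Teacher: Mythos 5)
Your proof is correct and, for the one implication the survey actually writes out, $(i)\Rightarrow(iii)$, it follows the paper's argument exactly: invoke the UAFPP to produce an approximate fixed point $x^*$ within distance $D$ of $x$, set $\tilde b=\max\{b,D\}$, and apply Theorem \ref{habil-quant-BRS} to get a bound $\Phi$ independent of $x$ and $T$ (your $\varepsilon/2$ bookkeeping versus the paper's final ``$<2\varepsilon$'' is purely cosmetic). Your direct argument for $(ii)\Rightarrow(i)$ via $d(x,x_N)\le Nb$, using $d(x_n,x_{n+1})=\lambda_n d(x_n,Tx_n)$ and the monotonicity of $(d(x_n,Tx_n))$, is the natural one and correctly avoids any appeal to the quantitative Borwein--Reich--Shafrir theorem, whose hypotheses the witnessing $(\lambda_n)$ in (ii) need not satisfy.
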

\begin{proof}
We give only the proof of $(i)\Rightarrow (iii)$, for which the main ingredient is our quantitative Borwein-Reich-Shafrir Theorem \ref{habil-quant-BRS}. We refer to \cite[Proposition 16]{KohLeu07} for the complete proof. 

Let $\varepsilon>0, b>0$, and $D>0$ be such that (\ref{habil-uafpp-def}) holds with ${\mathcal F}$ being the class of nonexpansive mappings. If  $(\lambda_n)$  in $[0,1]$ is divergent in sum and bounded away from 1, then, as we have already discussed in Subsubsection \ref{BRS-logical}, there exist  $K\in\N$ and $\alpha:\N\to\N$ satisfying the corresponding hypothesis of  Theorem \ref{habil-quant-BRS}.  Let $x\in C$ and $T:C\to C$ nonexpansive 
be such that $d(x,Tx)\le b$. By (\ref{habil-uafpp-def}), there exists $x^*\in C$ satisfying $d(x,x^*)\le D,$ and $d(x^*, Tx^*)<\varepsilon$. By taking $b^*=\max\{b,D\}$, it follows that 
\[d(x,Tx)\le b^* \quad \text{and}\quad d(x,x^*)\le b^*,\]
so the hypothesis (\ref{habil-quant-BRS-hyp-x-b}) is also satisfied. It follows that we can apply Theorem \ref{habil-quant-BRS} to get 
$N=\Phi(\varepsilon,b^*,K,\alpha)$ such that $d(x_n,Tx_n) < d(x^*,Tx^*)+\varepsilon < 2\varepsilon$ for all $n\ge N$.
\end{proof}

Let us remark the following fact. A first attempt to define  the property 
that $C$ has the uniform approximate fixed point property for nonexpansive 
mappings is in the line of Goebel-Kirk Theorem \ref{asreg-uniform-Goebel-Kirk-W}, that is: for all $\varepsilon>0$ there exists $D>0$ such that for all $x\in C$ and for all  $T\in{\mathcal F}$
 
\beq
\exists x^*\in C\big(d(x,x^*)\le D\wedge d(x^*, Tx^*)<\varepsilon\big). 
\label{habil-uafpp-goebel-kirk}
\eeq
In this case, it follows  that, even if we consider only constant mappings $T$, the only  subsets $C$  satisfying (\ref{habil-uafpp-goebel-kirk}) are the bounded ones.  If $C$ is bounded, then $C$ satisfies (\ref{habil-uafpp-goebel-kirk}) by Goebel-Kirk Theorem \ref{asreg-uniform-Goebel-Kirk-W}. Conversely, assume that $C$ satisfies (\ref{habil-uafpp-goebel-kirk}) for all constant mappings $T$. Then for $\varepsilon=1$ we get $D_1\in\N$ such that for all $x\in C$, and for 
all constant mappings $T:C\to C$, there is $x^*\in C$ with $d(x,x^*)\le D_1$ and 
$d(x^*, Tx^*)< 1$. It follows that 
\beq
d(x,Tx)\le d(x,x^*)+d(x^*, Tx^*)+d(Tx^*,Tx)
\le 2D_1+1\label{habil-ineq-unif-GK}
\eeq
Now, if we assume that $C$ is unbounded, there are $x,y\in C$ such that 
$d(x,y)>2D_1+1$. Define $T:C\to C, \, T(z)=y$ for all $z\in C$. Then 
$d(x,Tx)=d(x,y)>2D_1+1$ which contradicts (\ref{habil-ineq-unif-GK}).\\

We conclude this subsection with an  {\bf open problem}:\\

\noindent{\em  Are there  unbounded convex subsets $C$ of some $W$-hyperbolic space which have the UAFPP for all nonexpansive mappings $T:C\to C$ ? }

\subsection{Approximate fixed points in product spaces}

If $(X,\rho)$ and $(Y,d)$ are metric spaces, then the metric $d_\infty$ on $X\times Y$ is defined in the usual way:
\[ d_\infty ((x,u),(y,v))=\max\{\rho(x,y), d(u,v)\}\]
for $(x,u), (y,v)\in X\times Y$. We denote by $(X\times Y)_\infty$ the metric space thus obtained.

The following theorem was proved first by Esp\' inola and Kirk \cite{EspKir01} for Banach spaces and then by Kirk  \cite{Kir04} for $CAT(0)$ spaces. 

\begin{theorem}\label{habil-th-kirk-CAT}
Assume that $X$ is a  Banach space or a $CAT(0)$ space and $C\subseteq X$ is  a  bounded closed convex subset of $X$. 
If $(M,d)$ is a metric space with the AFPP for nonexpansive mappings, then
\[H:=(C\times M)_\infty\]
has the AFPP for nonexpansive mappings.
\end{theorem}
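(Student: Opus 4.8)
The plan is to reduce the existence of approximate fixed points for a nonexpansive $T:H\to H$ to the AFPP of $M$, absorbing the $C$-coordinate by a \emph{uniform} asymptotic regularity argument on $C$. Write $T(x,u)=(T_1(x,u),T_2(x,u))$ with $T_1:H\to C$ and $T_2:H\to M$. From the definition of $d_\infty$ one checks at once that $T_1$ and $T_2$ are $1$-Lipschitz in each variable separately; in particular, for each fixed $u\in M$ the map $S_u:C\to C$, $S_u(x):=T_1(x,u)$, is nonexpansive. Since $X$ is a Banach space or a $CAT(0)$ space, it is a $W$-hyperbolic space and $C$ is a bounded convex subset of it, so Corollary \ref{habil-bounded-hyperbolic-afpp-ne} and, crucially, the uniform Goebel--Kirk Theorem \ref{asreg-uniform-Goebel-Kirk-W} apply to $C$.

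Fix $\eps>0$ and take the constant sequence $\lambda_n=1/2$ (divergent in sum, bounded away from $1$). Fix a base point $x_0\in C$, and for each $u\in M$ let $(x_n(u))_{n\in\N}$ be the Krasnoselski--Mann iteration of $S_u$ from $x_0$, i.e.\ $x_0(u)=x_0$ and $x_{n+1}(u)=\frac12 x_n(u)\oplus\frac12 S_u(x_n(u))$. By Theorem \ref{asreg-uniform-Goebel-Kirk-W} there is $N=N(\eps)\in\N$, depending only on $\eps$ (and on $C$ and $(\lambda_n)$) but \emph{not} on $u$ nor on $x_0$, such that $\rho\bigl(x_N(u),S_u(x_N(u))\bigr)<\eps$ for all $u\in M$. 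This uniformity over the family $\{S_u\}_{u\in M}$ of nonexpansive self-maps of $C$ is the decisive point; without it the map $\psi$ below would not be nonexpansive, and I expect this to be the only nonroutine ingredient of the proof.

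The key lemma is that $\psi:M\to M$, $\psi(u):=T_2\bigl(x_N(u),u\bigr)$, is nonexpansive. One first shows by induction on $n$ that $\rho(x_n(u),x_n(v))\le d(u,v)$ for all $u,v\in M$: the base case holds since $x_0(u)=x_0(v)=x_0$, and for the induction step one uses axiom $(W4)$, nonexpansiveness of $T$, and the definition of $d_\infty$ to obtain
\[
\rho\bigl(x_{n+1}(u),x_{n+1}(v)\bigr)\le\tfrac12\rho(x_n(u),x_n(v))+\tfrac12\max\{\rho(x_n(u),x_n(v)),d(u,v)\}\le d(u,v).
\]
Then $d(\psi(u),\psi(v))=d\bigl(T_2(x_N(u),u),T_2(x_N(v),v)\bigr)\le d_\infty\bigl(T(x_N(u),u),T(x_N(v),v)\bigr)\le\max\{\rho(x_N(u),x_N(v)),d(u,v)\}\le d(u,v)$, so $\psi$ is nonexpansive.

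Finally, since $(M,d)$ has the AFPP for nonexpansive mappings, there is $u^*\in M$ with $d(u^*,\psi(u^*))<\eps$. Put $z:=(x_N(u^*),u^*)\in H$; then $Tz=\bigl(S_{u^*}(x_N(u^*)),\psi(u^*)\bigr)$, hence
\[
d_\infty(z,Tz)=\max\{\rho(x_N(u^*),S_{u^*}(x_N(u^*))),\,d(u^*,\psi(u^*))\}<\eps .
\]
As $\eps>0$ was arbitrary, $T$ is approximately fixed, which proves that $H=(C\times M)_\infty$ has the AFPP for nonexpansive mappings. I would also remark that the argument uses nothing about $X$ beyond the fact that $C$ is a bounded convex subset of a $W$-hyperbolic space, so the statement in fact holds in that broader generality.
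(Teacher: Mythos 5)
Your proof is correct and follows essentially the same route as the paper: the paper obtains this theorem from its Corollary \ref{habil-cor-useful}, whose machinery is exactly your decomposition into the family $S_u=T_1(\cdot,u)$, the Krasnoselski--Mann iterates made uniformly asymptotically regular over $u$, and the induced nonexpansive map $\psi$ on $M$ (the paper's $\varphi_N$, with its selection $\delta$ taken constant equal to your $x_0$) to which the AFPP of $M$ is applied. The only difference is where the uniformity comes from --- you invoke Goebel--Kirk Theorem \ref{asreg-uniform-Goebel-Kirk-W} directly, which is precisely what the paper says the original Esp\'{\i}nola--Kirk proof does, whereas the paper's own derivation routes it through the quantitative Borwein--Reich--Shafrir theorem so as to also cover unbounded $C$, confirming your closing remark that only ``bounded convex subset of a $W$-hyperbolic space'' is used.
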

The proof of this result uses essentially Goebel-Kirk Theorem \ref{asreg-uniform-Goebel-Kirk-W}.

In the following, we generalize  Theorem \ref{habil-th-kirk-CAT} to {\em unbounded} convex subsets $C$ of $W$-hyperbolic spaces. We extend the results further, to families $(C_u)_{u\in M}$ of unbounded convex subsets of a $W$-hyperbolic space. The key ingredient in obtaining these generalizations is Theorem \ref{habil-quant-BRS}, our uniform quantitative version of Borwein-Reich-Shafrir Theorem.  The results presented in this subsection were obtained by Kohlenbach and the author in \cite{KohLeu07}.

\subsubsection{The case of one convex subset $C$}

In the sequel, $C\subseteq X$ is a convex subset of a $W$-hyperbolic space $(X,\rho,W)$, $(M,d)$  is a metric space which has the AFPP for nonexpansive mappings and $H:=(C\times M)_\infty$ and $(\lambda_n)$ is a sequence in $[0,1]$.

Let us denote with $P_1:H\to C,\, P_2:H\to M$  the coordinate projections and define for each nonexpansive mapping $T:H\to H$ and for  each $u\in M$,
\[T_u:C\to C, \quad T_u(x)=(P_1\circ T)(x,u).\]
It is easy to see that $T_u$ is nonexpansive, so we can associate with $T_u$ the Krasnoselski-Mann iteration $(x_n^u)$  starting with an arbitrary $x\in C$.

In the sequel, $\delta:M\to C$ is a nonexpansive mapping that {\em selects} for each $u\in M$ an element $\delta(u)\in C$. Trivial examples of such nonexpansive selection mappings are the constant ones. For simplicity, we shall denote the Krasnoselski-Mann iteration starting from $\delta(u)$ and associated with $T_u$ by $(\delta_n(u))$:
\[\delta_0(u):=\delta(u), \quad \delta_{n+1}(u):=(1-\lambda_n)\delta_n(u)\oplus\lambda_nT_u(\delta_n(u)).\]
 For each $n\in\N$, let us define 
\begin{eqnarray*}
\varphi_n:M\to M, & \varphi_n(u)=(P_2\circ T)(\delta_n(u),u).
\end{eqnarray*}

\begin{theorem}\label{habil-main-rH-rC}
Assume that 
\[\displaystyle\sup_{u\in M}r_C(T_u)<\infty,\]
and $\varphi:\R^*_+\to \R^*_+$ is such that for each $\varepsilon>0$ and $v\in M$ there exists $x^*\in C$ satisfying 
\begin{eqnarray}
\rho(\delta(v),x^*)\le \varphi(\varepsilon)\quad\text{and}\quad \rho(x^*,T_v(x^*))\le 
\sup_{u\in M}r_C(T_u)+\varepsilon.\label{habil-hyp-rH-rC}
\end{eqnarray}
Then $\ds r_H(T)\le \sup_{u\in M}r_C(T_u)$.
\end{theorem}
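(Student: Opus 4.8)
The goal is to bound the minimal displacement $r_H(T)$ of a nonexpansive $T:H\to H$, where $H=(C\times M)_\infty$, by $\sup_{u\in M}r_C(T_u)$. Write $r:=\sup_{u\in M}r_C(T_u)$, which is finite by hypothesis. Fix $\varepsilon>0$; the plan is to produce a point $(z,w)\in H$ with $d_\infty\big((z,w),T(z,w)\big)< r+3\varepsilon$ (say), and then let $\varepsilon\to 0$. The strategy is to run, simultaneously in the $C$-coordinate and the $M$-coordinate, a Krasnoselski--Mann iteration in $C$ whose asymptotic regularity is \emph{uniform} in the parameter $u\in M$, and to apply the AFPP of $M$ to a suitable limiting nonexpansive self-map of $M$ built from that iteration.

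\textbf{Step 1: uniformize the $C$-coordinate.} Pick $(\lambda_n)$ divergent in sum and bounded away from $1$ (e.g. constant $\lambda_n=\tfrac12$), with associated $K$ and $\alpha$ as in the hypotheses of Theorem~\ref{habil-quant-BRS}. For each $v\in M$ choose, via~(\ref{habil-hyp-rH-rC}), a point $x^*_v\in C$ with $\rho(\delta(v),x^*_v)\le\varphi(\varepsilon)$ and $\rho(x^*_v,T_v x^*_v)\le r+\varepsilon$. Since $T_v$ is nonexpansive, $\rho(\delta(v),T_v\delta(v))\le 2\varphi(\varepsilon)+(r+\varepsilon)=:b$, a bound \emph{independent of} $v$; also $\rho(\delta(v),x^*_v)\le\varphi(\varepsilon)\le b$. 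Applying the quantitative Borwein--Reich--Shafrir Theorem~\ref{habil-quant-BRS} to $T_v$ with starting point $\delta(v)$ and comparison point $x^*_v$, we get for $N:=\Phi(\varepsilon,b,K,\alpha)$ (the same $N$ for every $v$!)
\[
\rho\big(\delta_N(u),T_u\delta_N(u)\big)<\rho(x^*_u,T_u x^*_u)+\varepsilon\le r+2\varepsilon\qquad\text{for all }u\in M .
\]
So the $C$-coordinate of the $N$-th KM iterate is a uniform $(r+2\varepsilon)$-fixed point of $T_u$.

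\textbf{Step 2: handle the $M$-coordinate.} Consider $\varphi_N:M\to M$, $\varphi_N(u)=(P_2\circ T)(\delta_N(u),u)$. One must check $\varphi_N$ is nonexpansive: $P_2$ and $T$ are nonexpansive, so it suffices that $u\mapsto(\delta_N(u),u)$ is nonexpansive from $M$ into $H$, i.e. $d_\infty\big((\delta_N(u),u),(\delta_N(v),v)\big)=\max\{\rho(\delta_N(u),\delta_N(v)),d(u,v)\}\le d(u,v)$; this holds because $u\mapsto\delta_N(u)$ is nonexpansive, which follows by induction on the KM construction using $\delta$ nonexpansive, $T_{(\cdot)}(\cdot)=P_1\circ T$ jointly nonexpansive in both arguments, and the $W$-hyperbolic inequality $(W4)$ (convex combinations do not increase distances). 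Now apply the AFPP of $(M,d)$ to $\varphi_N$: there is $w\in M$ with $d(w,\varphi_N(w))<\varepsilon$.

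\textbf{Step 3: assemble.} Put $z:=\delta_N(w)$ and consider $(z,w)\in H$. Then, writing $T(z,w)=\big((P_1\circ T)(z,w),(P_2\circ T)(z,w)\big)=\big(T_w z,\varphi_N(w)\big)$,
\[
d_\infty\big((z,w),T(z,w)\big)=\max\{\rho(z,T_w z),\,d(w,\varphi_N(w))\}<\max\{r+2\varepsilon,\varepsilon\}=r+2\varepsilon .
\]
Since $\varepsilon>0$ was arbitrary, $r_H(T)\le r=\sup_{u\in M}r_C(T_u)$, as claimed.

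\textbf{Main obstacle.} The genuinely nontrivial input is Step~1: that the asymptotic-regularity estimate for the fibrewise maps $T_u$ is \emph{uniform in $u$}, which is exactly what the explicit, parameter-transparent bound $\Phi(\varepsilon,b,K,\alpha)$ of Theorem~\ref{habil-quant-BRS} delivers — a qualitative BRS theorem would not suffice here, since the $T_u$ form a family and the relevant $\delta(u)$ varies. The only other point needing care is the nonexpansiveness of $u\mapsto\delta_N(u)$ (hence of $\varphi_N$), which is a routine induction over the KM recursion using $(W1)$--$(W4)$ but should be stated explicitly; everything else is bookkeeping with $d_\infty$.
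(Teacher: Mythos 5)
Your proof is correct and follows essentially the same route as the paper's: use the hypothesis to get a $v$-independent bound $b$ on $\rho(\delta(v),T_v\delta(v))$ and $\rho(\delta(v),x^*_v)$, invoke the uniform quantitative Borwein--Reich--Shafrir Theorem \ref{habil-quant-BRS} to obtain a single $N$ making $\delta_N(u)$ an $(r+2\varepsilon)$-fixed point of $T_u$ for all $u$, check that $\varphi_N$ is nonexpansive, and apply the AFPP of $M$ to $\varphi_N$. The only point deserving explicit writing is the induction showing $u\mapsto\delta_N(u)$ is nonexpansive, which you correctly identify and which goes through via $(W4)$ and the nonexpansiveness of $P_1\circ T$ on $(C\times M)_\infty$.
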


As an immediate consequence, we get the following result.

\begin{corollary}\label{habil-cor-main-rH-rC-used}
Assume that $\varphi:\R^*_+\to \R^*_+$ is such that
\begin{equation}
\forall\varepsilon>0\forall u\in M\exists x^*\in C\bigg(\rho(\delta(u),x^*)\le 
\varphi(\varepsilon)\quad\text{and}\quad\rho(x^*,T_u(x^*))\le\varepsilon\bigg).\label{habil-hyp-C-AFPP}
\end{equation}
Then $r_H(T)=0$.
\end{corollary}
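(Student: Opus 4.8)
The plan is to read this off directly from Theorem \ref{habil-main-rH-rC}, since the hypothesis (\ref{habil-hyp-C-AFPP}) is exactly the specialization of the setup of that theorem to the case $\sup_{u\in M}r_C(T_u)=0$. First I would use (\ref{habil-hyp-C-AFPP}) to show that $r_C(T_u)=0$ for every $u\in M$: fix $u$ and let $\varepsilon>0$ be arbitrary; the point $x^*\in C$ furnished by (\ref{habil-hyp-C-AFPP}) satisfies $\rho(x^*,T_u(x^*))\le\varepsilon$, whence $r_C(T_u)=\inf\{\rho(x,T_u(x))\mid x\in C\}\le\varepsilon$. As $\varepsilon>0$ was arbitrary and $r_C(T_u)\ge 0$, we get $r_C(T_u)=0$, and therefore $\sup_{u\in M}r_C(T_u)=0<\infty$, so the finiteness requirement in the hypothesis of Theorem \ref{habil-main-rH-rC} is met.

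Next I would verify the second hypothesis of Theorem \ref{habil-main-rH-rC}, namely (\ref{habil-hyp-rH-rC}), with the very same modulus $\varphi$ and the same selection map $\delta$. Substituting $\sup_{u\in M}r_C(T_u)=0$, condition (\ref{habil-hyp-rH-rC}) reads: for each $\varepsilon>0$ and each $v\in M$ there exists $x^*\in C$ with $\rho(\delta(v),x^*)\le\varphi(\varepsilon)$ and $\rho(x^*,T_v(x^*))\le\varepsilon$. This is literally (\ref{habil-hyp-C-AFPP}) after renaming $v$ to $u$, hence it holds by assumption. Thus Theorem \ref{habil-main-rH-rC} applies and yields $r_H(T)\le\sup_{u\in M}r_C(T_u)=0$. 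Since $r_H(T)=\inf\{d_\infty(h,Th)\mid h\in H\}\ge 0$, we conclude $r_H(T)=0$, i.e. $T$ is approximately fixed on $H=(C\times M)_\infty$.

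There is essentially no serious obstacle here: the argument is a one-line reduction to Theorem \ref{habil-main-rH-rC}. The only point requiring a little care is the bookkeeping of quantifiers — checking that, once $\sup_{u\in M}r_C(T_u)$ is known to vanish, the displayed condition (\ref{habil-hyp-rH-rC}) really does coincide with (\ref{habil-hyp-C-AFPP}), and that the same pair $(\delta,\varphi)$ may be used in both statements. (If one wanted a self-contained proof one could instead argue directly: given $\varepsilon>0$, pick any $u\in M$, apply (\ref{habil-hyp-C-AFPP}) to produce $x^*\in C$ that is an $\varepsilon$-fixed point of $T_u$, run the Krasnoselski–Mann iteration from $x^*$ as in the proof of Theorem \ref{habil-main-rH-rC}, and use Goebel–Kirk-type uniform asymptotic regularity on the bounded region reached together with the AFPP of $M$ to obtain an $\varepsilon'$-fixed point of $T$ in $H$; but invoking Theorem \ref{habil-main-rH-rC} already packages exactly this.)
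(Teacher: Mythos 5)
Your proof is correct and is precisely the argument the paper intends: deduce $r_C(T_u)=0$ for all $u$ from (\ref{habil-hyp-C-AFPP}), observe that (\ref{habil-hyp-rH-rC}) with $\sup_{u\in M}r_C(T_u)=0$ is literally (\ref{habil-hyp-C-AFPP}), and invoke Theorem \ref{habil-main-rH-rC}. The paper's own proof is just a one-line version of this same reduction.
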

\begin{proof}
From the hypothesis, it follows immediately that $r_C(T_u)=0$ for all $u\in M$. 
\end{proof}

The next theorem is obtained by applying Theorem \ref{habil-quant-BRS-bounded-xn} to the family $(T_u)_{u\in M}$. 

\begin{theorem}\label{habil-main-x-yn-bounded}
Assume that $(\lambda_n)$ is divergent in sum and bounded away from $1$
and that there exists $b>0$ such that 
\begin{eqnarray}
\forall u\in M\exists y\in C\bigg(\rho(\delta(u),y)\le b\quad\text{and}\quad \forall m,p\in\N \big(\rho(y^u_m,y^u_p)\le b\big)\bigg), \label{habil-hyp-main-x-yn-bounded}
\end{eqnarray}
where $(y^u_n)$ is the Krasnoselski-Mann iteration associated with $T_u$, starting with $y$: 
\[y^u_0:=y, \quad y^u_{n+1}=(1-\lambda_n)y^u_n\oplus\lambda_nT_u(y^u_n).\]
Then $r_H(T)=0$. 
\end{theorem}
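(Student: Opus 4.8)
The plan is to reduce the statement, via the uniformity built into Theorem~\ref{habil-quant-BRS-bounded-xn}, to a single application of the AFPP of $(M,d)$ to one nonexpansive self-map of $M$. Since $(\lambda_n)$ is divergent in sum and bounded away from $1$, I would first fix, once and for all, a $K\in\N$ with $\lambda_n\le 1-1/K$ for all $n$ and (using a rate of divergence for $\sum\lambda_n$ together with Lemma~\ref{BRS-lambda-rate-divergence}) a function $\alpha:\N\times\N\to\N$ satisfying (\ref{habil-quant-BRS-hyp-alpha}); these data do not depend on $u\in M$.

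Next I would record, by induction on $n$, that $u\mapsto\delta_n(u)$ is a nonexpansive map $M\to C$. The base case is the hypothesis that $\delta$ is nonexpansive. For the step, note that in $H=(C\times M)_\infty$ the coordinate projections $P_1,P_2$ are nonexpansive and $u\mapsto(\delta_n(u),u)$ is nonexpansive (the $d_\infty$-distance of two such pairs is the larger of $\rho(\delta_n(u),\delta_n(v))\le d(u,v)$ and $d(u,v)$), so $u\mapsto T_u(\delta_n(u))=P_1(T(\delta_n(u),u))$ is nonexpansive; then $(W4)$, applied with the fixed parameter $\lambda_n$, gives $\rho(\delta_{n+1}(u),\delta_{n+1}(v))\le(1-\lambda_n)\rho(\delta_n(u),\delta_n(v))+\lambda_n\rho(T_u(\delta_n(u)),T_v(\delta_n(v)))\le d(u,v)$. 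In particular each $\varphi_n=P_2\circ T\circ(u\mapsto(\delta_n(u),u))$ is a nonexpansive self-map of $M$.

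The core step is to fix $\varepsilon>0$ and apply Theorem~\ref{habil-quant-BRS-bounded-xn} to each mapping $T_u:C\to C$, taking as starting point $\delta(u)$ (whose Krasnoselski-Mann iteration is $(\delta_n(u))$), as auxiliary point the $y\in C$ supplied by (\ref{habil-hyp-main-x-yn-bounded}) (whose iteration $(y^u_n)$ satisfies $\rho(y^u_m,y^u_p)\le b$ for all $m,p$), and using $\rho(\delta(u),y)\le b$. This yields $\rho(\delta_N(u),T_u(\delta_N(u)))<\varepsilon$ for $N:=\Phi(\varepsilon,b,K,\alpha)$ as in Theorem~\ref{habil-quant-BRS-bounded-xn}, and the decisive point is that $N$ is \emph{independent of} $u$. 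Applying the AFPP of $(M,d)$ to the nonexpansive map $\varphi_N$, we get $u^*\in M$ with $d(u^*,\varphi_N(u^*))<\varepsilon$. Setting $z^*:=(\delta_N(u^*),u^*)\in H$ and using $T(z^*)=(T_{u^*}(\delta_N(u^*)),\varphi_N(u^*))$ (immediate from the definitions of $T_u$ and $\varphi_N$) we obtain
\[
d_\infty(z^*,T(z^*))=\max\{\rho(\delta_N(u^*),T_{u^*}(\delta_N(u^*))),\,d(u^*,\varphi_N(u^*))\}<\varepsilon .
\]
Since $\varepsilon>0$ was arbitrary, $r_H(T)=0$.

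The genuinely hard part is already packaged in Theorem~\ref{habil-quant-BRS-bounded-xn}: that the rate of asymptotic regularity $\Phi(\varepsilon,b,K,\alpha)$ depends only on $\varepsilon,b,K,\alpha$ — and not on the particular nonexpansive map, starting point, or ambient convex set — so that a single $N$ works simultaneously for all $T_u$. Granting that, the remaining obstacles are routine bookkeeping: the inductive nonexpansiveness of $u\mapsto\delta_n(u)$ and of $\varphi_n$, and checking that hypothesis (\ref{habil-hyp-main-x-yn-bounded}) supplies exactly the bounded-iteration data (cf.\ (\ref{habil-quant-BRS-hyp-x-b})) required to invoke Theorem~\ref{habil-quant-BRS-bounded-xn} for each $u$.
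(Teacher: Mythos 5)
Your proof is correct and follows essentially the same route as the paper: the paper obtains this theorem precisely by applying Theorem~\ref{habil-quant-BRS-bounded-xn} to the family $(T_u)_{u\in M}$ (the whole point being the uniformity of $\Phi(\varepsilon,b,K,\alpha)$ in $u$), and then concludes via the nonexpansive maps $\varphi_n$ and the AFPP of $M$, exactly as you do with $\varphi_N$ and the point $z^*=(\delta_N(u^*),u^*)$. The only cosmetic difference is that the paper packages the final step through Theorem~\ref{habil-main-rH-rC}/Corollary~\ref{habil-cor-main-rH-rC-used} (using that $\rho(\delta(u),\delta_N(u))\le 3b$ uniformly), whereas you unfold that argument directly.
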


Applying the above theorem with  $y:=\delta(u)$, we get the following generalization of Theorem \ref{habil-th-kirk-CAT}.

\begin{corollary}\label{habil-cor-useful}
Assume that for all $u\in M$, the Krasnoselski-Mann iteration $\delta_n(u)$ is bounded. Then $r_H(T)=0$.   
\end{corollary}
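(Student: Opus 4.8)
The plan is to derive Corollary~\ref{habil-cor-useful} as an immediate specialization of Theorem~\ref{habil-main-x-yn-bounded}, taking the auxiliary point of that theorem to be $y:=\delta(u)$. With this choice the Krasnoselski-Mann iteration $(y^u_n)$ associated with $T_u$ and starting from $y$ is generated by the recursion $z_{n+1}=(1-\lambda_n)z_n\oplus\lambda_n T_u(z_n)$ from the same initial point as $(\delta_n(u))$, so $y^u_n=\delta_n(u)$ for every $n$. The first half of hypothesis~(\ref{habil-hyp-main-x-yn-bounded}), namely $\rho(\delta(u),y)\le b$, then reduces to $\rho(\delta(u),\delta(u))=0\le b$, which is true for any $b>0$. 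Thus only the second half has to be secured: a single $b>0$ with $\rho(y^u_m,y^u_p)=\rho(\delta_m(u),\delta_p(u))\le b$ for all $m,p\in\N$ and all $u\in M$. Granting this, Theorem~\ref{habil-main-x-yn-bounded} --- whose remaining hypotheses, $(\lambda_n)$ divergent in sum and bounded away from $1$ and $M$ enjoying the AFPP for nonexpansive mappings, are the standing assumptions here --- yields $r_H(T)=0$, i.e. $(C\times M)_\infty$ has approximate fixed points for $T$.

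So the one thing to verify is the uniform boundedness of the iterates $(\delta_n(u))$. If $C$ is bounded this is automatic with $b$ the diameter of $C$, recovering Theorem~\ref{habil-th-kirk-CAT}. In general I would read the hypothesis ``$(\delta_n(u))$ bounded for each $u$'' in the uniform sense that Theorem~\ref{habil-main-x-yn-bounded} requires --- its parameter $b$ serving all $u$ at once --- and justify that reading by nonexpansiveness: $\delta$ is nonexpansive, and combining $(W4)$ with the nonexpansiveness of each $T_u$ an induction on $n$ gives $\rho(\delta_n(u),\delta_n(v))\le d(u,v)$ for all $n$; hence a bound on the iterates at a fixed base point $u_0\in M$ propagates to all $u$ up to the additive term $2\,d(u,u_0)$, so a uniform $b$ is available e.g. whenever $M$ has finite diameter.

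The main obstacle is exactly this step from pointwise to uniform boundedness, and it is essential rather than cosmetic. The proof of Theorem~\ref{habil-main-x-yn-bounded} proceeds by fixing $\varepsilon>0$, using the quantitative Borwein-Reich-Shafrir bound $\Phi(\varepsilon,b,K,\alpha)$ of Theorem~\ref{habil-quant-BRS-bounded-xn} to force $\rho(\delta_N(u),T_u\delta_N(u))<\varepsilon$ at one index $N=\Phi(\varepsilon,b,K,\alpha)$ common to all $u$, and then applying the AFPP of $M$ to the nonexpansive map $\varphi_N\colon M\to M$ to produce an $\varepsilon$-fixed point of $T$ of the form $(\delta_N(u^*),u^*)$; for this, $N$, hence $b$, must not depend on $u$. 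For a single $u$ one does have $\limn\rho(\delta_n(u),T_u\delta_n(u))=0$ directly from Goebel-Kirk (Theorem~\ref{habil-GK-gen-thm}), but in the absence of a uniform index these per-$u$ approximate fixed points cannot be assembled into one approximate fixed point of $T$ on $H$.
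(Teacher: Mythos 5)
Your proposal is exactly the paper's proof: Corollary \ref{habil-cor-useful} is obtained by specializing Theorem \ref{habil-main-x-yn-bounded} to $y:=\delta(u)$, so that $(y^u_n)=(\delta_n(u))$ and the first half of hypothesis (\ref{habil-hyp-main-x-yn-bounded}) is trivially satisfied. Your caveat that the remaining hypothesis then demands the bound $b$ to be \emph{uniform} in $u$ is correct and well observed --- this is indeed what Theorem \ref{habil-main-x-yn-bounded} requires, the paper's one-line derivation tacitly reads the corollary's hypothesis in that uniform sense, and in the intended application (Theorem \ref{habil-th-kirk-CAT}, where $C$ is bounded) the uniform bound $b=d_C$ is automatic.
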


Theorem \ref{habil-th-kirk-CAT} is an immediate consequence of Corollary \ref{habil-cor-useful}, since if $C$ is bounded, $\delta_n(u)$ is bounded for each $u\in M$.

\subsubsection{Families of unbounded convex sets}

\noindent In the following we indicate that  all the above results can be generalized to families $(C_u)_{u\in M}$ of   {\em unbounded} convex subsets of the $W$-hyperbolic space $(X,\rho,W)$.

Let $(C_u)_{u\in M}$ be a family of convex subsets of $X$ with the property that there exists a nonexpansive {\em selection } mapping $\delta:M\to \bigcup_{u\in M}C_u$, that is a nonexpansive mapping satisfying
\begin{equation} \forall u\in M\big(\delta(u)\in C_u\big). \label{habil-selection}
\end{equation}

We consider the following subspace of $(X\times M)_\infty$:
\[ H:=\{(x,u): u\in M, x\in C_u\}\]
and let  $P_1:H\to \bigcup\limits_{u\in M} C_u, \, P_2:H\to M$ be the projections. 

In the following, we consider nonexpansive mappings  $T:H\to H$ satisfying
\begin{equation}
 \forall (x,u)\in H\,\, \bigg((P_1\circ T)(x,u)\in C_u\bigg).\label{habil-hyp-T-Cu}
\end{equation}
It is easy to see that we can define a nonexpansive mapping
\[T_u:C_u\to C_u, \quad T_u(x)=(P_1\circ T)(x,u)\]
for each $u\in M$. We denote the Krasnoselski-Mann iteration starting from $x\in C_u$ and associated with $T_u$ by $(x^u_n)$.

For each $n\in\N$, we define 
\begin{eqnarray*}
\varphi_n:M\to M, & \varphi_n(u)=(P_2\circ T)(\delta_n(u),u).
\end{eqnarray*}

The following results can be proved in a similar manner with Theorems \ref{habil-main-rH-rC}, \ref{habil-main-x-yn-bounded}.

\begin{theorem}\label{habil-main-family-rH-rC}
Assume that 
\[\sup_{u\in M}r_{C_u}(T_u)<\infty\]
and that $\varphi:\R^*_+\to \R^*_+$ is such that for each $\varepsilon>0$ and $v\in M$ there exists $x^*\in C_v$ satisfying 
\begin{eqnarray*}
\rho(\delta(v),x^*)\le \varphi(\varepsilon)\,\quad\text{and}\quad\,\rho(x^*,T_v(x^*))\le 
\sup_{u\in M}r_{C_u}(T_u)+\varepsilon.\label{habil-hyp-main-family-rH-rC}
\end{eqnarray*}
Then $\ds r_H(T)\le \sup_{u\in M}r_{C_u}(T_u)$.
\end{theorem}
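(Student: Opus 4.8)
The plan is to imitate the proof of Theorem~\ref{habil-main-rH-rC} with $C$ replaced throughout by the relevant member $C_u$ of the family, the only extra work being to keep track of the selection property~(\ref{habil-selection}) and of hypothesis~(\ref{habil-hyp-T-Cu}); the engine of the argument is, as before, the \emph{uniform} quantitative Borwein--Reich--Shafrir Theorem~\ref{habil-quant-BRS} (so, as in Theorem~\ref{habil-main-x-yn-bounded}, one works under the hypothesis that $(\lambda_n)$ is divergent in sum and bounded away from~$1$, which is what lets one fix a $K$ with $\lambda_n\le 1-1/K$ and a corresponding $\alpha$ for that theorem). Write $s:=\sup_{u\in M}r_{C_u}(T_u)<\infty$ and fix $\varepsilon>0$. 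It suffices to exhibit a point $(z,w)\in H$ with $d_\infty\big((z,w),T(z,w)\big)<s+2\varepsilon$, since then $r_H(T)\le s+2\varepsilon$ and, $\varepsilon$ being arbitrary, $r_H(T)\le s$.

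First I would verify that for every $n\in\N$ the map $u\mapsto\delta_n(u)$ is a nonexpansive selection, i.e.\ it is nonexpansive from $M$ into $X$ and $\delta_n(u)\in C_u$ for all $u$. This is an induction on $n$: $\delta_0=\delta$ is the given selection; for the step, $u\mapsto(\delta_n(u),u)$ is nonexpansive into $H$ by the inductive hypothesis, $P_1\circ T$ is nonexpansive, so $u\mapsto T_u(\delta_n(u))=(P_1\circ T)(\delta_n(u),u)$ is nonexpansive and lands in $C_u$ by~(\ref{habil-hyp-T-Cu}); finally $(W4)$ shows that $u\mapsto(1-\lambda_n)\delta_n(u)\oplus\lambda_n T_u(\delta_n(u))$ is nonexpansive, with value in the convex set $C_u$. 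Consequently each $\varphi_n:M\to M$, $\varphi_n(u)=(P_2\circ T)(\delta_n(u),u)$, is a nonexpansive self-map of $M$ (composition of the nonexpansive map $u\mapsto(\delta_n(u),u)$, the nonexpansive $T$, and the $1$-Lipschitz projection $P_2$).

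Next, apply Theorem~\ref{habil-quant-BRS} to the nonexpansive mappings $T_u:C_u\to C_u$ (each $C_u$ being itself a $W$-hyperbolic space). By hypothesis, for the fixed $\varepsilon$ and each $u\in M$ there is $x_u^*\in C_u$ with $\rho(\delta(u),x_u^*)\le\varphi(\varepsilon)$ and $\rho(x_u^*,T_ux_u^*)\le s+\varepsilon$; the triangle inequality and nonexpansiveness of $T_u$ then give $\rho(\delta(u),T_u\delta(u))\le 2\varphi(\varepsilon)+s+\varepsilon$. Hence, with $b:=2\varphi(\varepsilon)+s+\varepsilon$, the hypotheses $\rho(\delta(u),T_u\delta(u))\le b$ and $\rho(\delta(u),x_u^*)\le b$ of Theorem~\ref{habil-quant-BRS} hold \emph{uniformly} in $u$. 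Since the bound $N:=\Phi(\varepsilon,b,K,\alpha)$ produced there depends only on $\varepsilon,b,K,\alpha$ and not on the mapping or the starting point, and since $\delta_n(u)$ is precisely the Krasnoselski--Mann iteration of $T_u$ started at $\delta(u)$, we obtain for every $u\in M$ that $\rho(\delta_N(u),T_u\delta_N(u))<\rho(x_u^*,T_ux_u^*)+\varepsilon\le s+2\varepsilon$.

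Finally, invoke the AFPP of $M$ for the nonexpansive map $\varphi_N$: there is $w\in M$ with $d(w,\varphi_N(w))<\varepsilon$. Put $z:=\delta_N(w)\in C_w$, so $(z,w)\in H$, and compute
\[
d_\infty\big((z,w),T(z,w)\big)=\max\{\rho(\delta_N(w),T_w\delta_N(w)),\ d(w,\varphi_N(w))\}<\max\{s+2\varepsilon,\varepsilon\}=s+2\varepsilon ,
\]
which completes the proof. The step I expect to be the real obstacle is the third one: the whole argument collapses unless the index $N$ at which $\rho(\delta_N(u),T_u\delta_N(u))$ becomes $<s+2\varepsilon$ can be chosen \emph{independently of $u$} (equivalently, independently of $T_u$ and of $\delta(u)$), and it is exactly this uniformity --- unavailable from the classical Borwein--Reich--Shafrir Theorem but guaranteed by its proof-mined version, Theorem~\ref{habil-quant-BRS}, once the $\varphi$-hypothesis supplies the single common bound $b$ --- that makes the subsequent appeal to the AFPP of $M$ legitimate.
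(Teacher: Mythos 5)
Your proof is correct and follows exactly the route the paper indicates for this result (the adaptation to the family setting of the argument for Theorem~\ref{habil-main-rH-rC}): the $\varphi$-hypothesis yields a bound $b=2\varphi(\varepsilon)+s+\varepsilon$ that is uniform in $u$, the quantitative Borwein--Reich--Shafrir Theorem~\ref{habil-quant-BRS} then produces a single index $N$ valid for all $T_u$ simultaneously, and the AFPP of $M$ is applied to the nonexpansive map $\varphi_N$. Your two pieces of extra bookkeeping --- the inductive verification that each $\delta_n$ remains a nonexpansive selection with $\delta_n(u)\in C_u$ (using $(W4)$, the convexity of $C_u$ and hypothesis~(\ref{habil-hyp-T-Cu})), and the observation that the standing assumptions on $(\lambda_n)$ (divergent in sum, bounded away from $1$, hence the existence of $K$ and $\alpha$) must be carried over from the surrounding discussion --- are precisely the points that need checking in the family case.
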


\begin{theorem}\label{habil-main-family-x-yn-bounded}
Let $(\lambda_n)$ divergent in sum and bounded away from $1$. Assume that there is $b>0$ such that 
\begin{eqnarray*}
\forall u\in M\exists y\in C_u\big(\rho(\delta(u),y)\le b\quad\text{and}\quad \forall m,p\in\N (\rho(y^u_m,y^u_p)\le b)\big).\label{habil-Cu-hyp-th-3.21}
\end{eqnarray*}
Then $r_H(T)=0$.
\end{theorem}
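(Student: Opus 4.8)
The plan is to reduce Theorem \ref{habil-main-family-x-yn-bounded} to the already-established single-set result Theorem \ref{habil-main-x-yn-bounded} (equivalently, to its proof via Theorem \ref{habil-quant-BRS-bounded-xn}), applied uniformly across the family $(C_u)_{u\in M}$. The point is that the hypothesis on $(\lambda_n)$ (divergent in sum, bounded away from $1$) gives, by the discussion in Subsubsection \ref{BRS-logical}, constants $K\in\N$ and $\alpha:\N\times\N\to\N$ satisfying (\ref{habil-quant-BRS-hyp-alpha}), and these are independent of $u$. Likewise $b$ is independent of $u$ by hypothesis. Thus the quantitative bound $\Phi(\varepsilon,b,K,\alpha)$ from Theorem \ref{habil-quant-BRS-bounded-xn} is one and the same for every $u\in M$.

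First I would fix $u\in M$ and apply the hypothesis to obtain $y\in C_u$ with $\rho(\delta(u),y)\le b$ and $\rho(y^u_m,y^u_p)\le b$ for all $m,p\in\N$, where $(y^u_n)$ is the Krasnoselski-Mann iteration in $C_u$ associated with the nonexpansive mapping $T_u:C_u\to C_u$ (well-defined by (\ref{habil-hyp-T-Cu})). Since $C_u$ is itself a convex subset of the $W$-hyperbolic space $(X,\rho,W)$, Theorem \ref{habil-quant-BRS-bounded-xn} applies with starting point $x:=\delta(u)$ and comparison point $x^*:=y$: it yields, for every $\varepsilon>0$,
\[
\forall n\ge \Phi(\varepsilon,b,K,\alpha)\,\bigl(\rho(\delta_n(u),T_u(\delta_n(u)))<\varepsilon\bigr),
\]
where $(\delta_n(u))$ is the Krasnoselski-Mann iteration starting from $\delta(u)$. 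Crucially, the bound $N(\varepsilon):=\Phi(\varepsilon,b,K,\alpha)$ does not depend on $u$. In particular $\limn \rho(\delta_n(u),T_u(\delta_n(u)))=0$ for each $u$, with a rate uniform in $u$.

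Next I would lift this to $H$. Fix $\varepsilon>0$ and set $n:=N(\varepsilon)$. Consider the point $w:=(\delta_n(u),u)\in H$. Using the definitions $T_u(x)=(P_1\circ T)(x,u)$ and $\varphi_n(u)=(P_2\circ T)(\delta_n(u),u)$, we have $T(w)=(T_u(\delta_n(u)),\varphi_n(u))$, so
\[
d_\infty(w,T(w))=\max\{\rho(\delta_n(u),T_u(\delta_n(u))),\,d(u,\varphi_n(u))\}.
\]
The first coordinate is $<\varepsilon$. For the second, the construction of the sequence $(\varphi_n)$ together with nonexpansivity of $T$ shows that $(\varphi_n(u))_n$ behaves like a Krasnoselski-Mann-type orbit of a nonexpansive self-map of $M$ (this is exactly the mechanism used in Theorem \ref{habil-main-x-yn-bounded}), and the AFPP of $(M,d)$ for nonexpansive mappings — combined again with the uniform asymptotic regularity just obtained in the $C_u$-coordinate — forces $d(u,\varphi_N(u))$ to be made arbitrarily small for suitable $N$; one takes the larger of the two thresholds. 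Hence for $n$ large enough (uniformly in $u$) we get $d_\infty(w,T(w))<\varepsilon$, i.e. $H$ has $\varepsilon$-fixed points for every $\varepsilon>0$, which is $r_H(T)=0$.

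The main obstacle is the $M$-coordinate: unlike the $C_u$-coordinate, where Theorem \ref{habil-quant-BRS-bounded-xn} does all the work, controlling $d(u,\varphi_n(u))$ requires showing that the sequence $(\varphi_n)$ on $M$ is genuinely (asymptotically) the iteration of a single nonexpansive map of $M$ for which the AFPP applies, and that its asymptotic regularity can be synchronised with the uniform rate $N(\varepsilon)$ coming from the $C_u$ side. This is precisely where the boundedness hypothesis $\rho(y^u_m,y^u_p)\le b$ (uniform in $u$) is used: it guarantees the orbits $(\delta_n(u))$ stay in a $b$-ball, so that the induced map on $M$ is well-behaved and the argument of Theorem \ref{habil-main-x-yn-bounded} goes through verbatim, now with all estimates uniform over $u\in M$ because $K,\alpha,b$ are. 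Once that synchronisation is in place the conclusion $r_H(T)=0$ follows as above.
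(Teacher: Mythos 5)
Your first half is sound and is exactly the intended mechanism: since $K$, $\alpha$ and $b$ do not depend on $u$, Theorem \ref{habil-quant-BRS-bounded-xn} applied in each $C_u$ with $x:=\delta(u)$ and $x^*:=y$ gives $\rho(\delta_n(u),T_u(\delta_n(u)))<\varepsilon$ for all $n\ge N(\varepsilon):=\Phi(\varepsilon,b,K,\alpha)$, with $N(\varepsilon)$ independent of $u$. The gap is in how you handle the $M$-coordinate. For a point $u$ that was fixed \emph{in advance}, there is no reason whatsoever for $d(u,\varphi_n(u))$ to become small as $n$ grows: the sequence $n\mapsto\varphi_n(u)$ is \emph{not} the orbit of a single nonexpansive self-map of $M$ (each $\varphi_n$ is a different map), and the AFPP of $M$ never asserts that displacements are small at every point — only that some point has small displacement. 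A trivial counterexample to your claim: take $T(x,u)=(x,Su)$ with $S:M\to M$ nonexpansive having approximate but no exact fixed points; then $\varphi_n(u)=Su$ for every $n$, and $d(u,Su)$ is not small for an arbitrary $u$. So "one takes the larger of the two thresholds" cannot work.

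The correct order of quantifiers is the reverse of yours, and this is precisely why the uniformity in $u$ of $N(\varepsilon)$ matters. One first checks (by induction on $n$, using the nonexpansivity of $\delta$, of $T$ on $H$, and axiom $(W4)$) that $u\mapsto\delta_n(u)$ satisfies $\rho(\delta_n(u),\delta_n(v))\le d(u,v)$, whence each $\varphi_n=(P_2\circ T)(\delta_n(\cdot),\cdot)$ is a nonexpansive self-map of $M$. Now fix $\varepsilon>0$, set $n:=N(\varepsilon)$, and \emph{only then} invoke the AFPP of $M$ for the single map $\varphi_n$ to produce a point $u_\varepsilon\in M$ with $d(u_\varepsilon,\varphi_n(u_\varepsilon))<\varepsilon$. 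Since the rate $N(\varepsilon)$ is uniform in $u$, the estimate $\rho(\delta_n(u_\varepsilon),T_{u_\varepsilon}(\delta_n(u_\varepsilon)))<\varepsilon$ still holds for this \emph{a posteriori} chosen $u_\varepsilon$, and then $w:=(\delta_n(u_\varepsilon),u_\varepsilon)$ satisfies $d_\infty(w,Tw)=\max\{\rho(\delta_n(u_\varepsilon),T_{u_\varepsilon}(\delta_n(u_\varepsilon))),\,d(u_\varepsilon,\varphi_n(u_\varepsilon))\}<\varepsilon$, giving $r_H(T)=0$. With this repair your argument coincides with the paper's (the family version is proved exactly as the single-set Theorem \ref{habil-main-x-yn-bounded}), but as written the treatment of the $M$-coordinate is not a proof.
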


We get also the following corollary.

\begin{corollary}
Assume that $(C_u)_{u\in M}$ is a family of   {\em bounded} convex subsets of $X$ such that  $\ds\sup_{u\in M}diam(C_u)<\infty$.

Then $H$ has the AFPP for nonexpansive mappings $T\!:\!H\!\to\!H$ satisfying (\ref{habil-hyp-T-Cu}).
\end{corollary}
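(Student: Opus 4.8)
The plan is to obtain this corollary as an immediate specialization of Theorem \ref{habil-main-family-x-yn-bounded}, mirroring the way Theorem \ref{habil-th-kirk-CAT} was recovered from Corollary \ref{habil-cor-useful} in the single-set case. First I would fix an arbitrary nonexpansive mapping $T:H\to H$ satisfying (\ref{habil-hyp-T-Cu}) and set $D:=\sup_{u\in M}diam(C_u)$, which is finite by hypothesis; the goal is then to show $r_H(T)=0$. For the sequence of parameters I would simply take the constant sequence $\lambda_n:=\frac12$, which is divergent in sum and bounded away from $1$, so that Theorem \ref{habil-main-family-x-yn-bounded} becomes applicable once its boundedness hypothesis is checked.

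Next I would verify that hypothesis. For each $u\in M$ choose $y:=\delta(u)\in C_u$. Since $T_u$ maps $C_u$ into itself — this is exactly property (\ref{habil-hyp-T-Cu}) — and $C_u$ is convex, a one-line induction shows that every iterate $y^u_{n}=(1-\lambda_{n-1})y^u_{n-1}\oplus\lambda_{n-1}T_u(y^u_{n-1})$ lies in $C_u$. Hence $\rho(\delta(u),y)=0$ and $\rho(y^u_m,y^u_p)\le diam(C_u)\le D$ for all $m,p\in\N$, so the hypothesis of Theorem \ref{habil-main-family-x-yn-bounded} holds with $b:=\max\{D,1\}$ (the maximum with $1$ serves only to keep $b>0$ in the degenerate case where all $C_u$ are singletons). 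Applying that theorem yields $r_H(T)=0$; since $T$ was arbitrary among the nonexpansive self-maps of $H$ satisfying (\ref{habil-hyp-T-Cu}), this is precisely the assertion that $H$ has the AFPP for such mappings.

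I do not expect a genuine obstacle here: the proof is a routine reduction to Theorem \ref{habil-main-family-x-yn-bounded}, and the only point that needs any argument at all is the invariance of the Krasnoselski-Mann iterates inside $C_u$, which follows at once from convexity of $C_u$ and (\ref{habil-hyp-T-Cu}). The substantive content — that a uniform bound on the diameters forces the associated iteration sequences to stay uniformly bounded, and that this in turn forces $r_H(T)=0$ through the quantitative Borwein--Reich--Shafrir analysis of Theorem \ref{habil-quant-BRS} — is already packaged inside Theorem \ref{habil-main-family-x-yn-bounded}, so nothing beyond bookkeeping remains.
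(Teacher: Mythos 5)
Your proposal is correct and follows exactly the paper's route: the paper's own proof is the one-line observation that the hypothesis of Theorem \ref{habil-main-family-x-yn-bounded} is satisfied with $y:=\delta(u)$, and your verification (iterates of $T_u$ stay in the convex $T_u$-invariant set $C_u$, whose diameter is uniformly bounded by $\sup_{u\in M}diam(C_u)$) is precisely the bookkeeping that justifies that line.
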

\begin{proof} 
The hypothesis of Theorem \ref{habil-main-family-x-yn-bounded} is satisfied with $y:=\delta(u)$.
 \end{proof}

\subsubsection{Partial answer to an open problem of Kirk}

In the following, we use our notion of uniform approximate fixed point property, introduced in Subsection \ref{habil-app-fpt-uafpp}, to give some partial answers to the following problem of Kirk  \cite[Problem 27]{Kir04}:

Let $C$ be a closed convex subset  of a complete $CAT(0)$ space $X$  (having the geodesic line extension property) and $M$ be a metric space. If both $C$ and $M$ have the AFPP for nonexpansive mappings, is it true that the product $H:=(C\times M)_\infty$ again has the AFPP?

We show that this is true if $C$ has the UAFPP (even in the case where $X$ is just a $W$-hyperbolic space) and a technical condition is satisfied which, in particular, holds if $M$ is bounded.

\begin{theorem}\label{habil-thm-UAFPP+AFPP}
Let $C$ be a convex subset  of a $W$-hyperbolic space $(X,\rho,W)$  and $(M,d)$ be a metric space with the AFPP for nonexpansive mappings. Assume that $C$ has the UAFPP for nonexpansive mappings.\\
Let $\delta:M\to C$ be a nonexpansive selection mapping and $T:H\to H$ be a nonexpansive mapping such that 
$\ds \sup_{u\in M}\rho(T_u(\delta(u)),\delta(u))<\infty$.

Then $r_H(T)=0$.
\end{theorem}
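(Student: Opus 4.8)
The plan is to show $r_H(T)=0$ by producing, for each $\varepsilon>0$, a point of $H$ that is moved less than $\varepsilon$ by $T$. The basic strategy is the standard one for product-space fixed-point results: use the hypothesis on $C$ to push the first coordinate close to being fixed along the Krasnoselski-Mann iteration $(\delta_n(u))$ \emph{uniformly in $u$}, thereby turning the maps $\varphi_n:M\to M$, $\varphi_n(u)=(P_2\circ T)(\delta_n(u),u)$, into mappings that are nearly nonexpansive; then apply the AFPP of $M$ to $\varphi_N$ for a suitably large fixed $N$ to control the second coordinate. Concretely, let $b:=\sup_{u\in M}\rho(T_u(\delta(u)),\delta(u))<\infty$. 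Given $\varepsilon>0$, apply the UAFPP of $C$ for nonexpansive mappings with data $\varepsilon/2$ and $b$: this yields $D>0$ such that for every $u\in M$, since $\rho(\delta(u),T_u\delta(u))\le b$, there is a point $x^*_u\in C$ with $\rho(\delta(u),x^*_u)\le D$ and $\rho(x^*_u,T_u x^*_u)<\varepsilon/2$. Hence each $T_u$ has approximate fixed points in a $D$-neighborhood of $\delta(u)$, with $D$ \emph{independent of $u$}.

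Next I would invoke the quantitative Borwein-Reich-Shafrir estimate, Theorem \ref{habil-quant-BRS} (in its form for nonexpansive mappings). Fix once and for all a sequence $(\lambda_n)$ in $[1/K,1-1/K]$ (say $\lambda_n=1/2$) so that $K$ and $\alpha$ as in that theorem are available; these depend only on $(\lambda_n)$. Put $b^*:=\max\{b,D\}$. Then for each $u\in M$ the hypotheses $\rho(\delta(u),T_u\delta(u))\le b^*$ and $\rho(\delta(u),x^*_u)\le b^*$ of Theorem \ref{habil-quant-BRS} hold, so with $N:=\Phi(\varepsilon/2,b^*,K,\alpha)$ — a number \emph{not depending on $u$} — we get $\rho(\delta_n(u),T_u\delta_n(u))<\rho(x^*_u,T_ux^*_u)+\varepsilon/2<\varepsilon$ for all $n\ge N$ and all $u\in M$. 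In particular $\rho(\delta_N(u),T_u\delta_N(u))<\varepsilon$ uniformly in $u$. Now observe that $\delta_N:M\to C$ is nonexpansive (it is built from $\delta$, $W$, and the $T_u$'s, all of which behave nonexpansively in $u$; this is exactly the kind of bookkeeping done in the previous subsection) and $P_2\circ T$ is nonexpansive, so $\varphi_N=(P_2\circ T)(\delta_N(\cdot),\cdot)$ is a nonexpansive self-map of $M$. Since $M$ has the AFPP, there is $u_0\in M$ with $d(u_0,\varphi_N(u_0))<\varepsilon$.

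Finally I would estimate $d_\infty\big((\delta_N(u_0),u_0),\,T(\delta_N(u_0),u_0)\big)$. Writing $T(\delta_N(u_0),u_0)=\big((P_1\circ T)(\delta_N(u_0),u_0),\,(P_2\circ T)(\delta_N(u_0),u_0)\big)=\big(T_{u_0}(\delta_N(u_0)),\,\varphi_N(u_0)\big)$, the first-coordinate discrepancy is $\rho(\delta_N(u_0),T_{u_0}\delta_N(u_0))<\varepsilon$ and the second-coordinate discrepancy is $d(u_0,\varphi_N(u_0))<\varepsilon$; by definition of $d_\infty$ the maximum is $<\varepsilon$. Hence $r_H(T)\le\varepsilon$, and letting $\varepsilon\to0$ gives $r_H(T)=0$.

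The step I expect to be the main obstacle is \emph{not} the final estimate but making the uniformity genuinely uniform: one must be careful that $D$ from the UAFPP and $N=\Phi(\varepsilon/2,b^*,K,\alpha)$ from Theorem \ref{habil-quant-BRS} really do not depend on $u$ — this is where the quantitative, uniform character of Theorem \ref{habil-quant-BRS} (bound depending only on $\varepsilon,b^*,K,\alpha$, not on the mapping or starting point) is essential, and it is precisely why the plain qualitative Borwein-Reich-Shafrir theorem would not suffice. A secondary technical point is verifying that $u\mapsto\delta_n(u)$ is nonexpansive for each $n$ (an easy induction using $(W4)$ and nonexpansiveness of $T$ in $H$, hence of each $(x,u)\mapsto(P_1\circ T)(x,u)$ jointly), so that $\varphi_N$ is a legitimate nonexpansive self-map of $M$ to which the AFPP applies.
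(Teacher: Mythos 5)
Your proof is correct and follows the same route as the paper: the paper applies the UAFPP with data $\varepsilon$ and $b=\sup_{u\in M}\rho(T_u(\delta(u)),\delta(u))$ to obtain a $u$-independent $D$, and then simply invokes Corollary \ref{habil-cor-main-rH-rC-used} with $\varphi(\varepsilon)=D$. What you do in addition is reconstruct the proof of that corollary --- a uniform $N$ from the quantitative Borwein--Reich--Shafrir Theorem \ref{habil-quant-BRS} applied with $b^*=\max\{b,D\}$, nonexpansiveness of $u\mapsto\delta_N(u)$ and hence of $\varphi_N$, then the AFPP of $M$ applied to $\varphi_N$ --- which is precisely the argument underlying Theorem \ref{habil-main-rH-rC}, and all the details you supply check out.
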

\begin{proof} 
Let $\varepsilon>0$ and $b>0$ be such that $\rho(T_u(\delta(u)),\delta(u))\le b$ for all $u\in M$. Since $C$ has the UAFPP for nonexpansive mappings, there exists $D>0$ (depending on $\varepsilon$ and $b$) such that (\ref{habil-uafpp-def}) holds for each nonexpansive self-mapping of $C$ and each $x\in C$. For each $u\in M$,  we can apply (\ref{habil-uafpp-def}) for $x:=\delta(u)$ and $T_u$ to get $x^*\in C$ such that $\rho(\delta(u),x^*)\le D$ and $\rho(x^*, T_u(x^*))\leq\varepsilon $. 
Hence, the hypothesis of Corollary \ref{habil-cor-main-rH-rC-used} is satisfied with $\varphi(\varepsilon)=D$, so $r_H(T)=0$ follows.
 \end{proof}

\begin{corollary}
Let $C$ be a convex subset  of a $W$-hyperbolic space $(X,\rho,W)$  and $(M,d)$ be a {\em bounded} metric space. Assume that $C$ has the UAFPP  and that $(M,d)$ has the AFPP for nonexpansive mappings. 

Then  $H:=(C\times M)_\infty$ has the AFPP for nonexpansive mappings. 
\end{corollary}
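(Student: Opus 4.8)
The plan is to derive the statement directly from Theorem~\ref{habil-thm-UAFPP+AFPP}. Recall that, by definition, $H=(C\times M)_\infty$ has the AFPP for nonexpansive mappings exactly when every nonexpansive mapping $T\colon H\to H$ is approximately fixed, i.e.\ when $r_H(T)=0$ for every such $T$. So I would fix an arbitrary nonexpansive mapping $T\colon H\to H$ and verify the hypotheses of Theorem~\ref{habil-thm-UAFPP+AFPP}.

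First I would supply a nonexpansive selection mapping. Since $C$ is nonempty (being convex), choose $x_0\in C$ and let $\delta\colon M\to C$ be the constant map $\delta(u):=x_0$. Then $\rho(\delta(u),\delta(v))=0\le d(u,v)$ for all $u,v\in M$, so $\delta$ is nonexpansive, and it is trivially a selection mapping in the sense of (\ref{habil-selection}).

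The only substantive point — and the sole place where the boundedness of $M$ is used — is to verify that $\ds\sup_{u\in M}\rho\big(T_u(\delta(u)),\delta(u)\big)<\infty$. Here $T_u(\delta(u))=(P_1\circ T)(x_0,u)$. Fix some $u_0\in M$. Since $T$ is nonexpansive on $(C\times M)_\infty$, for every $u\in M$ we have $d_\infty\big(T(x_0,u),T(x_0,u_0)\big)\le d_\infty\big((x_0,u),(x_0,u_0)\big)=d(u,u_0)\le\mathrm{diam}(M)$, and hence $\rho\big((P_1\circ T)(x_0,u),(P_1\circ T)(x_0,u_0)\big)\le\mathrm{diam}(M)$. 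The triangle inequality then bounds $\rho\big((P_1\circ T)(x_0,u),x_0\big)$ by $\mathrm{diam}(M)+\rho\big((P_1\circ T)(x_0,u_0),x_0\big)$, uniformly in $u$, so the supremum is finite.

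Now all hypotheses of Theorem~\ref{habil-thm-UAFPP+AFPP} hold: $C$ has the UAFPP for nonexpansive mappings and $M$ has the AFPP for nonexpansive mappings by assumption, $\delta$ is a nonexpansive selection mapping, and the displacement condition has just been established. The theorem therefore gives $r_H(T)=0$, and since $T$ was arbitrary, $H$ has the AFPP for nonexpansive mappings. I do not expect any genuine obstacle here: the proof is a short verification of the hypotheses of Theorem~\ref{habil-thm-UAFPP+AFPP}, the only observation with any content being that boundedness of $M$ forces the orbit-displacement quantity $\sup_{u\in M}\rho\big(T_u(\delta(u)),\delta(u)\big)$ to be finite.
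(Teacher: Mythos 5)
Your proof is correct and follows essentially the same route as the paper: choose a constant (hence nonexpansive) selection mapping $\delta(u):=x_0$, use the nonexpansiveness of $T$ together with $\mathrm{diam}(M)$ to bound $\rho(T_u(x_0),x_0)\le \rho(T_{u_0}(x_0),x_0)+d(u,u_0)$ uniformly in $u$, and then invoke Theorem~\ref{habil-thm-UAFPP+AFPP}. The only difference is that you spell out the intermediate step through $d_\infty$ that the paper leaves implicit.
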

\begin{proof}
Let $x\in C$ be arbitrary, and define $\delta:M\to C$ by $\delta(u)=x$. Let $T:H\to H$ be a nonexpansive mapping. Fix some $u_0\in M$, and define $b:=\rho(x,T_{u_0}(x))+diam(M)$. Then $\rho(x,T_u(x))\le \rho(x,T_{u_0}(x))+ d(u_0,u)\le b$  for each $u\in M$, so we can apply Theorem \ref{habil-thm-UAFPP+AFPP} to conclude that  $r_H(T)=0$. 
\end{proof}
\subsection{Rates of asymptotic regularity for Halpern iterations}\label{habil-app-fpt-Halpern}

\noindent Let $C$ be a  convex subset of a W-hyperbolic space $(X,d,W)$ and $T:C\to C$ be nonexpansive. 

As in the case of normed spaces, we can define the {\em Halpern iteration} starting with $x\in C$ by 
\beq
x_0:=x, \quad x_{n+1}:=\lambda_{n+1}x\oplus(1-\lambda_{n+1})Tx_n, 
 \label{habil-Halpern-iterate-x}
\eeq
where $(\lambda_n)$ is a sequence in $[0,1]$.

The following lemma collects some useful properties of Halpern iterations.

\begin{lemma}\label{lemma-Halpern-hyp}
Assume that $(x_n)_{n\geq 1}$ is the Halpern iteration starting with $x\in C$. Then
\be
\item For all $n\geq 1$,
\bea
d(Tx_n,x)&\leq & d(x_n,x)+d(Tx,x) \label{H-Txn-x}\\
d(Tx_n,x_n) &\leq & d(x_{n+1},x_n)+ \lambda_{n+1}d(Tx_n,x) \label{H-Txn-xn}\\
d(x_{n+1},x) &\leq & (1-\lambda_{n+1})d(x_n,x)+(1-\lambda_{n+1})d(Tx,x) \label{H-xn+1-x}\\
d(x_{n+1},x_n) &\leq &  (1-\lambda_{n+1})d(x_n,x_{n-1})+|\lambda_{n+1}-\lambda_n|\,d(x,Tx_{n-1}) \label{H-xn+1-xn-ineq1}\\ 
d(x_{n+1},x_n) &\leq & \lambda_{n+1}d(x_n,x)+(1-\lambda_{n+1})d(Tx_n,x_n). \label{H-xn+1-xn-ineq2}
\eea
\item  If $(x_n)$ is bounded, then  $(Tx_n)$ is also bounded. Moreover, if $M\geq  d(x,Tx)$ and $M\geq  d(x_n,x)$ for all $n\geq 1$, then
\bea
d(Tx_n,x) \leq  2M  \quad \text{and}  \quad d(Tx_n,x_n) \leq  d(x_{n+1},x_n) +2M\lambda_{n+1}\label{H-ineq-Txn-xn}\\
d(x_{n+1},x_n) \leq   (1-\lambda_{n+1})d(x_n,x_{n-1})+2M|\lambda_{n+1}-\lambda_n|. \label{H-ineq-xn+1-xn}
\eea
for all $n\geq 1$.
\ee 
\end{lemma}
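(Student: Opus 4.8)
The plan is to derive every displayed inequality directly from the defining relation $x_{n+1}=\lambda_{n+1}x\oplus(1-\lambda_{n+1})Tx_n$, which in $W$-notation reads $x_{n+1}=W_X(x,Tx_n,1-\lambda_{n+1})$, so that the weight on $Tx_n$ is $1-\lambda_{n+1}$ and the weight on $x$ is $\lambda_{n+1}$. The only tools needed are the axioms $(W1)$--$(W4)$, the identities (\ref{habil-prop-xylambda}), the triangle inequality and the nonexpansivity of $T$; no uniform convexity or completeness enters, so the whole lemma reduces to a short sequence of computations. I would begin by recording, from (\ref{habil-prop-xylambda}), the two facts $d(x_{n+1},x)=(1-\lambda_{n+1})d(x,Tx_n)$ and $d(x_{n+1},Tx_n)=\lambda_{n+1}d(x,Tx_n)$, which will be used repeatedly.

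With this in hand, inequality (\ref{H-Txn-x}) is just $d(Tx_n,x)\le d(Tx_n,Tx)+d(Tx,x)\le d(x_n,x)+d(Tx,x)$ by nonexpansivity; (\ref{H-xn+1-x}) follows by combining $d(x_{n+1},x)=(1-\lambda_{n+1})d(x,Tx_n)$ with (\ref{H-Txn-x}); (\ref{H-Txn-xn}) comes from the triangle inequality $d(Tx_n,x_n)\le d(Tx_n,x_{n+1})+d(x_{n+1},x_n)$ together with $d(Tx_n,x_{n+1})=\lambda_{n+1}d(x,Tx_n)$; and (\ref{H-xn+1-xn-ineq2}) is a single application of $(W1)$ with base point $z=x_n$ to $x_{n+1}=W_X(x,Tx_n,1-\lambda_{n+1})$, giving $d(x_n,x_{n+1})\le\lambda_{n+1}d(x_n,x)+(1-\lambda_{n+1})d(x_n,Tx_n)$.

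The one step requiring a small idea is (\ref{H-xn+1-xn-ineq1}), since $x_{n+1}=W_X(x,Tx_n,1-\lambda_{n+1})$ and $x_n=W_X(x,Tx_{n-1},1-\lambda_n)$ differ both in the second argument and in the parameter. Here I would insert the auxiliary point $z:=W_X(x,Tx_{n-1},1-\lambda_{n+1})$, estimate $d(x_{n+1},z)\le(1-\lambda_{n+1})d(Tx_n,Tx_{n-1})\le(1-\lambda_{n+1})d(x_n,x_{n-1})$ using $(W4)$ and nonexpansivity, and $d(z,x_n)=|\lambda_{n+1}-\lambda_n|\,d(x,Tx_{n-1})$ using $(W2)$, then add via the triangle inequality. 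Finally, for part (ii): boundedness of $(Tx_n)$ is immediate from (\ref{H-Txn-x}), and under the hypotheses $M\ge d(x,Tx)$ and $M\ge d(x_n,x)$ for all $n\ge1$ one first gets $d(Tx_n,x)\le 2M$ for all $n\ge1$ from (\ref{H-Txn-x}) (and $d(x,Tx_0)=d(x,Tx)\le M\le 2M$ covers the boundary index $n=1$ in the last estimate); substituting this bound into (\ref{H-Txn-xn}) and (\ref{H-xn+1-xn-ineq1}) yields (\ref{H-ineq-Txn-xn}) and (\ref{H-ineq-xn+1-xn}). The only real point of care is bookkeeping with the weight convention and the $n=1$ boundary case where $x_{n-1}=x_0=x$; there is no genuine obstacle beyond that.
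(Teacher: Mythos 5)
Your proposal is correct and follows essentially the same route as the paper: each inequality in (i) is obtained from the triangle inequality, nonexpansivity, the identities (\ref{habil-prop-xylambda}), and (W1)/(W2)/(W4), with (\ref{H-xn+1-xn-ineq1}) handled by inserting exactly the same intermediate point $\lambda_{n+1}x\oplus(1-\lambda_{n+1})Tx_{n-1}$, and (ii) read off from (i). Your extra attention to the $n=1$ boundary case (where $x_0=x$) is a harmless refinement of the paper's remark that (ii) is an immediate consequence of (i).
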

\begin{proof}
\be
\item
\bua
d(Tx_n,x)&\leq & d(Tx_n,Tx)+d(Tx,x) \leq d(x_n,x)+d(Tx,x)\\
d(Tx_n,x_n) &\leq & d(Tx_n,x_{n+1})+d(x_{n+1},x_n)\\
&=& d(x_{n+1},x_n) + d(Tx_n,\lambda_{n+1}x\oplus (1-\lambda_{n+1})Tx_n) \\
& = & d(x_{n+1},x_n)+ \lambda_{n+1}d(x,Tx_n)\,\quad \text{~by (\ref{habil-prop-xylambda}})\\
d(x_{n+1},x) &= & d(\lambda_{n+1}x\oplus (1-\lambda_{n+1})Tx_n,x)\\
 & = & (1-\lambda_{n+1}) d(Tx_n,x) \,\quad \text{~by (\ref{habil-prop-xylambda}})\\
&\leq & (1-\lambda_{n+1}) d(Tx_n,Tx)+(1-\lambda_{n+1}) d(Tx,x)\\
&\leq & (1-\lambda_{n+1})d(x_n,x)+(1-\lambda_{n+1})d(Tx,x),\\
d(x_{n+1},x_n) &=& d(\lambda_{n+1}x\oplus (1-\lambda_{n+1})Tx_n,\lambda_nx\oplus(1-\lambda_n)Tx_{n-1})\\
&\leq & d(\lambda_{n\!+\!1}x\oplus (1\!-\!\lambda_{n\!+\!1})Tx_n,\lambda_{n\!+\!1}x\oplus(1\!-\!\lambda_{n\!+\!1})Tx_{n\!-\!1})\!\\
&& + d(\lambda_{n+1}x\oplus(1-\lambda_{n+1})Tx_{n-1},\lambda_nx\oplus(1-\lambda_n)Tx_{n-1})\\
&\leq &  (1-\lambda_{n+1})d(Tx_n,Tx_{n-1})+ |\lambda_{n+1}-\lambda_n|d(x,Tx_{n-1})\\
&&\text{by (W4) and (W2)}\\
&\leq &  (1-\lambda_{n+1})d(x_n,x_{n-1})+|\lambda_{n+1}-\lambda_n|d(x,Tx_{n-1})
\eua
\bua
d(x_{n+1},x_n) &=& d(\lambda_{n+1}x\oplus (1-\lambda_{n+1})Tx_n,x_n) \\ 
&\leq & \lambda_{n}d(x_n,x)+(1-\lambda_{n+1})d(Tx_n,x_n)\,\text{~by (W1)}.
\eua
\item is an immediate consequence of (i).
\ee
\end{proof}

In the sequel  we give effective rates of asymptotic regularity for Halpern iterations, that is rates of convergence of the sequence $(d(x_n,Tx_n))$ towards $0$, where $(x_n)$ is the Halpern iteration starting with $x\in C$.

By inspecting the proof of Wittmann Theorem \ref{habil-wittmann-thm} (and its generalizations),  it is easy to see that the first step is to obtain asymptotic regularity, and that this can be done in  a much more general setting.   Thus, the following theorem, essentially contained in \cite{Wit92,Xu02,Xu04}, can be proved.

\begin{theorem}\label{habil-Halpern-ass-reg}
Let $C$ be a convex subset of a normed space $X$ and  $T:C\to C$ be nonexpansive. Assume that $(\lambda_n)_{n\geq 1}$ is a sequence in $[0,1]$  satisfies the following conditions
\beq
\limn \lambda_n =0, \quad \ds\sum_{n=1}^\infty \lambda_n \text{ is divergent}\quad \text{and } \ds \sum_{n=1}^\infty|\lambda_{n+1}-\lambda_n| \text{ is convergent}. \label{app-hyp-Halpern}
\eeq
Then $\ds\limn \|x_n-Tx_n\|=0$ for every $x\in C$  with the property that $(x_n)$ is bounded.
\end{theorem}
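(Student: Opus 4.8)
The plan is to reduce everything to the convergence $\|x_{n+1}-x_n\|\to 0$ of successive differences, and then to read off asymptotic regularity from the triangle-type estimates already collected in Lemma~\ref{lemma-Halpern-hyp} together with (C1). First I would fix $x\in C$ with $(x_n)$ bounded and, using this boundedness, pick $M>0$ with $M\geq\|x-Tx\|$ and $M\geq\|x_n-x\|$ for all $n\geq 1$. Since $X$ is a normed space, $W(x,y,\lambda)=(1-\lambda)x+\lambda y$, so (\ref{habil-prop-xylambda}) holds with equality and the normed-space instance of Lemma~\ref{lemma-Halpern-hyp}(ii) is exactly the computation displayed in the excerpt; in particular, by (\ref{H-ineq-Txn-xn}) and (\ref{H-ineq-xn+1-xn}), for all $n\geq 1$,
\[
\|Tx_n-x_n\|\leq\|x_{n+1}-x_n\|+2M\lambda_{n+1},\qquad
\|x_{n+1}-x_n\|\leq(1-\lambda_{n+1})\|x_n-x_{n-1}\|+2M|\lambda_{n+1}-\lambda_n|.
\]

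Next I would prove, as the key lemma, the elementary fact about real sequences: if $(a_n),(t_n),(b_n)$ are nonnegative with $t_n\in[0,1]$, $\sum_n t_n=\infty$, $\sum_n b_n<\infty$ and $a_n\leq(1-t_n)a_{n-1}+b_n$ for all $n\geq 1$, then $a_n\to 0$. This follows by iterating the recursion from an arbitrary index $m$ to get $a_n\leq\bigl(\prod_{k=m+1}^{n}(1-t_k)\bigr)a_m+\sum_{k=m+1}^{n}b_k$ (using $\prod(1-t_k)\leq 1$): given $\varepsilon>0$, choose $m$ so that $\sum_{k>m}b_k<\varepsilon/2$, and then, since $\sum_k t_k=\infty$ forces $\prod_{k>m}(1-t_k)=0$, choose $n$ large enough that $\bigl(\prod_{k=m+1}^{n}(1-t_k)\bigr)a_m<\varepsilon/2$. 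Applying this with $a_n=\|x_{n+1}-x_n\|$, $t_n=\lambda_{n+1}$ (so $\sum_n t_n=\sum_{n\geq 2}\lambda_n=\infty$ by (C2), using $\lambda_n\in[0,1]$) and $b_n=2M|\lambda_{n+1}-\lambda_n|$ (summable by (C3)) yields $\limn\|x_{n+1}-x_n\|=0$.

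Finally I would conclude: from the first displayed inequality, $\|Tx_n-x_n\|\leq\|x_{n+1}-x_n\|+2M\lambda_{n+1}$, and the right-hand side tends to $0$ by the previous paragraph together with $\limn\lambda_n=0$ from (C1); hence $\limn\|x_n-Tx_n\|=0$. The only genuinely non-routine ingredient is the real-sequence lemma, where the crux is the implication $\sum t_k=\infty\Rightarrow\prod(1-t_k)\to 0$ (e.g.\ via $1-t\leq e^{-t}$) combined with the $\varepsilon/2$ splitting of the tail of $\sum b_k$; everything else is bookkeeping with the inequalities of Lemma~\ref{lemma-Halpern-hyp}. I would also point out, with an eye to the later proof-mining sections, that this argument is fully effective: a rate of convergence for $\|x_{n+1}-x_n\|$ is obtained from a rate of divergence of $\sum\lambda_n$, a Cauchy modulus for $\sum|\lambda_{n+1}-\lambda_n|$ and the bound $M$, and then a rate of asymptotic regularity follows by combining this with a rate for $\lambda_n\to 0$.
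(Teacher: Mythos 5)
Your proof is correct and follows essentially the same route as the paper's (the argument underlying Theorem~\ref{habil-Halpern-main-thm}, whose bound $\Phi$ encodes exactly your three ingredients: the Cauchy modulus $\beta$ for the tail of $\sum|\lambda_{n+1}-\lambda_n|$, the rate of divergence $\theta$ making the product $\prod(1-\lambda_k)\le\exp(-\sum\lambda_k)$ small, and the rate $\alpha$ for $\lambda_n\to 0$ in the final step via (\ref{H-ineq-Txn-xn})). The real-sequence lemma you isolate, applied to $a_n=\|x_{n+1}-x_n\|$ via (\ref{H-ineq-xn+1-xn}), is precisely the standard Wittmann--Xu argument that the paper analyzes quantitatively.
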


Applying proof mining techniques, we obtained in \cite{Leu07a} a quantitative version of the above theorem, which provides for the first time effective rates of asymptotic regularity for the Halpern iterations. Moreover, for $\lambda_n=1/n$, we get an exponential (in $1/\eps$)  rate of asymptotic regularity.

In the sequel, we present generalizations of these quantitative results to $W$-hyperbolic spaces. Their proofs follow closely the proofs of the corresponding results from \cite{Leu07a}, thus we omit them.

\subsubsection{Main results}\label{habil-jucs-main-results}

Before stating our main theorem, let us recall some terminology. If $(a_n)_{n\geq 1}$ is a convergent sequence of real numbers, then a function $\gamma:(0,\infty)\to\N^*$ is called a {\em Cauchy modulus} of $(a_n)$ if 
\beq
\forall \eps>0\,\forall n\in\N^*\left(|a_{\gamma(\eps)+n}-a_{\gamma(\eps)}| < \eps\right)\label{def-mod-Cauchy}.
\eeq

\begin{theorem}\label{habil-Halpern-main-thm}(see \cite[Theorem 3]{Leu07a})\\
Let $C$ be a  convex subset of a W-hyperbolic space $(X,d,W)$ and  $T:C\to C$ be nonexpansive. 
Assume that 
\beq
\lim_{n\to\infty} \lambda_n =0, \quad \sum_{n=1}^\infty \lambda_n=\infty \quad \text{and }\sum_{n=1}^\infty|\lambda_{n+1}-\lambda_n| \text{ converges}. \label{habil-Halpern-lambda-n-hyp}
\eeq

Then $\ds\limn d(x_n,Tx_n)=0$ for every $x\in C$  with the property that $(x_n)$ is bounded.\\[0.2cm]
\noindent Furthermore, let $\alpha:(0,\infty)\to\N^*$ be a rate of convergence of $(\lambda_n)$, $\beta:(0,\infty)\to\N^*$ be  a Cauchy modulus of $s_n:=\ds\sum_{i=1}^n|\lambda_{i+1}-\lambda_i|$ and $\theta:\N^*\to\N^*$ be a rate of divergence of $\ds\sum_{n=1}^\infty \lambda_n$.

Then 
\bua
 \forall \eps\in(0,2) \forall n\ge \Phi(\alpha,\beta,\theta,M,\eps)\ \bigg(d(x_n,Tx_n)< \eps\bigg), \label{habil-Halpern-main-thm-hyperbolic-conclusion}
\eua
where $\ds\Phi(\alpha,\beta,\theta,M,\eps)=\max\left\{\theta\biggl(\beta\left(\frac\eps {8M}\right)+1+\left\lceil\ln\left(\frac{8M}\eps\right)\right\rceil\biggr),\,\,\alpha\left(\frac\eps {4M}\right)\right\},$ with $M\in\N^*$ such that $M\geq d(x,Tx), d(x_n,x)$ for all $n\ge 1$.\\
\end{theorem}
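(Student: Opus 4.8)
The plan is to make effective the proof of the non-quantitative Theorem~\ref{habil-Halpern-ass-reg} (in the form contained in \cite{Wit92,Xu02,Xu04}), with all the underlying geometric inequalities in the $W$-hyperbolic space already supplied by Lemma~\ref{lemma-Halpern-hyp}. Write $a_n := d(x_{n+1},x_n)$. The two facts I would start from are, for $n$ large enough, the recursion (\ref{H-ineq-xn+1-xn}),
\[
a_n \le (1-\lambda_{n+1})\,a_{n-1} + 2M\,|\lambda_{n+1}-\lambda_n|,
\]
and the estimate (\ref{H-ineq-Txn-xn}), $d(Tx_n,x_n) \le a_n + 2M\lambda_{n+1}$. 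Thus it suffices to (i) extract an explicit rate at which $a_n\to 0$ and (ii) absorb the term $2M\lambda_{n+1}$ using the rate $\alpha$ of convergence of $(\lambda_n)$ to $0$.

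Step (ii) is immediate: for $n+1\ge\alpha\!\left(\eps/(4M)\right)$ one has $\lambda_{n+1}<\eps/(4M)$, hence $2M\lambda_{n+1}<\eps/2$; this is the source of the second argument $\alpha(\eps/(4M))$ of $\Phi$. The heart of the proof is step (i). First I would set $N_0 := \beta\!\left(\eps/(8M)\right)$; since $s_n=\sum_{i=1}^n|\lambda_{i+1}-\lambda_i|$ is nondecreasing, the Cauchy-modulus property of $\beta$ gives $\sum_{i=N_0+1}^{\infty}|\lambda_{i+1}-\lambda_i|\le\eps/(8M)$. Unrolling the recursion from index $n$ down to $N_0$ yields
\[
a_n \le \Big(\prod_{k=N_0+1}^{n}(1-\lambda_{k+1})\Big)a_{N_0} \;+\; \sum_{j=N_0+1}^{n}\Big(\prod_{k=j+1}^{n}(1-\lambda_{k+1})\Big)2M\,|\lambda_{j+1}-\lambda_j|.
\]
Bounding each product factor by $1$, the second summand is at most $2M\sum_{i=N_0+1}^{\infty}|\lambda_{i+1}-\lambda_i|\le\eps/4$. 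For the first summand I would use $a_{N_0}\le d(x_{N_0+1},x)+d(x,x_{N_0})\le 2M$ together with $\prod_{k=N_0+1}^{n}(1-\lambda_{k+1})\le\exp\big(-\sum_{m=N_0+2}^{n+1}\lambda_m\big)$, and then force the exponential below $\eps/(8M)$ via the rate of divergence $\theta$: putting $p := N_0+1+\lceil\ln(8M/\eps)\rceil$, one has $\sum_{m=1}^{\theta(p)}\lambda_m\ge p$, while $\sum_{m=1}^{N_0+1}\lambda_m\le N_0+1$ because every $\lambda_m\le 1$, so $\sum_{m=N_0+2}^{\theta(p)}\lambda_m\ge\ln(8M/\eps)$. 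Hence for $n\ge\theta(p)-1$ the first summand is $\le 2M\cdot\eps/(8M)=\eps/4$, giving $a_n\le\eps/2$.

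Combining (i) and (ii): for all $n\ge\max\{\theta(p),\alpha(\eps/(4M))\}=\Phi(\alpha,\beta,\theta,M,\eps)$ we get $d(Tx_n,x_n)<\eps/2+\eps/2=\eps$. The qualitative assertion $\limn d(x_n,Tx_n)=0$ for bounded $(x_n)$ then follows at once, since a bound on $(x_n)$ supplies an admissible $M$ and the hypotheses (\ref{habil-Halpern-lambda-n-hyp}) guarantee the existence of $\alpha$, $\beta$ and (ineffectively) $\theta$. I expect the main obstacle to be precisely the bookkeeping in step (i): splitting the telescoped convolution sum so that the "tail of $s_n$" part and the "vanishing product" part are controlled independently, and cleanly absorbing the index offset $N_0$ into the argument of $\theta$ through the crude estimate $\sum_{m=1}^{N_0+1}\lambda_m\le N_0+1$.
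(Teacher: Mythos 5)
Your proposal is correct and follows essentially the same route as the paper's (omitted, deferred to \cite{Leu07a}) proof: unroll the recursion (\ref{H-ineq-xn+1-xn}) from $n$ down to $N_0=\beta(\eps/(8M))$, control the convolution tail by the Cauchy modulus $\beta$, kill the product $\prod(1-\lambda_{k+1})$ via $\exp(-\sum\lambda_m)$ and the divergence rate $\theta$ shifted by $N_0+1$, and then absorb $2M\lambda_{n+1}$ in (\ref{H-ineq-Txn-xn}) using $\alpha$. The bookkeeping (the crude bound $\sum_{m=1}^{N_0+1}\lambda_m\le N_0+1$ and the resulting argument $\beta(\eps/(8M))+1+\lceil\ln(8M/\eps)\rceil$ of $\theta$) reproduces exactly the stated $\Phi$.
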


If $C$ is bounded with diameter $d_C$, we can take $M:=d_C$ in the above theorem. 

\bcor\label{habil-Halpern-hyperbolic-C-bounded}
Let $(X,d,W), (\lambda_n), C, T,\alpha, \beta,\theta$  be as in the hypothesis of Theorem \ref{habil-Halpern-main-thm}. Assume moreover that $C$ is bounded with diameter $d_C$.

Then  $T$ is $\lambda_n$-asymptotically regular and for all $x\in C$,
\bua
 \forall \eps\in(0,2) \forall n\ge \Phi(\alpha,\beta,\theta,d_C,\eps)\ \bigg(d(x_n,Tx_n)< \eps\bigg), 
\eua
where $\Phi(\alpha,\beta,\theta,d_C,\eps)$ is defined as in Theorem \ref{habil-Halpern-main-thm} by replacing $M$ with $d_C$.
\ecor

The rate of asymptotic regularity can be simplified for $(\lambda_n)$ nonincreasing.

\begin{corollary}\label{habil-Halpern-hyperbolic-lambda-nonincreasing}
Let $(X,d,W), C, T$ be as in the hypothesis of Theorem \ref{habil-Halpern-main-thm}. 
Assume that $(\lambda_n)_{n\geq 1}$ is a {\em nonincreasing} sequence in $[0,1]$  such that $\ds\lim_{n\to\infty} \lambda_n =0$ and $\ds\sum_{n=1}^\infty \lambda_n$ is divergent.

Then $\ds\limn d(x_n,Tx_n)=0$ for every $x\in C$  with the property that $(x_n)$ is bounded.\\[0.2cm]
Furthermore, if $\alpha:(0,\infty)\to\N^*$ is a rate of convergence of $(\lambda_n)$ and $\theta:\N^*\to\N^*$ is a rate of divergence of $\ds\sum_{n=1}^\infty \lambda_n$, then
\bua
 \forall \eps\in(0,2)\forall n\ge \Psi(\alpha,\theta,M,\eps)\ \bigg(d(x_n,Tx_n)< \eps\bigg), 
\eua
where $\ds\Psi(\alpha,\theta,M,\eps)=\max\left\{\theta\left(\alpha\left(\frac\eps {8M}\right)+1+\left\lceil\ln\left(\frac{8M}\eps\right)\right\rceil\right),\,\,\alpha\left(\frac\eps {4M}\right)\right\},$ with $M\in\N^*$ such that $M\geq d(x,Tx), d(x_n,x)$ for all $n\ge 1$.
\end{corollary}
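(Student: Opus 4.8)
The plan is to obtain Corollary \ref{habil-Halpern-hyperbolic-lambda-nonincreasing} as a direct specialization of Theorem \ref{habil-Halpern-main-thm}: the monotonicity of $(\lambda_n)$ makes the variation sequence telescoping, which both verifies the remaining hypothesis of that theorem and lets one reuse a rate of convergence of $(\lambda_n)$ as a Cauchy modulus for the variation sequence.

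First I would check the hypotheses (\ref{habil-Halpern-lambda-n-hyp}). Since $(\lambda_n)_{n\ge 1}$ is nonincreasing and nonnegative, $|\lambda_{i+1}-\lambda_i|=\lambda_i-\lambda_{i+1}$, so the partial sums telescope:
\[
s_n:=\sum_{i=1}^{n}|\lambda_{i+1}-\lambda_i|=\lambda_1-\lambda_{n+1}.
\]
Because $\limn\lambda_n=0$, the sequence $(s_n)$ converges (to $\lambda_1$), so $\sum_{n\ge 1}|\lambda_{n+1}-\lambda_n|$ is convergent; combined with the standing assumptions $\limn\lambda_n=0$ and $\sum_{n\ge 1}\lambda_n=\infty$, this gives (\ref{habil-Halpern-lambda-n-hyp}). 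Hence Theorem \ref{habil-Halpern-main-thm} applies and yields $\limn d(x_n,Tx_n)=0$ for each $x\in C$ with $(x_n)$ bounded, which is the qualitative part of the corollary.

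For the explicit rate, the key step is to produce a Cauchy modulus $\beta$ of $(s_n)$ from the rate of convergence $\alpha$ of $(\lambda_n)$. By the telescoping identity, for all $m\ge 1$ and $k\ge 1$,
\[
|s_{m+k}-s_m|=\lambda_{m+1}-\lambda_{m+k+1}\le\lambda_{m+1},
\]
using again that $(\lambda_n)$ is nonincreasing and nonnegative. So whenever $m\ge\alpha(\eps)$ we have $\lambda_{m+1}<\eps$ (as $m+1\ge\alpha(\eps)$), whence $|s_{m+k}-s_m|<\eps$ for every $k\ge 1$; that is, $\beta:=\alpha$ is a Cauchy modulus of $(s_n)$ in the sense of (\ref{def-mod-Cauchy}). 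Plugging $\beta=\alpha$ into the bound $\Phi(\alpha,\beta,\theta,M,\eps)$ delivered by Theorem \ref{habil-Halpern-main-thm}, and keeping the same $M\in\N^*$ with $M\ge d(x,Tx)$ and $M\ge d(x_n,x)$ for all $n\ge 1$, produces
\[
\Phi(\alpha,\alpha,\theta,M,\eps)=\max\left\{\theta\!\left(\alpha\!\left(\frac{\eps}{8M}\right)+1+\left\lceil\ln\!\left(\frac{8M}{\eps}\right)\right\rceil\right),\ \alpha\!\left(\frac{\eps}{4M}\right)\right\}=\Psi(\alpha,\theta,M,\eps),
\]
which is exactly the asserted rate of asymptotic regularity for $\eps\in(0,2)$.

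Everything else is routine; no structure of the $W$-hyperbolic space or of $T$ is invoked beyond what Theorem \ref{habil-Halpern-main-thm} already uses. The only point deserving care is that the single function $\beta=\alpha$ must work uniformly in the increment $k$, and this is precisely where the monotonicity of $(\lambda_n)$ is exploited, via the estimate $\lambda_{m+1}-\lambda_{m+k+1}\le\lambda_{m+1}$.
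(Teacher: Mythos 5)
Your proposal is correct and is essentially the paper's own argument: the paper likewise observes that for nonincreasing $(\lambda_n)$ the series $\sum_{n\ge 1}|\lambda_{n+1}-\lambda_n|$ telescopes and converges with Cauchy modulus $\alpha$, and then applies Theorem \ref{habil-Halpern-main-thm} with $\beta:=\alpha$. Your verification that $\beta=\alpha$ satisfies (\ref{def-mod-Cauchy}) via $|s_{m+k}-s_m|=\lambda_{m+1}-\lambda_{m+k+1}\le\lambda_{m+1}<\eps$ is exactly the detail the paper leaves implicit.
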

\begin{proof}
Remark that $(\lambda_n)$ nonincreasing implies that $\ds \sum_{n=1}^\infty|\lambda_{n+1}-\lambda_n|$ converges with Cauchy modulus $\alpha$. Apply Theorem \ref{habil-Halpern-main-thm} with $\beta:=\alpha$.
\end{proof}

Finally, by taking $\lambda_n=1/n$, we get an exponential (in $1/\eps$) rate of asymptotic regularity.

\bcor\label{habil-Halpern-exponential}
Let $C$ be a  convex subset of a W-hyperbolic space $(X,d,W)$ and  $T:C\to C$ be nonexpansive.  Assume that $\ds\lambda_n=\frac 1n$ for all $n\geq 1$.

Then  $\ds\limn d(x_n,Tx_n)=0$ for all $x\in C$ and, moreover, 
\bua
 \forall \eps\in(0,2)\forall n\ge \Phi(d_C,\eps)\ \bigg( d(x_n,Tx_n)< \eps\bigg), 
\eua
where $\ds\Phi(d_C,\eps)=\exp\left(\ln 4\cdot\left(\frac{16d_C}\eps+3\right)\right)$.
\ecor

\subsection{Rates of asymptotic regularity for Ishikawa iterations}\label{habil-app-fpt-Ishikawa}

\noindent Let $C$  be a convex subset of a  W-hyperbolic space $(X,d,W)$ and $T:C\to C$ be nonexpansive. 

As in the case of normed spaces, we can define the {\em Ishikawa iteration} starting with $x\in C$ by 
\beq
x_0:=x, \quad x_{n+1}=(1-\lambda_n)x_n\oplus \lambda_nT((1-s_n)x_n\oplus s_nTx_n),
\eeq
where $(\lambda_n),(s_n)$ are sequences in $[0,1]$. By letting $s_n=0$ for all $n\in\N$, we get the Krasnoselski-Mann iteration as  a special case.

We shall use the following notations 
\[y_n := (1-s_n)x_n\oplus s_nTx_n\]
and
\[T_n:C\to C, \quad T_n(x)= (1-\lambda_n)x\oplus \lambda_nT((1-s_n)x\oplus s_nTx).\]
Then 
\[x_{n+1}=(1-\lambda_n)x_n\oplus \lambda_nTy_n=T_nx_n\]
and it is easy to see that $Fix(T)\se Fix(T_n)$ for all $n\in\N$.

The following lemma collects some basic properties of Ishikawa iterations; we refer to \cite{Leu08} for the proofs.

\blem\label{habil-Ishikawa-useful}
\be
\item $d(x_{n+1},Tx_{n+1})\leq (1+2s_n(1-\lambda_n))d(x_n,Tx_n)$ for all $n\in\N$;
\item $T_n$ is nonexpansive for all $n\in\N$;
\item\label{habil-Ishikawa-useful-p} For all  $p\in Fix(T)$, the sequence $(d(x_n,p))$ is nonincreasing and for all $n\in\N$, 
\[d(y_n,p)\leq d(x_n,p)\quad \text{and}\quad d(x_n,Ty_n),d(x_n,Tx_n)\leq 2d(x_n,p).\]
\ee
\elem

We consider the important problem of asymptotic regularity, this time associated with the Ishikawa iterations:
\[\lim_{n\to\infty}d(x_n,Tx_n)=0.\]

Our point of departure is Theorem \ref{intro-Ishikawa-as-reg}. We recall it here.

\bthm
Let $X$ be a uniformly convex Banach space or a $CAT(0)$ space, $C\se X$ a  bounded closed convex subset and $T:C\to C$ be nonexpansive.  Assume that $\ds\sum_{n=0}^\infty\lambda_n(1-\lambda_n)$ diverges, $\limsup_n s_n<1$ and $\ds\sum_{n=0}^\infty s_n(1-\lambda_n)$ converges.

Then $\limn d(x_n,Tx_n)=0$ for all $x\in C$.
\ethm

Using proof mining methods, we obtained \cite{Leu08} a quantitative version (Theorem \ref{habil-Ishikawa-quant-as-reg}) of a two-fold generalization of the above result:
\begin{itemize}
\item[-] firstly, we  consider $UCW$-hyperbolic spaces;
\item[-] secondly, we assume that $Fix(T)\ne\emptyset$ instead of assuming the boundedness of $C$.
\end{itemize}

The idea is to combine methods used in \cite{Leu07} (see Subsection \ref{habil-app-fpt-CAT0}) to obtain effective rates of asymptotic regularity for Krasnoselski-Mann iterates with the ones used in \cite{Leu07a} (see Subsection \ref{habil-app-fpt-Halpern})  to get rates of asymptotic regularity for Halpern iterates. 

In this way, we provided for the first time (even for the normed case) effective rates of asymptotic regularity for the Ishikawa iterates, i.e. rates of convergence of $(d(x_n,Tx_n))$ towards $0$. 

For bounded $C$ (Corollary \ref{habil-Ishikawa-bounded-C}), the rate of asymptotic regularity is uniform in  the nonexpansive mapping $T$ and the starting point $x\in C$ of the iteration, and it depends  on $C$ only via its diameter and on the space $X$ only via a monotone modulus of uniform convexity.

\subsubsection{Main resulta}

\bprop\label{habil-Ishikawa-liminf-xn-Tyn=0}\cite{Leu08}
Let $C$ be a convex subset of a $UCW$-hyperbolic space $(X,d,W)$ and $T:C\rightarrow C$ nonexpansive with $Fix(T)\ne\emptyset$. Assume that $\ds\sum_{n=0}^\infty\lambda_n(1-\lambda_n)$ is divergent.

Then $\liminf_n d(x_n,Ty_n)=0$ for all $x\in C$.\\[0.1cm]
\noindent Furthermore, if $\eta$ is a monotone  modulus of uniform convexity and $\theta :\N\to\N$ is a rate of divergence for  $\ds\sum_{n=0}^\infty\lambda_n(1-\lambda_n)$, then 

for all  $x\in C,\eps>0,k\in\N$ there exists $N\in\N$ satisfying 
\beq 
k\leq N\leq h(\varepsilon,k,\eta,b,\theta) \text{~~and~~} d(x_N,Ty_N)<\varepsilon,\label{habil-Ishikawa-quant-liminf-d-xn-Tyn}
\eeq
 where
\[h(\varepsilon,k,\eta,b,\theta)=\left\{\begin{array}{ll}\displaystyle \theta\left(\left\lceil\frac{b+1}{\varepsilon\cdot\eta\left(b,\displaystyle\frac{\varepsilon}{b}\right)}\right\rceil+k\right)  & \text{for~ } \varepsilon \le 2b,\\
k & \text{otherwise,}
\end{array}\right.
\]
with $b>0$ such that $b\ge d(x,p)$ for some $p\in Fix(T)$.
\eprop

As an immediate consequence of the above proposition, we get a rate of asymptotic regularity for the Krasnoselski-Mann iterates that is basically the same with the one obtained in Theorem \ref{habil-main-Groetsch-thm} .

\bcor
Let $(X,d,W),\eta,C,T,b,(\lambda_n),\theta$ be as in the hypotheses of Proposition 
\ref{habil-Ishikawa-liminf-xn-Tyn=0} and assume that $(x_n)$ is the Krasnoselski-Mann iteration starting with $x$, defined by (\ref{app-KM-lambda-n-def-hyp}). 

Then $\limn d(x_n,Tx_n)=0$ for all $x\in C$ and  
\beq
\forall \eps>0\, \forall n\ge \Phi(\eps,\eta,b,\theta)\bigg(d(x_n,Tx_n)<\eps\bigg), \label{habil-Ishikawa-KM-rate-as-reg}
\eeq
where $\Phi(\varepsilon,\eta,b,\theta)=h(\varepsilon,0,\eta,b,\theta)$,
with $h$ defined as above.
\ecor

\bprop\label{habil-Ishikawa-liminf-xn-Txn=0}\cite{Leu08}
In the hypotheses of Proposition \ref{habil-Ishikawa-liminf-xn-Tyn=0}, assume  that $\limsup_n s_n<1$.

 Then $\liminf_n d(x_n,Tx_n)=0$ for all $x\in C$.\\[0.1cm]
\noindent Furthermore, if $L,N_0\in\N$ are such that $\ds s_n\leq 1-\frac1L$ for all $n\geq N_0$, then 

for all  $x\in C,\eps>0,k\in\N$ there exists $N\in\N$ such that 
\beq
k\leq N\leq \Psi(\varepsilon,k,\eta,b,\theta,L,N_0) \text{~~and~~} d(x_N,Tx_N)<\varepsilon, \label{habil-quant-Ishikawa-liminf-d-xn-Txn}
\eeq
where $\ds \Psi(\varepsilon,k,\eta,b,\theta,L,N_0)=h\left(\frac{\eps}L, k+N_0,\eta,b,\theta\right)$, with $h$ defined as in Proposition \ref{habil-Ishikawa-liminf-xn-Tyn=0}.
\eprop

As a corollary, we obtain an approximate fixed point bound for the nonexpansive mapping $T$.

\bcor\label{habil-Ishikawa-AFP-bound}
In the hypotheses of Proposition \ref{habil-Ishikawa-liminf-xn-Txn=0},
\beq
\forall \eps>0\, \exists N\le \Phi(\varepsilon,\eta,b,\theta,L,N_0)\bigg( d(x_N,Tx_N)<\varepsilon\bigg),
\eeq
where $\Phi(\eps,\eta,b,\theta,L,N_0)=\Psi(\eps,0,\eta,b,\theta,L,N_0)$,  with $\Psi$ defined as above.
\ecor

The following theorem is the main result of \cite{Leu08}.

\bthm\label{habil-Ishikawa-quant-as-reg}
Let $C$ be a convex subset of a $UCW$-hyperbolic space $(X,d,W)$ and $T:C\rightarrow C$ nonexpansive with $Fix(T)\ne\emptyset$. Assume that $\ds\sum_{n=0}^\infty\lambda_n(1-\lambda_n)$ diverges, $\limsup_n s_n<1$ and $\ds\sum_{n=0}^\infty s_n(1-\lambda_n)$ converges. 

Then $\limn d(x_n,Tx_n)=0$ for all $x\in C$.\\[0.2cm]
Furthermore, if $\eta$ is a monotone  modulus of uniform convexity, $\theta$ is a rate of divergence for $\ds\sum_{n=0}^\infty\lambda_n(1-\lambda_n)$, $L,N_0$ are such that $\ds s_n\leq 1-\frac1L$ for all $n\geq N_0$  and $\gamma$ is a Cauchy modulus for $\ds\sum_{n=0}^\infty s_n(1-\lambda_n)$, then for all  $x\in C$,
\beq
\forall \eps>0\forall n\ge \Phi(\eps,\eta,b,\theta,L,N_0,\gamma)\bigg(d(x_n,Tx_n)<\eps\bigg),
\eeq
where 
\[\Phi(\varepsilon,\eta,b,\theta,L,N_0,\gamma)= \left\{\begin{array}{ll}\!\!\displaystyle \theta\left(\left\lceil\frac{2L(b+1)}{\varepsilon\cdot\eta\left(b,\displaystyle\frac{\varepsilon}{2Lb}\right)}\right\rceil+\gamma\left(\frac{\eps}{8b}\right)+N_0+1\right)  & \!\!\!\text{for~} \eps\le 4Lb,\\
\!\!\ds \gamma\left(\frac{\eps}{8b}\right)+N_0+1 & \!\!\!\text{otherwise,}
\end{array}\right.
\]
with $b>0$ such that $b\ge d(x,p)$ for some $p\in Fix(T)$.
\ethm

\begin{fact}\label{habil-Ishikawa-quant-as-reg-tilde-eta}
In the hypotheses of Theorem \ref{habil-Ishikawa-quant-as-reg}, assume, moreover, that  $\eta(r,\varepsilon)=\varepsilon\cdot\tilde{\eta}(r,\varepsilon)$ such that $\tilde{\eta}$ increases with $\varepsilon$ (for a fixed $r$). Then the bound $\Phi(\varepsilon,\eta,b,\theta,L,N_0,\gamma)$ can be replaced  for $\eps\le 4Lb$ with
\[ \tilde{\Phi}(\varepsilon,\eta,b,\theta,L,N_0,\gamma)=\theta\left(\left\lceil\frac{L(b+1)}{\varepsilon\cdot\tilde{\eta}\left(b,\displaystyle\frac{\varepsilon}{2Lb}\right)}\right\rceil+\gamma\left(\frac{\eps}{8b}\right)+N_0+1\right).\] 
\end{fact}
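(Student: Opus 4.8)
The plan is to observe that the modulus of uniform convexity enters the bound $\Phi(\varepsilon,\eta,b,\theta,L,N_0,\gamma)$ of Theorem~\ref{habil-Ishikawa-quant-as-reg} only through the auxiliary function $h$ of the quantitative $\liminf$-estimate in Proposition~\ref{habil-Ishikawa-liminf-xn-Tyn=0}; the passage to Proposition~\ref{habil-Ishikawa-liminf-xn-Txn=0} (the rescaling $\varepsilon\mapsto\varepsilon/L$, $k\mapsto k+N_0$ producing $\Psi$) and the final step in the proof of Theorem~\ref{habil-Ishikawa-quant-as-reg} (adding $\gamma(\varepsilon/(8b))+N_0+1$ inside $\theta$ and using Lemma~\ref{habil-Ishikawa-useful}.(i) together with the convergence of $\sum s_n(1-\lambda_n)$ to upgrade $\liminf$ to $\lim$) do not involve $\eta$ at all. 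Hence it suffices to prove a sharper version of Proposition~\ref{habil-Ishikawa-liminf-xn-Tyn=0} under the extra hypothesis $\eta(r,\varepsilon)=\varepsilon\,\tilde\eta(r,\varepsilon)$ and then rerun the remaining bookkeeping of the proof of Theorem~\ref{habil-Ishikawa-quant-as-reg} unchanged. This is the same mechanism by which the bound $\Phi$ of Theorem~\ref{habil-main-Groetsch-thm} can be replaced by $\tilde\Phi$.

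For the sharpening, recall the core step in the proof of Proposition~\ref{habil-Ishikawa-liminf-xn-Tyn=0}: fixing $p\in Fix(T)$ with $b\ge d(x,p)$ one has $r_n:=d(x_n,p)\le b$ for all $n$ by Lemma~\ref{habil-Ishikawa-useful}.(\ref{habil-Ishikawa-useful-p}); if $d(x_n,Ty_n)\ge\varepsilon$ then $r_n\ge\varepsilon/2$ (since $d(x_n,Ty_n)\le 2r_n$) and $r_n>0$, and applying Lemma~\ref{UCW-eta-prop-1}.(\ref{UCW-Groetsch-eta}) with center $a=p$, points $x_n,Ty_n$, radius $r_n$, relative separation $\psi_n:=\varepsilon/r_n\in[\varepsilon/b,2]$ and parameter $\lambda_n$ gives
\[ d(x_{n+1},p)\le\bigl(1-2\lambda_n(1-\lambda_n)\,\eta(r_n,\psi_n)\bigr)\,r_n . \]
In the original argument one bounds $\eta(r_n,\psi_n)\ge\eta(b,\varepsilon/b)$ by monotonicity and then uses $r_n\ge\varepsilon/2$, producing a per-step decrease $\ge\lambda_n(1-\lambda_n)\,\varepsilon\,\eta(b,\varepsilon/b)$, which leads to $h$. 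Under the hypothesis $\eta(r,\varepsilon)=\varepsilon\,\tilde\eta(r,\varepsilon)$ we instead keep the factor $\psi_n$ explicit: $\tilde\eta$ decreases in $r$ (inherited from $\eta$ being monotone) and increases in $\varepsilon$ (the new assumption), so $\eta(r_n,\psi_n)=\psi_n\,\tilde\eta(r_n,\psi_n)\ge\frac{\varepsilon}{r_n}\,\tilde\eta(b,\varepsilon/b)$, whence
\[ d(x_{n+1},p)\le d(x_n,p)-2\lambda_n(1-\lambda_n)\,\varepsilon\,\tilde\eta(b,\varepsilon/b). \]
The per-step decrease is now $\ge 2\lambda_n(1-\lambda_n)\,\varepsilon\,\tilde\eta(b,\varepsilon/b)$, larger than the original one by the factor $\frac{2b}{\varepsilon}\ge 1$ in the regime $\varepsilon\le 2b$ where $h$ is nontrivial. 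Summing against a rate of divergence $\theta$ of $\sum\lambda_n(1-\lambda_n)$ exactly as before shows that $d(x_n,Ty_n)\ge\varepsilon$ can persist over at most $\lceil\frac{b+1}{2\varepsilon\,\tilde\eta(b,\varepsilon/b)}\rceil$ units of $\lambda_n(1-\lambda_n)$-mass; so Proposition~\ref{habil-Ishikawa-liminf-xn-Tyn=0} holds with $h$ replaced, for $\varepsilon\le 2b$, by $\tilde h(\varepsilon,k,\eta,b,\theta)=\theta\bigl(\lceil\frac{b+1}{2\varepsilon\,\tilde\eta(b,\varepsilon/b)}\rceil+k\bigr)$.

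Finally I would feed $\tilde h$ through the (otherwise verbatim) proofs of Proposition~\ref{habil-Ishikawa-liminf-xn-Txn=0} and of Theorem~\ref{habil-Ishikawa-quant-as-reg}. Using $\eta(b,\varepsilon/(2Lb))=\frac{\varepsilon}{2Lb}\,\tilde\eta(b,\varepsilon/(2Lb))$, the first summand in the argument of $\theta$ changes from $\lceil\frac{2L(b+1)}{\varepsilon\,\eta(b,\varepsilon/(2Lb))}\rceil=\lceil\frac{4L^2 b(b+1)}{\varepsilon^2\,\tilde\eta(b,\varepsilon/(2Lb))}\rceil$ to $\lceil\frac{L(b+1)}{\varepsilon\,\tilde\eta(b,\varepsilon/(2Lb))}\rceil$, the term $\gamma(\varepsilon/(8b))+N_0+1$ being untouched, which is exactly the asserted $\tilde\Phi$; since $\frac{4Lb}{\varepsilon}\ge 1$ for $\varepsilon\le 4Lb$, the new value never exceeds the old one, so $\tilde\Phi$ is still a valid rate of asymptotic regularity. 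I expect the only genuinely delicate point, and what I would verify most carefully, to be the bookkeeping of the monotonicity directions of $\tilde\eta$ together with the two successive rescalings of $\varepsilon$ — by $L$ when passing from the $Ty_n$-estimate to the $Tx_n$-estimate, and by a further factor $2$ when upgrading $\liminf$ to $\lim$ — so that the argument of $\tilde\eta$ comes out as $\varepsilon/(2Lb)$ precisely; beyond the one-line sharpening of the displayed inequality no new idea is required.
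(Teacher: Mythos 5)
Your proof is correct and follows exactly the intended route: you keep the relative separation $\varepsilon/r_n$ explicit in the uniform-convexity step (writing $\eta(r_n,\varepsilon/r_n)=\tfrac{\varepsilon}{r_n}\tilde\eta(r_n,\varepsilon/r_n)\ge\tfrac{\varepsilon}{r_n}\tilde\eta(b,\varepsilon/b)$ via the two monotonicities, instead of discarding the factor through $r_n\ge\varepsilon/2$), and then push the sharpened $h$ through the unchanged rescalings $\varepsilon\mapsto\varepsilon/L$ and $\varepsilon\mapsto\varepsilon/2$, which is precisely the mechanism behind the analogous $\tilde\eta$-refinement of Theorem \ref{habil-main-Groetsch-thm}, and your arithmetic lands exactly on the stated $\tilde{\Phi}$. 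One cosmetic caveat: your closing inference (``the new value never exceeds the old one, so $\tilde{\Phi}$ is still a valid rate'') is backwards if read literally, since a smaller bound is not automatically valid — but validity is already supplied by your sharpened per-step estimate, so nothing substantive is missing.
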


For bounded $C$, we get an  effective rate of asymptotic regularity which depends on the error $\varepsilon$, on the modulus of uniform convexity $\eta$,  on the diameter $d_C$ of $C$, on $(\lambda_n), (s_n)$ via $\theta,L,N_0,\gamma$, but does not depend  on the nonexpansive mapping $T$, the starting point $x\in C$ of the iteration or other data related with $C$ and $X$.

\begin{corollary}\label{habil-Ishikawa-bounded-C}
 Let $(X,d,W)$ be a complete $UCW$-hyperbolic space, $C\se X$  a {\em bounded} convex closed subset with diameter $d_C$ and $T:C\rightarrow C$ nonexpansive.\\
Assume that $\eta,(\lambda_n),(s_n),\theta, L,N_0,\gamma$ are as in the hypotheses of Theorem \ref{habil-Ishikawa-quant-as-reg}. 

Then $\limn d(x_n,Tx_n)=0$ for all $x\in C$ and, moreover,
\[\forall \varepsilon >0\,\forall n\ge \Phi(\eps,\eta,d_C,\theta,L,N_0,\gamma)\,
\bigg(d(x_n,Tx_n) <\varepsilon\bigg), \]
where $\Phi(\eps,\eta,d_C,\theta,L,N_0,\gamma)$ is defined as in Theorem \ref{habil-Ishikawa-quant-as-reg} by replacing $b$ with $d_C$.
\ecor
\begin{proof}
We can apply  Corollary \ref{UCW-BGK}, the generalization of Browder-Goehde-Kirk Theorem to complete $UCW$-hyperbolic spaces, to get that $Fix(T)\ne\emptyset$. Moreover, $d(x,p)\le d_C$ for any $x\in C$ and any $p\in Fix(T)$, hence we can take $b:=d_C$ in Theorem \ref{habil-Ishikawa-quant-as-reg}. 
\end{proof}

The rate of asymptotic regularity can be further simplified for constant $\lambda_n=\lambda\in(0,1)$.

\begin{corollary}\label{habil-Ishikawa-bounded-C-constant-lambda}
 Let $(X,d,W),\eta,C,d_C,T$ be as in the hypotheses of Corollary \ref{habil-Ishikawa-bounded-C}.
Assume that $\lambda_n=\lambda\in(0,1)$ for all $n\in\N$. \\
Furthermore, let $L,N_0$ be such that $\ds s_n\leq 1-\frac1L$ for all $n\geq N_0$  and assume that the series  $\ds\sum_{n=0}^\infty s_n$ converges  with Cauchy modulus $\delta$. 

Then for all $x\in C$, 
\beq
\forall \eps>0\forall n\ge \Phi(\eps,\eta,d_C,\lambda,L,N_0,\delta)\bigg(d(x_n,Tx_n)<\eps\bigg),
\eeq
where 
\[\Phi(\varepsilon,\eta,d_C,\lambda,L,N_0,\delta)= \left\{\begin{array}{ll}\!\displaystyle \left\lceil\frac{1}{\lambda(1-\lambda)}\cdot \frac{2L(d_C+1)}{\varepsilon\cdot\eta\left(d_C,\displaystyle\frac{\varepsilon}{2Ld_C}\right)}\right\rceil+M & \!\!\!\text{for~ } \varepsilon \le 4Ld_C,\\
\! M & \!\!\!\text{otherwise,}
\end{array}\right.
\]
with $\ds M=\delta\left(\frac{\eps}{8d_C(1-\lambda)}\right)+N_0+1$.

Moreover, if $\eta(r,\varepsilon)$ can be written as $\eta(r,\varepsilon)=\varepsilon\cdot\tilde{\eta}(r,\varepsilon)$ such that $\tilde{\eta}$ increases with $\varepsilon$ (for a fixed $r$),  then the bound $\Phi(\varepsilon,\eta,d_C,\lambda,L,N_0,\delta)$ can be replaced  for $\eps\le 4Ld_C$ with
\[
\Phi(\varepsilon,\eta,d_C,\lambda,L,N_0,\delta)=  \left\lceil\frac{1}{\lambda(1-\lambda)}\cdot \frac{L(d_C+1)}{\varepsilon\cdot\tilde{\eta}\left(d_C,\displaystyle\frac{\varepsilon}{2Ld_C}\right)}\right\rceil+M.
\]
\ecor

As we have already seen, $CAT(0)$ spaces are $UCW$-hyperbolic spaces with a modulus of uniform convexity $\ds \eta(r,\varepsilon)=\frac{\varepsilon^2}{8}$, which has the form required in Remark \ref{habil-Ishikawa-quant-as-reg-tilde-eta}. Thus, the above result can be applied to $CAT(0)$ spaces. 

\begin{corollary}\label{habil-Ishikawa-CAT0-constant-lambda}
Let $X$ be a $CAT(0)$ space, $C\se X$  a {\em bounded} convex closed subset with diameter $d_C$ and $T:C\rightarrow C$ nonexpansive. Assume that $\lambda_n=\lambda\in(0,1)$ for all $n\in\N$ and $L,N_0,(s_n),\delta$ are as in the hypotheses of Corollary \ref{habil-Ishikawa-bounded-C-constant-lambda}

Then $\limn d(x_n,Tx_n)=0$ for all $x\in C$ and, moreover
\beq
\forall \eps>0\forall n\ge \Phi(\eps,d_C,\lambda,L,N_0,\delta)\bigg(d(x_n,Tx_n)<\eps\bigg),
\eeq
where 
\[\Phi(\varepsilon,d_C,\lambda,L,N_0,\delta)= \left\{\begin{array}{ll}
\left\lceil\ds \frac{D}{\eps^2}\right\rceil+M,
& \text{for~ } \varepsilon \le 4Ld_C,\\
M & \text{otherwise,}
\end{array}\right.
\]
with $\ds M=\delta\left(\frac{\eps}{8d_C(1-\lambda)}\right)+N_0+1,\,\, D=\ds\frac{16L^2d_C(d_C+1)}{\lambda(1-\lambda)}$.
\ecor

\subsection{Asymptotically nonexpansive mappings in $UCW$-spaces}\label{habil-app-fpt-as-ne}

In this subsection, we present results on fixed point theory of asymptotically nonexpansive mappings in the very general setting of $UCW$-spaces. These results were obtained by Kohlenbach and the author in  \cite{KohLeu08a} .

In the following, $(X,d,W)$ is a $UCW$-hyperbolic space and $C\se X$ a convex subset of $X$. Let us recall that a mapping $T:C\to C$ is said to be asymptotically nonexpansive with sequence $(k_n)$ in $[0,\infty)$ if $\ds\limn k_n =0$ and 
\[
d(T^n x,T^ny) \leq (1+k_n)d(x,y) \quad  \text{for all }n\in\N, x,y\in C.
 \] 

The first main result is a generalization  to $UCW$-spaces of Goebel-Kirk Theorem \ref{habil-as-ne-FPP-ucBanach} and Kirk Theorem \ref{habil-as-ne-FPP-CAT0}.

\bthm\label{habil-FPP-ass-ne-UCW}\cite{KohLeu08a}
Closed convex and bounded subsets of complete $UCW$-hyperbolic spaces have the FPP for asymptotically nonexpansive mappings.
\ethm

\noindent Our proof generalizes Goebel and Kirk's proof of Theorem \ref{habil-as-ne-FPP-ucBanach} and, as a consequence, we obtain also an elementary proof of Theorem \ref{habil-as-ne-FPP-CAT0}. 

In fact, as it was already pointed out for uniformly convex normed spaces in \cite{KohLam04}, the proof of the FPP can be transformed into an elementary proof of the AFPP,  which does not need the completeness of $X$ or the closedness of $C$.

\bprop\label{habil-AFPP-ass-ne-uchyp}
Bounded  convex subsets of $UCW$-hyperbolic spaces have the AFPP for asymptotically nonexpansive mappings.
\eprop

The main part of this subsection is devoted  to getting a quantitative version  of an asymptotic regularity theorem for the Krasnoselski-Mann iterations of asymptotically nonexpansive mappings.

As in the case of normed spaces, the {\em Krasnoselski-Mann iteration} starting from $x\in C$ is defined by:   
\begin{equation}
x_0:=x, \quad x_{n+1}:=(1-\lambda_n)x_n \oplus\lambda_n T^nx_n, \label{habil-ass-ne-KM-lambda-n-def-hyp}
\end{equation}
where $(\lambda_n)$ is a sequence in $[0,1]$.

We apply proof mining techniques to the following generalization to $UCW$-hyperbolic spaces of Theorem \ref{intro-as-ne-as-reg}. 

\begin{theorem} \label{habil-as-ne-original-theorem}
Let $C$ be a convex subset  of a $UCW$-hyperbolic space $(X,d,W)$ and $T:C\rightarrow C$  be asymptotically nonexpansive with sequence $(k_n)\in [0,\infty)$ satisfying $\sum\limits^{\infty}_{i=0} k_i<\infty$. Assume that $(\lambda_n)$ be a sequence in $[a,b]$ for $0<a<b<1.$

If  $Fix(T)\ne\emptyset$, then $T$ is $\lambda_n$-asymptotically regular.
\end{theorem}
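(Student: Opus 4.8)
The plan is to adapt the classical argument of Schu (for uniformly convex Banach spaces) to the setting of a $UCW$-hyperbolic space $(X,d,W)$, exploiting Lemma \ref{UCW-eta-prop-1} as the substitute for the parallelogram-type inequalities available in normed spaces. Fix $x\in C$, let $p\in Fix(T)$ and write $r:=d(x,p)$. First I would record the standard estimates for asymptotically nonexpansive iterations: since $d(T^nx_n,p)=d(T^nx_n,T^np)\le(1+k_n)d(x_n,p)$ and $x_{n+1}=(1-\lambda_n)x_n\oplus\lambda_nT^nx_n$, property $(W1)$ gives
\[
d(x_{n+1},p)\le(1-\lambda_n)d(x_n,p)+\lambda_n(1+k_n)d(x_n,p)=(1+\lambda_nk_n)d(x_n,p).
\]
Iterating and using $\sum k_i<\infty$, the sequence $(d(x_n,p))$ is bounded, say by some $R>0$, and in fact $\lim_n d(x_n,p)$ exists — this is the key monotone-like quantity. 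Indeed, $d(x_{n+1},p)\le d(x_n,p)+R\,k_n$, so $(d(x_n,p)+R\sum_{i\ge n}k_i)$ is nonincreasing and convergent; call its limit $c\ge 0$.

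Next I would extract asymptotic regularity from the convexity inequality. Apply Lemma \ref{UCW-eta-prop-1}(\ref{UCW-Groetsch-eta}) with $a=p$, the two points $x_n$ and $T^nx_n$, and the weight $\lambda_n\in[a,b]$: letting $s_n:=\max\{d(x_n,p),d(T^nx_n,p)\}$ we get, whenever $d(x_n,T^nx_n)\ge\varepsilon$,
\[
d(x_{n+1},p)\le\bigl(1-2a(1-b)\,\eta(s_n,\varepsilon/s_n)\bigr)s_n
\]
(using $\lambda_n(1-\lambda_n)\ge a(1-b)$ and monotonicity of $\eta$ to replace the radius $s_n$ by a uniform upper bound $R+1$ via Lemma \ref{UCW-eta-prop-1}(\ref{UCW-eta-s-geq-r})). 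Since $s_n\le(1+k_n)d(x_n,p)\to c$ and $d(x_{n+1},p)\to c$, if $d(x_n,T^nx_n)$ did not tend to $0$ we could pass to a subsequence with $d(x_n,T^nx_n)\ge\varepsilon$ for some fixed $\varepsilon>0$, and the displayed inequality would force $c\le(1-\delta)c$ for a fixed $\delta>0$ (once $n$ is large enough that $k_n$ is negligible), hence $c=0$; but then $d(x_n,p)\to 0$ directly gives $d(x_n,T^nx_n)\le d(x_n,p)+d(T^np,T^nx_n)\le(2+k_n)d(x_n,p)\to 0$, a contradiction either way. Therefore $\lim_n d(x_n,T^nx_n)=0$.

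Finally I would upgrade $d(x_n,T^nx_n)\to 0$ to $d(x_n,Tx_n)\to 0$. Using that $T$ is $(1+k_1)$-Lipschitz (as noted in Subsection \ref{intro-as-ne}), write
\[
d(x_n,Tx_n)\le d(x_n,x_{n+1})+d(x_{n+1},T^{n+1}x_{n+1})+d(T^{n+1}x_{n+1},T^{n+1}x_n)+d(T^{n+1}x_n,Tx_n),
\]
where $d(x_n,x_{n+1})=\lambda_n d(x_n,T^nx_n)\to 0$ by $(W1)$-type identities, $d(x_{n+1},T^{n+1}x_{n+1})\to 0$ by the previous step, $d(T^{n+1}x_{n+1},T^{n+1}x_n)\le(1+k_1)d(x_{n+1},x_n)\to 0$, and $d(T^{n+1}x_n,Tx_n)=d(T(T^nx_n),T(x_n))\le(1+k_1)d(T^nx_n,x_n)\to 0$. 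Hence $d(x_n,Tx_n)\to 0$, i.e. $T$ is $\lambda_n$-asymptotically regular.

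\textbf{Main obstacle.} The delicate point is the passage-to-a-subsequence / uniform-convexity step: one must control the moduli $\eta(s_n,\varepsilon/s_n)$ uniformly along the subsequence, which requires the \emph{monotone} modulus hypothesis built into the definition of a $UCW$-hyperbolic space (so that enlarging the radius from $s_n$ to a fixed $R+1$ is legitimate) together with care that $\varepsilon/s_n$ stays bounded below. Everything else is bookkeeping with $(W1)$, the Lipschitz bound $1+k_1$, and the summability of $(k_n)$; the genuinely geometric input is exactly Lemma \ref{UCW-eta-prop-1}.
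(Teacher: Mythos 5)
Your proof is correct and follows essentially the same route as the paper's source: it is the classical Schu-type argument (Fej\'er-type monotonicity of $d(x_n,p)$ modulo the summable perturbation $(k_n)$, the uniform convexity inequality of Lemma \ref{UCW-eta-prop-1} applied at a fixed point, and the standard four-term triangle inequality to pass from $d(x_n,T^nx_n)\to 0$ to $d(x_n,Tx_n)\to 0$), which is exactly the proof that \cite{KohLeu08a} analyzes to obtain the quantitative Theorem \ref{habil-as-ne-Herbrand}. Your handling of the delicate point is also right: parts (\ref{UCW-eta-monotone-eps}) and (\ref{UCW-eta-monotone-s-geq-r}) of Lemma \ref{UCW-eta-prop-1} let you replace the varying radius $s_n$ and ratio $\eps/s_n$ by the fixed data $R+1$ and $\eps/(R+1)$, yielding a uniform $\delta>0$ along the subsequence.
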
 

There does not seem to exist a computable rate of asymptotic regularity in this case; in \cite{KohLam04} it is shown that the proof even holds for asymptotically weakly-quasi nonexpansive functions
for which one can prove that no uniform effective rate does exist. Anyway, the general 
logical metatheorems from Section \ref{logical-meta} guarantee (see also the logical discussion below) 
effective uniform bounds on the so-called  {\em Herbrand normal form} or {\em no-counterexample 
interpretation} of the convergence i.e. on 
\beq
\forall \varepsilon >0\,\forall g:\N\to\N \,\exists N\in\N 
\,\forall m\in [N,N+g(N)] \big( d(x_m,Tx_m)<\varepsilon\big),
\eeq
which (ineffectively) is equivalent to the fact that $\limn d(x_n,Tx_n)=0$. Here $[n,n+m]:=\{ n,n+1,n+2,\ldots,n+m\}$. 

This coincides with what recently has been advocated under the name {\em metastability} or {\em finite convergence} in an essay posted by Terence Tao \cite{Tao07} (see also \cite{Tao06,Tao08}).  Thus, in Tao's terminology, the logical metatheorems guarantee an effective uniform bound on the {\em metastability of} $(d(x_n,Tx_n))$.

In the sequel, we give a quantitative version of the above theorem, generalizing to $UCW$-hyperbolic spaces the logical analysis and the results of Kohlenbach and Lambov \cite{KohLam04}. As a consequence, for $CAT(0)$ spaces we get a quadratic bound on the approximate fixed point property of $(x_n)$ (see Corollary \ref{habil-as-ne-bounded-C-CAT0}). We recall that  for nonexpansive mappings, a quadratic rate of asymptotic regularity for the Krasnoselski-Mann iterations was obtained in Corollary \ref{habil-CAT0-constant-lambda}. 

These  results can also be seen as an instance of {\em `hard analysis'} as proposed by Tao in his essay \cite{Tao07}. 

\subsubsection{Logical discussion}\label{habil-as-ne-section-logic}

It is easy to see that the proof of the above theorem can be formalized in ${\cal A}^{\omega}[X,d,UCW,\eta]_{-b}$, the theory  of $UCW$-hyperbolic spaces. Unfortunately, the conclusion of the above theorem, that for all $x\in C$, $\limn d(x_n,Tx_n)=0$, i.e.  
\beq
\forall \eps>0\exists N\in\N\forall p\in\N\big(d(x_{N+p},Tx_{N+p})<\eps \big),\label{habil-as-reg-fef}
\eeq
is a $\forall\exists\forall$-formula, so it has a too complicated logical form for the logical metatheorems to apply. 
In the case of nonexpansive mappings, due to the fact that $(d(x_n,Tx_n))$ is nonincreasing,  (\ref{habil-as-reg-fef}) could be rewritten as
\beq
\forall \eps>0\exists N\in\N\big(d(x_{N},Tx_{N})<\eps \big)\label{habil-as-reg-ef},
\eeq
which has the required $\forall\exists$-form. This is no longer possible for asymptotically nonexpansive mappings, since for this class of mappings the sequence $(d(x_n,Tx_n))$ is not necessarily nonincreasing. 

\blem\label{habil-lemma-Herbrand-converges-0}
The following are equivalent
\be
\item[] $(1) \quad \forall \eps>0\,\exists N\in\N\,\forall p\in\N\big(d(x_{N+p},Tx_{N+p})<\eps\big)$;
\item[]  $(2) \quad \forall \eps>0\, \exists N\in\N\, \forall m\in\N \,\forall i\in [N,N+m]\big(d(x_{i},Tx_{i})<\eps\big)$;
\item[]  $(2^H) \quad \forall \eps>0\, \forall g:\N\to\N\, \exists N\in\N\,\forall i\in [N,N+g(N)]\big(d(x_{i},Tx_{i})<\eps\big)$.
\ee
\elem
\begin{proof}
$(1)\Lra (2)$ and $(2)\Ra (2^H)$ are obvious. Assume that  $(2^H)$ is true. If $(2)$ would be false, 
then for some $\eps>0$
\[ \forall n\in\N\,\exists m_n\in\N \,\exists i \in [n,n+m_n]\, (d(x_i,Tx_i)\ge \eps). \] 
Define $g(n):=m_n.$ Then $(2^H)$ applied to $g$ leads to a contradiction. 
\end{proof}
The transformed version $(2^H)$ is the {\em Herbrand normal form} of $(2)$ or  the {\em no-counterexample 
interpretation} \cite{Kre51,Kre52} of $(2)$, well-known in mathematical logic. The good news is that $(2^H)$ has the $\forall\exists$-form, as the universal quantifier over $i$ is bounded. Obviously, since the above argument is ineffective, a bound on $\exists N\in\N$ in $(2^H)$ cannot be converted effectively into a bound on $\exists N\in\N$ in $(2)$.  

As it suffices to consider only mappings $T:X\to X$, it is easy to see that  $\mathcal{A}^\omega[X,d,UCW,\eta]_{-b}$ proves the following formalized version of Theorem \ref{habil-as-ne-original-theorem}:

\[ \ba{l} \forall\, g:\N\to\N\,\forall\, \eps>0\,\forall\, K,L\in\N\,\forall\, g:\N\to\N\, \forall\, (\lambda_n)\in [0,1]^{\N}\,\forall \,
(k_n)\in [0,K]^{\N}\,\forall \,x\in X\,\forall\, T:X\to X \\ 
\biggl(Fix(T)\not=\emptyset  \, \si \forall n\in\N\,\forall y,z\in X\bigg(d(T^ny,T^nz)\le  (1+k_n)d(y,z)\bigg) \, \si \,\forall n\in\N\ \left(\ds\sum^n_{i=0} k_i\le K\right)\, \\
\quad\quad \si\, L\ge 2\,\wedge\, \forall n\in\N\ \left(\ds\frac{1}{L} \le 
\lambda_n\le 1-\frac{1}{L}\right)\, \\
\hfill \rightarrow \exists N\in\N\,\forall i\in [N,N+g(N)]\bigg(d(x_{i},Tx_{i})<\eps\bigg)\biggr). \ea \]
Moreover, the asymptotic nonexpansivity of $T$  and the fact that $k_1\le K$ imply that $T$ is $(1+K)$-Lipschitz. Thus, we can apply Corollary \ref{meta-Groetsch} which guarantees the extractability of a computable 
bound $\Phi$ on $\exists N\in\N$ in the conclusion 
\[ \ba{l} 
\hspace*{-1cm}\,\forall b\in\N\,\forall\, g:\N\to\N\,\forall\, \eps>0\,\forall\, K,L\in\N\, \forall\, (\lambda_n)\in [0,1]^{\N}\,\forall \,
(k_n)\in [0,K]^{\N}\,\forall \,x\in X\,\forall\, T:X\to X \\ 
\biggl(\forall \delta>0\bigg(Fix_\delta(T,x,b)\ne\emptyset\bigg)  \, \si \forall n\in\N\,\forall y,z\in X\bigg(d(T^ny,T^nz)\le  (1+k_n)d(y,z)\bigg) \, \\
\qquad\si \,\forall n\in\N\ \left(\ds\sum^n_{i=0} k_i\le K\right)\,  \si\, L\ge 2\,\wedge\, \forall n\in\N\ \left(\ds\frac{1}{L} \le 
\lambda_n\le 1-\frac{1}{L}\right)\, \\
\hfill \rightarrow \exists N\le \Phi(\eps,K,L,b,\eta,g) \,\forall i\in [N,N+g(N)]\bigg(d(x_{i},Tx_{i})<\eps\bigg)\biggr). \ea \]

Thus, the premise that $T$ has fixed points is weakened to $T$ having approximate fixed points in a $b$-neighborhood of $x$ and the bound $\Phi$ depends, in addition to $\eps,K,L,\eta$, on $b\in\N$ and $g:\N\to \N$.
By taking $g(n)\equiv 0$, we get an approximate fixed point bound for $T$. 

We refer to \cite[Section 5]{KohLeu08a} for details on the above logical discussion.

\subsubsection{Main results on asymptotic regularity}\label{habil-as-ne-main-results}

The following quantitative version of Theorem \ref{habil-as-ne-original-theorem} is the second main result of the paper \cite{KohLeu08a}.

\begin{theorem} \label{habil-as-ne-Herbrand}
Let $C$ be a convex subset of a $UCW$-hyperbolic space $(X,d,W)$  and $T:C\to C$ be asymptotically nonexpansive with sequence $(k_n)$. \\
Assume that $\eta$ is a monotone modulus of uniform convexity $\eta$,  $K\in\N$ is such that $\ds\sum_{n=0}^\infty k_n\le K$ and  $L\in\N, L\geq 2$  satisfies $\ds\frac{1}{L}\leq \lambda_n \leq 1-\frac{1}{L}$ for all $n\in\N$.\\
Let $x\in C$ and $b>0$ be such that $T$ has approximate fixed points in a $b$-neighborhood of $x$.

Then $\ds \limn d(x_n,Tx_n)=0$  and, moreover, for all $\eps\in (0,1]$ and all $g:\N\to\N$,
\beq
\exists N\le \Phi(K, L, b, \eta, \eps, g)
\forall m\in [N, N+g(N)] \bigg(d(x_m,Tx_m) < \eps \bigg), \label{habil-as-ne-Herbrand-conclusion}
\eeq 
where $\Phi(K, L, b, \eta, \eps, g)=\ds h^M(0)$, with 
\[\ba{l}
h(n) = g(n+1)+n+2,\quad M =\left\lceil\ds\frac {3\left(5KD+D+\frac{11}{2}\right)}{\delta}\right\rceil, \quad  
\ds D = e^K\left(b+ 2\right),\\
\delta = \ds\frac{\eps}{L^2f(K)}\cdot\eta\left((1+K)D+1,\frac{\eps}{f(K)((1+K)D+1)}\right),\quad 
f(K) = 2(1+(1+K)^2(2+K)).
\ea\]
Moreover, $N=h^i(0)+1$ for some $i<M$.\\
\end{theorem}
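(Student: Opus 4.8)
## Proof proposal for Theorem~\ref{habil-as-ne-Herbrand}

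The plan is to adapt the approach of Kohlenbach--Lambov \cite{KohLam04} (carried out there for uniformly convex normed spaces) to the $UCW$-hyperbolic setting, exploiting the nice geometric inequalities from Lemma~\ref{UCW-eta-prop-1}. First I would fix a point $p$ with $d(x,p)\le b$ and $d(p,Tp)<1$ (it suffices to take an $\varepsilon$-fixed point for a small enough $\varepsilon$; the constant $2$ in $D=e^K(b+2)$ will absorb the slack, just as in the logical discussion in Subsubsection~\ref{habil-as-ne-section-logic}). The key preparatory estimates are: (a) the Krasnoselski-Mann iterates stay in a bounded region, more precisely $d(x_n,p)\le D=e^K(b+2)$ for all $n$, which follows by induction using asymptotic nonexpansivity, $\sum k_i\le K$, $e^K\ge\prod(1+k_i)$, and property $(W1)$; (b) a quantitative control, via Lemma~\ref{habil-classes-majorizable}, that $T$ is $(1+K)$-Lipschitz, so $d(x_n,T^nx_n)$ and $d(x_n,Tx_n)$ differ in a controlled way; and (c) the crucial quasi-monotonicity-type inequality: because the "defect" $k_n$ is summable, the sequence $d(x_n,Tx_n)$ cannot oscillate too wildly — one shows that $d(x_{n+1},Tx_{n+1})\le (1+c\cdot k_n)d(x_n,Tx_n)+ (\text{small correction})$, so that along any window $[N,N+g(N)]$ on which $\sum_{i\ge N}k_i$ is already tiny, the value of $d(x_m,Tx_m)$ at the window's left endpoint essentially controls the whole window.

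Next I would carry out the core averaging argument. Using Lemma~\ref{UCW-eta-prop-1}.(\ref{UCW-Groetsch-eta}) applied with $a:=T^n p$ (or $p$ up to the $k_n$-corrections), $x:=x_n$, $y:=T^nx_n$, $\lambda:=\lambda_n$, and using $\lambda_n(1-\lambda_n)\ge 1/L^2$, one obtains
\[
d(x_{n+1},p)\le \bigl(1-2\lambda_n(1-\lambda_n)\,\eta(r,\varepsilon/r)\bigr)\,r + (\text{$k_n$-correction}),
\]
where $r$ is the relevant radius ($\le (1+K)D+1$) and $\varepsilon/r$ governs how far apart $x_n$ and $T^nx_n$ are. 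Since $d(x_n,p)$ is (up to summable corrections) nonincreasing and bounded below, its total decrease is finite; hence the "bad" indices $n$ — those where $d(x_n,T^nx_n)$ is at least some threshold — contribute a bounded total amount of decrease, so there can be at most $M$ of them, where $M$ is the quantity displayed in the theorem (the constant $\delta$ being exactly the per-step decrease guaranteed by the modulus $\eta$, and $f(K)$ collecting the Lipschitz bookkeeping). This yields: within any block of $M$ consecutive "candidate" points there is a genuinely good point where $d(x_m,T^mx_m)<\delta$, hence $d(x_m,Tx_m)<\varepsilon$ after converting through the Lipschitz constant.

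The final step is the Herbrand/metastability packaging. Rather than producing a single $N$ with $\forall p\,(d(x_{N+p},Tx_{N+p})<\varepsilon)$ — which, as explained after Lemma~\ref{habil-lemma-Herbrand-converges-0}, has no computable bound — I would iterate the function $h(n)=g(n+1)+n+2$ a total of $M$ times starting from $0$. The point is: if none of the $M$ successive windows $[h^i(0)+1,\ h^i(0)+1+g(h^i(0)+1)]$, $i<M$, contained a point with $d(x_m,Tx_m)<\varepsilon$, then by the quasi-monotonicity from step (c) each such window would harbour a "bad" index of the kind counted above, producing $M$ disjoint bad indices and contradicting the bound $M$ on their number. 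Hence some window is good, its left endpoint $N=h^i(0)+1$ for some $i<M$ works, and $N\le h^M(0)=\Phi(K,L,b,\eta,\varepsilon,g)$.

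I expect the main obstacle to be step (c) together with the precise tracking of the $k_n$-corrections: one must show that the summable perturbation $(k_n)$, although it destroys exact monotonicity of both $d(x_n,p)$ and $d(x_n,Tx_n)$, only introduces errors whose cumulative effect is bounded by a constant depending on $K$, and that this constant can be folded into $D$ and into the count $M$ without circularity. Getting the windowing argument to interlock correctly with these perturbations — so that "a good point in the window at the left endpoint" really does propagate across the whole window of length $g(N)$ — is the delicate bookkeeping part; everything else is a routine, if lengthy, application of $(W1)$--$(W4)$ and Lemma~\ref{UCW-eta-prop-1}. One then observes that $CAT(0)$ spaces have $\eta(r,\varepsilon)=\varepsilon^2/8$ of the special form in Fact~\ref{habil-Ishikawa-quant-as-reg-tilde-eta}, which turns the bound into a quadratic one, giving the corollaries.
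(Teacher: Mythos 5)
Your overall strategy is the one the paper itself points to: the survey gives no self-contained proof of Theorem \ref{habil-as-ne-Herbrand} but states explicitly that it is obtained in \cite{KohLeu08a} by generalizing the logical analysis of Kohlenbach--Lambov \cite{KohLam04} to $UCW$-hyperbolic spaces, and your steps (a)--(c) (the $e^K$-bound $D$ on $d(x_n,p)$ via quasi-Fej\'er monotonicity, the $(1+K)$-Lipschitz conversion between $d(x_n,T^nx_n)$ and $d(x_n,Tx_n)$ encoded in $f(K)$, the uniform-convexity decrease $\delta$ via Lemma \ref{UCW-eta-prop-1} with $\lambda_n(1-\lambda_n)\ge 1/L^2$, and the iteration of $h$ for the Herbrand/metastability packaging) are exactly the ingredients of that analysis.

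The one place where your argument, as written, does not deliver the stated conclusion is the final counting step. You argue that if no window $[h^i(0)+1,\,h^i(0)+1+g(h^i(0)+1)]$ \emph{contains} an $\eps$-good point then one gets $M$ disjoint bad indices, and you conclude ``hence some window is good''. But the theorem requires a window that is \emph{entirely} good, and ``contains a good point'' is strictly weaker: a good point in the middle of a window says nothing about the earlier indices of that window, and forward propagation via $d(x_{n+1},Tx_{n+1})\le(1+2s_n\cdots)$-type inequalities only controls indices \emph{after} the good one. The standard repair --- and what \cite{KohLam04} actually does --- is to run the pigeonhole over the $M$ candidate left-endpoints $N_i=h^i(0)+1$ themselves: each ``failure'' of a candidate (either $d(x_{N_i},T^{N_i}x_{N_i})\ge\delta$, which costs a decrease of at least $\delta$ in the quasi-monotone quantity $d(x_n,p)$, or a too-large tail $\sum_{j\ge N_i}k_j$ over the following window) consumes a definite portion of the total budget $3(5KD+D+\tfrac{11}{2})$, so at least one candidate $N=N_i$ satisfies both conditions simultaneously; only then does quasi-monotonicity propagate smallness from the left endpoint across all of $[N,N+g(N)]$. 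You flag precisely this interlocking as the delicate part, and you have all the needed estimates, but the pigeonhole must be set up on the candidates (controlling the $k$-tail per window at the same time), not on ``windows containing good points''.
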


\begin{fact}\label{habil-as-ne-theorem-tilde-eta}
Assume, moreover, that  $\eta(r,\varepsilon)$ can be written as $\eta(r,\varepsilon)=\varepsilon\cdot\tilde{\eta}(r,\varepsilon)$ such that $\tilde{\eta}$ increases with $\varepsilon$ (for a fixed $r$). Then we can replace $\eta$ with $\tilde{\eta}$ in the bound $\Phi(K, L, b, \eta, \eps, g)$.
\end{fact}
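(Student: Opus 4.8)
The plan is to revisit the proof of Theorem \ref{habil-as-ne-Herbrand} and to isolate the single point at which the modulus of uniform convexity $\eta$ actually enters the numerical estimate, exactly as is done in the corresponding remark for the Groetsch-type bound (Theorem \ref{habil-main-Groetsch-thm} and its $\tilde{\Phi}$-variant). In that proof the modulus is used only through an application of Lemma \ref{UCW-eta-prop-1}: at the key step one estimates $d\big((1-\lambda_n)u_n\oplus\lambda_n v_n,\,p\big)$ in terms of $\max\{d(u_n,p),d(v_n,p)\}$ and the displacement $d(u_n,v_n)$, and this produces a strict decrease governed by a term of the shape $2\lambda_n(1-\lambda_n)\,\eta(r,\varepsilon)\,r$, where $r$ is (a bound for) the enclosing radius and $\varepsilon r$ is the current displacement. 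The quantity $\delta$ in the statement of Theorem \ref{habil-as-ne-Herbrand} is, up to the bookkeeping constants, precisely the value of this $\eta$-term for the relevant arguments $r=(1+K)D+1$ and $\varepsilon=\eps/\big(f(K)((1+K)D+1)\big)$; since $\eps\le 1$, $f(K)\ge 2$ and $(1+K)D+1\ge 1$ one has $\varepsilon\in(0,2]$, so the application is legitimate.

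First I would observe that, under the hypothesis $\eta(r,\varepsilon)=\varepsilon\cdot\tilde{\eta}(r,\varepsilon)$ with $\tilde{\eta}$ nondecreasing in $\varepsilon$, the decrease term can be rewritten and sharpened: $2\lambda_n(1-\lambda_n)\,\eta(r,\varepsilon)\,r=2\lambda_n(1-\lambda_n)\,\tilde{\eta}(r,\varepsilon)\,(\varepsilon r)\ge 2\lambda_n(1-\lambda_n)\,\tilde{\eta}(r,\varepsilon_0)\,(\varepsilon r)$ whenever the true displacement $\varepsilon r=d(u_n,v_n)$ satisfies $\varepsilon\ge\varepsilon_0$ for the target ratio $\varepsilon_0$. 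In other words, the effective decrease is proportional to $\tilde{\eta}(r,\varepsilon_0)$ times the displacement itself, rather than to $\eta(r,\varepsilon_0)$ times $r$; this is exactly the gain exploited in Theorem \ref{habil-main-Groetsch-thm}, and it amounts to replacing $\eta$ literally by $\tilde{\eta}$ in the definition of $\delta$, i.e. to working with $\tilde{\delta}:=\frac{\eps}{L^2 f(K)}\cdot\tilde{\eta}\big((1+K)D+1,\,\frac{\eps}{f(K)((1+K)D+1)}\big)$ in place of $\delta$.

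Then I would propagate this substitution through the rest of the argument: the constants $D$ and $f(K)$, the function $h$ and the exponent $M=\lceil 3(5KD+D+\tfrac{11}{2})/\delta\rceil$ depend on $\eta$ only via $\delta$, and the final bound $h^M(0)$ is increasing in $M$ because $h(n)=g(n+1)+n+2>n$ forces $h^{M+1}(0)>h^M(0)$. Since $\varepsilon=\eps/\big(f(K)((1+K)D+1)\big)<1$ gives $\delta=\varepsilon\tilde{\delta}<\tilde{\delta}$, replacing $\delta$ by $\tilde{\delta}$ only decreases $M$ and hence $h^M(0)$, so the resulting expression is again a valid bound — and it is precisely the bound obtained from $\Phi(K,L,b,\eta,\eps,g)$ by writing $\tilde{\eta}$ in place of $\eta$ throughout. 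Fact \ref{habil-as-ne-theorem-tilde-eta} follows.

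The main obstacle I anticipate is a bookkeeping one rather than a conceptual one: one must check that $\eta$ really is used only at this single, monotone point of the proof of Theorem \ref{habil-as-ne-Herbrand} (there is no hidden second use, e.g. inside the estimates that control the contribution of the Lipschitz error sequence $(k_n)$), and that the monotonicity of $\tilde{\eta}$ in its second argument is applied in the correct direction — i.e. that at the relevant step the actual displacement ratio is genuinely $\ge$ the target ratio, so that passing from $\tilde{\eta}(r,\varepsilon)$ down to $\tilde{\eta}(r,\varepsilon_0)$ is a legitimate lower estimate. Once these are verified, the computation is routine and mirrors the corresponding step in \cite{Leu07}.
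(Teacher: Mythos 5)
Your approach is the right one and is essentially the argument behind the analogous statements in the paper (the $\tilde{\Phi}$-variant of Theorem \ref{habil-main-Groetsch-thm} and Remark \ref{habil-Ishikawa-quant-as-reg-tilde-eta}): the modulus enters the proof of Theorem \ref{habil-as-ne-Herbrand} only through the uniform-convexity drop estimate; the factorization $\eta(r,\varepsilon)=\varepsilon\cdot\tilde{\eta}(r,\varepsilon)$ together with the monotonicity of $\tilde{\eta}$ in $\varepsilon$ lets you bound the per-step drop from below by $\tilde{\eta}$ at the worst-case arguments times the \emph{actual} displacement, rather than by $\eta$ at the worst-case arguments times a lower bound for the radius; and this amounts precisely to replacing $\delta$ by $\tilde{\delta}=\delta/\varepsilon_0$. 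One sentence in your third paragraph should be repaired, however: you conclude that since $\tilde{\delta}>\delta$ the exponent $M$ and hence $h^M(0)$ decrease, ``so the resulting expression is again a valid bound''. Smallness does not confer validity --- a smaller number is not automatically a bound. The validity of $h^{\tilde{M}}(0)$ comes solely from the improved per-step drop estimate you derived in the preceding paragraph: the proof of Theorem \ref{habil-as-ne-Herbrand} uses $\delta$ only as a lower bound on the drop, so it runs verbatim with $\tilde{\delta}$ and the correspondingly smaller iteration count $\tilde{M}$, and the fact that the resulting bound is numerically smaller is the payoff, not the justification. With that logical dependence stated in the right order, and granting the caveat you yourself flag (the verification that $\eta$ has no second, hidden occurrence must be done against the proof in \cite{KohLeu08a}, which this survey does not reproduce), the argument is complete.
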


We give now some consequences.  By taking  $g(n)\equiv 0$, we obtain an approximate fixed point bound for the asymptotically nonexpansive mapping $T$.

\bcor\label{habil-as-ne-AsReg-as-ne}
Assume $(X,d,W),\eta, C, T, (k_n), K, (\lambda_n), L$ are  as in the hypotheses of Theorem \ref{habil-as-ne-Herbrand}. Let $x\in C$ and $b>0$ be such that $T$ has approximate fixed points in a $b$-neighborhood of $x$.

Then $\ds \limn d(x_n,Tx_n)=0$  and, moreover, 
\beq
\forall\eps\in (0,1]\,\exists N\le \Phi(K, L, b, \eta, \eps)
\bigg(d(x_N,Tx_N) < \eps \bigg),
\eeq 
where $\Phi(K, L, b, \eta, \eps)=\ds 2M$ and $M, D, \theta, f(K)$ are as in Theorem \ref{habil-as-ne-Herbrand}. 
\ecor

Furthermore, if $C$ is bounded with diameter $d_C$, $C$ has the AFPP for asymptotically nonexpansive mappings by Proposition \ref{habil-AFPP-ass-ne-uchyp}, so $T$ has approximate fixed points in a $d_C$-neighborhood of $x$ for all $x\in C$. Hence, we get asymptotic regularity and an explicit approximate fixed point bound.

\begin{corollary}\label{habil-as-ne-bounded-C}
Let $(X,d,W),\eta, C, T, (k_n), K, (\lambda_n), L$ be  as in the hypotheses of Theorem \ref{habil-as-ne-Herbrand}. Assume moreover that $C$ is bounded with  diameter $d_C$.

Then $T$ is $\lambda_n$-asymptotically regular and  the following holds for all $x\in C$:
\beq
\forall\eps\in (0,1]\exists N\le \Phi(K, L, d_C, \eta, \eps)
\bigg(d(x_N,Tx_N) < \eps \bigg),
\eeq 
where $\Phi(K, L, d_C, \eta, \eps)$ is defined as in Theorem \ref{habil-as-ne-AsReg-as-ne} by replacing $b$ with $d_C$.
\end{corollary}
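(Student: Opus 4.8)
The statement is a routine specialization of the preceding quantitative results to the bounded case, so the proof will consist of checking that the hypothesis "$T$ has approximate fixed points in a $b$-neighborhood of $x$" is automatically satisfied with $b:=d_C$, and then quoting Corollary \ref{habil-as-ne-AsReg-as-ne} (equivalently, Theorem \ref{habil-as-ne-Herbrand} with $g\equiv 0$). First I would invoke Proposition \ref{habil-AFPP-ass-ne-uchyp}, which asserts that bounded convex subsets of $UCW$-hyperbolic spaces have the AFPP for asymptotically nonexpansive mappings; applied to $C$ and $T$, this gives that for every $\delta>0$ there is a point $y\in C$ with $d(y,Ty)<\delta$, i.e. $Fix_\delta(T)\ne\emptyset$.

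Next, fix an arbitrary $x\in C$. Since $C$ has diameter $d_C$, any $\delta$-fixed point $y\in C$ obtained above satisfies $d(x,y)\le d_C$, so $y\in Fix_\delta(T,x,d_C)$. Hence $Fix_\delta(T,x,d_C)\ne\emptyset$ for all $\delta>0$, which is precisely the statement that $T$ has approximate fixed points in a $d_C$-neighborhood of $x$. Note $d_C<\infty$ because $C$ is bounded, and $d_C>0$ may be assumed (if $d_C=0$ then $C$ is a single point and the claim is trivial).

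With this verified, I would apply Corollary \ref{habil-as-ne-AsReg-as-ne} to $(X,d,W),\eta,C,T,(k_n),K,(\lambda_n),L$ with the choice $b:=d_C$. That corollary yields both $\limn d(x_n,Tx_n)=0$ (which, since $x\in C$ was arbitrary, is exactly $\lambda_n$-asymptotic regularity of $T$ in the sense of Subsection \ref{intro-as-ne}) and the explicit bound $\forall\eps\in(0,1]\,\exists N\le \Phi(K,L,d_C,\eta,\eps)\,(d(x_N,Tx_N)<\eps)$, where $\Phi(K,L,d_C,\eta,\eps)$ is obtained from the expression in Corollary \ref{habil-as-ne-AsReg-as-ne} by substituting $d_C$ for $b$ throughout $M$, $D$, $\delta$ and $f(K)$. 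Since this holds for every $x\in C$ with the same bound $\Phi$, the uniformity claim follows.

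\textbf{Main obstacle.} There is essentially no obstacle: the only content is the observation that the diameter bound forces membership of every approximate fixed point in the $d_C$-ball around any base point, which reduces the hypothesis of Corollary \ref{habil-as-ne-AsReg-as-ne} from a nontrivial assumption to a consequence of Proposition \ref{habil-AFPP-ass-ne-uchyp}. The genuine work — extracting the bound $\Phi$ and proving the quantitative asymptotic regularity — has already been done in Theorem \ref{habil-as-ne-Herbrand} and Corollary \ref{habil-as-ne-AsReg-as-ne}, on which this corollary merely relies.
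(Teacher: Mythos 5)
Your proof is correct and follows exactly the paper's own argument: Proposition \ref{habil-AFPP-ass-ne-uchyp} gives the AFPP for the bounded convex set $C$, the diameter bound then shows $T$ has approximate fixed points in a $d_C$-neighborhood of every $x\in C$, and Corollary \ref{habil-as-ne-AsReg-as-ne} applied with $b:=d_C$ yields both the asymptotic regularity and the uniform bound.
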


Finally, in the case of convex bounded subsets of $CAT(0)$ spaces, we get a quadratic (in $1/\varepsilon$) approximate fixed point bound. 

\begin{corollary}\label{habil-as-ne-bounded-C-CAT0}
Let $X$ be a $CAT(0)$ space, $C$ be a convex bounded subset of $X$ with diameter $d_C$ and $T:C\to C$ be asymptotically nonexpansive with sequence $(k_n)$. \\
Assume that $K\in\N,L\in\N, L\geq 2$  are such that $\ds\sum_{n=0}^\infty k_n\le K$ and $\ds\frac{1}{L}\leq \lambda_n \leq 1-\frac{1}{L}$ for all $n\in\N$.

Then $T$ is $\lambda_n$-asymptotically regular and  the following holds for all $x\in C$:
\beq
\forall\eps\in (0,1]\,\exists N\le \Phi(K, L, d_C,\eps)
\bigg(d(x_N,Tx_N) < \eps \bigg),
\eeq 
where $\Phi(K, L, d_C, \eps)=2M$, with 
\[\ba{l}
M =\left\lceil\ds\frac{1}{\eps^2}\cdot 24L^2\left(5KD+D+\frac{11}{2}\right)(f(K))^3((1+K)D+1)^2\right\rceil, \\[0.2cm]
  D = \ds e^K\left(d_C+ 2\right), \quad f(K) = 2(1+(1+K)^2(2+K)).
\ea\]
\end{corollary}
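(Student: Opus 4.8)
The plan is to obtain this corollary purely by specializing Corollary~\ref{habil-as-ne-bounded-C} (equivalently Corollary~\ref{habil-as-ne-AsReg-as-ne} with $g\equiv 0$) to the $CAT(0)$ setting, so that essentially no new argument is needed beyond bookkeeping. First I would recall, as noted in Subsubsection~\ref{habil-CAT0}, that every $CAT(0)$ space is a $UCW$-hyperbolic space with the monotone modulus of uniform convexity $\eta(r,\eps)=\eps^2/8$, which does not even depend on $r$. The relevant point is that this $\eta$ has exactly the factored form required by Fact~\ref{habil-as-ne-theorem-tilde-eta}: writing $\eta(r,\eps)=\eps\cdot\tilde{\eta}(r,\eps)$ with $\tilde{\eta}(r,\eps)=\eps/8$, the function $\tilde{\eta}$ increases with $\eps$ for fixed $r$ (trivially, since it is independent of $r$ and linear in $\eps$). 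Hence Fact~\ref{habil-as-ne-theorem-tilde-eta} permits using $\tilde{\eta}$ in place of $\eta$ in the bound of Theorem~\ref{habil-as-ne-Herbrand}, and correspondingly in Corollary~\ref{habil-as-ne-AsReg-as-ne}.

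Next, since $C$ is a bounded convex subset of a $CAT(0)$ space, hence of a $UCW$-hyperbolic space, with diameter $d_C$, Proposition~\ref{habil-AFPP-ass-ne-uchyp} shows that $C$ has the AFPP for asymptotically nonexpansive mappings; in particular $T$ has approximate fixed points in a $d_C$-neighborhood of every $x\in C$. Thus the hypotheses of Corollary~\ref{habil-as-ne-bounded-C} are met with $b:=d_C$, which yields both $\lambda_n$-asymptotic regularity of $T$ and, for every $x\in C$ and every $\eps\in(0,1]$, an approximate fixed point $N\le\Phi=2M$ with $M$ as in Theorem~\ref{habil-as-ne-Herbrand} after the substitutions $\eta\mapsto\tilde{\eta}$ and $b\mapsto d_C$.

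It then only remains to evaluate $M$ explicitly. With $D=e^K(d_C+2)$ and $f(K)=2(1+(1+K)^2(2+K))$ as in Theorem~\ref{habil-as-ne-Herbrand}, substituting $\tilde{\eta}(r,\eps)=\eps/8$ into the definition of $\delta$ gives
\[
\delta=\frac{\eps}{L^2 f(K)}\cdot\tilde{\eta}\!\left((1+K)D+1,\ \frac{\eps}{f(K)((1+K)D+1)}\right)=\frac{\eps^2}{8L^2\,(f(K))^2\,((1+K)D+1)},
\]
so that $M=\lceil 24L^2(5KD+D+\frac{11}{2})(f(K))^2((1+K)D+1)/\eps^2\rceil$. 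Since $f(K)\ge 1$ and $(1+K)D+1\ge 1$, this is bounded above by $\lceil 24L^2(5KD+D+\frac{11}{2})(f(K))^3((1+K)D+1)^2/\eps^2\rceil$, which is precisely the expression in the statement; this deliberate inflation of the polynomial factors is harmless, since in a metastability/approximate-fixed-point bound any upper bound is admissible.

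I do not anticipate a genuine obstacle: all the analytic and proof-theoretic substance is already concentrated in Theorem~\ref{habil-as-ne-Herbrand} and Proposition~\ref{habil-AFPP-ass-ne-uchyp}, and the only thing to verify with care is that the $CAT(0)$ modulus $\eta(r,\eps)=\eps^2/8$ indeed satisfies the monotonicity-in-$\eps$ hypothesis of Fact~\ref{habil-as-ne-theorem-tilde-eta} — which it does — together with the routine arithmetic of inserting $\tilde{\eta}$ into the formula for $\delta$ and simplifying.
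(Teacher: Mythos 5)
Your proposal is correct and is essentially the paper's intended argument: one specializes Corollary \ref{habil-as-ne-bounded-C} (together with Fact \ref{habil-as-ne-theorem-tilde-eta}) to the $CAT(0)$ modulus $\eta(r,\eps)=\eps^2/8$, i.e.\ $\tilde{\eta}(r,\eps)=\eps/8$, with $b:=d_C$ supplied by Proposition \ref{habil-AFPP-ass-ne-uchyp}. Your direct substitution in fact produces the sharper constant $(f(K))^2\big((1+K)D+1\big)$ where the statement has $(f(K))^3\big((1+K)D+1\big)^2$, and your remark that the stated, larger bound is then a fortiori valid (since $f(K)\ge 1$ and $(1+K)D+1\ge 1$) correctly disposes of this discrepancy.
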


\section{Proof mining in ergodic theory}\label{proof-mining-ergodic}

In this section, we apply proof mining techniques to obtain an explicit uniform bound on the metastability of ergodic averages in uniformly convex Banach spaces. This result was obtained by Kohlenbach and the author in \cite{KohLeu08b}. Our result can also be viewed as a finitary version in the sense of Terence Tao of the mean ergodic theorem for such spaces and so generalizes similar results obtained for Hilbert spaces by Avigad, Gerhardy and  Towsner \cite{AviGerTow08} and Tao \cite{Tao08}.

In the following $\N=\{ 1,2,3,\ldots\}$. Let $X$ be a Banach space and $T:X\to X$ be a self-mapping of $X.$ 
The {\em Cesaro mean} starting with $x\in X$ is the sequence $(x_n)_{n\geq 1}$ defined by $$\ds x_n:=\frac{1}{n}\sum_{i=0}^{n-1}T^ix.$$

Uniformly convex Banach spaces were introduced in 1936 by Clarkson in his seminal paper \cite{Cla36}. A Banach space $X$ is called {\em uniformly convex} if for all $\eps\in (0,2]$ there exists $\delta\in(0,1]$  such that for  all  $x,y\in X$, 
\beq
\| x\| \le 1, \quad \|y\|\le 1 \text{~~and~~}  \|x-y\|\geq \varepsilon  \text{~~imply~~} \left\|\frac12(x+y)\right\|\leq  1-\delta. 
\eeq
A mapping $\eta:(0,2]\to (0,1]$ providing such a $\delta:=\eta(\eps)$ for given $\eps\in(0,2]$ is called a {\em modulus of uniform convexity}. An example of a modulus of uniform convexity is Clarkson's {\em modulus of convexity} \cite{Cla36}, defined for any  Banach space $X$ as the function $\delta_X:[0,2]\to[0,1]$ given by
\beq
\delta_X(\varepsilon)=\inf\left\{1-\left\|\frac{x+y}{2}\right\|: \|x\|\le 1, \|y\|\le 1, \|x-y\|\ge \varepsilon \right\}.
\eeq
It is easy to see that $\delta_X(0)=0$ and that $\delta_X$ is nondecreasing. 
A well-known result is the fact that a Banach space $X$ is uniformly convex 
if and only if $\delta_X(\eps)>0$ for $\eps\in(0,2]$. Note that for uniformly convex Banach spaces $X$, $\delta_X$ is the largest modulus of uniform convexity.

In 1939, Garrett Birkhoff proved the following generalization of von Neumann's mean ergodic theorem.

\bthm\cite{Bir39}\label{habil-edts-MET-Birkhoff}
Let $X$ be a uniformly convex Banach space and $T:X\to X$ be a linear nonexpansive mapping.  Then for any $x\in X$, the Cesaro mean $(x_n)$ is convergent.
\ethm

In \cite{AviGerTow08}, Avigad, Gerhardy and Towsner addressed the issue of finding an effective rate of convergence 
for $(x_n)$ in Hilbert spaces. They showed that even for the separable Hilbert space $L_2$ there are simple computable such operators $T$ and computable points $x\in L_2$ such that there is no computable rate of convergence of $(x_n)$. In such a situation, the best one can hope for is an effective bound on the Herbrand normal form of the Cauchy property of $(x_n)$:  
\beq 
\forall \varepsilon >0 \,\forall g:\N\to\N\,\exists N\in\N\, \forall 
i,j\in [N,N+g(N)] \ 
\big(\| x_i-x_j\| < \varepsilon\big). \label{habil-edts-Herbrand}
\eeq
In \cite{Koh05} (see also \cite[Section 17.3]{Koh08-book}), Kohlenbach obtained general logical metatheorems for (uniformly convex) normed spaces, similar with the ones for metric or $W$-hyperbolic spaces presented in Section \ref{logical-meta}. These metatheorems guarantee, given a proof of (\ref{habil-edts-Herbrand}), 
the extractability of an effective bound $\Phi(\eps,g,b)$ on $\exists N$ in (\ref{habil-edts-Herbrand}) that is 
highly uniform in the sense that it only depends on $g,\varepsilon$ and an upper bound 
$b\ge \| x\|$ but otherwise is independent from $x,X$ and $T$. In fact, by a simple renorming argument one can always achieve to have the bound to depend on $b,\varepsilon$ only 
via $b/\varepsilon$.

Guided by this approach, Avigad, Gerhardy and Towsner \cite{AviGerTow08} extracted such a bound from a standard textbook proof of von Neumann's mean ergodic theorem.  A less direct proof for the existence of a bound with the above mentioned uniformity features is - for a particular finitary dynamical system - also given by Tao \cite{Tao08} as part of his proof of a generalization of the von Neumann mean ergodic theorem to commuting 
families of invertible measure preserving transformations $T_1,\ldots,T_l.$   

In \cite{KohLeu08b}, we apply the same methodology to Birkhoff's proof of  Theorem \ref{habil-edts-MET-Birkhoff} and extract an even easier to state bound for the more general case of uniformly convex Banach spaces. In this setting, the bound additionally depends on a given modulus of uniform convexity $\eta$ for $X$. Despite of our result being significantly more general then the Hilbert space case treated in \cite{AviGerTow08}, the extraction of 
our bound is considerably easier compared to \cite{AviGerTow08} and even numerically better.

\subsection{Logical discussion}

The proof of the above theorem can be formalized in the theory ${\cal A}^{\omega}[X,\|\cdot \|,\eta]$ of uniformly convex normed spaces, defined in \cite{Koh05}. We refer to \cite[Section 17.3]{Koh08-book} for details on this theory and the corresponding logical metatheorems. 

The conclusion of the above theorem is that $(x_n)$ converges for all $x\in C$, that is 
\beq
\exists l\in X\forall \eps>0\exists N\in\N\forall p\in\N\big(\|x_{N+p}-l\|<\eps\big),\label{habil-ergodic-limit}
\eeq
which is a $\exists\forall\exists\forall$-formula, so it has a too complicated logical form. One can cut down the complexity a little bit by considering the equivalent (for Banach spaces) conclusion that for all $x\in C$, $(x_n)$ is Cauchy, i.e.:
\beq
\forall \eps>0\exists N\in\N\forall p\in\N\big(\|x_{N+p}-x_N\|<\eps\big).\label{habil-ergodic-xn-Cauchy}
\eeq
The Cauchy property of $(x_n)$ is a $\forall\exists\forall$-formula, still too complicated. We are in a situation similar with the one in Subsection \ref{habil-app-fpt-as-ne}. The idea is again to consider the Herbrand normal form of the Cauchy property of $(x_n)$. As in the proof of Lemma \ref{habil-lemma-Herbrand-converges-0}, one can easily see that for all $x\in X$, the fact that $(x_n)$ is Cauchy is equivalent to
\beq
\forall \eps>0\, \exists N\in\N\, \forall m\in\N \,\forall i,j\in [N,N+m]\big(\|x_{i}-x_{j}\|<\eps\big), \label{reformulation-Caucy-1}
\eeq
which in turn is equivalent with its Herbrand normal form, given by (\ref{habil-edts-Herbrand}):
\bua
\forall \eps>0\, \forall g:\N\to\N\, \exists N\in\N\,\forall i,j\in [N,N+g(N)]\big(\|x_{i}-x_{j}\|<\eps\big).
\eua
As we have discussed above, the logical metatheorems guarantee the extractability of an effective bound $\Phi(\eps,g,b,\eta)$ on $\exists N$, where $b\ge \| x\|$ and $\eta$ is a modulus of uniform convexity of $X$. 

The only ineffective principle used in Birkhoff's original proof is the fact that any 
sequence $(a_n)$ of nonnegative real numbers has an infimum. We denote it with $(GLB)$.

In our analysis we first replace this analytical existential statement by a purely arithmetical one, namely 
\bua (GLB_{ar}): \quad 
\forall \eps>0\,\exists N\in\N\,\forall m\in\N\ (a_N\le a_m+\eps). 
\eua
For the general underlying facts from logic that guarantee this to be possible, we refer to \cite{Koh98} or to \cite[Chapter 13]{Koh08-book}.
The principle $(GLB_{ar})$ is still ineffective as, in general, there  is no computable bound on $\exists N$, even for computable $(a_n)$.  As above, we consider the equivalent reformulation
\bua 
\forall \eps>0\,\exists N\in\N\,\forall m\in\N\, \forall i\leq m (a_N\le a_i+\eps). 
\eua
and then we take its Herbrand normal form
\bua 
\forall \eps>0\,\forall g:\N\to\N\,\exists N\in\N\, \forall i\leq g(N) (a_N\le a_i+\eps). 
\eua
We carry out informally monotone functional interpretation, by which (GLB$_{ar}$) gets replaced in the proof by the quantitative form provided in Lemma \ref{habil-edts-GLB-lemma}.

\blem\label{habil-edts-GLB-lemma}\cite{KohLeu08b}\\
Let  $(a_n)_{n\geq 0}$ be a sequence of nonnegative real numbers. Then
\be
\item $\ds \forall\eps>0\, \forall g:\N\to\N\,\exists 
N\leq\Theta(b,\eps,g)\,\big(a_N\leq a_{g(N)}+ \eps\big), $\\
where $\ds \Theta(b,\eps,g)= \max_{i\leq K}g^i(1), \,\, 
b \geq  a_0,  \,\, K =\left\lceil \frac{b}{\varepsilon} 
\right\rceil$. \\
Moreover, $N=g^i(1)$ for some $i<K$.
\item  $\ds \forall\eps>0\, 
\forall g:\N\to\N\,\exists N\leq h^K(1)\,\forall 
m\leq g(N)\big(a_N\leq a_m+\eps\big),$\\
where $\ds  h(n)=\max_{i\leq n}g(i)$ and  $b, K$ are as above.
\ee
\elem
\noindent In the above lemma, $h^K$ is the $K$-th iterative of $h:\N\to\N$.

\subsection{Main results}

The  main result of the paper \cite{KohLeu08b} is the following quantitative version of Birkhoff's generalization to uniformly convex Banach spaces of von Neumann's mean ergodic Theorem.

\bthm\label{habil-edts-quant-B-VN}
Assume that $X$ is a uniformly convex Banach space, $\eta$ is a modulus of uniform convexity  and  $T:X\to X$ is a linear nonexpansive mapping. Let $b>0$. Then for all $x\in X$ with $\|x\|\leq b$,
\beq
\forall\eps>0\,\forall g:\N\to\N\,\exists N\leq \Phi(\eps,g,b,\eta)\,\forall i,j\in[N,N+g(N)]\,\big(\|x_i-x_j\|<\eps\big).
\eeq
where $\ds \Phi(\eps,g,b,\eta)=M\cdot\tilde{h}^K(1)$, with
$$\ba{l}
\ds M=\left\lceil\frac{16b}\eps\right\rceil, \quad \quad \gamma=\frac\eps{16}
\eta\left(\frac{\eps}{8b}\right), \quad \quad \ds K=\left\lceil 
\frac{b}{\gamma}\right\rceil,\\[0.1cm]
h,\,\tilde{h}:\N\to\N, \,\,h(n)=2(Mn+g(Mn)),  \quad \ds 
\tilde{h}(n)=\max_{i\leq n}h(i).\\[0.1cm]
 \quad 
\ds 
\ea
$$
If $\eta(\varepsilon)$ can be written as $\ds \varepsilon\cdot 
\tilde{\eta}(\varepsilon)$ with $0<\varepsilon_1\le\varepsilon_2\to 
\tilde{\eta}(\varepsilon_1)
\le \tilde{\eta}(\varepsilon_2),$ then we can replace $\eta$ by 
$\tilde{\eta}$ and the constant `$16$' by `$8$' in the 
definition of $\gamma$  in the bound above.
\ethm

Note that our bound $\Phi$ is independent from $T$ and depends on the space $X$ and the starting 
point $x\in X$ only via the modulus of convexity $\eta$ and the norm upper bound $b\ge \| x\|.$ Moreover, it is easy to see that the bound depends on $b$ and $\varepsilon$ only via $b/\varepsilon$.

It is well-known that as a modulus of uniform convexity of a Hilbert space $X$ one can take $\eta(\varepsilon)=\eps^2/8$ with $\tilde{\eta}(\eps)=\eps/8$ satisfying the requirements in the last claim 
of our main theorem.  As an immediate consequence, we get the following quantitative version of 
von Neumann's mean ergodic theorem.

\bcor\label{habil-edts-quant-Hilbert}
Assume that $X$ is a Hilbert space and  $T:X\to X$ is a $T:X\to X$ is a linear nonexpansive mapping. Let $b>0$. Then for all $x\in X$ with $\|x\|\leq b$,
\beq
\forall\eps>0\,\forall g:\N\to\N\,\exists N\leq 
\Phi(\eps,g,b)\,\forall i,j\in[N,N+g(N)]\,\big(\|x_i-x_j\|<\eps\big).
\eeq
where $(x_n),$ $\Phi$ are defined as above, but with 
$\ds K=\left\lceil \frac{512b^2}{\eps^2}\right\rceil$. 
\ecor

We get a similar result for $L_p$-spaces ($2<p<\infty$), using the fact that $\ds \eta(\eps)=\frac{\varepsilon^p}{p\,2^p}$ is a modulus of uniform convexity for $L_p$ 
(see e.g. \cite{Koh03}). Note that  $\ds \frac{\varepsilon^p}{p\,2^p} =\varepsilon \cdot
\tilde{\eta}_p(\varepsilon)$ with $\ds \tilde{\eta}_p(\varepsilon)=\frac{\varepsilon^{p-1}}{p\,2^p} $
satisfying the monotonicity condition in Theorem \ref{habil-edts-quant-B-VN}.

The bound extracted by Avigad et al. \cite{AviGerTow08} for Hilbert spaces  
is the following one:
\[ \Phi(\eps,g,b)= h^K(1),\]
where $\ds h(n)= n+2^{13}
\rho^4\tilde{g}((n+1)\tilde{g}(2n\rho)\rho^2)$, $\rho=\left\lceil \frac{b}{\varepsilon}\right\rceil$, $K=512\rho^2$ and $\ds \tilde{g}(n)=\max_{i\le n}(i+g(i))$. 
Note that, disregarding the different placement of 
`$\ds\lceil\cdot\rceil$',  the number of iterations $K$ in both this bound and in our 
bound in Corollary 
\ref{habil-edts-quant-Hilbert} coincide, whereas 
the function $h$ being iterated in our bound is much simper than that 
occurring in the above bound from  \cite{AviGerTow08}. 

Avigad et al. \cite{AviGerTow08} have an improved bound (roughly corresponding to our bound for $T$ 
being linear nonexpansive)  only in the special case when the linear mapping $T$ is an 
isometry. For this case, they show that one can take $h$ as  
\[ h(n)= n+2^{13}
\rho^4\tilde{g}\big((n+1)\tilde{g}(1)\rho^2\big), \] 
which still is somewhat more complicated than the function $h$ in our bound for the 
general case of $T$ being nonexpansive.  From this, Avigad et al. \cite{AviGerTow08} obtain in the isometric case that  $\ds\Phi(\eps,g,b)=2^{O(\rho^2\log \rho)}$ for linear functions $g,$ i.e. $g=O(n).$ 

Our bound in Corollary \ref{habil-edts-quant-Hilbert} generalizes this complexity upper bound on $\Phi$ 
to $T$ being nonexpansive rather than being an isometry.

\end{document}